\newcommand{\out}{\mathsf{Out(\mathbb{F})}}
\newcommand{\aut}{\mathsf{Aut(\mathbb{F})}}
\newtheorem{thm}{Theorem}[section]
\newtheorem{prop}[thm]{Proposition}
\newtheorem{lemma}[thm]{Lemma}
\newtheorem{cor}[thm]{Corollary}
\newtheorem*{fact*}{Fact}
\theoremstyle{definition}
\newtheorem{definition}[thm]{Definition}
\newtheorem{remark}[thm]{Remark}
\title{Limits of conjugacy classes under iterates of Hyperbolic elements of $\out$}
\author{Pritam Ghosh}
\dedicatory{Dedicated to Lee Mosher on his $60^{th}$ birthday}
\subjclass[2000]{Primary 20F65; Secondary 57M, 37B, 37D}
\keywords{Free groups, Outer automorphism, Cannon-Thurston map}
\begin{document}
 
 \maketitle

 \begin{abstract}
  For a free group $\mathbb{F}$ of finite rank such that $\text{rank}(\mathbb{F})\geq 3$,
 we prove that the set of weak limits of a conjugacy class in $\mathbb{F}$ under iterates of some
 hyperbolic $\phi\in\out$ is equal to the collection of generic leaves and  lines with endpoints in 
 attracting fixed points of $\phi$.

 As an application we describe the ending lamination set for a hyperbolic extension of $\mathbb{F}$ by a hyperbolic
 subgroup of $\out$ in a new way and use it to prove results about Cannon-Thurston maps for such
 extensions. We also use it to derive conditions for quasiconvexity of finitely generated, infinite index subgroups
 of $\mathbb{F}$ in the extension group. These results generalize similar results obtained in
 \cite{Mj-99, KL-13} and use different techniques.
\end{abstract}


 \section{Introduction}

 Fix a nonabelian free group $\mathbb{F}$ of finite rank and assume that $\text{rank}(\mathbb{F})$ $\geq 3$.
  The outer automorphism group, $\out$, of a free group $\mathbb{F}$ is defined as $\aut$ modulo the group of inner automorphisms
 of $\mathbb{F}$.
 There are many tools in studying the properties of this group. One of
them is by using train-track maps introduced in \cite{BH-92} and later generalized in \cite{BFH-97}, \cite{BFH-00}, \cite{FH-11}. The fully irreducible
outer automorphisms are elements in $\out$ which do not have iterates that leave a conjugacy class of some proper
free factor invariant. They are the most well understood elements in $\out$.
They behave very closely to the pseudo-Anosov homeomorphisms of surfaces with one boundary component,
which have been well understood and are a rich source of examples and interesting theorems. Our focus will be on outer automorphisms
$\phi$ which might not be fully irreducible but have a lift $\Phi\in\aut$ such that
$$\exists M>0, \lambda>1 \ni \lambda|c|\leq \text{ max }\{| \Phi^M(c) |, | \Phi^{-M}(c) |\} \forall c\in \mathbb{F}$$
Such classes of outer automorphisms are called \textit{hyperbolic} outer automorphisms.
It was shown by \cite{BF-92} and \cite{Br-00} that an outer automorphism is hyperbolic
if and only if it does not have any periodic conjugacy classes.

Our first objective (Section \ref{main} ) is to classify all \textit{weak} limits of any conjugacy class $[c]$ under the action of a hyperbolic
$\phi\in\out$. For this we use the theory of completely split relative train track maps introduced by
Feighn-Handel in their Recognition Theorem work \cite{FH-11} and later further studied in great details by
Handel-Mosher in their body of work about Subgroup Decomposition in $\out$ \cite{HM-13a, HM-13c}.
Our main result in this direction is the following result (here
 $\mathcal{B}_{gen}(\phi)$ denotes the set of all generic leaves of all attracting laminations for $\phi$ (See section \ref{sec:6}) and
$\mathcal{B}_{\mathsf{Fix}_+}(\phi)$  denotes the set of all lines with endpoints in $\mathsf{Fix}_+(\phi)$.(See section \ref{sec:8})):

 \newtheorem*{Limit}{Theorem \ref{Limit}}
\begin{Limit}

\textit{Suppose $\phi \in \out$ is a hyperbolic outer automorphism and $[c]$ is
 any conjugacy class in $\mathbb{F}$. Then the weak limits of $[c]$ under iterates of $\phi$ are in
 $$\mathcal{WL}(\phi) : =  \mathcal{B}_{gen}(\phi) \cup \mathcal{B}_{\mathsf{Fix}_+}(\phi)$$}
  \textit{Conversely, any line in $\mathcal{WL}(\phi)$ is a weak limit of some conjugacy class in $\mathbb{F}$ under iterates of $\phi$.}

 \end{Limit}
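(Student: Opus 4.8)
The plan is to fix a completely split relative train track (\emph{CT}) map $f\colon G\to G$ representing a rotationless power of $\phi$; since passing to a rotationless power alters neither the weak limit set nor $\mathcal{WL}(\phi)$ (the attracting laminations and $\mathsf{Fix}_{+}$ being read off the rotationless power, cf.\ Sections \ref{sec:6} and \ref{sec:8}), we may argue with $f$. Represent $c$ by a circuit $\sigma$ in $G$ and write $\sigma_{n}$ for the circuit carrying $f^{n}_{\#}(\sigma)$, so that, after sorting the orbit of $[c]$ by residues modulo the power, the weak limits of $[c]$ under $\phi$ are the lines every finite subpath of which occurs in $\sigma_{n}$ for infinitely many $n$. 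Two consequences of hyperbolicity are used throughout: $\phi$ has no periodic conjugacy class, so $|\sigma_{n}|\to\infty$; and $\phi$ has no closed Nielsen path, so $f$ has no exceptional paths and every NEG edge $E$ has $f(E)=E\cdot u$ with the $f_{\#}$-iterates of $u$ unbounded, so the eigenray $R_{E}=E\cdot u\cdot f_{\#}(u)\cdot f^{2}_{\#}(u)\cdots$ has blocks of growing length.

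For the forward inclusion the structural engine is that $f_{\#}$ carries the complete splitting of $\sigma_{n}$ to one of $\sigma_{n+1}$ refining its naive image, whose splitting units are single edges of EG strata, single edges of NEG strata, and taken connecting paths in zero strata. Given a weak limit $\ell$ and a finite subpath $\mu\subset\ell$, I would record the position of $\mu$, which occurs in $\sigma_{n}$ for infinitely many $n$, relative to this splitting. If $\mu$ is eventually trapped inside a maximal subpath of $\sigma_{n}$ lying in a single EG stratum $H_{r}$ --- such subpaths becoming arbitrarily long and, away from their boundedly many illegal turns, $r$-legal --- then $\mu$ is a subpath of a leaf of the attracting lamination $\Lambda^{+}_{r}$; letting $\mu$ exhaust $\ell$, $\ell$ is a leaf of $\Lambda^{+}_{r}$, so $\ell\in\mathcal{B}_{gen}(\phi)$ if it is generic and otherwise $\ell$ (a non-generic boundary leaf, hence bi-asymptotic to eigenrays) lies in $\mathcal{B}_{\mathsf{Fix}_{+}}(\phi)$. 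If instead $\mu$ straddles NEG splitting units, then after iteration it lies in a ``comb'' assembled from eigenrays $R_{E}$; as $n\to\infty$ these combs $C^{0}$-converge to bi-infinite lines both of whose ends are eigenrays at fixed vertices, hence lines with endpoints in $\mathsf{Fix}_{+}(\phi)$. Either way $\ell\in\mathcal{WL}(\phi)$. The main obstacle here is the bookkeeping that forces one of these alternatives: one must show the position of $\mu$ in $\sigma_{n}$ cannot oscillate between EG interiors and NEG combs so as to produce a line of neither type, which is handled by the limiting splitting that the refinements induce on $\ell$.

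For the converse I would exhibit, for each $\ell\in\mathcal{WL}(\phi)$, a witnessing conjugacy class. If $\ell$ is a generic leaf of $\Lambda^{+}_{r}$: take a circuit $\sigma$ weakly attracted to $\Lambda^{+}_{r}$ (e.g.\ one crossing $H_{r}$ legally enough, which exists since $\Lambda^{+}_{r}$ is attracting); then the $H_{r}$-subpaths of $f^{n}_{\#}(\sigma)$ become arbitrarily long and legal, and since any two generic leaves lie in each other's weak closure, $\ell$ is a weak limit of $[\sigma]$. If $\ell\in\mathcal{B}_{\mathsf{Fix}_{+}}(\phi)$: realize $\ell$ in $G$ in normal form $\overline{R_{1}}\cdot\beta\cdot R_{2}$, with $R_{1},R_{2}$ eigenrays at fixed vertices and $\beta$ a (possibly trivial) fixed Nielsen path joining their base vertices, as forced by the structure of $\mathsf{Fix}_{+}$ for the rotationless CT. Since $\mathrm{rank}(\mathbb{F})\ge 3$ there is a circuit $\sigma$ containing the seed $\overline{E_{1}}\cdot\beta\cdot E_{2}$, $E_{i}$ the initial edge of $R_{i}$; then $f^{n}_{\#}(\sigma)$ contains $f^{n}_{\#}(\overline{E_{1}}\cdot\beta\cdot E_{2})$, which tightens to $\overline{R_{1}^{(n)}}\cdot\beta\cdot R_{2}^{(n)}$ (using $f^{n}_{\#}(E_{i})=R_{i}^{(n)}$, that $\beta$ is fixed, and that the turns at the two fixed vertices are fixed and nondegenerate) and $C^{0}$-converges to $\ell$; hence every finite subpath of $\ell$ occurs in $f^{n}_{\#}(\sigma)$ for all large $n$, so $\ell$ is a weak limit of $[\sigma]$. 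The delicate point here is checking that the bounded cancellation lemma really leaves the growing eigenray truncations $R_{i}^{(n)}$ intact at every scale, so the limit is exactly $\ell$ and not some truncation or grafting of it.
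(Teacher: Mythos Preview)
Your forward direction takes a genuinely different route from the paper. The paper does \emph{not} track where a subpath $\mu$ of the weak limit sits inside the complete splitting of $\sigma_n$. Instead it argues at the level of boundary points: first it shows (Lemma~\ref{coro}, Corollary~\ref{coro1}) that a weak limit $\ell$ is never attracted to any $\Lambda^-$ and never has an endpoint in $\mathsf{Fix}_-(\phi)$; then it shows (Lemma~\ref{lemma1}) that $\ell$ \emph{is} attracted to some $\Lambda^+$. Feeding these facts into Handel--Mosher's structure theorem for $\mathcal{B}_{na}(\Lambda^-)$ (Lemma~\ref{concat}) pins $\ell$ down to $\mathcal{B}_{gen}(\phi)\cup\mathcal{B}_{sing}(\phi)\cup\mathcal{B}_{ext}$, and the Levitt--Lustig convergence theorem (Lemma~\ref{attfix}) then forces any endpoint not already in $\mathsf{Fix}_+(\phi)$ to be iterated into $\mathsf{Fix}_-(\phi)$, contradicting Corollary~\ref{coro1}. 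Your splitting-position approach is in principle workable and more elementary, but the ``bookkeeping'' gap you flag is the entire difficulty: nothing prevents the occurrences of a fixed $\mu$ in $\sigma_n$ from landing in different strata for different $n$, and the sentence ``handled by the limiting splitting that the refinements induce on $\ell$'' does not resolve this, since the refinements are of the splittings of $\sigma_n$, not of $\ell$, and there is no canonical way to carry a splitting across to a weak limit. The paper's boundary-dynamics argument sidesteps this completely.

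Your converse contains a genuine error. You claim that every $\ell\in\mathcal{B}_{\mathsf{Fix}_+}(\phi)$ has a realization $\overline{R_1}\cdot\beta\cdot R_2$ with $\beta$ a \emph{Nielsen} path. That normal form is Lemma~\ref{structure}(1), and it characterizes $\mathcal{B}_{sing}(\phi)$, which is strictly smaller than $\mathcal{B}_{\mathsf{Fix}_+}(\phi)$ (see the remark after Proposition~\ref{nongen0}). If the two endpoints $P,Q$ lie in $\mathsf{Fix}_+(\widehat{\Phi}_1)$ and $\mathsf{Fix}_+(\widehat{\Phi}_2)$ for non-isogredient principal lifts, the finite connecting path $\beta$ is \emph{not} $f_\#$-fixed, and your iteration $f^n_\#(\overline{E_1}\cdot\beta\cdot E_2)$ no longer tightens to $\overline{R_1^{(n)}}\cdot\beta\cdot R_2^{(n)}$: the middle piece moves, and bounded cancellation gives you no control over which bi-infinite line you are actually limiting on. The paper instead invokes Lemma~\ref{converse} (Handel--Mosher, \cite[Lemma~1.52]{HM-13a}) to produce the witnessing conjugacy class directly from a point of $\mathsf{Fix}_+(\phi)$, avoiding any seed-circuit construction.
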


 The dynamics of fully irreducible outer automorphisms are very well understood.
 This theorem, in a way shows that
dynamical behavior of hyperbolic outer automorphisms which are not fully irreducible are also well behaved.
 One of the special properties of a fully irreducible and hyperbolic automorphism $\phi$ is that
 the non-attracting subgroup system (see section \ref{sec:7}) of the unique attracting lamination of $\phi$
 is trivial. This implies
 that every conjugacy class is attracted to the unique attracting lamination under iterates of $\phi$.
 The above theorem goes on to show that a somewhat similar behaviour is observed in the case of hyperbolic
 automorphisms which are not fully irreducible. More precisely, every conjugacy class is attracted to \textit{some}
 attracting lamination under iterates of $\phi$, which is the content of Lemma \ref{EG}.
 The above theorem also implies that $\phi$ is hyperbolic if and only if every conjugacy class is weakly
 attracted to some singular line of $\phi$ under iterates of $\phi$ (See \ref{althyp}).

 Theorem \ref{Limit} is used to prove several important results in Section \ref{appl} . The
 work of \cite[Theorem 1.2]{Br-00} and  \cite{BF-92} show that
 $\phi$ is hyperbolic if and only if the mapping torus $G_{\phi}: = \mathbb{F}_{\langle \phi \rangle} \rtimes \mathbb{Z}$
 is a Gromov hyperbolic group. We use the above theorem \ref{Limit} to define a set of lines which describe
 the ending lamination set ($\Lambda_{\langle \phi \rangle}$) defined  in \cite[]{Mj-97} for  $\mathbb{F}_{\langle \phi \rangle} \rtimes \mathbb{Z}$.
 We prove:

 \newtheorem*{el}{Lemma \ref{el1}}
 \begin{el}
  \textit{Let $\mathcal{H} = \langle \phi \rangle$ for some hyperbolic $\phi\in\out$. Then
 $$\Lambda_{\langle \phi \rangle} = \widetilde{\mathcal{WL}}(\phi) \cup \widetilde{\mathcal{WL}}(\phi^{-1})$$}

 \end{el}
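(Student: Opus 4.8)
The plan is to reduce the lemma to the assertion that, for every hyperbolic $\psi\in\out$, the ending lamination of $G_\psi:=\mathbb{F}\rtimes_\psi\mathbb{Z}$ in the direction $+\infty\in\partial\mathbb{Z}$ is exactly $\widetilde{\mathcal{WL}}(\psi)$. Granting this, the lemma follows at once: since $\phi$ is hyperbolic, $G_\phi$ is Gromov hyperbolic by \cite{Br-00,BF-92}, so $\Lambda_{\langle\phi\rangle}$ is defined, and as the quotient group is $\mathbb{Z}$ with $\partial\mathbb{Z}=\{+\infty,-\infty\}$, the definition of \cite{Mj-97} gives $\Lambda_{\langle\phi\rangle}=\Lambda_{+\infty}(G_\phi)\cup\Lambda_{-\infty}(G_\phi)$; the $\mathbb{F}$-equivariant isomorphism $G_\phi\to G_{\phi^{-1}}$ that fixes $\mathbb{F}$ and inverts the stable letter interchanges the two ends of $\mathbb{Z}$, so $\Lambda_{-\infty}(G_\phi)=\Lambda_{+\infty}(G_{\phi^{-1}})$, and applying the reduced assertion to $\psi=\phi$ and $\psi=\phi^{-1}$ finishes the proof. (If one instead uses the convention in which conjugation by the stable letter realizes $\psi^{-1}$, this only swaps the two terms of the union in the statement, so no generality is lost.)

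The heart of the matter is the reduced assertion, and here the main task is to translate the \cite{Mj-97} definition of $\Lambda_{z}$ into the dynamical language of Theorem \ref{Limit}. Realize the Cayley graph of $G_\psi$ as a tree of copies of $\Gamma_\mathbb{F}$ indexed by $\mathbb{Z}$, in which the fiber over $n+1$ is obtained from the fiber over $n$ by applying $\psi$; then $G_\psi$ is hyperbolic, so it flares exponentially, and one should be able to show that a line $\ell$ of $\Gamma_\mathbb{F}$ is a leaf of $\Lambda_{+\infty}(G_\psi)$ exactly when arbitrarily long subpaths of $\ell$ occur, up to $\mathbb{F}$-translation, as subpaths of the reduced images $\psi^{n_i}(\sigma_i)$ of bounded-length paths $\sigma_i$ with $n_i\to+\infty$; equivalently (taking the $\sigma_i$ to be subpaths of cyclically reduced conjugacy-class representatives), exactly when $\ell$ lies in the diagonal closure of the set of weak limits of the sequences $\{\psi^n[c]\}$, over all conjugacy classes $[c]$, as $n\to+\infty$. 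In other words, $\Lambda_{+\infty}(G_\psi)$ is the ``forward'' weak-limit lamination of $\psi$.

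Once this identification is in place, one direction is easy: by the converse half of Theorem \ref{Limit} every line of $\mathcal{B}_{gen}(\psi)\cup\mathcal{B}_{\mathsf{Fix}_+}(\psi)$ is a weak limit of a single conjugacy class under forward iterates of $\psi$, hence lies in $\Lambda_{+\infty}(G_\psi)$ by the identification above, and since $\widetilde{\mathcal{WL}}(\psi)$ is (the diagonal closure of) this union while $\Lambda_{+\infty}(G_\psi)$ is correspondingly closed, $\widetilde{\mathcal{WL}}(\psi)\subseteq\Lambda_{+\infty}(G_\psi)$. For the reverse inclusion one starts from an arbitrary leaf $\ell$ of $\Lambda_{+\infty}(G_\psi)$, written via the identification as a weak limit of $\psi^{n_i}(\sigma_i)$ with the base paths $\sigma_i$ possibly varying, and must still place $\ell$ in $\widetilde{\mathcal{WL}}(\psi)$. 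Here I would re-run the argument behind Theorem \ref{Limit}: fix a completely split relative train track representative of $\psi$, pass to a subsequence along which the $\sigma_i$ together with their complete splittings stabilize, and analyze each long subpath of $\ell$ by tracing its preimage back through the splitting of $\sigma_i$; it decomposes into uniformly boundedly many pieces, each of which is either a long concatenation of EG splitting units, contributing in the limit a generic leaf of an attracting lamination of $\psi$ (by Lemma \ref{EG}), or a piece carried by the non-attracting subgroup system or by a non-exponentially-growing stratum, contributing in the limit a line with endpoints in $\mathsf{Fix}_+(\psi)$. Thus $\ell$ lies in $\mathcal{B}_{gen}(\psi)\cup\mathcal{B}_{\mathsf{Fix}_+}(\psi)$ up to diagonal closure, i.e.\ in $\widetilde{\mathcal{WL}}(\psi)$, by Theorem \ref{Limit}.

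The main obstacle is the identification in the second paragraph: transcribing the \cite{Mj-97} definition of $\Lambda_z$ — which is phrased via geodesic rays in the quotient Cayley graph and the coarse geometry of $G_\psi$ — into a clean statement about forward iterates of $\psi$ acting on conjugacy classes, and, hand in hand with it, controlling in the reverse inclusion the case where the base conjugacy classes do not stabilize along the subsequence, so that no limit line escapes $\widetilde{\mathcal{WL}}(\psi)$. With that identification secured, the remainder is the same train-track bookkeeping as in the proof of Theorem \ref{Limit}.
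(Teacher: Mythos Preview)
You have significantly overcomplicated what is, in the paper's setup, an immediate unwinding of definitions. The paper's Definition~\ref{el} already records Mitra's ending lamination in the adapted form
\[
\Lambda_{z,[c]} = \{\,l \mid \text{every subword of } l \text{ or its inverse occurs in } {\phi_n}_\#([c]) \text{ for some } n\,\},
\qquad \Lambda_z = \bigcup_{[c]} \Lambda_{z,[c]},
\]
so the ``main obstacle'' you identify---translating Mitra's coarse-geometric definition into statements about iterates of automorphisms on conjugacy classes---has already been carried out before the lemma is even stated. With $\mathcal{H}=\langle\phi\rangle$ the geodesic to $z_1=\phi^\infty$ is $\phi_n=\phi^n$, and Remark~\ref{flip}(2) says precisely that $l\in\Lambda_{z_1,[c]}$ is the same as $l$ being a lift of a weak limit of $\phi^n_\#([c])$. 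Taking the union over $[c]$ gives $\Lambda_{z_1}=\widetilde{\mathcal{WL}}(\phi)$ by the very definition of $\mathcal{WL}(\phi)$ at the start of Section~\ref{main}; the argument for $z_2=\phi^{-\infty}$ is symmetric. That is the entire proof.

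Two specific misreadings drive your detour. First, you treat $\widetilde{\mathcal{WL}}(\psi)$ as if it were \emph{defined} by the right-hand side of Theorem~\ref{Limit} (and then speak of a ``diagonal closure''), so you feel obliged to invoke that theorem and even re-run its train-track argument; in fact $\mathcal{WL}(\phi)$ is defined as the set of weak limits of conjugacy classes, and Theorem~\ref{Limit} plays no role in this lemma. Second, your worry about ``base conjugacy classes not stabilizing'' is a phantom: since $\Lambda_z=\bigcup_{[c]}\Lambda_{z,[c]}$, every leaf of $\Lambda_z$ already arises from a \emph{single} fixed $[c]$, so there is nothing to stabilize. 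Your plan might be pushed through, but it reproves results the paper has packaged into Definition~\ref{el} and Remark~\ref{flip}, and it manufactures difficulties (varying $\sigma_i$, passing to subsequences, re-deriving the identification) that simply do not arise once you read those two items.
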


 This new description of the ending lamination set has very significant implications. First we use this
 new description of the ending lamination set to prove the following theorem, which partially answers a question of Swarup:

 \newtheorem*{CThyp}{Theorem \ref{ct1}}
 \begin{CThyp}

  \textit{Consider the exact sequence of hyperbolic groups:}
$$ 1\rightarrow \mathbb{F} \rightarrow G_{\phi}\rightarrow \langle\phi\rangle\rightarrow 1$$

  \textit{The Cannon-Thurston map $\widehat{i}:\partial\mathbb{F}\
 \rightarrow \partial G_{\phi}$ is a finite-to-one map and cardinality of preimage of each point in $\partial G_{\phi}$ is
 uniformly bounded, the bound depending only on rank of $\mathbb{F}$.}

 \end{CThyp}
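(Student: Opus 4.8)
The plan is to convert the assertion into a counting problem about leaves of the ending lamination. Since $G_\phi$ is Gromov hyperbolic (\cite{Br-00,BF-92}) and $\mathbb{F}$ is a normal subgroup of it, the Cannon--Thurston map $\widehat i:\partial\mathbb{F}\to\partial G_\phi$ exists by the work of Mitra (see \cite{Mj-97} and the references there), together with the identification criterion: for distinct $p,q\in\partial\mathbb{F}$ one has $\widehat i(p)=\widehat i(q)$ if and only if the line $(p,q)$ of $\mathbb{F}$ is a leaf of the ending lamination $\Lambda_{\langle\phi\rangle}$. Because ``$\widehat i(p)=\widehat i(q)$'' is an equivalence relation, it follows that for each $\xi\in\partial G_\phi$ any two distinct points of the fibre $\widehat i^{-1}(\xi)$ are the endpoints of a leaf of $\Lambda_{\langle\phi\rangle}$; equivalently, $\widehat i^{-1}(\xi)$ is a clique in the graph whose vertex set is $\partial\mathbb{F}$ and whose edges are the leaves of $\Lambda_{\langle\phi\rangle}$. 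By Lemma \ref{el1} that edge set is $\widetilde{\mathcal{WL}}(\phi)\cup\widetilde{\mathcal{WL}}(\phi^{-1})$, so by Theorem \ref{Limit} every edge is either a generic leaf of an attracting lamination of $\phi$ or of $\phi^{-1}$, or a line both of whose endpoints lie in $\mathsf{Fix}_+(\phi)$ or in $\mathsf{Fix}_+(\phi^{-1})$. Hence it is enough to prove that there is a constant $C=C(\text{rank}(\mathbb{F}))$ such that every point of $\partial\mathbb{F}$ is an endpoint of at most $C$ of these lines; then $|\widehat i^{-1}(\xi)|\le C+1$ for every $\xi$.

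To produce such a $C$ I would assemble the finiteness inputs of the relative train-track machinery, all governed by $N:=\text{rank}(\mathbb{F})$. A rotationless power of $\phi$ (and of $\phi^{-1}$) has only finitely many attracting laminations, in number bounded by $N$. A given point of $\partial\mathbb{F}$ is the endpoint of only boundedly many leaves of a given attracting lamination: working in a completely split relative train-track representative, branching of the leaves of an EG stratum is confined to lifts of vertices of bounded valence and is governed there by the relevant derivative map, and the local index bounds the number of available directions. Likewise, the number of lines with both endpoints in $\mathsf{Fix}_+(\phi)$ (respectively $\mathsf{Fix}_+(\phi^{-1})$) through a given point is bounded, because that point is either an endpoint of no such line or is an attracting fixed point of some principal lift of a rotationless power of $\phi$, and the second endpoint must then be one of the boundedly many attracting fixed points of that same lift. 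All of these bounds are instances of the index theory of Gaboriau--Jaeger--Levitt--Lustig and its refinements in \cite{FH-11,HM-13a}. Adding the contributions, a point of $\partial\mathbb{F}$ lies on at most $k\cdot b$ generic leaves, where $k$ bounds the total number of attracting laminations of $\phi$ and $\phi^{-1}$ together and $b$ bounds the branching, plus a bounded number of $\mathsf{Fix}_+$-lines, and one takes $C$ to be this (explicit) total.

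The step I expect to be the real obstacle is the uniform branching bound for the attracting laminations, together with the bookkeeping that fuses the $\phi$ and the $\phi^{-1}$ data. A priori a single ideal point of $\partial\mathbb{F}$ could be an endpoint of infinitely many leaves of a minimal lamination, so one must use the train-track structure in earnest --- finiteness of illegal turns and of Nielsen paths, finiteness of the directions fixed by the pertinent derivative maps at principal vertices --- to rule this out and to see that the bound does not depend on the chosen point. This is the non-fully-irreducible analogue of the argument of Kapovich--Lustig \cite{KL-13}, where the fibre bound $2N$ is obtained from a single attracting tree of a fully irreducible automorphism; the extra content here is exactly that $\Lambda_{\langle\phi\rangle}$ is assembled from the structures attached to the finitely many attracting laminations of $\phi$ and of $\phi^{-1}$ and to the sets $\mathsf{Fix}_+(\phi)$ and $\mathsf{Fix}_+(\phi^{-1})$, so the resulting constant is a sum of several bounded contributions rather than a single clean $2N$.
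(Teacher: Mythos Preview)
Your setup is exactly right: fibres of $\widehat{i}$ are cliques for the relation ``joined by a leaf of $\Lambda_{\langle\phi\rangle}$'', and Lemma~\ref{el1} together with Theorem~\ref{Limit} identifies those leaves. Where your argument diverges from the paper is in how you bound the size of such a clique.

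You propose to bound, for each point of $\partial\mathbb{F}$, the number of generic leaves of each attracting lamination through it, and you correctly flag this branching bound as the crux. The paper sidesteps this entirely via Proposition~\ref{asym2} (which in turn rests on Lemma~\ref{asym1} from \cite{HM-13c}): if two leaves of $\bigcup\Lambda_i$ share an endpoint, then \emph{both} already lie in $\mathcal{B}_{\mathsf{Fix}_+}(\phi)$. So as soon as a fibre has three or more points, every line between them is a lift of one of the finitely many lines in $\mathcal{B}_{\mathsf{Fix}_+}(\phi)$ (or $\mathcal{B}_{\mathsf{Fix}_+}(\phi^{-1})$), whose cardinality is bounded in terms of the rank by Proposition~\ref{singfin}. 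No separate branching analysis for generic leaves is needed: the asymptotic-leaves lemma absorbs that work.

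Two smaller points. First, your sketch for the $\mathsf{Fix}_+$--lines says the second endpoint must be an attracting fixed point of \emph{the same} principal lift; that describes $\mathcal{B}_{sing}(\phi)$, not $\mathcal{B}_{\mathsf{Fix}_+}(\phi)$. The correct statement allows the two endpoints to come from different principal lifts, and finiteness then comes from Proposition~\ref{singfin} rather than from a single $\mathsf{Fix}_+(\widehat{\Phi})$. Second, you need the observation that a fibre cannot mix leaves from $\widetilde{\mathcal{WL}}(\phi)$ and $\widetilde{\mathcal{WL}}(\phi^{-1})$; the paper handles this by noting that $\Lambda_{z_1}$ and $\Lambda_{z_2}$ have disjoint endpoint sets (Mitra, \cite[Proposition 5.1]{Mj-97}), so one may work on each side separately.

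In short: your outline is correct but does more than necessary. The step you identify as the obstacle is precisely the content of Proposition~\ref{asym2}; invoking it turns your argument into the paper's two-line proof.
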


 The proof of this theorem becomes fairly simple once we prove in Proposition \ref{nongen0} that every
 nongeneric generic leaf of an attracting lamination for $\phi$ is a line in $\mathcal{B}_{\mathsf{Fix}_+}(\phi)$.
 Then we can apply the proposition that the number of lines in $\mathcal{B}_{\mathsf{Fix}_+}(\phi)$ for any hyperbolic outer automorphism $\phi$ is uniformly bounded
 above by some number depending only on rank of $\mathbb{F}$ (see Proposition \ref{singfin}).

 The special case, of the aforementioned theorem, when $\phi$ is fully irreducible was first proved by
  \cite[Theorem 5.4]{KL-13}. However the methods of our proofs are very different; we use the action of
  $\phi$ on conjugacy classes of $\mathbb{F}$ and their proof uses action of $\phi$ on boundary of outer space.

 In Section \ref{sec:A2} the next application of the new description of the ending lamination set is in understanding quasiconvexity of
 finitely generated infinite index subgroups of $\mathbb{F}$ inside $G_{\phi}$. Here we are able to
 give one equivalent condition and also one sufficient condition for quasiconvexity. (For definition of
 $\mathsf{Fix}_+(\phi)$ please see section \ref{sec:4}).

 \newtheorem*{qcl}{Theorem \ref{qc1}}
 \begin{qcl}
  \textit{Consider the exact sequence} $$ 1\rightarrow \mathbb{F} \rightarrow G_{\phi}\rightarrow \langle\phi\rangle\rightarrow 1$$
 \textit{where $\phi$ is a hyperbolic automorphism of $\mathbb{F}$. Let $H$ be a finitely generated infinite index subgroup of
 $\mathbb{F}$.  Then the following are equivalent: }
 \begin{enumerate}
  \item \textit{$H$ is quasiconvex in $G_{\phi}$. }
  \item $\partial H\cap \{\mathsf{Fix}_+(\phi)\cup \mathsf{Fix}_-(\phi)\} = \emptyset$.

 \end{enumerate}
 \textit{Moreover both $(1)$ and $(2)$ are satisfied if every attracting and repelling lamination of $\phi$ is
 minimally filling with respect to $H$.}

 \end{qcl}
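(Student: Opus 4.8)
The plan is to convert quasiconvexity of $H$ in $G_\phi$ into a statement about the ending lamination $\Lambda_{\langle\phi\rangle}$ via Lemma \ref{el1}, and then to extract the conclusion from Theorem \ref{Limit} together with the structure of $\mathsf{Fix}_\pm(\phi)$. Recall the criterion that a finitely generated subgroup $H\leq\mathbb F$ is quasiconvex in $G_\phi$ if and only if $H$ \emph{carries} no leaf of $\Lambda_{\langle\phi\rangle}$ (cf.\ \cite{Mj-97, Mj-99}); here ``$H$ carries a line $\ell$'' means that $\ell$ lifts to the core of the cover of the rose determined by $H$, and since $H$ is finitely generated (hence quasiconvex in $\mathbb F$) this is equivalent to some $\mathbb F$-translate of $\ell$ having both endpoints in the limit set $\partial H$; moreover the set of lines carried by $H$ is closed. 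By Lemma \ref{el1}, $\Lambda_{\langle\phi\rangle}=\widetilde{\mathcal{WL}}(\phi)\cup\widetilde{\mathcal{WL}}(\phi^{-1})$, and by Theorem \ref{Limit} and Proposition \ref{nongen0} every leaf of $\widetilde{\mathcal{WL}}(\phi)$ is either a generic leaf of an attracting lamination of $\phi$ or a line of $\mathcal{B}_{\mathsf{Fix}_+}(\phi)$. After passing to a power we may assume $\phi$ is rotationless (this affects neither (1) nor (2)), and using the symmetry $\phi\leftrightarrow\phi^{-1}$ (which interchanges $\mathsf{Fix}_+$ and $\mathsf{Fix}_-$) the equivalence $(1)\Leftrightarrow(2)$ reduces to: (a) if $H$ carries a leaf of $\widetilde{\mathcal{WL}}(\phi)$ then $\partial H\cap\mathsf{Fix}_+(\phi)\neq\emptyset$; and (b) if $\partial H\cap\mathsf{Fix}_+(\phi)\neq\emptyset$ then $H$ is not quasiconvex in $G_\phi$.

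For (a): if the carried leaf lies in $\mathcal{B}_{\mathsf{Fix}_+}(\phi)$ then both of its endpoints lie in $\mathsf{Fix}_+(\phi)\cap\partial H$. Otherwise it is a generic leaf of an attracting lamination $\Lambda^+$ of $\phi$; since the $\mathbb F$-orbit of a generic leaf is dense in $\Lambda^+$ and the set of lines carried by $H$ is closed, $H$ carries all of $\Lambda^+$, so every leaf of $\Lambda^+$ has its endpoints in $\partial H$. By Proposition \ref{nongen0} the non-generic leaves of $\Lambda^+$ belong to $\mathcal{B}_{\mathsf{Fix}_+}(\phi)$, and, more generally, every attracting fixed point attached to $\Lambda^+$ is a limit of endpoints of leaves of $\Lambda^+$ (part of the structure theory of principal lifts, \cite{FH-11, HM-13a}); since $\partial H$ is closed it then meets $\mathsf{Fix}_+(\phi)$.

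For (b): choose $\xi\in\partial H\cap\mathsf{Fix}_+(\Phi)$ for a principal lift $\Phi$ of $\phi$. Since $\phi$ has no periodic conjugacy class, $\xi$ is not an endpoint of the axis of any element of $\mathbb F$, so $H$ is non-cyclic and $\partial H$ accumulates at $\xi$. Let $R$ be the eigenray of $\Phi$ converging to $\xi$ in a completely split relative train track representative. The geodesic ray in the minimal $H$-invariant subtree converging to $\xi$ asymptotically fellow-travels $R$, and a double pigeonhole over the finite quotient graph produces, for every $L$, a cyclically reduced $h\in H$ with $|h|_{\mathbb F}\asymp L$ whose cyclic representative fellow-travels $R$ along its entire length. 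Since $R$ is legal and is contracted by $\Phi^{-1}$ while the remaining $O(1)$ part of the representative is controlled by bounded cancellation, applying $\phi^{-k}$ with $k$ of order $\log L$ shortens $[h]$ to $\mathbb F$-length $O(\log L)$; as $|h|_H\asymp|h|_{\mathbb F}\asymp L$, this exhibits exponential distortion of $H$ in $G_\phi$, and so $H$ is not quasiconvex. The ``moreover'' clause then follows from the equivalence $(1)\Leftrightarrow(2)$: if every attracting and repelling lamination of $\phi$ is minimally filling with respect to $H$, then $H$ carries no generic leaf of, and no eigenray attached to, any such lamination, hence by Lemma \ref{el1} and Proposition \ref{nongen0} carries no leaf of $\Lambda_{\langle\phi\rangle}$, so (1) holds and therefore so does (2).

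The main obstacle is the passage, in (a) and in the ``moreover'' clause, from ``$H$ carries a generic leaf of $\Lambda^+$'' (or an eigenray attached to it) to ``$\partial H$ contains an attracting fixed point of $\phi$'': this needs a precise account, via Proposition \ref{nongen0} and the principal-lift machinery of \cite{FH-11, HM-13a}, of exactly which leaves, rays, and fixed points $\partial H$ is forced to contain once it contains a single generic leaf of an attracting lamination. A second, more technical point is the distortion estimate in (b): one must combine legality with the bounded cancellation lemma to ensure that after applying $\phi^{-k}$ the $\mathbb F$-length of $[h]$ really is logarithmic in $L$ (and not merely subexponential), so that the distortion produced is genuinely exponential.
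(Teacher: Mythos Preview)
Your overall architecture matches the paper's: both reduce the question to Mitra's criterion (Lemma~\ref{qc}) together with the description $\Lambda_{\langle\phi\rangle}=\widetilde{\mathcal{WL}}(\phi)\cup\widetilde{\mathcal{WL}}(\phi^{-1})$ from Lemma~\ref{el1}. The differences are in the two implications, and in one of them you take a genuine detour that is both harder and not fully justified.

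\textbf{Direction (a).} Your argument is correct in outline but more laboured than it needs to be. Once $H$ carries a generic leaf of $\Lambda^+$, closedness gives you all of $\Lambda^+$; at this point the paper simply invokes Lemma~\ref{structure}(2): every attracting lamination contains a leaf which is a \emph{singular line}, hence has both endpoints in $\mathsf{Fix}_+(\phi)$, and these endpoints then lie in $\partial H$. You instead try to reach $\mathsf{Fix}_+(\phi)$ via ``nongeneric leaves lie in $\mathcal{B}_{\mathsf{Fix}_+}(\phi)$'' together with the further claim that ``every attracting fixed point attached to $\Lambda^+$ is a limit of endpoints of leaves of $\Lambda^+$''. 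The first step does not help when $\Lambda^+$ happens to have no nongeneric leaves, and the second step is a new assertion you would have to prove. Lemma~\ref{structure} bypasses both issues in one line.

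\textbf{Direction (b).} Here you diverge substantially from the paper. Having already invoked Mitra's criterion, the paper uses it again: if $\xi\in\partial H\cap\mathsf{Fix}_+(\phi)$, then by Lemma~\ref{fixlam} the weak closure of $\xi$ contains some $\Lambda\in\mathcal{L}(\phi)$; since $H$ is finitely generated, $\partial H$ is compact and the set of lines carried by $H$ is closed, so the accumulation set of any ray in the $H$-subtree converging to $\xi$ is carried by $H$; hence $H$ carries a leaf of the ending lamination and is not quasiconvex by Lemma~\ref{qc}. This is a two-line argument. You instead attempt a direct distortion estimate: build long $h\in H$ that fellow-travel the eigenray $R$, then contract by $\phi^{-k}$. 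The sketch has a real gap: the assertion that ``$R$ is legal and is contracted by $\Phi^{-1}$'' is not justified. Legality of $R$ is with respect to the CT for $\phi$, but to control what $\phi^{-1}$ does to long $r$-legal segments you would need to work in a CT for $\phi^{-1}$ (a different marked graph) and analyse the change of markings together with bounded cancellation; this is exactly the kind of two-train-track comparison that is delicate, and nothing in the paper's toolkit gives it for free. Since Mitra's criterion is already on the table, the detour is unnecessary.

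\textbf{Moreover clause.} Your deduction is morally right but skips the step the paper makes explicit in Proposition~\ref{minfil}: one needs Burns' theorem to embed $H$ as a proper free factor of some finite-index $H'\le\mathbb{F}$, so that $H$ itself is a finitely generated infinite-index subgroup of $H'$ to which the definition of ``minimally filling with respect to $H$'' applies.
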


 The proof of this theorem uses a result that was proved  in \cite{HM-09}
 \ref{structure}, which states that every attracting lamination $\Lambda$ of $\phi$  contains a leaf
 which is a singular line for $\phi$ and one of whose ends is dense in $\Lambda$.

 The concept of \textit{minimally filling with respect to $H$} is motivated from the idea of
 \textit{minimally filling}. We make this definition in \ref{relminfill} and show that
 this theorem indeed covers the case for a fully irreducible $\phi$ in Corollary \ref{qciwip}.

 Dynamics of outer automorphisms have been studied for quite sometime using the notion of train track maps which were introduced
 in the seminal paper of \cite{BH-92}. Then \cite[Proposition 2.4]{BFH-97} studied the dynamics of
 fully irreducible outer automorphisms and proved the existence of an attracting lamination with the property
 that any finitely generated subgroup of $\mathbb{F}$ which carries this lamination must be of finite index in $\mathbb{F}$.
 Note that this is a much stronger property than just ``filling'' the free group (not being carried by a proper free factor of $\mathbb{F}$)
 and was called \textit{minimally filling} by Kapovich-Lustig in their work \cite{KL-13}. We establish a connection between
 minimally filling and filling (in the sense of free factor support) in Proposition \ref{fill-minfill}.
 The primary obstruction that occurs is due to possible existence filling attracting laminations which
 are not minimally filling. We then proceed to generalize
 the notion of minimally filling for the purposes of using it for hyperbolic outer automorphisms which are not
 fully irreducible (Definition \ref{relminfill}).

 Next we give an algebraic condition on a finitely generated, infinite index subgroup of
 $\mathbb{F}$ for it to be quasiconvex in $G_{\phi}$ (see Theorem \ref{qc2}). It is quite understandable that such
 a general condition is quite complicated in nature due to complexities that arise in a non-fully irreducible hyperbolic
 $\phi$. For example, one might have a filling lamination which may not be minimally filling and
 whose nonattracting subgroup system is nontrivial implying that not every conjugacy class is attracted to it under
 iteration of $\phi$.

 Section \ref{sec:A3} of this paper deals with the Cannon-Thurston maps for a
 more complicated scenario when we have an exact sequence of hyperbolic groups
 $$ 1\rightarrow \mathbb{F} \rightarrow G \rightarrow \mathcal{H} \rightarrow 1$$
 where $\mathcal{H}$ is non-elementary hyperbolic subgroup of $\out$.  Observe that hyperbolicity of
 $G$ implies that $\mathcal{H}$ is \textit{purely atoroidal}, meaning every element of $\mathcal{H}$ is a hyperbolic outer automorphism.
 We use the Lemma \ref{el1}, where we describe the nature of ending lamination set for a hyperbolic $\phi$ in terms
 of generic leaves and lines in $\mathcal{B}_{\mathsf{Fix}_+}(\phi)$, to derive a list equivalent necessary conditions in Proposition
 \ref{nec} that elements of $\mathcal{H}$ must satisfy in order for the extension group to be hyperbolic. In particular
 it we observe that two hyperbolic outer automorphisms which do not have a common power but have a common attracting lamination
 cannot both belong to $\mathcal{H}$.

 In a very recent work \cite{DKT-16} have
 shown the finiteness of the fibers of Cannon-Thurston map for the special case when $\mathcal{H}$ is convex cocompact. The
 general case however still remains open. We hope that the results we develop here will be useful for addressing this problem
 in its full generality and also other interesting questions related to hyperbolicity in $\out$.

 \vspace{1cm}

 \textbf{Acknowledgement: } The author would like to thank Mahan Mj for suggesting the applications to
 quasiconvexity after reading the first draft of the paper. We also thank Pranab Sardar for help with the literature on
 quasiconvexity and other helpful discussions.

 The author was supported by SERB N-PDF grant PDF/2015/000038 during this project.


\section{Preliminaries}
\label{sec:1}
In this section we give the reader a short review of the definitions and some important results in $\out$ which are relevant to the theorem that we are trying to prove here. All the results which are stated as lemmas, can be found in full details in
\cite{BFH-00}, \cite{FH-11}, \cite{HM-13a}, \cite{HM-13c}.

\subsection{Weak topology}
\label{sec:2}
Given any finite graph $G$, let $\widehat{\mathcal{B}}(G)$ denote the compact space of equivalence classes of circuits in $G$ and paths in $G$, whose endpoints (if any) are vertices of $G$. We give this space the \textit{weak topology}.
Namely, for each finite path $\gamma$ in $G$, we have one basis element $\widehat{N}(G,\gamma)$ which contains all paths and circuits in $\widehat{\mathcal{B}}(G)$ which have $\gamma$ as its subpath.
Let $\mathcal{B}(G)\subset \widehat{\mathcal{B}}(G)$ be the compact subspace of all lines in $G$ with the induced topology. One can give an equivalent description of $\mathcal{B}(G)$ following \cite{BFH-00}.
A line is completely determined, upto reversal of direction, by two distinct points in $\partial \mathbb{F}$, since there only one line that joins these two points. 
We can then induce the weak topology on the set of lines coming from the Cantor set $\partial \mathbb{F}$. More explicitly,
let $\widetilde{\mathcal{B}}=\{ \partial \mathbb{F} \times \partial \mathbb{F} - \vartriangle \}/(\mathbb{Z}_2)$, where $\vartriangle$ is the diagonal and $\mathbb{Z}_2$ acts by interchanging factors. We can put the weak topology on
$\widetilde{\mathcal{B}}$, induced by Cantor topology on $\partial \mathbb{F}$. The group $\mathbb{F}$ acts on $\widetilde{\mathcal{B}}$ with a compact but non-Hausdorff quotient space $\mathcal{B}=\widetilde{\mathcal{B}}/\mathbb{F}$.
The quotient topology is also called the \textit{weak topology}. Elements of $\mathcal{B}$ are called \textit{lines}. A lift of a line $\gamma \in \mathcal{B}$ is an element  $\widetilde{\gamma}\in \widetilde{\mathcal{B}}$ that
projects to $\gamma$ under the quotient map and the two elements of $\widetilde{\gamma}$ are called its endpoints.

One can naturally identify the two spaces $\mathcal{B}(G)$ and $\mathcal{B}$ by considering a homeomorphism between the two Cantor sets $\partial \mathbb{F}$ and set of ends of universal cover of $G$ , where $G$ is a marked graph.
$\out$ has a natural action on  $\mathcal{B}$. The action comes from the action of Aut($\mathbb{F}$) on $\partial \mathbb{F}$. Given any two marked graphs $G,G'$ and a homotopy equivalence $f:G\rightarrow G'$ between them, the induced map
$f_\#: \widehat{\mathcal{B}}(G)\rightarrow \widehat{\mathcal{B}}(G')$ is continuous and the restriction $f_\#:\mathcal{B}(G)\rightarrow \mathcal{B}(G')$ is a homeomorphism. With respect to the identification
$\mathcal{B}(G)\approx \mathcal{B}\approx \mathcal{B}(G')$, if $f$ preserves the marking then $f_{\#}:\mathcal{B}(G)\rightarrow \mathcal{B}(G')$ is equal to the identity map on $\mathcal{B}$. When $G=G'$, $f_{\#}$ agree with their
homeomorphism $\mathcal{B}\rightarrow \mathcal{B}$ induced by the outer automorphism associated to $f$.

Given a marked graph $G$, a \textit{ray} in $G$ is an one-sided infinite concatenation of edges $E_0E_1E_2......$. A \textit{ray} of $\mathbb{F}$ is an
element of the orbit set $\partial\mathbb{F}/\mathbb{F}$. There is connection between these two objects which can be explained as follows.
Two  rays in $G$ are asymptotic if they have equal subrays, and
this is an equivalence relation on the set of rays in $G$. The set of asymptotic equivalence
classes of rays $\rho$ in $G$ is in natural bijection with $\partial\mathbb{F}/\mathbb{F}$ where $\rho$ in $G$ corresponds to end
$\xi\in\partial\mathbb{F}/\mathbb{F}$ if there is a lift $\tilde{\rho} \subset G$ of $\rho$ and a lift
$\tilde{\xi}\in\partial\mathbb{F}$ of $\xi$, such that $\tilde{\rho}$ converges to $\tilde{\xi}$ in the
Gromov compactification of $\tilde{G}$. A ray $\rho$ is often said to be the realization of $\xi$ if the above
conditions are satisfied.

A line(path) $\gamma$ is said to be \textit{weakly attracted} to a line(path) $\beta$ under the action of $\phi\in\out$, if the $\phi^k(\gamma)$ converges to $\beta$ in the weak topology. This is same as saying, for any given finite subpath of $\beta$, $\phi^k(\gamma)$
contains that subpath for some value of $k$; similarly if we have a homotopy equivalence $f:G\rightarrow G$,  a line(path) $\gamma$ is said to be \textit{weakly attracted} to a line(path) $\beta$ under the action of $f_{\#}$ if the $f_{\#}^k(\gamma)$ converges to $\beta$
in the weak topology. The \textit{accumulation set} of a ray $\gamma$ in $G$ is the set of lines  $l\in \mathcal{B}(G)$ which are elements of the weak closure of $\gamma$; which is same as saying every finite subpath of $l$
occurs infinitely many times as a subpath $\gamma$. The weak accumulation set of some $\xi\in\partial\mathbb{F}$ is the set of lines in the weak closure
of any of the asymptotic rays in its equivalence class. We call this the \textit{weak closure} of $\xi$.

A line $l$ is said to be \textit{birecurrent} if $l$ if it is in the weak closure of some positive subray of itself  and
it is in the weak closure of some negative subray of itself. This implies that any finite subpath of $l$ occurs infinitely often in either
direction.

\subsection{Free factor systems and subgroup systems}
\label{sec:3}
Given a finite collection $\{K_1, K_2,.....,K_s\}$ of subgroups of $\mathbb{F}$ , we say that this collection determines a \textit{free factorization} of $\mathbb{F}$ if $\mathbb{F}$ is the free product of these subgroups, that is,
$\mathbb{F} = K_1 * K_2 * .....* K_s$. The conjugacy class of a subgroup is denoted by [$K_i$].

A \textit{free factor system} is a finite collection of conjugacy classes of subgroups of $\mathbb{F}$ , $\mathcal{K}:=\{[K_1], [K_2],.... [K_p]\}$ such that there is a free factorization of $\mathbb{F}$ of the form
$\mathbb{F} = K_1 * K_2 * ....* B$, where $B$ is some finite rank subgroup of $\mathbb{F}$ (it may be trivial).

There is an action of $\out$ on the set of all conjugacy classes of subgroups of $\mathbb{F}$. This action induces an action of $\out$ on the set of all free factor systems. For notation simplicity we will avoid writing $[K]$ all the time and write $K$ instead, when we discuss
the action of $\out$ on this conjugacy class of subgroup $K$ or anything regarding the conjugacy class [$K$]. It will be understood that we actually mean [$K$].

For any marked graph $G$ and any subgraph $H \subset G$, the fundamental groups of the noncontractible components of $H$ form a free factor system . We denote this by $[H]$. A subgraph of $G$ which has no valence 1 vertex is called a \textit{core graph}.
Every subgraph has a unique core graph, which is a deformation retract of the union of the noncontractible components of $H$, implying that free factor system defined
by the core of $H$ is equal to the free factor system defined by core of $H$. Conversely, any free factor system can be realized as
$[H]$ for some nontrivial core subgraph of some marked graph $G$.

A free factor system $\mathcal{K}$ \textit{carries a conjugacy class} $[c]$ in $\mathbb{F}$ if there exists some $[K] \in \mathcal{K}$ such that $c\in K$. We say that $\mathcal{K}$ \textit{carries the line} $\gamma \in \mathcal{B}$ if for any marked graph $G$
the realization of $\gamma$ in $G$ is the weak limit of a sequence of circuits in $G$ each of which is carried by $\mathcal{K}$.
An equivalent way of saying this is: for any marked graph $G$ and a subgraph $H \subset G $ with $[H]=\mathcal{K}$, the realization of $\gamma$ in $G$ is contained in $H$.

Similarly define a \textit{subgrpoup system} $\mathcal{A} = \{[H_1], [H_2], .... ,[H_k]\}$ to be a finite collection of conjugacy classes of finite rank subgroups $H_i<\mathbb{F}$.

A subgroup system $\mathcal{A}$ carries a conjugacy class $[c]\in \mathbb{F}$ if there exists some $[A]\in\mathcal{A}$ such that $c\in A$. Also, we say that $\mathcal{A}$ carries a line $\gamma$ if one of the following equivalent conditions hold:
\begin{itemize}
\item $\gamma$ is the weak limit of a sequence of conjugacy classes carries by $\mathcal{A}$.
\item There exists some $[A]\in \mathcal{A}$ and a lift $\widetilde{\gamma}$ of $\gamma$ so that the endpoints of $\widetilde{\gamma}$ are in $\partial A$.

\end{itemize}
The following lemma is an important property of lines carried by a subgroup system. The proof is by using the observation that $A<\mathbb{F}$ is of finite rank implies that $\partial A$ is a compact subset of $\partial \mathbb{F}$.
\begin{lemma}
For each subgroup system $\mathcal{A}$ the set of lines carried by $\mathcal{A}$ is a closed subset of $\mathcal{B}$
\end{lemma}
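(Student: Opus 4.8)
The plan is to transport the statement to $\partial\mathbb F\times\partial\mathbb F$, where the compactness of $\partial A$ flagged in the statement does the real work.

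\emph{Reduction to a subset of $\partial\mathbb F\times\partial\mathbb F$.} Since the lines carried by $\mathcal A$ form the union, over the finitely many $[A]\in\mathcal A$, of the lines carried by the individual $[A]$, it suffices to treat a single conjugacy class $[A]$ with $A$ of finite rank, and we may assume $A$ is nontrivial (otherwise no line is carried). Fix a representative $A$ and let $q\colon\widetilde{\mathcal B}\to\mathcal B=\widetilde{\mathcal B}/\mathbb F$ and $p\colon\partial\mathbb F\times\partial\mathbb F-\vartriangle\to\widetilde{\mathcal B}$ be the defining quotient maps. Unwinding the endpoint description of ``carried by $[A]$'' and using the $\mathbb F$-invariance of the family $\{g\partial A\}_{g\in\mathbb F}$, one checks that the full preimage under $q\circ p$ of the set $C$ of lines carried by $[A]$ equals
$$E_A\;=\;\Big(\bigcup_{g\in\mathbb F} g\partial A\times g\partial A\Big)-\vartriangle .$$
By the definitions of the quotient topologies on $\mathcal B$ and $\widetilde{\mathcal B}$, the set $C$ is closed in $\mathcal B$ if and only if $E_A$ is closed in $\partial\mathbb F\times\partial\mathbb F-\vartriangle$, so I will prove the latter.

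\emph{A finiteness statement.} Let $T$ be the Cayley tree of $\mathbb F$, so $\partial T=\partial\mathbb F$, with basepoint the identity vertex $x_0$, and let $T_A\subseteq T$ be the minimal $A$-invariant subtree; then $\partial T_A=\partial A$, and since $A$ is finitely generated it acts cocompactly on $T_A$, so the vertex set of $T_A$ is $A\cdot F_0$ for a finite set $F_0\subseteq\mathbb F$. I claim that for each $D\ge 0$ only finitely many of the subtrees $gT_A$ ($g\in\mathbb F$) meet the ball $B(x_0,D)$: if $gT_A$ meets $B(x_0,D)$ then, as $gT_A$ has no valence-one vertices, it contains a vertex $w$ with $d(x_0,w)\le D+1$; there are finitely many such $w$, and $w\in gT_A$ forces $g^{-1}w\in A\cdot F_0$, so $gT_A=wf^{-1}T_A$ for some $f\in F_0$, leaving finitely many possibilities.

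\emph{Closedness of $E_A$.} As $\partial\mathbb F\times\partial\mathbb F-\vartriangle$ is metrizable it suffices to consider a sequence: let $(\zeta_n,\zeta_n')\in E_A$ converge to $(\zeta,\zeta')$ with $\zeta\ne\zeta'$, and pick $g_n\in\mathbb F$ with $\zeta_n,\zeta_n'\in g_n\partial A$. In $T$ the geodesic rays from $x_0$ to $\zeta_n$ and to $\zeta_n'$ both enter the subtree $g_nT_A$ through its nearest point $\pi_n$ to $x_0$, so the Gromov product satisfies $(\zeta_n|\zeta_n')_{x_0}\ge d(x_0,\pi_n)$. If $d(x_0,\pi_n)\to\infty$ along a subsequence, then $(\zeta_n|\zeta_n')_{x_0}\to\infty$, hence $\zeta=\zeta'$ --- contrary to hypothesis; so $d(x_0,\pi_n)\le D$ for a fixed $D$. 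By the finiteness statement the $g_nT_A$ take finitely many values, so after passing to a subsequence $g_nT_A=gT_A$ for one fixed $g$, whence $\zeta_n,\zeta_n'\in g\partial A$. Since $g\partial A=\partial(gT_A)$ is compact, hence closed in $\partial\mathbb F$, it follows that $\zeta,\zeta'\in g\partial A$, i.e.\ $(\zeta,\zeta')\in\big(g\partial A\times g\partial A\big)-\vartriangle\subseteq E_A$. Thus $E_A$ is closed, and with it $C$.

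\emph{The main obstacle.} The heart of the argument is the dichotomy in the last step: either the entry points $\pi_n$ leave every ball about $x_0$ --- in which case the two endpoint sequences are squeezed together and the only new accumulation points lie on the diagonal, outside the relevant space --- or the subtrees $g_nT_A$ recur, in which case the closedness of the single compact set $g\partial A$ finishes the job. Making this dichotomy valid is precisely where finite rank of $A$ enters: through compactness of $\partial A$ in the first alternative, and through cocompactness of the $A$-action on $T_A$ in the finiteness statement behind the second. Everything else is formal manipulation of the quotient topologies.
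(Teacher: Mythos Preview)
Your proof is correct and follows exactly the idea the paper indicates: the paper records only the one-line observation that finite rank of $A$ makes $\partial A$ compact in $\partial\mathbb F$, leaving the details implicit, while you have carried out precisely those details --- the reduction to a single $[A]$, the passage to the $\mathbb F$-saturated preimage in $\partial\mathbb F\times\partial\mathbb F-\vartriangle$, and the local finiteness/dichotomy argument in the tree. There is nothing to add.
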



From \cite{BFH-00} \label{fill}
The \textit{free factor support} of a set of lines $B$ in $\mathcal{B}$ is (denoted by $\mathcal{A}_{supp}(B)$) defined as the \textit{meet} of all free factor systems that carries $B$. We are skipping giving the exact definition of \textit{meet} here
since we have no explicit use for that definition. Roughly speaking, one should think of the free factor support as the smallest (in terms of inclusion of subgroups)
free factor system that carries $B$ (for more details please see \cite[Corollary 2.6.5]{BFH-00}).
If $B$ is a single line then $\mathcal{A}_{supp}(B)$ is single free factor. 
We say that a set of lines, $B$, is \textit{filling} if $\mathcal{A}_{supp}(B)=[\mathbb{F}]$.

\subsection{Principal automorphisms and rotationless automorphisms}
\label{sec:4}

Given an outer automorphism $\phi\in\out$ , we can consider a lift $\Phi$ in $\aut$. We call a lift principal automorphism, if it satisfies certain conditions described below. Roughly speaking, what such lifts guarantees is the the existence of certain lines which are not a part of the attracting lamination but it still
fills the free group $\mathbb{F}$. Such lines (called \textit{singular lines}) will be a key tool in describing the set of lines which are not attracted to the attracting lamination of $\phi$.\\
Consider $\phi\in \out$ and a lift $\Phi$ in $\aut$. $\Phi$ has an action on $\mathbb{F}$, which has a fixed subgroup denoted by Fix$(\Phi)$. Consider the boundary of this fixed subgroup $\partial$Fix$(\Phi)\subset$ $\partial \mathbb{F}$. It is either empty or has exactly two points.\\
This action action extends to the boundary and is denoted by $\widehat{\Phi}:\partial \mathbb{F}\rightarrow\partial \mathbb{F}$. Let Fix$(\widehat{\Phi})$ denote the set of fixed points of this action. We call an element $P$ of Fix$(\widehat{\Phi})$ \textit{attracting fixed point} if there exists an open neighborhood $U\subset \partial \mathbb{F}$ of $P$ such that we have $\widehat{\Phi}(U)\subset U$
and for every points $Q\in U$ the sequence $\widehat{\Phi}^n(Q)$ converges to $P$. Let Fix$_+(\widehat{\Phi})$ denote the set of attracting fixed points of Fix$(\widehat{\Phi})$. Similarly let Fix$_-(\widehat{\Phi})$ denote the attracting fixed points of Fix$(\widehat{\Phi}^{-1})$. \\
Let Fix$_N(\widehat{\Phi})=$ Fix$(\widehat{\Phi})$ $-$ Fix$_-(\widehat{\Phi})= \partial$Fix$(\Phi)$ $\cup$ Fix$_+(\widehat{\Phi})$. We say that an automorphism $\Phi\in\aut$ in the outer automorphism class of $\phi$ is a \textit{principal automorphism} if
Fix$_N(\widehat{\Phi})$ has at least 3 points or Fix$_N(\widehat{\Phi})$ has exactly two points which are neither the endpoints of an axis of a covering translation, nor the endpoints of a
 generic leaf of an attracting lamination $\Lambda^+_\phi$. The set of all principal automorphisms of $\phi$ is denoted by $P(\phi)$. \\

 \textbf{Notation:} For sake of simplicity we will abuse the notation in this paper. Since we will be dealing exclusively with
 hyperbolic outer automorphisms in this paper we know that $\partial$Fix$(\Phi)$ is trivial for any automorphism $\Phi$ in the class of
 $\phi$. Hence Fix$_N(\widehat{\Phi})=$ Fix$_+(\widehat{\Phi})$. We shall abbreviate
 $$\mathsf{Fix}_+(\phi): = \bigcup_{\Phi\in P(\phi)} \mathsf{Fix}_+(\widehat{\Phi})$$
  Similarly for $\mathsf{Fix}_-(\phi)$.

We have the following lemma from \cite{LL-08} that we shall be using in this paper to show that any conjugacy class limits to either a
generic leaf of some attracting lamination of $\phi$ or a singular line of $\phi$ (see proposition \ref{nongen0}). This result essentially tells us that
the attracting fixed points are almost globally attracting, except for the finite number of repelling fixed points.

\begin{lemma}\label{attfix}

 \cite[Theorem I]{LL-08}
 If $\phi\in\out$ has no periodic conjugacy classes in $\mathbb{F}$ then there exists an integer $q\geq 1$ such that
 for each $\Phi\in\aut$ representing $\phi$ and each $\xi\in\partial \mathbb{F}$, one of the following holds:
 \begin{enumerate}
  \item $\xi\in$ $\mathsf{Fix}_-(\widehat{\Phi}^q)$.
  \item The sequence $\widehat{\Phi}^{qi}$ converges to a point in $\mathsf{Fix}_+(\widehat{\Phi}^q)$.
 \end{enumerate}

\end{lemma}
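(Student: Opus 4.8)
This statement is Theorem~I of \cite{LL-08}, and I record here the plan of proof, since the result drives much of what follows. The first thing I would do is fix the power $q$: choose $q\geq 1$ so that $\phi^{q}$ is rotationless --- such $q$ exists, and can even be bounded in terms of $\operatorname{rank}(\mathbb{F})$, by \cite{FH-11} --- and then replace $\phi$ by $\phi^{q}$ and every lift $\Phi$ by $\Phi^{q}$, so that I may assume $\phi$ itself is rotationless. This reduction serves two purposes. First it stabilizes fixed sets: for a rotationless $\phi$ and any lift $\Phi$, the set $\mathsf{Fix}(\widehat{\Phi})$ is finite, is unchanged on passing to positive powers, and each of its points is either attracting, or repelling, or an endpoint of $\partial\mathsf{Fix}(\Phi)$. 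Since a hyperbolic $\phi$ has no nontrivial periodic conjugacy classes, $\mathsf{Fix}(\Phi)$ is trivial for every lift $\Phi$, hence $\partial\mathsf{Fix}(\Phi)=\emptyset$ and
\[
\mathsf{Fix}(\widehat{\Phi})=\mathsf{Fix}_{+}(\widehat{\Phi})\sqcup\mathsf{Fix}_{-}(\widehat{\Phi}),
\]
that is, $\mathsf{Fix}_{N}(\widehat{\Phi})=\mathsf{Fix}_{+}(\widehat{\Phi})$, exactly as recorded in the Notation paragraph above. Second, rotationlessness makes the ``attracting directions'' used below genuinely $f_{\#}$-fixed rays rather than merely periodic ones, so that the relevant orbit converges on the nose and not merely along a subsequence. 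After these reductions the claim to be proved is: for every lift $\Phi$ and every $\xi\in\partial\mathbb{F}\smallsetminus\mathsf{Fix}_{-}(\widehat{\Phi})$, the sequence $\widehat{\Phi}^{\,i}(\xi)$ converges to a point of $\mathsf{Fix}_{+}(\widehat{\Phi})$.

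To prove this I would pass to a completely split relative train track representative $f\colon G\to G$ of $\phi$ and realize $\xi$ by the ray $\widetilde{\rho}$ from the basepoint to $\xi$ in the lift $\widetilde{G}$ supporting the chosen $\Phi$-equivariant lift of $f$; then the tightened iterates of this lift applied to $\widetilde\rho$ have $\widehat{\Phi}^{\,i}(\xi)$ as their endpoints at infinity. If $\widehat{\Phi}(\xi)=\xi$ then $\xi\in\mathsf{Fix}(\widehat{\Phi})=\mathsf{Fix}_{+}(\widehat{\Phi})\sqcup\mathsf{Fix}_{-}(\widehat{\Phi})$, and not being repelling it lies in $\mathsf{Fix}_{+}(\widehat{\Phi})$, with the constant orbit converging to it. Otherwise I would argue by downward induction on the filtration of $G$, working with the projection $\rho$ of $\widetilde\rho$ to $G$. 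If $\rho$ crosses no growing stratum cofinally (no exponentially growing stratum and no non-fixed NEG stratum), then by the rotationless structure of the strata $\rho$ is asymptotic to an $f_{\#}$-fixed ray, forcing $\widehat{\Phi}(\xi)=\xi$, which is the case already treated. If $\rho$ does cross a growing stratum cofinally, let $H_{r}$ be the highest such; using bounded cancellation together with the expansion supplied by $H_{r}$, I would show that the common initial segment of the tightened paths $f_{\#}^{\,i}(\rho)$ grows without bound, i.e.\ the $f_{\#}^{\,i}(\rho)$ converge to an eigenray $R$ with $f_{\#}(R)=R$. Lifting, the iterates of $\widetilde\rho$ converge to the endpoint at infinity of the lift $\widetilde R$ fixed by the chosen lift of $f$; that endpoint is an attracting fixed point of $\widehat{\Phi}$, and $\widehat{\Phi}^{\,i}(\xi)$ converges to it, as required.

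The substantial step is this last stabilization: one must control the cancellation introduced when tightening $f^{\,i}(\rho)$ and prove that the common prefix genuinely elongates rather than oscillating, which is delicate when $\rho$ begins low in the filtration and only meets a growing stratum after a long prefix, and when several exponentially growing strata interact along $\rho$. This is precisely where complete splittings enter: the $f_{\#}$-image of a completely split path carries a complete splitting refining the image of the original one, so long legal segments in $H_{r}$ are shielded from cancellation and are forced to grow, which pins down the limiting eigenray. A further, softer point is that a single $q$ serves all lifts $\Phi$ simultaneously, which is automatic since rotationlessness is a property of $\phi$ and the eigenray and fixed-point data for $\Phi^{q}$ are governed uniformly by those of $\phi^{q}$. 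As all of this is carried out in detail in \cite{LL-08}, in the body of the paper I simply invoke their Theorem~I, recording only the simplification that in our hyperbolic setting $\mathsf{Fix}(\Phi)$ is trivial and hence $\mathsf{Fix}_{N}(\widehat{\Phi})=\mathsf{Fix}_{+}(\widehat{\Phi})$.
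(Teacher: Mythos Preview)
The paper does not give a proof of this lemma at all; it is simply stated as a citation of \cite[Theorem~I]{LL-08} and then used as a black box. Your proposal is consistent with this: you ultimately defer to \cite{LL-08} as well, while adding an expository sketch of the argument. So there is no discrepancy in approach.

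One small remark on your sketch: the tools you invoke (rotationless powers, completely split relative train tracks) come from \cite{FH-11}, which postdates \cite{LL-08}; the original Levitt--Lustig argument uses the earlier improved relative train track technology of \cite{BFH-00} rather than CTs. Your outline is a reasonable modernized account, but it is not literally how \cite{LL-08} proceeds. Since the paper only cites the result, this is harmless, but if you intend the sketch to reflect the source you should adjust the language accordingly.
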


Let Per$(\widehat{\Phi})$ = $\cup_{k\geq1}$Fix$(\widehat{\Phi}^k)$, Per$_+(\widehat{\Phi})$ = $\cup_{k\geq1}$Fix$_+(\widehat{\Phi}^k)$ and similarly define Per$_-(\widehat{\Phi})$ and Per$_N(\widehat{\Phi})$. \\
We say that $\phi\in \out$ is rotationless if Fix$_N(\widehat{\Phi})$ = Per$_N(\widehat{\Phi})$ for all $\Phi\in P(\phi)$, and if for each $k\geq1$ the map $\Phi\rightarrow \Phi^k$ induces a bijection between $P(\phi)$ and $P(\phi^k)$. \\
The following two important lemmas about rotationless automorphisms are taken from \cite{FH-11}.

\begin{lemma}[\cite{FH-11},Lemma 4.43]
\label{rotationless}
 There exists a $K$ depending only upon the rank of the free group $\mathbb{F}$ such that for every $\phi\in \out$ , $\phi^K$ is rotationless.
\end{lemma}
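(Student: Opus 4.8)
The plan is to reduce the statement to a boundedness count on a relative train track representative. First I would fix a relative train track map $f\colon G\to G$ representing $\phi$ (these exist by \cite{BH-92}, \cite{BFH-00}) and normalize it so that $G$ has no invariant subforests and no valence-one vertices, and replace any string of valence-two vertices by a single edge; the upshot is that the number of vertices and edges of $G$ is bounded above by a constant $N=N(\mathrm{rank}(\mathbb F))$ depending only on the rank. The key observation is that every ingredient entering the definition of ``rotationless'' is encoded by combinatorial data on $G$ of size at most $N$: the principal vertices form a subset of the vertex set, the principal directions form a subset of the (at most $2N$) directions at vertices, and every indivisible periodic Nielsen path is a concatenation of boundedly many edges meeting at illegal turns, hence is drawn from a list of bounded size. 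Each of these finite sets is permuted by $f_\#$, so each orbit has length at most $N$.

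Next I would set $K=K(\mathrm{rank}(\mathbb F)):=\mathrm{lcm}(1,2,\dots,N)$ and pass to $f^K$, which is again a relative train track map, now representing $\phi^K$, in which every principal vertex, every principal direction, and every indivisible periodic Nielsen path is \emph{fixed}. It remains to translate this back into the two defining conditions of rotationless. Using the description, valid for any lift $\Phi\in\aut$ of $\phi^K$, of $\mathrm{Fix}(\widehat\Phi)$ and $\mathrm{Per}(\widehat\Phi)$ in terms of fixed (resp. periodic) directions and eventually-periodic Nielsen rays in $\widetilde G$, one sees that a point of $\mathrm{Per}_N(\widehat\Phi)$ is realized by a ray whose initial direction and whose Nielsen subsegments are periodic, hence already fixed under $f^K$; therefore the ray is fixed and the point lies in $\mathrm{Fix}_N(\widehat\Phi)$. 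Thus $\mathrm{Fix}_N(\widehat\Phi)=\mathrm{Per}_N(\widehat\Phi)$, which is condition (1), and moreover $\mathrm{Fix}_N(\widehat\Phi)=\mathrm{Fix}_N(\widehat{\Phi^k})$ for every $k$ since the relevant combinatorial data for $\Phi^k$ is the same as for $\Phi$.

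For condition (2), the map $\Phi\mapsto\Phi^k$ is always injective on the set of lifts and carries $P(\phi^K)$ into $P(\phi^{Kk})$; conversely, given a principal lift $\Psi$ of $\phi^{Kk}$ one writes $\Psi=\Phi^k$ for the unique lift $\Phi$ of $\phi^K$, and since $\mathrm{Fix}_N(\widehat\Phi)=\mathrm{Fix}_N(\widehat\Psi)$ while the covering-translation axes and the generic leaves of attracting laminations are also $f^K_\#$-fixed, the non-degeneracy clause in the definition of principal is inherited by $\Phi$ from $\Psi$; hence $\Phi$ is principal and the correspondence is a bijection. This gives both defining properties of rotationless for $\phi^K$ with $K=K(\mathrm{rank}(\mathbb F))$. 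The step I expect to be the main obstacle is precisely this faithful dictionary between the analytic data ($\mathrm{Fix}_N(\widehat\Phi)$, $\mathrm{Per}_N(\widehat\Phi)$, principality of lifts, and the pairing of lifts $\Phi\in\aut$ with lifts of $f$ and their fixed sets in $\partial\mathbb F$) and the finite combinatorial data on $G$, together with the verification that the complexity bound $N$ is genuinely uniform after normalizing $f$; this is exactly the structure theory of \cite{BFH-00} and \cite{FH-11}, and in fact the statement here is \cite[Lemma 4.43]{FH-11}, whose proof carries out this program.
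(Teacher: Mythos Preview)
The paper does not prove this lemma; it is quoted verbatim as \cite[Lemma 4.43]{FH-11} and used as a black box throughout (see the sentence immediately following the statement: ``Whenever we write `pass to a rotationless power' we intend to use this uniform constant $K$ given by the lemma''). So there is no proof in the paper to compare your proposal against.

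Your sketch is a reasonable outline of the strategy Feighn--Handel actually use, and you correctly identify the crux: the dictionary between the boundary data $\mathrm{Fix}_N(\widehat\Phi)$, $\mathrm{Per}_N(\widehat\Phi)$ and the combinatorics of principal vertices, fixed directions, and Nielsen paths on a good representative. A couple of points where your sketch is too optimistic: the claim that indivisible periodic Nielsen paths form ``a list of bounded size'' because they are ``a concatenation of boundedly many edges meeting at illegal turns'' is not right as stated --- INPs can be arbitrarily long as edge paths; what is bounded is the number of INPs (per EG stratum, at most one up to reversal) and hence their \emph{periods}, not their lengths. Also, the passage from a relative train track to the refined structure needed to read off $\mathrm{Fix}_N$ (principal vertices, principal lifts, the ray description of attracting fixed points) already requires much of the CT machinery of \cite{FH-11}, so the argument is not as self-contained as your first paragraph suggests. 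But since you explicitly defer to \cite{FH-11} for exactly this translation, and the paper itself simply cites the result, there is no discrepancy to report.
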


The above lemma is heavily used in this paper. Whenever we write \textquotedblleft pass to a rotationless power \textquotedblright we intend to use this uniform constant $K$ given by the lemma.
Rotationless powers
are useful and important since they kill any periodic behaviour (as the following lemma  shows) and guarantee the existence of completely split relative train track maps as we shall see later.
\begin{lemma}[\cite{FH-11}]
 If $\phi\in \out$ is rotationless then:
\begin{itemize}
 \item Every periodic conjugacy class of $\phi$ is a fixed conjugacy class.
\item Every free factor system which is periodic under $\phi$ is fixed.
\item Every periodic direction of $\phi$ is fixed by $\phi$.
\end{itemize}

\end{lemma}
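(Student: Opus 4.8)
The plan is to treat all three assertions as instances of the single principle that ``rotationless'' kills periodic-but-not-fixed behaviour, the engine in each case being the equality $\mathsf{Fix}_N(\widehat{\Phi})=\mathsf{Per}_N(\widehat{\Phi})$ for principal lifts together with the bijection $\Phi\mapsto\Phi^{k}\colon P(\phi)\to P(\phi^{k})$ built into the definition of rotationless. I would prove the assertions in the order (3), (1), (2), feeding each into the next, and throughout I would work with a completely split relative train track (CT) representative $f\colon G\to G$ of $\phi$, which exists because $\phi$ is rotationless.

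For (3), a periodic direction is realized by a direction $d$ at a fixed vertex of $f$ with $Df^{k}(d)=d$ for some $k\geq 1$. The standard eigenray construction produces an $f^{k}$-fixed ray $R$ at that vertex with initial direction $d$; lifting to $\widetilde G$ and passing to boundaries gives a point $\xi\in\partial\mathbb{F}$ and a lift $\Psi$ of $\phi^{k}$ with $\widehat{\Psi}(\xi)=\xi$, where $\xi$ is not a repelling fixed point of $\widehat{\Psi}$, so $\xi\in\mathsf{Fix}_N(\widehat{\Psi})$. By the rotationless hypothesis $\Psi=\Phi_0^{\,k}$ for a unique principal lift $\Phi_0\in P(\phi)$, hence $\xi\in\mathsf{Fix}_N(\widehat{\Phi_0^{\,k}})\subseteq\mathsf{Per}_N(\widehat{\Phi_0})=\mathsf{Fix}_N(\widehat{\Phi_0})$; translating $\widehat{\Phi_0}(\xi)=\xi$ back into $G$ gives $Df(d)=d$. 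For (1), after replacing $c$ by its root (canonical up to conjugacy, hence carried along by $\phi$) we may assume $c$ is not a proper power and treat only this case. From $\phi^{k}[c]=[c]$ pick a lift $\Psi$ of $\phi^{k}$ with $\Psi(c)=c$, so the two endpoints of the axis of $c$ lie in $\partial\mathsf{Fix}(\Psi)\subseteq\mathsf{Fix}_N(\widehat{\Psi})$. Realize $[c]$ as an $f^{k}_{\#}$-fixed circuit $\sigma$ in the CT; by the structure of CT's, $\sigma$ is a concatenation of single edges and indivisible Nielsen paths of $f^{k}$, each with periodic endpoint directions, which are $f$-fixed by (3). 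The lift of $\phi^{k}$ realizing the endpoints of such a Nielsen path is $\Phi_0^{\,k}$ for a principal $\Phi_0\in P(\phi)$, and the argument of the previous step applied to the two boundary points it determines shows they are $\widehat{\Phi_0}$-fixed, so the path is already an indivisible Nielsen path of $f$. Hence $f_{\#}(\sigma)=\sigma$, i.e.\ $\phi[c]=[c]$.

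For (2) it suffices to show a $\phi^{N}$-invariant free factor $[K]$ is $\phi$-invariant: applying this to $\phi^{N\cdot m!}$, where $m$ is the number of components of a given periodic free factor system $\mathcal{F}$, makes each component $\phi$-invariant, hence makes $\mathcal{F}$ itself $\phi$-invariant. Given $\phi^{N}[K]=[K]$: if $\operatorname{rank}(K)=1$ this is (1). If $\operatorname{rank}(K)\geq 2$, the restriction of $\phi^{N}$ to $\operatorname{Out}(K)$ is again rotationless; if it has a periodic conjugacy class in $K$ we obtain, via (1), a finite $\phi$-invariant collection of $\phi$-fixed conjugacy classes inside $[K]$ whose free factor support is $[K]$, pinning $[K]$ down; otherwise that restriction is atoroidal and one recurses on a proper $\phi^{N}$-invariant free factor system of $K$ supplied by its relative train track theory, the recursion descending on rank and bottoming out at rank-one factors (handled by (1)) and at fully irreducible pieces, whose invariance under $\phi$ follows from uniqueness of the attracting and repelling laminations.

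The main obstacle, present in all three parts, is the recurring step ``the lift of $\phi^{k}$ realizing this fixed ray, Nielsen path, or invariant factor is the $k$-th power of a principal lift of $\phi$.'' Justifying it requires the Feighn--Handel classification of the fixed and periodic point set of a single lift in $\aut$: that attracting boundary fixed points and non-isolated points of $\partial\mathsf{Fix}$ occur only on principal lifts, and that the residual rank-one case (an endpoint of the axis of a covering translation lying in $\partial\mathsf{Fix}$) is exactly what the direct analysis in (1) accounts for. I expect setting up that classification cleanly, rather than the deductions sketched above, to be where the real work lies; the deductions themselves are then short.
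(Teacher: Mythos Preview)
The paper does not prove this lemma at all: it is simply quoted as a result from \cite{FH-11} (Feighn--Handel's Recognition Theorem paper) and used as a black box. So there is no ``paper's own proof'' to compare against; everything you wrote is already more than the present paper supplies.

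That said, your sketch has real gaps beyond the one you flag. In part~(3), even granting the principal-lift step, you assume the periodic direction sits at a \emph{fixed} vertex; a priori it sits at a periodic vertex, and you would need to argue first that periodic vertices are fixed (in \cite{FH-11} this is intertwined with the construction of CTs, not a consequence of rotationlessness alone). More seriously, your recursion in part~(2) does not work as stated: the assertion ``the restriction of $\phi^{N}$ to $\operatorname{Out}(K)$ is again rotationless'' is not obvious---rotationlessness is defined via principal lifts in $\aut$, and there is no immediate reason this descends to a free factor---and the clause ``a finite $\phi$-invariant collection of $\phi$-fixed conjugacy classes inside $[K]$ whose free factor support is $[K]$'' is unjustified (a single fixed conjugacy class need not fill $K$, and you have produced only one). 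The actual argument in \cite{FH-11} for~(2) proceeds differently, going through the finiteness of $\phi$-periodic free factor systems and the structure of the CT filtration rather than an inductive descent on rank. Your identification of the principal-lift bijection as the engine is correct in spirit, but the deductions you layer on top of it for~(2) would not go through as written.
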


\begin{remark}

  These results show that given any $\phi\in\out$ one can pass to $\psi = \phi^K$ which is rotationless and hence one can
  choose $q=1$ for the rotationless outer automorphism $\psi$ as in Lemma \ref{attfix}. This is the form in which we will be using
  lemma \ref{attfix}.
 \end{remark}

\subsection{Topological representatives and Train track maps}
\label{sec:5}
Given $\phi\in\out$ a \textit{topological representative} is a homotopy equivalence $f:G\rightarrow G$ such that $\rho: R_r \rightarrow G$ is a marked graph, 
$f$ takes vertices to vertices and edges to paths and $\overline{\rho}\circ f \circ \rho: R_r \rightarrow R_r$ represents $R_r$. A nontrivial path $\gamma$ in $G$ 
is a \textit{periodic Nielsen path} if there exists a $k$ such that $f^k_\#(\gamma)=\gamma$;
the minimal such $k$ is called the period and if $k=1$, we call such a path \textit{Nielsen path}. A periodic Nielsen path is \textit{indivisible} if it cannot 
be written as a concatenation of two or more nontrivial periodic Nielsen paths.

Given a subgraph $H\subset G$ let $G\setminus H$ denote the union of edges in $G$ that are not in $H$.

Given a marked graph $G$ and a homotopy equivalence $f:G\rightarrow G$ that takes edges to paths, one can define a new map $Tf$ by setting $Tf(E)$ 
to be the first edge in the edge path associated to $f(E)$; similarly let $Tf(E_i,E_j) = (Tf(E_i),Tf(E_j))$. So $Tf$ is a map that takes turns to turns. We say that a 
nondegenerate turn is illegal if for some iterate of $Tf$ the turn becomes degenerate; otherwise the
 turn is legal. A path is said to be legal if it contains only legal turns and it is $r-legal$ if it is of height $r$ and all its illegal turns are in $G_{r-1}$.

 \textbf{Relative train track map.} Given $\phi\in \out$ and a topological representative $f:G\rightarrow G$ with a filtration $G_0\subset G_1\subset \cdot\cdot\cdot\subset G_k$ which is preserved by $f$, we say that $f$ is a train relative train track map if the following conditions are satisfied:
 \begin{enumerate}
  \item $f$ maps r-legal paths to legal r-paths.
  \item If $\gamma$ is a path in $G$ with its endpoints in $H_r$ then $f_\#(\gamma)$ has its end points in $H_r$.
  \item If $E$ is an edge in $H_r$ then $Tf(E)$ is an edge in $H_r$
 \end{enumerate}

 For any topological representative $f:G\rightarrow G$ and exponentially growing stratum $H_r$, let $N(f,r)$ be the number of indivisible 
 Nielsen paths $\rho\subset G$ that intersect the interior of $H_r$. Let $N(f)= \Sigma_r N(f,r)$. Let $N_{min}$ be the minimum value of $N(f)$ that occurs among the topological representatives with $\Gamma=\Gamma_{min}$. We call a relative train track map stable if $\Gamma=\Gamma_{min}$ and $N(f)=N_{min}$.
 The following result is Theorem 5.12 in \cite{BH-92} which assures the existence of a stable relative train track map.
 \begin{lemma}
  Every $\phi\in \out$ has a stable relative train track representative.
 \end{lemma}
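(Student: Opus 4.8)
The plan is to follow Bestvina--Handel \cite{BH-92}: attach to every filtered topological representative a complexity that cannot be decreased indefinitely, use the standard combinatorial moves to pass from an arbitrary representative to a relative train track map without increasing this complexity, and then minimize a secondary, integer-valued complexity among the relative train track representatives so obtained.

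First I would set up the primary complexity. Given a topological representative $f\colon G\to G$ of $\phi$ together with an $f$-invariant filtration $\emptyset=G_0\subset G_1\subset\cdots\subset G_k=G$ whose strata have irreducible transition matrices, each stratum $H_r=\overline{G_r\setminus G_{r-1}}$ carries a transition matrix $M_r$ recording how often $f(E)$ crosses each edge of $H_r$, for $E$ an edge of $H_r$. Call $H_r$ \emph{exponentially growing} when the Perron--Frobenius eigenvalue $\lambda_r$ of $M_r$ exceeds $1$, and let $\Gamma(f)$ be the list of these $\lambda_r$ written in non-increasing order; order such lists lexicographically. Part of the analysis in \cite{BH-92} shows that the moves below never increase $\Gamma$ and strictly decrease it only finitely often, so there is a topological representative of $\phi$ realizing the minimal value $\Gamma_{\min}$.

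Next, starting from a representative realizing $\Gamma_{\min}$, I would run the stratum-by-stratum reduction of \cite{BH-92}: subdividing edges at preimages of vertices, performing valence-one and valence-two homotopies, collapsing $f$-invariant subforests, and --- the essential step --- folding to remove illegal turns that some iterate of $Tf$ collapses. Each move can be arranged so that it is supported in $G\setminus G_{r-1}$ for the relevant index $r$, hence leaves the lower filtration intact, and each move leaves $\Gamma$ unchanged or strictly decreases it; since we began at $\Gamma_{\min}$, the output still realizes $\Gamma_{\min}$. When the process terminates, the three relative train track conditions hold: $f$ sends $r$-legal paths to $r$-legal paths, a path with endpoints in $H_r$ has $f_\#$-image with endpoints in $H_r$, and $Tf$ maps each edge of $H_r$ to an edge of $H_r$. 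I expect the main obstacle to be exactly this step: controlling how folding interacts with the filtration, so that illegal turns inside an exponentially growing stratum can be pushed into lower strata or eliminated without creating new illegal turns in higher strata and without raising $\Gamma$ --- this is the technical core of Section~5 of \cite{BH-92}, and it is where termination of the procedure must also be checked.

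Finally, among the nonempty family of relative train track representatives of $\phi$ with $\Gamma=\Gamma_{\min}$, consider $N(f)=\sum_r N(f,r)$, where $N(f,r)$ counts the indivisible Nielsen paths meeting the interior of the exponentially growing stratum $H_r$. In a relative train track map an indivisible Nielsen path crossing an exponentially growing stratum is determined by a bounded amount of combinatorial data --- its two legal pieces at the unique illegal turn, together with bounded cancellation --- so each $N(f,r)$ is finite and $N(f)$ is a non-negative integer. Therefore $N(f)$ attains a minimum $N_{\min}$ over this family, and any representative with $\Gamma=\Gamma_{\min}$ and $N(f)=N_{\min}$ is, by definition, a stable relative train track map, which proves the lemma. (The purpose of stability is that it forces each exponentially growing stratum to carry at most one indivisible Nielsen path, which is what subsequent arguments rely on.)
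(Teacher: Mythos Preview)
Your sketch is a faithful outline of the Bestvina--Handel argument from \cite{BH-92}, and it is substantially more than the paper itself provides: the paper does not prove this lemma at all but simply records it as Theorem~5.12 of \cite{BH-92} and moves on. So there is no ``paper's own proof'' to compare against here; the statement is quoted as background.

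As a sketch of the cited argument your write-up is sound. Two small points worth tightening if you were to expand it into a real proof. First, the existence of $\Gamma_{\min}$ does not follow just from ``the moves never increase $\Gamma$ and strictly decrease it only finitely often''; one also needs that the set of achievable Perron--Frobenius eigenvalues of nonnegative integer matrices of bounded dimension is discrete above $1$, which is what makes the lexicographic order well-founded. Second, the finiteness of each $N(f,r)$ is a genuine fact about relative train track maps (each indivisible Nielsen path of height $r$ has a single illegal turn in $H_r$ and its two legal arcs are determined by bounded data), and it is worth pointing to \cite[Lemma~5.11]{BH-92} rather than asserting it. With those citations in place your outline matches the standard proof.
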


 \textbf{Splittings, complete splittings and CT's}. Given relative train track map $f:G\rightarrow G$, splitting of a line, path or a circuit $\gamma$ is a decomposition of $\gamma$ into subpaths $....\gamma_0\gamma_1 .....\gamma_k....  $ 
 such that for all $i\geq 1$ the path $f^i_\#(\gamma) =  .. f^i_\#(\gamma_0)f^i_\#(\gamma_1)...f^i_\#(\gamma_k)...$
 The terms $\gamma_i$ are called the \textit{terms} of the splitting of $\gamma$.

 Given two linear edges $E_1,E_2$ and a root-free closed Nielsen path $\rho$ such that $f_\#(E_i) = E_i.\rho^{p_i}$ then we say that $E_1,E_2$ are said to be in the \textit{same linear family} and any path of the form $E_1\rho^m\overline{E}_2$ for some integer $m$ is called an \textit{exceptional path}.

 \textbf{Complete splittings:} A splitting of a path or circuit $\gamma = \gamma_1\cdot\gamma_2......\cdot \gamma_k$ is called complete splitting if each term $\gamma_i$ falls into one of the following categories:
 \begin{itemize}
  \item $\gamma_i$ is an edge in some irreducible stratum.
  \item $\gamma_i$ is an indivisible Nielsen path.
  \item $\gamma_i$ is an exceptional path.
  \item $\gamma_i$ is a maximal subpath of $\gamma$ in a zero stratum $H_r$ and $\gamma_i$ is taken.
 \end{itemize}

 \textbf{Completely split improved relative train track maps}. A \textit{CT} or a completely split improved relative train track maps are topological representatives with particularly nice properties. But CTs do not exist for all outer automorphisms. Only the rotationless outer automorphisms are guaranteed to have a CT representative
 as has been shown in the following Theorem from \cite{FH-11}(Theorem 4.28).
 \begin{lemma}
  For each rotationless $\phi\in \out$ and each increasing sequence $\mathcal{F}$ of $\phi$-invariant free factor systems, there exists a CT $f:G\rightarrow G$ that is a topological
  representative for $\phi$ and $f$ realizes $\mathcal{F}$.
 \end{lemma}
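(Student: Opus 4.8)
The plan is to build the CT by successive refinement of a relative train track representative, following the strategy of Feighn--Handel. First I would invoke the preceding existence lemma to produce a stable relative train track map $f:G\to G$ for $\phi$ whose filtration $G_0\subset G_1\subset\cdots\subset G_k$ is compatible with a refinement of $\mathcal{F}$, so that each free factor system in $\mathcal{F}$ is realized by a filtration element. Since $\phi$ is rotationless, the periodic structure is already tame: every periodic Nielsen path is a Nielsen path, every periodic direction is fixed, and the correspondence $P(\phi)\leftrightarrow P(\phi^k)$ holds. The goal is then to modify $f$, without changing the outer automorphism it represents and without destroying the realization of $\mathcal{F}$, until the iterated image of every edge of every irreducible stratum admits a complete splitting, and likewise for every Nielsen path and exceptional path that occurs.

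Next I would process the strata from the bottom up, distinguishing exponentially growing (EG) strata, non-exponentially growing (NEG) strata, and zero strata. For an EG stratum $H_r$, using the standard moves of subdivision, folding, and sliding, I would arrange that $f|H_r$ has no illegal turns in $H_r$ beyond those forced by the relative train track condition, that there is at most one indivisible Nielsen path of height $r$, and that the images of higher edges split at its illegal turns; this is where most of the work lies. For an NEG stratum consisting of a single edge $E_r$ with $f(E_r)=E_r\cdot u$, I would push $u$ into lower strata and, using rotationlessness, arrange $u$ to be a Nielsen path or a concatenation of lower complete-splitting terms, thereby producing linear edges and exceptional paths in the controlled form $E_1\rho^m\overline{E}_2$. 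Zero strata would be absorbed into an adjacent EG stratum or made ``taken'', so that any maximal subpath in a zero stratum occurs as a term of a complete splitting. At each step one checks that the move preserves $\phi$ and can be carried out so as to keep every element of $\mathcal{F}$ realized by a filtration element.

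The main obstacle --- the reason this is a substantial theorem rather than a routine assembly --- is termination: each local improvement can disturb strata already processed, so one needs a carefully chosen complexity, lexicographic in the stratum data, in $N(f)$, and in the illegal-turn structure, that strictly decreases under the moves while never being increased by any of them. This is precisely the long inductive argument of Feighn--Handel, and I would not reproduce it here; instead I would cite \cite[Theorem~4.28]{FH-11} for the full construction, having indicated why rotationlessness is exactly the hypothesis that makes the NEG and Nielsen-path steps go through.
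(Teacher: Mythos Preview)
Your proposal is fine, but you are doing more than the paper does: the paper offers no proof whatsoever for this lemma --- it is stated purely as a citation of \cite[Theorem~4.28]{FH-11}. Your outline of the Feighn--Handel construction is a reasonable sketch of what that cited proof contains, and your final sentence (deferring to \cite[Theorem~4.28]{FH-11} for the full inductive argument) is exactly the paper's entire treatment of this statement. So there is no discrepancy in approach; you have simply expanded on what the paper leaves as a black-box reference.
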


 The following results are some properties of CT's defined in Recognition theorem work of Feighn-Handel in \cite{FH-11}.
 We will state only the ones we need here.
 \begin{enumerate}\label{CT}
  \item \textbf{(Rotationless)} Each principal vertex is fixed by $f$ and each periodic direction at a principal vertex is fixed by $Tf$.
  \item \textbf{(Completely Split)} For each edge $E$ in each irreducible stratum, the path $f(E)$ is completely split.
  \item \textbf{(vertices)} The endpoints of all indivisible Nielsen paths are vertices. The terminal endpoint of each nonfixed NEG edge is principal.
  \item \textbf{(Periodic edges)} Each periodic edge is fixed.
  \item \textbf{(Zero strata)} Each zero strata $H_i$ is contractible and enveloped by a EG strata
  $H_s, s>i$, such that every edge of $H_i$ is a taken in $H_s$. Each vertex of $H_i$ is contained in $H_s$ and
  link of each vertex in $H_i$ is contained in $H_i \cup H_s$.
  \item \textbf{(Linear Edges)} For each linear edge $E_i$ there exists a root free indivisible Nielsen
  path $w_i$ such that $f_\#(E_i) = E_i w^{d_i}_i$ for some $d_i \neq 0$.
  \item \textbf{(Nonlinear NEG edges)} \cite[Lemma 4.21]{FH-11} Each non-fixed NEG stratum $H_i$ is a single edge with its
  NEG orientation and has a splitting $f_\#(E_i) = E_i\cdotp u_i$, where $u_i$ is a closed nontrivial  completely split
  circuit and is an indivisible Nielsen path if and only if $H_i$ is linear.
 \end{enumerate}

CTs have very nice properties. The reader can look them up \cite{FH-11} for a detailed exposition or \cite{HM-13a} for a quick reference. We list below only a few of them that is needed for us.
All three lemmas are present in the aforementioned papers. These properties along with the complete description
of the components in a complete splitting of a circuit are the main reasons why will keep working with rotationless powers
of a hyperbolic $\phi$ in the next two sections. We can achieve a great deal of control when we iterate
random conjugacy classes under $\phi$.
\begin{lemma}(\cite{FH-11}, Lemma 4.11)
 A completely split path or circuit has a unique complete splitting.
\end{lemma}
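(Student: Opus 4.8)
The plan is to prove the lemma by induction on the height $s$ of the highest stratum $H_s$ of the CT $f\colon G\to G$ that is crossed by the path or circuit $\gamma$, with the trivial path as base case. In the inductive step I would carry out two tasks. First, I would show that the \emph{top terms} of any complete splitting of $\gamma$ --- those terms that contain an edge of $H_s$ --- are completely determined by $\gamma$ alone, both in their location and in their type. Second, I would observe that once the top terms are fixed, deleting them exhibits $\gamma$ as an alternating concatenation of top terms and ``gaps'', each gap being a (possibly trivial) subpath of $\gamma$ lying in $G_{s-1}$, and the given complete splitting of $\gamma$ restricts to a complete splitting of each gap. Each gap is crossed only by strata of height $<s$, so the inductive hypothesis gives uniqueness of the complete splitting of every gap; together with the first task this forces the entire set of splitting points of $\gamma$, which is the assertion.

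The heart of the argument is the first task, and it is a case analysis on the type of $H_s$. The one general fact used throughout is that a splitting point of any splitting of $\gamma$ cannot carry an illegal turn of $\gamma$ --- iterating $f_\#$ would eventually identify the two directions of that turn and so create cancellation at the junction --- so every illegal turn of $\gamma$ of height $s$ lies in the interior of a single term, which (being the top stratum crossed) has height exactly $s$.
\begin{itemize}
 \item If $H_s$ is a zero stratum, then by property \textbf{(Zero strata)} an edge of $H_s$ can only appear in a zero-stratum term (it lies in no irreducible stratum, and every indivisible Nielsen path or exceptional path crossed by $\gamma$ has height $<s$ and so lies in $G_{s-1}$); since a zero-stratum term is by definition a maximal subpath of $\gamma$ inside the zero stratum, the top terms are read off from $\gamma$.
 \item If $H_s$ is non-exponentially-growing it is a single edge $E_s$, and by properties \textbf{(Linear Edges)}/\textbf{(Nonlinear NEG edges)} the only term types of height $s$ are $E_s^{\pm1}$ itself and exceptional paths $E_s\rho^{\,m}\overline{E}_t$ whose higher linear edge is $E_s$. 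Thus an occurrence of $E_s^{\pm1}$ in $\gamma$ either is its own term or lies inside a subpath of $\gamma$ of the exceptional form; and when it lies inside such a subpath it is \emph{forced} to lie in an exceptional-path term, because the formulae $f^i_\#(E_s)=E_s\rho^{\,ip_s}$ and $f^i_\#(\overline{E}_t)=\rho^{-ip_t}\overline{E}_t$, together with the sign conditions on $p_s,p_t,m$ built into the notion of exceptional path, force $f_\#$-iteration to create cancellation at any junction interior to that subpath. Hence the term containing each occurrence of $E_s^{\pm1}$ is determined by $\gamma$.
 \item If $H_s$ is exponentially growing the only term types of height $s$ are single edges of $H_s$ and indivisible Nielsen paths of height $s$. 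Each illegal turn of $\gamma$ of height $s$ must lie in the interior of such an indivisible Nielsen path, and since a given illegal turn is an interior turn of at most one indivisible Nielsen path, the indivisible-Nielsen-path top terms are determined by $\gamma$; every occurrence of an $H_s$-edge not lying in one of these must then be its own term. So the top terms are again forced.
\end{itemize}

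The step I expect to be the main obstacle is the non-exponentially-growing case: verifying exhaustively, for every value of the exponent $m$ (the case $m=0$ included), that the ``finer'' decomposition splitting $E_s$ off from $\rho^{\,m}\overline{E}_t$ is never a splitting of $\gamma$. This comes down to careful bookkeeping of the cancellation in products $\rho^{\,a}\cdot\rho^{\,b}$ for $a,b$ of like and of unlike sign as $a$ is replaced by the linearly growing exponents appearing in the $f_\#$-iterates, matched against the precise conventions defining ``same linear family'' and ``exceptional path'' --- which is exactly why exceptional paths are singled out as a separate term type. By contrast, the zero-stratum and exponentially-growing cases, and the organizing induction, are comparatively routine given properties \textbf{(Zero strata)}--\textbf{(Nonlinear NEG edges)} of CTs together with the standard fact that an illegal turn determines at most one indivisible Nielsen path through it.
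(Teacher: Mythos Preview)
The paper does not supply its own proof of this lemma: it is quoted verbatim as \cite[Lemma~4.11]{FH-11} and used as a black box, so there is no in-paper argument to compare your proposal against.

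That said, your sketch is a faithful reconstruction of the Feighn--Handel argument: induction on the height $s$ of the top stratum crossed by $\gamma$, identifying the ``top terms'' intrinsically from $\gamma$ and then recursing on the lower-height gaps. The case division (zero, NEG, EG) and the mechanisms you invoke in each case --- maximality for zero-stratum terms, the exceptional-path analysis for linear NEG edges, and the illegal-turn-determines-INP fact for EG strata --- are the right ones. One small omission worth flagging: in the NEG case you should separate out the subcase where $E_s$ is a \emph{fixed} edge (then $E_s^{\pm1}$ is always its own term and no exceptional path has height $s$) and the subcase where $E_s$ is nonlinear NEG (again no exceptional path of height $s$, so each occurrence of $E_s^{\pm1}$ is its own term); the exceptional-path bookkeeping you highlight as the main obstacle only arises when $E_s$ is linear. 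With that refinement your outline matches the original proof and is correct.
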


The following lemma is a crucial component of our proof here. It assures us that as we iterate a circuit $\sigma$ under $f_\#$ eventually we
will achieve a complete splitting at some point and then analyzing the components of such a splitting (as we do in Lemma \ref{EG}) tells us what the possible limits
could be.
\begin{lemma}\label{circuitsplit}
\cite[Lemma 4.26]{FH-11}
 If $\sigma$ is a finite path or a circuit with endpoint in vertices, then $f^k_\#(\sigma)$ is completely split for all sufficiently large $k\geq 1$
 and $f^{k+1}_\#(\sigma)$ has complete splitting that is a refinement of the complete splitting of
 $f^k_\#(\sigma)$.
\end{lemma}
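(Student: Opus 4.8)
The plan is to split the statement into two parts: (a) $f^k_\#(\sigma)$ is completely split for all large $k$; and (b) once it is completely split, applying $f_\#$ again produces a complete splitting refining the previous one. Part (b) is the short one, so I would dispatch it first. Assume $\gamma$ is completely split, $\gamma=\gamma_1\cdots\gamma_m$ with each $\gamma_i$ an edge of an irreducible stratum, an indivisible Nielsen path, an exceptional path, or a taken maximal subpath of a zero stratum. By the definition of a splitting the junctions between the $\gamma_i$ survive all iterates, so $f_\#(\gamma)=f_\#(\gamma_1)\cdots f_\#(\gamma_m)$, and it is enough that each $f_\#(\gamma_i)$ be completely split: concatenating those complete splittings gives one for $f_\#(\gamma)$ that refines $\{\gamma_i\}$. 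For an irreducible-stratum edge, $f_\#(\gamma_i)$ is completely split by the \textbf{(Completely Split)} property of a CT; an indivisible Nielsen path is fixed by $f_\#$ since $f$ is rotationless; an exceptional path is carried to an exceptional path by the \textbf{(Linear Edges)} property; and a taken subpath of a zero stratum $H_i$ lies, by the \textbf{(Zero strata)} property, inside the $f$-image of an edge of the enveloping EG stratum, whose iterated images are completely split. So part (b) follows, and the whole content lies in part (a).

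For part (a) I would induct on the filtration, taking as induction hypothesis that the conclusion holds for paths and circuits with endpoints at vertices lying in $G_{r-1}$ (on which $f$ restricts to a CT), and proving it for paths reaching the stratum $H_r$. The aim is to show that for $k$ large, $f^k_\#(\sigma)$ admits a splitting whose terms are either already terms of a complete splitting of height $r$ — edges of $H_r$, height-$r$ indivisible Nielsen paths, exceptional paths built from $H_r$-edges — or maximal subpaths lying in $G_{r-1}$; then one invokes the induction hypothesis on each of the finitely many $G_{r-1}$-subpaths and takes the largest of the resulting thresholds. If $H_r$ is a single non-fixed NEG edge $E_r$, the \textbf{(Nonlinear NEG edges)} property gives $f_\#(E_r)=E_r\cdot u_r$ with $u_r$ a completely split circuit of height $<r$, so every occurrence of $E_r^{\pm1}$ in $f_\#(\sigma)$ is a splitting point, except that consecutive linear edges over a common root-free Nielsen path must be grouped into (stable) exceptional paths. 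If $H_r$ is a fixed or periodic edge it is itself a complete-splitting term by \textbf{(Periodic edges)}. If $H_r$ is a zero stratum, \textbf{(Zero strata)} gives that it is contractible and enveloped by an EG stratum, so after a single iterate each maximal subpath of $f_\#(\sigma)$ in $H_r$ is taken, hence already a term.

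The real work, and the step I expect to be the main obstacle, is the case of an exponentially growing stratum $H_r$. Here one must control the height-$r$ illegal turns of $f^k_\#(\sigma)$. Relative train track condition (1) gives that $f$ carries $r$-legal subpaths to $r$-legal subpaths, and the bounded cancellation property of the homotopy equivalence $f$ bounds the new backtracking created at an illegal turn in passing from $f^k_\#(\sigma)$ to $f^{k+1}_\#(\sigma)$. Combining these with the fact that a stable CT has only boundedly many indivisible Nielsen paths of height $r$, one argues that after finitely many iterates every surviving height-$r$ illegal turn of $f^k_\#(\sigma)$ is trapped inside a copy of one of those INPs, which then splits off as a term, while each complementary $r$-legal stretch splits into $H_r$-edges and maximal subpaths of $G_{r-1}$. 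Quantifying how long it takes a non-Nielsen illegal turn to be destroyed — uniformly in terms of the bounded combinatorial length of $\sigma$ — is the technical heart; the rest is bookkeeping across the finitely many strata, and once part (a) is in hand part (b) is, as noted, immediate.
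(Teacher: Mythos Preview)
The paper does not prove this lemma: it is stated with the citation \cite[Lemma 4.26]{FH-11} and used as a black box, with no proof or sketch given. So there is no in-paper argument to compare your proposal against.

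That said, your outline follows essentially the approach of the Feighn--Handel source: part (b) is handled term-by-term from the defining properties of a CT, and part (a) is proved by induction on the filtration height, with the EG case requiring the bounded-cancellation argument that forces surviving height-$r$ illegal turns into indivisible Nielsen paths. Two minor points of imprecision: in part (b), your treatment of a taken zero-stratum term $\gamma_i$ explains where $\gamma_i$ comes from rather than why $f_\#(\gamma_i)$ is completely split---one needs that $f_\#(\gamma_i)$ sits as a subpath of some $f^2_\#(E)$ at splitting vertices, which is how Feighn--Handel argue; and INPs are fixed by $f_\#$ by the (Periodic edges)/(Rotationless) property rather than ``by definition of rotationless'' alone. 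These are cosmetic in a sketch; the strategy is the standard one.
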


The following two Lemmas are used in proof of Lemma \ref{EG}, when we give our argument with
the zero strata within the proof. When a circuit $\sigma$ is iterated under a hyperbolic $\phi$ and we
have a complete splitting for some $f^k_\#(\sigma)$, where one of the components of the complete splitting is a path in a zero strata, these lemmas along with Property(5) listed above, tell us that
the preceding and the following components (with respect to the component contained in zero strata) in the complete splitting must be an exponentially growing edge.

\begin{lemma} \cite[Theorem 5.15, eg(i)]{BFH-00}\label{np}
 Every periodic Nielsen path is fixed. Also, for each EG stratum $H_r$ there exists at most one indivisible Nielsen path of height $r$, upto reversal of orientation, and
 the initial and terminal edges of this Nielsen path is in $H_r$.
\end{lemma}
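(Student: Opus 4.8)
Write (i), (ii), (iii) for the three assertions of the lemma: every periodic Nielsen path is fixed; each exponentially growing stratum $H_r$ carries at most one indivisible Nielsen path of height $r$, up to reversal; and the initial and terminal edges of that path lie in $H_r$. The plan is to fix a stable relative train track representative $f\colon G\to G$ of $\phi$ with filtration $\emptyset=G_0\subset G_1\subset\cdots\subset G_m$ (which exists by the stable relative train track theorem quoted above) and to argue by induction on the height of the path, the essential new input appearing at an exponentially growing stratum. First I would reduce away the easy strata: if a periodic Nielsen path $\gamma$ has height $r$ with $H_r$ a zero or non-exponentially-growing stratum, then splitting $\gamma$ at the vertices of $G_r$ reduces matters to a single edge of $H_r$ (which is fixed, since NEG and fixed edges are not permuted nontrivially by the filtration structure) or to a Nielsen path carried by $G_{r-1}$ (handled by the inductive hypothesis), and no zero stratum carries a Nielsen path of its own height. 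Moreover any periodic Nielsen path decomposes as a concatenation of indivisible ones — the lengths of Nielsen subpaths are bounded, so such a decomposition exists — and $f_\#$ permutes the terms of this splitting up to reversal; hence it suffices to prove (ii) and (iii) for one exponentially growing $H_r$ together with the statement that the unique such indivisible Nielsen path is $f_\#$-fixed, after which (i) follows by reassembling the pieces.

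So let $H_r$ be exponentially growing and $\rho$ an indivisible Nielsen path of height $r$. By the standard analysis of $r$-legal subpaths in \cite{BH-92}, $\rho$ has a single height-$r$ illegal turn, occurring at a vertex, and one writes $\rho=\alpha\bar\beta$ with $\alpha,\beta$ the maximal $r$-legal initial and terminal subpaths meeting at that turn. If the initial edge of $\rho$ lay in $G_{r-1}$, then the maximal initial subpath of $\rho$ inside $G_{r-1}$ would be an $f_\#$-periodic subpath that could be split off as a lower-stratum Nielsen path, contradicting indivisibility; so $\alpha$, and likewise $\beta$, begins with an $H_r$-edge, which is (iii). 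Since $\alpha,\beta$ are $r$-legal there is no folding inside them under $f_\#$, so $f_\#(\rho)=\rho$ forces $f(\alpha)=\alpha\tau$ and $f(\beta)=\beta\tau$ for a common nontrivial path $\tau$, namely the portion cancelled at the illegal turn. Iterating, $\alpha$ is an initial segment of the eigenray $R:=\lim_k f^k_\#(\alpha)=\alpha\cdot\tau\cdot f(\tau)\cdot f^2(\tau)\cdots$ and $\beta$ of $R':=\beta\cdot\tau\cdot f(\tau)\cdots$; thus $R$ and $R'$ are asymptotic rays with common tail $T:=\tau f(\tau)f^2(\tau)\cdots$, their branch point is exactly the height-$r$ illegal turn of $\rho$, and their initial directions are directions in $H_r$ fixed by $Tf$ (because $f_\#R=R$ and $f_\#R'=R'$).

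The uniqueness (ii) now follows from the rigidity of this configuration: an indivisible Nielsen path of height $r$ is recovered from its ordered pair of $r$-legal eigenrays, each beginning with an $H_r$-edge with fixed initial direction, that become asymptotic along a tail of the special form $\tau f(\tau)f^2(\tau)\cdots$; since the transition matrix of $H_r$ is Perron--Frobenius there are only finitely many eigenrays in $H_r$, and the matching-tail condition, together with a bounded-cancellation estimate, forces any two indivisible Nielsen paths of height $r$ to share the same height-$r$ illegal turn and the same eigenrays, hence to coincide up to reversal. Finally $f_\#(\rho)$ is again an indivisible Nielsen path of height $r$, so by (ii) it equals $\rho$ or $\bar\rho$; the case $f_\#(\rho)=\bar\rho$ is impossible, as it would force $f$ to interchange the two endpoints of $\rho$ — distinct fixed vertices, or, if they coincide, incompatible with $\rho$ remaining reduced after a further application of $f_\#$ — so $f_\#(\rho)=\rho$, and combined with the reductions of the first paragraph this yields (i).

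The main obstacle is the rigidity step inside (ii): upgrading the conclusion "the two eigenrays share the common tail $T=\tau f(\tau)f^2(\tau)\cdots$" to "the eigenrays, and therefore the indivisible Nielsen path, are uniquely determined up to reversal." This is precisely where the Perron--Frobenius structure of $H_r$ has to be combined with a sharp bounded-cancellation constant, and where the bookkeeping of $r$-legal subpaths, height-$r$ illegal turns, and the interaction with the lower filtration $G_{r-1}$ is most delicate; the remaining steps are comparatively soft.
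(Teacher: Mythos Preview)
The paper does not prove this lemma at all: it is stated with a citation to \cite[Theorem~5.15, eg(i)]{BFH-00} and no argument is given. So there is nothing to compare your approach to within this paper; your proposal is an attempt to reconstruct the proof of the cited result.

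As a reconstruction, your outline has the right shape---reduce to indivisible pieces, handle the easy strata, and concentrate on the EG case via the illegal-turn decomposition $\rho=\alpha\bar\beta$---but the crucial step, which you yourself flag as ``the main obstacle,'' is not actually carried out. Saying that ``the matching-tail condition, together with a bounded-cancellation estimate, forces any two indivisible Nielsen paths of height $r$ to share the same height-$r$ illegal turn and the same eigenrays'' is an assertion, not an argument: nothing in what you wrote rules out two indivisible height-$r$ Nielsen paths whose illegal turns occur at \emph{different} vertices of $H_r$, or at the same vertex but along different pairs of gates. The finiteness of eigenrays is not enough; one needs a mechanism that actually collapses the possibilities to one. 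In \cite{BFH-00} this is done not by eigenray rigidity but by exploiting the \emph{stability} hypothesis: if there were two indivisible Nielsen paths of height $r$, one performs folds and sliding moves to produce a new relative train track representative with strictly smaller $N(f)$, contradicting minimality. Your sketch never invokes stability, so it is missing the engine of the proof.

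Two smaller points. First, your reduction for NEG strata (``NEG and fixed edges are not permuted nontrivially'') already presupposes a rotationless/CT-type normalisation; in a bare stable relative train track map NEG edges can have nontrivial period, so you are implicitly using more than you have set up. Second, the argument ruling out $f_\#(\rho)=\bar\rho$ is incomplete: the endpoints of $\rho$ may coincide, and ``incompatible with $\rho$ remaining reduced after a further application of $f_\#$'' is not justified. In the literature this case is handled either by noting $f_\#(\rho)=\rho$ is part of the definition once one knows $\rho$ is a Nielsen path, or, when passing from periodic to fixed, by analysing the initial directions (which are fixed by $Tf$, not interchanged).
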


The following lemma is part of \cite[Lemma 4.24]{FH-11}
\begin{lemma}\label{propzero}
  If an EG stratum $H_i$ has an indivisible Nielsen path of height $i$ then there is no zero strata enveloped by $H_i$
\end{lemma}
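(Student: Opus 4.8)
The plan is a proof by contradiction that leans on Lemma~\ref{np} together with the CT axioms for EG strata and for zero strata. Assume $H_i$ is an EG stratum carrying an indivisible Nielsen path $\rho$ of height $i$, and assume, toward a contradiction, that some zero stratum $H_\ell$ (necessarily with $\ell<i$) is enveloped by $H_i$.

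First I would fix the combinatorial shape of $\rho$. By Lemma~\ref{np}, $\rho$ has a single illegal turn, lying in $H_i$; cutting $\rho$ there writes $\rho=\alpha\bar\beta$ with $\alpha$ and $\beta$ both $i$-legal and with the first edges of $\alpha$ and $\beta$ in $H_i$ (these are, up to reversal, the terminal edges of $\rho$, and $H_i$ is closed under reversal). Since $f$ preserves the $i$-legal structure of such paths without cancellation in the $H_i$-part, and since the unique illegal turn of the reduced path $f_\#(\rho)=\rho$ must occur at the same vertex as the illegal turn of $\rho$, one deduces $f_\#(\alpha)=\alpha\tau$ and $f_\#(\beta)=\beta\tau$ for one and the same $i$-legal path $\tau$, namely the segment cancelled at the illegal turn. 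Here $\tau$ is a fixed finite path, and iterating gives $f^k_\#(\alpha)=\alpha\,\tau\,f_\#(\tau)\cdots f^{k-1}_\#(\tau)$ (likewise for $\beta$), with $f^k_\#(\rho)=\rho$ recovered by total cancellation of the common tails $\tau,f_\#(\tau),\dots$ at the single illegal turn.

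Next I would bring in the zero stratum. Envelopment says every edge of $H_\ell$ is taken in $H_i$; since $f$ preserves the filtration it cannot carry an $H_\ell$-edge across $H_i$, so the transition block for $H_i\cup H_\ell$ is block-lower-triangular with irreducible EG part and zero $H_\ell$-part, whence $H_\ell$-edges appear in $f^k_\#(E)$ with multiplicity $\asymp\lambda^k$ for $H_i$-edges $E$; as $\alpha$ crosses $H_i$-edges, $f^k_\#(\alpha)$ therefore crosses unboundedly many $H_\ell$-edges. The aim is to contradict $f^k_\#(\rho)=\rho$, and this is where I expect the main difficulty: the $H_\ell$-edges just produced lie in the tails $\tau,f_\#(\tau),\dots$, which are exactly the part that cancels at the illegal turn, so the contradiction cannot be ``too many surviving $H_\ell$-edges'' but must instead come from the fine combinatorics of that cancellation and from where a zero stratum is allowed to sit inside a Nielsen path. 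Concretely, I would try to use the vertex/link axiom for zero strata (each vertex of $H_\ell$ lies in $H_i$, with link inside $H_\ell\cup H_i$) to constrain how an $H_\ell$-subpath can be ``surrounded'' by higher edges and hence cancelled, together with uniqueness of complete splittings in a CT, so that any maximal $H_\ell$-subpath forced to persist in $\rho$ (or in a would-be refinement) would have to be a complete-splitting term of the indivisible path $\rho$ --- impossible, since $\rho$ is a single indivisible term whose end edges lie in $H_i$.

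If that coupling proves too delicate, the cleaner route --- and the one I suspect underlies \cite[Lemma~4.24]{FH-11} --- is to argue not from the CT axioms but from the CT \emph{construction}: a zero stratum enveloped by an EG stratum is produced precisely by the step of the algorithm that absorbs the ``taken but not individually legal'' edges of an EG stratum having \emph{no} indivisible Nielsen path of its height, and when such an iNP is already present that step introduces no zero stratum. On this route the statement becomes a bookkeeping consequence of how CTs are built rather than a theorem about the finished map, and the only real work is to isolate the relevant step of the construction in \cite{FH-11} and check that an existing height-$i$ iNP is left untouched by it.
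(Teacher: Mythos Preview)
The paper does not give a proof of this lemma at all; it simply quotes it as part of \cite[Lemma~4.24]{FH-11}. So there is nothing in the paper to compare your argument against beyond that citation.

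That said, your two routes are very different in status. The first route has a genuine gap that you yourself flag but do not close: once you observe that every $H_\ell$-edge produced in $f^k_\#(\alpha)$ sits in the cancelling tail $\tau\,f_\#(\tau)\cdots$, there is no mechanism left in your sketch that forces any $H_\ell$-edge to \emph{survive} inside the fixed path $\rho$. Invoking the vertex/link axiom and uniqueness of complete splittings does not produce such an edge; at best it constrains where an $H_\ell$-subpath could sit \emph{if} one were present in $\rho$, but nothing in your argument places one there. So the contradiction never materializes, and this route as written does not prove the lemma.

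Your second route is the right one and is essentially what underlies \cite[Lemma~4.24]{FH-11}: the statement is a byproduct of how the CT algorithm handles EG strata, not a consequence one extracts from the finished CT axioms alone. In the Feighn--Handel construction, an EG stratum $H_i$ admitting a height-$i$ indivisible Nielsen path is processed so that the sub-filtration below it is already a core graph with no contractible (zero) components enveloped by $H_i$; the zero strata only arise in the complementary case. If you want a self-contained write-up, the honest thing to do is exactly what the paper does: cite \cite[Lemma~4.24]{FH-11} and, if you wish, add a sentence pointing to the specific step of the CT construction responsible.
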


\subsection{Attracting Laminations and their properties under CTs}
\label{sec:6}
For any marked graph $G$, the natural identification $\mathcal{B}\approx \mathcal{B}(G)$ induces a bijection between the closed subsets of $\mathcal{B}$ and the closed subsets of $\mathcal{B}(G)$. A closed
subset in any of these two cases is called a \textit{lamination}, denoted by $\Lambda$. Given a lamination $\Lambda\subset \mathcal{B}$ we look at the corresponding lamination in $\mathcal{B}(G)$ as the
realization of $\Lambda$ in $G$. An element $\lambda\in \Lambda$ is called a \textit{leaf} of the lamination.\\
A lamination $\Lambda$ is called an \textit{attracting lamination} for $\phi$ is it is the weak closure of a line $l$ (called the \textit{generic leaf of $\lambda$}) satisfying the following conditions:
\begin{itemize}
 \item $l$ is birecurrent leaf of $\Lambda$.
\item $l$ has an \textit{attracting neighborhood} $V$, in the weak topology, with the property that every line in $V$ is weakly attracted to $l$.
\item no lift $\widetilde{l}\in \mathcal{B}$ of $l$ is the axis of a generator of a rank 1 free factor of $\mathbb{F}$ .
\end{itemize}

We know from \cite{BFH-00} that with each $\phi\in \out$ we have a finite set of laminations $\mathcal{L}(\phi)$, called the set of \textit{attracting laminations} of $\phi$, and the set $\mathcal{L}(\phi)$ is
invariant under the action of $\phi$. When it is nonempty $\phi$ can permute the elements of $\mathcal{L}(\phi)$ if $\phi$ is not rotationless. For rotationless $\phi$ $\mathcal{L}(\phi)$ is a fixed set. Attracting laminations are directly related to EG stratas.
An important result from \cite{BFH-00} section 3 is that there is a unique bijection between exponentially growing strata and
attracting laminations, which implies that there are only finitely many elements in $\mathcal{L}(\phi)$.

\textbf{Dual lamination pairs. }
We have already seen that the set of lines carried by a free factor system is a closed set and so, together with the lemma that the weak closure of a generic leaf $\lambda$ of an attracting lamination $\Lambda$ is the whole lamination $\Lambda$ tells us that
$\mathcal{A}_{supp}(\lambda) = \mathcal{A}_{supp}(\Lambda)$. In particular the free factor support of an attracting lamination $\Lambda$ is a single free factor.
Let $\phi\in \out$ be an outer automorphism and $\Lambda^+_\phi$ be an attracting lamination of $\phi$ and $\Lambda^-_\phi$ be an attracting lamination of $\phi^{-1}$. We say that this lamination pair is a \textit{dual lamination pair} if $\mathcal{A}_{supp}(\Lambda^+_\phi) = \mathcal{A}_{supp}(\Lambda^-_\phi)$.
By Lemma 3.2.4 of \cite{BFH-00} there is bijection between $\mathcal{L}(\phi)$ and $\mathcal{L}(\phi^{-1})$ induced by this duality relation.
\vspace{.2cm}

\textbf{Tiles: } Bestvina-Feighn-Handel introduced the concept of \textit{tiles} in \cite{BFH-00}.
For an edge $E$ in a EG stratum, an unoriented path of the form $f^k_\#(E)$ is called a \textit{k-tile of height r}.
We state below a two important applications of tiles.

\begin{lemma}
 \begin{enumerate}\label{tiles}
  \item \cite[Lemma 3.1.10 item (4)]{BFH-00} If $\Lambda_r$ is the unique attracting lamination associated with $H_r$ then every generic leaf can be written as
  a increasing union of tiles of height $r$.
  \item \cite[Lemma1.57 item(4)]{HM-13a} There exists $p$ such that for every $k\geq i\geq 0$, each
  $k+p$-tile of height $r$ contains every $i$-tile of height $r$.
 \end{enumerate}

\end{lemma}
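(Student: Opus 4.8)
The plan is to treat both items as statements about the combinatorics of iterating a single edge inside the exponentially growing stratum $H_r$, powered by two facts: primitivity of the transition matrix $M_r$ of $H_r$, and the relative–train–track axiom that $f_\#$ carries $r$-legal paths to $r$-legal paths with whatever cancellation occurs being confined to the strata below $H_r$. I would prove item (2) first and then deduce item (1) from it together with the description of $\Lambda_r$ as the weak closure of its tiles.

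For (2): since $H_r$ is EG, its transition matrix $M_r$ is primitive, so there is a $p\geq 1$ with $M_r^m$ strictly positive for every $m\geq p$. Given $k\geq i\geq 0$ and edges $E,E'$ of $H_r$, write $f^{k+p}_\#(E)=f^{i}_\#\bigl(f^{\,k+p-i}_\#(E)\bigr)$. Because $k+p-i\geq p$, the $r$-legal path $f^{\,k+p-i}_\#(E)$ crosses the edge $E'$ at least once (the relevant entry of $M_r^{\,k+p-i}$ is positive). Applying $f^i_\#$ and using the no–cancellation property, that occurrence of $E'$ contributes a subpath $f^i_\#(E')$ to $f^{k+p}_\#(E)$; since $E'$ was arbitrary, the $(k+p)$-tile $f^{k+p}_\#(E)$ contains every $i$-tile of height $r$, which is (2).

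For (1): fix an edge $E$ of $H_r$. Applying (2) with $k=i=np$ gives $f^{(n+1)p}_\#(E)\supset f^{np}_\#(E)$, so $\{f^{np}_\#(E)\}_{n\geq1}$ is an increasing sequence of paths whose weak limit is a leaf of $\Lambda_r$; this exhibits one generic leaf as an increasing union of tiles of height $r$. For an arbitrary generic leaf $\lambda$ one argues as follows: $\lambda$ is birecurrent and lies in $\Lambda_r$, which is the weak closure of the tiles, so every finite subpath $\mu$ of $\lambda$ occurs inside some tile $f^k_\#(E)$; using birecurrence of $\lambda$ to find, around a fixed occurrence of $\mu$ in $\lambda$, a sub-arc that is itself a tile (enlarging $k$ via (2) so that $\mu$ sits well inside it), and then letting $\mu$ exhaust $\lambda$, one produces an increasing exhaustion of $\lambda$ by tiles of height $r$.

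The main obstacle is the clean passage from ``$f^{\,k+p-i}_\#(E)$ contains the edge $E'$'' to ``$f^{k+p}_\#(E)$ contains $f^i_\#(E')$ as a genuine subpath'': this is exactly where one invokes that $f$ sends $r$-legal paths to $r$-legal paths and that the cancellation produced by concatenating the $f_\#$-images of consecutive $H_r$-edges lives below $H_r$, so the $H_r$-skeleton of the tile is undisturbed. The secondary subtlety, for (1), is upgrading ``subpath of a tile'' to a centered exhaustion; this uses birecurrence of generic leaves together with a diagonal argument, and the input from \cite{BFH-00} that the weak limit of $\{f^{np}_\#(E)\}_n$ is genuinely a generic leaf — birecurrent, with an attracting neighbourhood — and not merely some leaf of $\Lambda_r$.
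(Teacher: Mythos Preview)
The paper does not supply its own proof of this lemma: both items are quoted verbatim from \cite{BFH-00} and \cite{HM-13a} and stated without argument, so there is nothing in the paper itself to compare against. Your treatment of item~(2) is essentially the standard one (primitivity of $M_r$ plus the $r$-legal no-cancellation property of relative train tracks), and you have correctly isolated the one delicate passage there.

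Your argument for item~(1), however, has a real gap. Saying that $f^{(n+1)p}_\#(E)$ contains $f^{np}_\#(E)$ as a subpath does not by itself produce a \emph{nested} sequence with a well-defined union: the copy of $f^{np}_\#(E)$ inside $f^{(n+1)p}_\#(E)$ may land at a different position each time. The fix in \cite{BFH-00} is to first choose $p$ and an edge $E$ so that $E$ occurs in the \emph{interior} of $f^p_\#(E)$, fix that occurrence, and observe that applying $f^p_\#$ carries this nesting forward, yielding a genuinely increasing chain $E\subset f^p_\#(E)\subset f^{2p}_\#(E)\subset\cdots$ anchored at the chosen copy of $E$. More seriously, for an \emph{arbitrary} generic leaf $\lambda$ your birecurrence step does not do what you claim: knowing that each finite subpath $\mu$ of $\lambda$ lies inside some tile does not produce a subpath of $\lambda$ containing $\mu$ that \emph{is} a tile, and birecurrence alone does not bridge this. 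The route in \cite{BFH-00} is different: one shows every leaf of $\Lambda_r$ is $r$-legal and hence admits, for each $k$, a \emph{splitting} into $k$-tiles and maximal $G_{r-1}$-subpaths; taking $\tau_k$ to be the $k$-tile in this splitting containing a fixed $H_r$-edge $E_0\subset\lambda$, compatibility of the splittings under refinement gives $\tau_k\subset\tau_{k+1}$, and genericity of $\lambda$ (both ends dense in $\Lambda_r$) is precisely what forces $\bigcup_k\tau_k=\lambda$ rather than only a subray.
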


This lemma will be used by us while analysing the weak limits of a conjugacy class under iterates of $\phi$; when we look at
a complete splitting of some $\phi^k_\#(\sigma)$ and one of the components in that splitting  is an edge in a EG strata, these lemma tell us that such a circuit is weakly attracted to the
attracting lamination related to that EG strata.

\subsection{Nonattracting subgroup system $\mathcal{A}_{na}(\Lambda^+_\phi)$}
\label{sec:7}
The \textit{nonattracting subgroup system} of an attracting lamination contains information about lines and circuits which are not attracted to the lamination.
The definition of this subgroup system is

\begin{definition}
 Suppose $\phi\in\out$ is rotationless and $f:G\rightarrow G$ is a CT representing $\phi$ such that $\Lambda^+_\phi$ is an invariant attracting lamination which corresponds to the EG stratum $H_s\in G$.
 The \textit{nonattracting subgraph Z} of $G$ is defined as a union of irreducible stratas $H_i$ of $G$ such that no edge in $H_i$ is weakly attracted to $\Lambda^+_\phi$. This is equivalent to saying that a strata $H_r\subset G\setminus Z$ if and only if there exists $k\geq 0$
 some term in the complete splitting of $f^k_\#(E_r)$ is an edge in $H_s$. Define the path $\widehat{\rho}_s$ to be trivial path at any chosen vertex if there does not exist any indivisible Nielsen path of height $s$, otherwise $\widehat{\rho}_s$ is the unique closed indivisible path of height $s$ (from definition of stable train track maps).
\end{definition}

\textbf{The groupoid  $\langle Z, \widehat{\rho}_s \rangle$ - } Let $\langle Z, \widehat{\rho}_s \rangle$ be the set of lines, rays, circuits and finite paths in $G$ which can be written as a concatenation of subpaths, each of which is an edge in $Z$, the path $\widehat{\rho}_s$ or its inverse. Under the operation of tightened concatenation of paths in $G$,
this set forms a groupoid (Lemma 5.6, [\cite{HM-13c}]).


Define the graph $K$ by setting $K=Z$ if $\widehat{\rho}_s$ is trivial and let $h:K \rightarrow G$ be the inclusion map. Otherwise define an edge $E_\rho$ representing the domain of the Nielsen path $\rho_s:E_\rho \rightarrow G_s$, and let $K$ be the disjoint union of $Z$ and $E_\rho$ with the following identification.
 Given an endpoint $x\in E_\rho$, if $\rho_s(x)\in Z$ then identify $x\sim\rho_s(x)$.Given distinct endpoints $x,y\in E_\rho$, if $\rho_s(x)=\rho_s(y)\notin Z$ then identify $x \sim y$. In this case define $h:K\rightarrow G$ to be the inclusion map on $K$ and the map $\rho_s$ on $E_\rho$. It is not difficult to see that the map $h$ is an immersion.
 Hence restricting $h$ to each component of $K$, we get an injection at the level of fundamental groups. The \textit{nonattracting subgroup system} $\mathcal{A}_{na}(\Lambda^+_\phi)$ is defined to be the subgroup system defined by this immersion.

We will leave it to the reader to look it up in \cite{HM-13c} where it is explored in details. We however list some key properties which we will be using and justifies the importance of this
subgroup system.
\begin{lemma}(\cite{HM-13c}- Lemma 1.5, 1.6)
\label{NAS}
 \begin{enumerate}
  \item The set of lines carried by $\mathcal{A}_{na}(\Lambda^+_\phi)$ is closed in the weak topology.
  \item A conjugacy class $[c]$ is not attracted to $\Lambda^+_\phi$ if and only if it is carried by $\mathcal{A}_{na}(\Lambda^+_\phi)$.
  \item $\mathcal{A}_{na}(\Lambda^+_\phi)$ does not depend on the choice of the CT representing $\phi$.
  \item  Given $\phi, \phi^{-1} \in \out$ both rotationless elements and a dual lamination pair $\Lambda^\pm_\phi$ we have $\mathcal{A}_{na}(\Lambda^+_\phi)= \mathcal{A}_{na}(\Lambda^-_\phi)$
  \item $\mathcal{A}_{na}(\Lambda^+_\phi)$ is a free factor system if and only if the stratum $H_r$ is not geometric.
  \item $\mathcal{A}_{na}(\Lambda^+_\phi)$ is malnormal.
  \item If $\{\gamma_n\}_{n\in\mathbb{N}}$ is a sequence of lines such that every weak limit of every subsequence of $\{\gamma_n\}$ is carried by $\mathcal{A}_{na}(\Lambda_\phi)$ then $\{\gamma_n\}$ is carried by $\mathcal{A}_{na}(\Lambda_\phi)$ for all sufficiently large $n$

 \end{enumerate}

\end{lemma}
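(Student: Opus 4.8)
The plan is to take item~(2) as the heart of the lemma and to deduce (1) and (3)--(7) from it together with the construction of the immersion $h\colon K\to G$; (2) itself is proved by the complete-splitting calculus of \textbf{CT}s. Fix a \textbf{CT} $f\colon G\to G$ for $\phi$ with $\Lambda^+_\phi$ the attracting lamination of the \textsf{EG} stratum $H_s$, let $Z$ be the nonattracting subgraph and $\widehat\rho_s$ the (possibly trivial) height-$s$ indivisible Nielsen path, so that a conjugacy class is carried by $\mathcal{A}_{na}(\Lambda^+_\phi)$ exactly when it is realized by a circuit in the groupoid $\langle Z,\widehat\rho_s\rangle$. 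The pivotal point is that this groupoid is $f_\#$-invariant: for an edge $E$ of $Z$, no term of the complete splitting of $f^k_\#(E)$ is an edge of a stratum of $G\setminus Z$ (by the defining property of $Z$) nor an edge of $H_s$ (else, by Lemma~\ref{tiles}, $E$ would be attracted to $\Lambda^+_\phi$); a height-$s$ Nielsen or exceptional term must be $\widehat\rho_s^{\pm1}$ by Lemma~\ref{np}; and a zero-stratum term is handled using contractibility of zero strata together with Lemma~\ref{propzero} (no zero stratum is enveloped by $H_s$ when $\widehat\rho_s$ is nontrivial), its flanking \textsf{EG} edges lying in $Z$.

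Granting invariance, (2) is quick in one direction and is the main obstacle in the other. If $[c]$ is carried, every $f^k_\#(c)$ is a circuit in $\langle Z,\widehat\rho_s\rangle$ whose only height-$s$ content is the fixed path $\widehat\rho_s$, so these circuits never contain long subpaths of a generic leaf of $\Lambda^+_\phi$ and $[c]$ is not attracted. Conversely, if $[c]$ is not attracted, pick $k$ with $f^k_\#(c)$ completely split (Lemma~\ref{circuitsplit}); were any term an edge of $G\setminus Z$, iterating it and using Lemma~\ref{tiles} would put arbitrarily long generic-leaf subpaths into $f^{k+j}_\#(c)$, contradicting non-attraction, while height-$s$ and zero-stratum terms are dealt with as above; hence $f^k_\#(c)$ is a circuit in $\langle Z,\widehat\rho_s\rangle$, so it is carried by the ($\phi$-invariant) system $\mathcal{A}_{na}(\Lambda^+_\phi)$, and therefore so is $[c]$. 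This is the step I expect to require the most care, since it demands complete control over which strata and Nielsen paths can occur in the complete splitting of a high iterate.

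Items (1), (3)--(7) then follow. (1) is the already-stated closedness of the set of lines carried by any finite-rank subgroup system. For (3), by (2) the conjugacy classes carried by $\mathcal{A}_{na}(\Lambda^+_\phi)$ are exactly those not attracted to $\Lambda^+_\phi$ --- a condition independent of the \textbf{CT} --- and a subgroup system defined by an immersion is determined by the conjugacy classes it carries. (4) follows by applying (2) to $\phi$ and to $\phi^{-1}$ and invoking, for a dual pair, the equality of the two sets of non-attracted conjugacy classes established in \cite{BFH-00}. (5) is the dichotomy that when $H_s$ is non-geometric the identifications building $K$ yield only free factors (either $K=Z$ is a subgraph, or $\widehat\rho_s$ contributes no peripheral loop), whereas a geometric $H_s$ contributes a once-punctured surface group, which is not a free factor. (6) is immediate from $h$ being an immersion, a nontrivial conjugacy between (parts of) two component groups producing overlapping elevations of $h$. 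Finally (7) combines (1) with a closer analysis of $h$ and of the weak topology on $\mathcal{B}$: the naive compactness argument is not enough, because a weak limit of lines not carried by $\mathcal{A}_{na}(\Lambda^+_\phi)$ can itself be carried, so one argues instead at the level of lifts and the groupoid $\langle Z,\widehat\rho_s\rangle$. All of this is carried out in \cite[Lemmas 1.5--1.6]{HM-13c} and the cited portions of \cite{BFH-00}.
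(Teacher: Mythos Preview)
The paper does not give its own proof of this lemma: it is stated purely as a citation of \cite[Lemmas~1.5--1.6]{HM-13c}, with the explicit remark ``We will leave it to the reader to look it up in \cite{HM-13c} where it is explored in details.'' So there is no in-paper argument to compare against; the paper's ``proof'' is just the reference.

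Your sketch is a reasonable reconstruction of the Handel--Mosher argument and you correctly identify item~(2) together with $f_\#$-invariance of the groupoid $\langle Z,\widehat\rho_s\rangle$ as the engine, with (1), (3)--(7) as consequences. Two small cautions. First, your justification of (4) by appealing to \cite{BFH-00} for the equality of non-attracted conjugacy classes for a dual pair is not quite where that fact lives; in Handel--Mosher it is proved directly from the structure of the nonattracting subgraphs and Nielsen paths on both sides, not pulled from \cite{BFH-00}. Second, for (6), malnormality is not automatic from $h$ being an immersion alone (immersions of graphs can have non-malnormal image); one needs the additional structure of how the components of $K$ sit in $G$ and the uniqueness of $\widehat\rho_s$. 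These are refinements rather than gaps, and since you already defer the full details to \cite{HM-13c}, your proposal is in line with what the paper does --- namely, cite the source.
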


\subsection{Singular lines, Extended boundary and Weak attraction theorem}
\label{sec:8}
In this section we will look at some results from \cite{HM-13c} which analyze and identify the set of lines which are not weakly attracted to an attracting lamination $\Lambda^\pm_\phi$, given some exponentially growing element in $\out$. Most of the results stated here are in terms of rotationless elements as in the original work. However, we note that being weakly attracted
to a lamination $\Lambda_\phi$ is not dependent on whether the element is rotationless. All lemmas stated here about rotationless elements also hold for non rotationless elements also, unless otherwise mentioned. This has been pointed out in Remark 5.1 in \cite{HM-13c}
The main reason for using rotationless elements is to make use of the train track structure from the CT theory. We will use some of the lemmas to prove lemmas about non rotationless elements which we will need later on.

Denote the set of lines not attracted to $\Lambda^+_\phi$ by $\mathcal{B}_{na}(\Lambda^+_\phi)$. The non-attracting subgroup system carries partial information about such lines as we can see in Lemma \ref{NAS}. Other obvious lines which are not attracted are the generic leaves of $\Lambda^-_\phi$. 
There is another class of lines, called singular lines, which we define below, which are not weakly attracted to $\Lambda^+_\phi$.

\begin{definition}
Define a \textit{\textbf{singular line}} for $\phi$ to be a line $\gamma\in \mathcal{B}$ if there exists a principal lift $\Phi$ of some rotationless iterate of $\phi$
 and a lift $\widetilde{\gamma}$ of $\gamma$ such that the endpoints of $\widetilde{\gamma}$ are contained in Fix$_+(\widehat{\Phi}) \subset \partial \mathbb{F}$.\\
The set of all singular lines of $\phi$ is denoted by $\mathcal{B}_{sing}(\phi)$.

\end{definition}

The lemma [Lemma 2.1, \cite{HM-13c}] below summarizes this discussion.
\begin{lemma}
 Given a rotationless $\phi\in \out$ and an attracting lamination $\Lambda^+_\phi$, any line $\gamma$ that satisfies one of the following three conditions is in $\mathcal{B}_{na}(\Lambda^+_\phi)$.
\begin{enumerate}
 \item $\gamma$ is carried by $\mathcal{A}_{na}\Lambda^\pm_\phi$
  \item $\gamma$ is a generic leaf of some attracting lamination for $\phi^{-1}$
\item $\gamma$ is in $\mathcal{B}_{sing}(\phi^{-1})$.
\end{enumerate}

\end{lemma}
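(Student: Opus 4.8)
The plan is to handle the three cases separately, and in each one to produce a weakly closed, $\phi$-invariant set $S\subseteq\mathcal{B}$ with $\gamma\in S$ and such that $S$ contains no generic leaf of $\Lambda^+_\phi$. Since ``$\gamma$ is weakly attracted to $\Lambda^+_\phi$'' means exactly that $\phi^k_\#(\gamma)$ converges in the weak topology to a generic leaf of $\Lambda^+_\phi$, and since $\phi^k_\#(\gamma)\in S$ for every $k\geq 0$ while $S$ is weakly closed, every weak limit of the sequence lies in $S$; hence the sequence cannot converge to a generic leaf of $\Lambda^+_\phi$, i.e.\ $\gamma\in\mathcal{B}_{na}(\Lambda^+_\phi)$. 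Throughout I would use that $\phi$ is rotationless, so that $q=1$ suffices in Lemma \ref{attfix}, every $\phi$-periodic line or lamination is $\phi$-fixed, and principal lifts of $\phi^{\pm N}$ are exactly the $N$-th powers of principal lifts of $\phi^{\pm 1}$.

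For (1), I would take $S$ to be the set of lines carried by $\mathcal{A}_{na}(\Lambda^+_\phi)=\mathcal{A}_{na}(\Lambda^-_\phi)$ (Lemma \ref{NAS}(4)); this is weakly closed by Lemma \ref{NAS}(1) and $\phi$-invariant because $\mathcal{A}_{na}(\Lambda^+_\phi)$ is canonically attached to the $\phi$-invariant lamination $\Lambda^+_\phi$ and is independent of the chosen CT (Lemma \ref{NAS}(3)). The point to verify is that no generic leaf $\beta$ of $\Lambda^+_\phi$ is carried by $\mathcal{A}_{na}(\Lambda^+_\phi)$. One argument: $\beta$ is a weak limit of conjugacy classes $[c_n]$ each weakly attracted to $\Lambda^+_\phi$ --- using birecurrence of $\beta$, close up an exhausting family of subpaths of $\beta$ into circuits $[c_n]\to\beta$, which for large $n$ lie in an attracting neighbourhood of $\beta$ and are therefore attracted to $\Lambda^+_\phi$; if $\beta$ were carried by $\mathcal{A}_{na}(\Lambda^+_\phi)$, Lemma \ref{NAS}(7) would force the $[c_n]$ to be carried for large $n$, contradicting Lemma \ref{NAS}(2). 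Equivalently, in a CT $f\colon G\to G$ with $\Lambda^+_\phi$ on the EG stratum $H_s$, a generic leaf is an increasing union of height-$s$ tiles $f^k_\#(E)$ by Lemma \ref{tiles}(1), hence contains arbitrarily long legal $H_s$-subpaths, whereas every path in $\langle Z,\widehat\rho_s\rangle$ meets $H_s$ only inside copies of the indivisible Nielsen path $\widehat\rho_s$, which carries an illegal turn (Lemma \ref{np}); so $\beta$ is not carried.

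For (2), with $\Lambda^-$ the attracting lamination of $\phi^{-1}$ of which $\gamma$ is a generic leaf, I would take $S=\Lambda^-$ viewed as a set of lines: it is closed by definition of a lamination, and $\phi$-invariant because $\phi^{-1}$ is again rotationless, so $\mathcal{L}(\phi^{-1})$ is a $\phi$-fixed set; hence $\phi^k_\#(\gamma)$ is a leaf of $\Lambda^-$ for all $k$. No generic leaf $\beta$ of $\Lambda^+_\phi$ lies in $\Lambda^-$: otherwise the weak closure of $\beta$, which is $\Lambda^+_\phi$, would be contained in the closed set $\Lambda^-$, forcing $\Lambda^+_\phi\subseteq\Lambda^-$; but an attracting lamination of $\phi$ is never contained in an attracting lamination of $\phi^{-1}$, a standard fact about dual lamination pairs (\cite{BFH-00}) which can also be extracted from Lemma \ref{attfix} applied to both $\phi$ and $\phi^{-1}$ by comparing the expanding/contracting behaviour of leaves. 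For (3), given $\gamma\in\mathcal{B}_{sing}(\phi^{-1})$ with a lift $\widetilde\gamma$ whose endpoints $P,Q$ lie in $\mathsf{Fix}_+(\widehat{\Psi})$ for a principal lift $\Psi$ of $\phi^{-1}$, I would observe that $\Phi:=\Psi^{-1}$ is a lift of $\phi$ fixing both $P$ and $Q$ (they lie in $\mathsf{Fix}_-(\widehat{\Phi})$), so $\Phi$ carries $\widetilde\gamma$ to the unique line through $P$ and $Q$, i.e.\ to itself; hence $\phi_\#(\gamma)=\gamma$, and I would take $S=\{\gamma\}$. Then weak attraction of $\gamma$ to $\Lambda^+_\phi$ would force $\gamma$ to weakly contain a generic leaf of $\Lambda^+_\phi$, so what must be shown is that no generic leaf of $\Lambda^+_\phi$ appears as a sub-line of a singular line of $\phi^{-1}$; since $\gamma$ is the $\phi_\#$-fixed line joining the two repelling fixed points $P,Q$ of $\widehat{\Phi}$, I would analyse the two $\phi^{-1}$-eigenrays into $P$ and into $Q$ via Lemma \ref{attfix} and conclude that their accumulation sets carry no leaf of $\Lambda^+_\phi$.

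The main obstacle, in every case, is precisely the assertion that a generic leaf of $\Lambda^+_\phi$ avoids the trap set $S$; the ``trap'' mechanism itself is routine. This is sharpest in case (3), where it is really a statement about weak closures of fixed lines through repelling fixed points of principal lifts --- essentially the content of \cite[Lemma 2.1]{HM-13c} and, at bottom, of the relationship between $\mathsf{Fix}_\pm(\widehat{\Phi})$ and the attracting/repelling laminations of $\phi$. I would import this, together with the non-nesting of attracting laminations of $\phi$ and $\phi^{-1}$ used in (2), from \cite{BFH-00, HM-13c} rather than re-derive it.
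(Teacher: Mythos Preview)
The paper does not prove this lemma at all: it is quoted verbatim from \cite[Lemma~2.1]{HM-13c} as background, with only the sentence ``The lemma [Lemma 2.1, \cite{HM-13c}] below summarizes this discussion'' preceding it. So there is no in-paper argument to compare against; your proposal is an attempt to supply a proof where the paper simply cites one.

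Your trap-set framework is the right idea and handles cases (1) and (2) cleanly. Two remarks. First, in case (3) the set $S=\{\gamma\}$ is generally \emph{not} weakly closed (the quotient $\mathcal{B}$ is non-Hausdorff), so the mechanism as you stated it at the outset does not literally apply; however, your actual argument there does not use closedness but rather that $\phi_\#(\gamma)=\gamma$, so weak attraction forces the generic leaf to lie in the weak closure of the single line $\gamma$. That reformulation is correct. Second, the reduction you then need --- that the weak closure of a singular line of $\phi^{-1}$ contains no generic leaf of $\Lambda^+_\phi$ --- is essentially case (2) again: by the structure of singular lines (Lemma~\ref{structure} for $\phi^{-1}$) the two ends of $\gamma$ accumulate on attracting laminations of $\phi^{-1}$, so any generic leaf of $\Lambda^+_\phi$ in the closure of $\gamma$ would lie in some $\Lambda^-\in\mathcal{L}(\phi^{-1})$, and you are back to the non-nesting fact $\Lambda^+_\phi\not\subseteq\Lambda^-$.

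That non-nesting statement is therefore the single substantive input in both (2) and (3), and you are right to flag it as the content you would import from \cite{BFH-00, HM-13c}. Since the paper itself imports the entire lemma, your outline is at least as detailed as what appears here; just be aware that Lemma~\ref{structure} as stated in this paper assumes $\phi$ hyperbolic, so for the general rotationless case you would need the version from \cite{HM-09} or \cite{HM-13a} directly.
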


But these are not all lines that constitute $\mathcal{B}_{na}(\Lambda^+_\phi)$. A important theorem in [Theorem 2.6, \cite{HM-13c}, stated below, tells us that there is way to concatenate lines from the three classes we mentioned in the above lemma which will also result in lines that are not weakly attracted to $\Lambda^+_\phi$. These are all possible types of lines in
$\mathcal{B}_{na}(\Lambda^+_\phi)$. A simple explanation of why the concatenation is necessary is, one can construct a line by connecting the base points of two rays, one of which is asymptotic to a singular ray in the forward direction of $\phi$ and the other is asymptotic to a singular ray in the backward direction of $\phi$. 
This line does not fall into any of the three categories we see in the lemma above.
The concatenation process described in \cite{HM-13c} takes care of such lines. We will not describe the concatenation here, but the reader can look up section 2.2 in \cite{HM-13c}. The following definition is by Handel and Mosher:

\begin{definition}

Let $A \in \mathcal{A}_{na}\Lambda^\pm_\phi$ and $\Phi \in P(\phi)$, we say that $\Phi$ is $A-related$ if Fix$_N(\widehat{\Phi})\cap \partial A \neq \emptyset$. Define the extended boundary of $A$ to be $$\partial_{ext}(A,\phi) = \partial A \cup \bigg( \bigcup_{\Phi}Fix_N(\widehat{\Phi}) \bigg)$$
where the union is taken over all $A$-related $\Phi\in P(\phi)$.
\end{definition}
Let $\mathcal{B}_{ext}(A,\phi)$ denote the set of lines which have end points in $\partial_{ext}(A,\phi)$; this set is independent of the choice of $A$ in its conjugacy class. Define $$\mathcal{B}_{ext}(\Lambda^+_\phi)  = \bigcup_{A\in \mathcal{A}_{na}\Lambda^\pm_\phi} \mathcal{B}_{ext}(A,\phi)$$

It is worth noting that the sets of lines mentioned in Lemma \ref{concat} are not necessarily pairwise disjoint. But if we have a line $\sigma \in \mathcal{B}_{na}(\Lambda^+_\phi)$ that is birecurrent then the situation is much simpler.
In that case $\sigma$ is either a generic leaf of some attracting lamination for $\phi^{-1}$ or $\sigma$ is carried by $\mathcal{A}_{na}\Lambda^\pm_\phi$. This takes us to the
following result due to Handel and Mosher that we need to prove an important result about asymptotic behaviour of leaves of attracting laminations
in Proposition \ref{asym2}, where we use the conclusion from the ``moreover'' part of the theorem to conclude that if
two leaves of attracting laminations of $\phi$ can be asymptotic then they are both singular lines of  $\phi$.

\begin{lemma}\label{concat}
 \cite[Theorem G]{HM-13c}

 If $\phi, \psi=\psi^{-1} \in \out$ are rotationless and $\Lambda^\pm_\phi$ is a dual lamination pair then
 $$\mathcal{B}_{na}(\Lambda^-_\phi, \psi) = \mathcal{B}_{ext}(\Lambda^\pm_\phi,\phi) \cup \mathcal{B}_{gen}(\phi) \cup \mathcal{B}_{sing}(\phi)$$

 Moreover the set of lines in $\mathcal{B}_{na}(\Lambda^-_\phi, \psi)$ are closed under concatenation. More precisely
 if $l', l''$ are two lines in $\mathcal{B}_{na}(\Lambda^-_\phi, \psi)$ with one asymptotic end $P$ say, and two other distinct
 endpoints $Q', Q''$ for $l', l''$ respectively,
 then either there exists some $[A]\in\mathcal{A}_{na}(\Lambda^\pm_\phi)$ so that $P, Q', Q'' \in \partial_{ext}(A,\phi)$ or there is a
 principal lift $\Phi$ such that all three points $P, Q', Q''$ are in $\mathsf{Fix}_N(\widehat{\Phi})$.

\end{lemma}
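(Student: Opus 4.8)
The statement is Theorem~G of \cite{HM-13c}, so the plan is simply to invoke it; but since it is the engine behind Proposition~\ref{asym2} it is worth recording how one would reconstruct the argument. Writing $\psi=\phi^{-1}$ (for which $\Lambda^-_\phi$ is an attracting lamination), the assertion refines the Weak Attraction Theorem: the lines failing to be attracted to $\Lambda^-_\phi$ under $\psi$ are exactly the three listed families, and the ``moreover'' clause records how two such lines can be spliced along a common asymptotic end. I would prove the set equality by the two evident inclusions and treat the ``moreover'' clause afterwards as a separate refinement of the right-hand side.

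For the inclusion $\supseteq$ I would check that no line in any of the three families is attracted to $\Lambda^-_\phi$ under $\psi$. A line carried by $\mathcal{A}_{na}(\Lambda^\pm_\phi)$ is handled by Lemma~\ref{NAS}: item~(4) identifies this subgroup system with $\mathcal{A}_{na}(\Lambda^-_\phi)$, item~(2) characterises the non-attracted conjugacy classes, and items~(1) and~(7) promote the conclusion from conjugacy classes to lines, since a carried line is a weak limit of carried conjugacy classes and the carried set is closed and stable under weak limits of subsequences. For a generic leaf $\lambda$ of $\Lambda^+_\phi$ one uses that $\Lambda^+_\phi$ is $\psi$-invariant and closed, so every weak limit of $\{\psi^k(\lambda)\}$ again lies in $\Lambda^+_\phi$; since $\Lambda^\pm_\phi$ is a dual pair this prevents $\lambda$ from accumulating onto a generic leaf of $\Lambda^-_\phi$. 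For a singular line of $\phi$, its lift has endpoints in $\mathsf{Fix}_+(\widehat{\Phi})$ for a principal lift $\Phi$ of a rotationless power of $\phi$; these endpoints are $\widehat{\Phi}$-fixed, so $\Phi$ fixes the lift, and bounded cancellation for $\Phi$ keeps all forward iterates of the line uniformly close to it, so no long subpath of a generic leaf of $\Lambda^-_\phi$ can ever appear. Finally a line of $\mathcal{B}_{ext}(\Lambda^\pm_\phi,\phi)$ is a concatenation of the previous types, and the same estimates apply piece by piece.

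For the inclusion $\subseteq$, together with the ``moreover'' clause, I would fix a CT $f\colon G\to G$ for a rotationless power of $\psi$ with $\Lambda^-_\phi$ corresponding to the EG stratum $H_s$, realize a non-attracted line $\gamma$ in $G$, and argue that any long subpath of $\gamma$ not eventually absorbed into the nonattracting subgraph $Z$ must, after iteration, contribute an edge of $H_s$ to its complete splitting; then Lemma~\ref{tiles} produces $k$-tiles of height $s$ inside $f^k_\#(\gamma)$, so $\gamma$ is attracted to $\Lambda^-_\phi$, a contradiction. Hence $\gamma$ is assembled, up to bounded pieces, from subpaths lying in the groupoid $\langle Z,\widehat{\rho}_s\rangle$ and from ``interface'' rays asymptotic to singular rays of $\phi$; passing back to $\partial\mathbb{F}$, the endpoints of a lift of $\gamma$ land in $\partial A$ for some $[A]\in\mathcal{A}_{na}(\Lambda^\pm_\phi)$, or are fixed points of a principal lift, or a mixture, which is exactly $\mathcal{B}_{ext}(\Lambda^\pm_\phi,\phi)\cup\mathcal{B}_{gen}(\phi)\cup\mathcal{B}_{sing}(\phi)$. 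For the trichotomy, given $l',l''$ sharing the asymptotic end $P$, I would examine the common local picture near $P$ and use malnormality of $\mathcal{A}_{na}(\Lambda^\pm_\phi)$ (Lemma~\ref{NAS}(6)) together with the finiteness of the set of principal lifts to force $P,Q',Q''$ to be accounted for by a single $[A]$ or a single principal lift $\Phi$. The hard part is precisely this last step — controlling the splice point uniformly, and ruling out that $P$ is shared between the extended boundary of one subgroup system and the fixed set of an unrelated principal lift — which is where the fine structure of $\partial_{ext}(A,\phi)$ and of $\mathsf{Fix}_N(\widehat{\Phi})$ developed in \cite{HM-13c} is indispensable.
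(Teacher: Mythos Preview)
The paper does not prove this lemma; it simply cites \cite[Theorem~G]{HM-13c}, exactly as your opening sentence proposes. So as a comparison against the paper, your proposal is correct and complete after the first line.

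Your added reconstruction sketch is supererogatory for the comparison, and it is worth flagging that the $\subseteq$ direction is considerably rougher than you suggest. You take a CT for $\psi=\phi^{-1}$, but the target description is in terms of $\phi$'s singular rays and $\phi$'s principal lifts; passing from ``$\gamma$ is assembled from subpaths in $\langle Z,\widehat{\rho}_s\rangle$ plus interface rays'' (seen in the $\psi$-CT) to ``the endpoints of $\gamma$ lie in $\partial_{ext}(A,\phi)$ or in $\mathsf{Fix}_N(\widehat{\Phi})$ for $\Phi\in P(\phi)$'' is precisely the hard step, and your sketch does not indicate how this translation is made. Similarly, for the $\supseteq$ direction your treatment of $\mathcal{B}_{ext}$ as ``a concatenation of the previous types'' with estimates applied ``piece by piece'' glosses over why such a concatenation cannot develop long $H_s$-tiles at the junctions under iteration of $\psi$. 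You rightly acknowledge at the end that the fine structure developed in \cite{HM-13c} is indispensable; in practice that fine structure \emph{is} the proof, and what you have written is closer to a plausibility outline than a reconstruction. None of this affects the comparison, since the paper itself makes no attempt at a proof.
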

This result also plays a key role in the proof of Proposition \ref{Limit} where we show that weak limit of any conjugacy class under
iterates of a hyperbolic outer automorphism is either a generic leaf or a singular line and uses the understanding
of $\mathcal{B}_{ext}(\Lambda^\pm_\phi,\phi)$.
\begin{remark}
 For our purposes, where $\phi$ is hyperbolic, $\mathsf{Fix}_N(\widehat{\Phi}) = \mathsf{Fix}_+(\widehat{\Phi})$.
\end{remark}

From the work of Feighn and Handel in Recognition theorem we extract  the following lemma by assuming
$\phi$ is hyperbolic. The original statement is much more general.
 \begin{lemma}\label{lemma4}
 \cite[Lemma 4.36]{FH-11}
  Let CT $f: G\longrightarrow G$ represent a rotationless $\phi\in \out$, and $\tilde{f}: \tilde{G}\longrightarrow \tilde{G}$
  is a lift. If a vertex $v$ is fixed by $f$ and $E$ is a non-fixed edge in some EG strata or superlinear NEG strata 
  originating at $v$ with a fixed initial direction,  such that
  $f_\#(E) = Eu$, then there exists a splitting $f_\#(E) = E\cdotp u$ such that:
  $\widetilde{R} = \widetilde{E}\cdotp \tilde{u}\cdotp \tilde{f}_\#(\tilde{u})\cdotp \tilde{f}_\#^2(\tilde{u})...... \cdotp \tilde{f}^k_\#(\tilde{u})....$
  is a ray and its endpoint is $\tilde{\xi}\in Fix_+(\widehat{\Phi})$ for some principal lift $\widehat{\Phi}$.
  Moreover, if $E$ is an edge in some EG strata then the weak accumulation set of $\xi$ is the unique attracting lamination
  associated to EG strata which contains $E$.

  Conversely, every point $\tilde{\xi}\in Fix_+(\widehat{\Phi})$ is obtained by iteration of a some nonfixed
  edge with a fixed initial vertex.
 \end{lemma}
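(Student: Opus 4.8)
The plan is to recall the construction of the \emph{eigenray} of $E$ from the completely-split train-track machinery of \cite{FH-11} and to check that, under the standing hyperbolicity hypothesis, it produces exactly the asserted statement; indeed this is \cite[Lemma 4.36]{FH-11} together with the simplification $\partial\mathrm{Fix}(\Phi)=\emptyset$. First I would produce the splitting $f_\#(E)=E\cdot u$ with $u$ completely split: when $E$ is a non-fixed NEG edge this is the \emph{Nonlinear NEG edges} property of a CT, and when $E$ is an edge of an EG stratum with fixed initial direction (equivalently $Tf(E)=E$) one extracts the same kind of initial splitting from the relative train-track property together with the complete splitting of $f(E)$. Because such a splitting is $f_\#$-invariant, one has $f^i_\#(E)=E\cdot u\cdot f_\#(u)\cdots f^{i-1}_\#(u)$ for every $i$; these paths form a nested increasing sequence, and their union is a ray $R=E\,u\,f_\#(u)\,f^2_\#(u)\cdots$ in $G$.

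Next I would lift: choose the lift $\widetilde E$ of $E$ based at a lift $\widetilde v$ of $v$, and let $\widetilde f$ be the lift of $f$ fixing $\widetilde v$ and the initial direction of $\widetilde E$; write $\Phi\in\aut$ for the corresponding automorphism. Then $\widetilde f_\#(\widetilde E)=\widetilde E\cdot\widetilde u$, the same nested-union construction gives $\widetilde R=\widetilde E\,\widetilde u\,\widetilde f_\#(\widetilde u)\cdots$, and a direct computation shows $\widetilde f_\#(\widetilde R)=\widetilde R$, so the endpoint $\widetilde\xi\in\partial\mathbb{F}$ of $\widetilde R$ lies in $\mathrm{Fix}(\widehat\Phi)$. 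That $\widetilde\xi$ is in fact an \emph{attracting} fixed point, and that $\Phi$ is a \emph{principal} automorphism, is where the hypotheses ``EG or superlinear NEG'' and ``fixed initial direction at a fixed vertex'' are needed: the EG/superlinear hypothesis rules out $\widetilde R$ being eventually the axis of a covering translation (or the other excluded two-point configurations), and a bounded-cancellation argument then shows a neighbourhood of $\widetilde\xi$ in $\partial\mathbb{F}$ is pulled toward $\widetilde\xi$ by $\widehat\Phi$; meanwhile the terminal-vertex information attached to a non-fixed NEG edge (the \emph{vertices} property of a CT), respectively the structure of EG strata, forces the relevant vertex to be principal, so $\Phi\in P(\phi)$.

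For the ``moreover'' clause, suppose $E$ lies in the EG stratum $H_r$ with associated attracting lamination $\Lambda_r$. The initial segments $f^i_\#(E)$ of $R$ are exactly the $i$-tiles of height $r$; by Lemma \ref{tiles}(2) every tile of height $r$ is eventually a subpath of all sufficiently long initial segments of $R$, while by Lemma \ref{tiles}(1) every generic leaf of $\Lambda_r$ is an increasing union of such tiles, so the accumulation set of $R$ (equivalently, the weak accumulation set of $\xi$) contains $\Lambda_r$; conversely, every finite subpath of $R$ sits inside some tile and hence inside a leaf of $\Lambda_r$, which gives the reverse inclusion. For the converse direction I would start with $\widetilde\xi\in\mathrm{Fix}_+(\widehat\Phi)$, realize it by a ray in $\widetilde G$, and use that a fixed, attracting, non-periodic ray in a CT must, after tightening and passing to a terminal subray, have the eigenray form above, whose first edge is a non-fixed edge with fixed initial direction; projecting down produces the required edge.

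The step I expect to be the main obstacle is the passage from ``$\widetilde\xi$ is a fixed point of $\widehat\Phi$'' to ``$\widetilde\xi$ is an isolated attracting fixed point of a \emph{principal} automorphism'', together with the converse direction: these are precisely the places where the distinction between linear and superlinear NEG strata is essential, and they rest genuinely on the CT structure theory of \cite{FH-11} rather than on anything one can do by hand here. Consequently the cleanest route in this paper is to quote \cite[Lemma 4.36]{FH-11} directly and to record the hyperbolic simplification $\mathrm{Fix}_N(\widehat\Phi)=\mathrm{Fix}_+(\widehat\Phi)$, which is exactly what the surrounding text does.
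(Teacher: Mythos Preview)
The paper does not give a proof of this lemma at all: it is stated as a direct citation of \cite[Lemma 4.36]{FH-11}, prefaced by ``From the work of Feighn and Handel in Recognition theorem we extract the following lemma by assuming $\phi$ is hyperbolic.'' Your concluding remark is therefore exactly right---the paper simply quotes the result and records the hyperbolic simplification $\mathrm{Fix}_N(\widehat\Phi)=\mathrm{Fix}_+(\widehat\Phi)$---and the proof sketch you supply (eigenray construction, tile argument for the accumulation set, converse via realizing $\widetilde\xi$ by a ray) is a faithful summary of how the cited source establishes it.
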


 Note that in this case $\widehat{\Phi}$ may not be a principal lift and the ray may not be a singular ray as we have
 described it here. What we are really interested in is the conclusion in the last sentence about accumulation set about
 the endpoint. The fact that the weak closure of any attracting fixed point contains an attracting lamination is used
 several times in this paper. In particular note that if we have a conjugacy class that gets attracted to a singular line under
 iterates of $\phi$, then it is also weakly attracted to the attracting laminations which appear in the closure of the endpoints of this
 singular line.

The following lemmas are applications of the above result which, put together, essentially tell that iteration of any edge whose
initial vertex is principal will give us a ray whose endpoint is an attracting fixed point.

\begin{lemma}\label{lemma2}

  Let $v$ be a principal vertex in a CT $f: G\longrightarrow G$ representing a rotationless $\phi\in \out$.
  Then for every direction $d$ originating from $v$,  $f^N_\# (d)$ is a fixed direction for some $N>0$ .

 \end{lemma}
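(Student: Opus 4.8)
The plan is to derive the statement purely from the structural properties of a CT together with finiteness of the set of directions at a vertex; no new dynamical input is needed.

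First I would observe that since $v$ is a principal vertex of the CT $f\colon G\to G$, it is fixed by $f$ (the \emph{Rotationless} property of CTs, \cite{FH-11}). Hence the direction map $Tf$ restricts to a self-map of the set $D_v$ of directions based at $v$: if $d$ is the initial direction at $v$ of an edge $E$, then $Tf(d)$ is by definition the initial direction of the edge path $f(E)$, which starts at $f(v)=v$, so $Tf(d)\in D_v$. Because $G$ is a finite graph, $D_v$ is a finite set.

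Second, finiteness of $D_v$ forces every $d\in D_v$ to be pre-periodic under $Tf$: there exist $N\ge 1$ and $p\ge 1$ with $(Tf)^{N+p}(d)=(Tf)^{N}(d)$, where we may take $N\ge 1$ after enlarging it by a period if necessary. Set $d':=(Tf)^{N}(d)=f^{N}_{\#}(d)$. Then $d'$ is a periodic direction based at the principal vertex $v$, so by the CT property that every periodic direction at a principal vertex is fixed by $Tf$ we get $Tf(d')=d'$. Thus $f^{N}_{\#}(d)=d'$ is a fixed direction, which is the desired conclusion.

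The only subtlety, and the point I would take care to spell out, is that the periodicity produced by the pigeonhole argument genuinely occurs at a \emph{principal} vertex, so that the rotationless clause of the CT properties is applicable: this is automatic here because $v$ is $f$-fixed, so every iterate $(Tf)^{k}(d)$ stays in $D_v$ and hence is always a direction at the single principal vertex $v$. I do not expect any real obstacle; the content of the lemma is exactly the implication ``eventually periodic, plus (periodic $\Rightarrow$ fixed at principal vertices), hence eventually fixed,'' with the rotationless hypothesis (built into the CT) doing all the work.
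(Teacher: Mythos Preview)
Your proof is correct and follows essentially the same approach as the paper: both arguments use finiteness of the direction set at $v$ to force eventual periodicity under $Tf$, and then invoke the CT (Rotationless) property that periodic directions at a principal vertex are fixed. You are slightly more careful than the paper in making explicit that $v$ being principal implies $f(v)=v$, so that $Tf$ genuinely restricts to a self-map of $D_v$; the paper's version also goes a bit further by noting that one can choose a single $N$ working for all directions at $v$, but this is immediate once each direction is eventually fixed.
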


 \begin{proof}
  The number of directions originating at any given vertex is finite. If a direction is not periodic then upon iteration of
  $f_\#$ it must repeat some direction after $N'$ iterations. But since $f$ is
  rotationless every periodic direction has period one. So, $N'$ is bounded by valence of $v$.
  Repeat this for every direction and taking the least common multiple we get $N$ such that $f^N_\#(d)$ has a fixed
  direction for any direction $d$.
 \end{proof}

 \begin{lemma}\label{lemma3}
  Let $v$ be a principal vertex in a CT $f: G\longrightarrow G$ representing a rotationless $\phi\in \out$ and $E'$
  be a non-fixed EG edge originating at $v$. Then $f^k_\#(E')$ weakly accumulates on a singular ray as
  $k\rightarrow \infty$.


 \end{lemma}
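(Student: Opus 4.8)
The plan is to reduce to Lemma~\ref{lemma4} after first replacing $E'$ by an edge with a \emph{fixed} initial direction, and then to use legality of the top stratum to rule out cancellation during iteration. First, since $v$ is a principal vertex it is fixed by $f$. Let $H_r$ be the EG stratum containing $E'$. By Lemma~\ref{lemma2} there is an $N_0>0$ such that $Tf^{N_0}$ sends the initial direction of $E'$ to a fixed direction. Because $H_r$ is irreducible, $Tf$ preserves $H_r$, and since a single edge is $r$-legal the relative train track conditions give that $f^{N_0}_\#(E')=f^{N_0}(E')$ is a legal $r$-path; it originates at $v$ (as $f^{N_0}(v)=v$), its first edge $E_1$ lies in $H_r$, and $Tf(E_1)=E_1$. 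As $H_r$ is EG, $E_1$ is non-fixed, and $f_\#(E_1)$ begins with $E_1$, say $f_\#(E_1)=E_1\cdot u_1$ with $u_1$ nontrivial.

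Now apply Lemma~\ref{lemma4} to the non-fixed EG edge $E_1$ at the fixed vertex $v$: it yields a splitting $f_\#(E_1)=E_1\cdot u_1$ for which
$$R=E_1\cdot u_1\cdot f_\#(u_1)\cdot f^2_\#(u_1)\cdots$$
is a ray whose endpoint lies in $\mathsf{Fix}_+(\widehat{\Phi})$ for a principal lift $\widehat{\Phi}$ of the rotationless $\phi$; thus $R$ is a singular ray (and, $E_1$ being EG, the weak accumulation set of its endpoint is the attracting lamination associated to $H_r$, a fact that may be recorded for later use). Iterating the splitting, $f^k_\#(E_1)=f^k(E_1)=E_1\, u_1\, f_\#(u_1)\cdots f^{k-1}_\#(u_1)$ is precisely the initial subpath of $R$ of length $k$, and these initial subpaths exhaust $R$ as $k\to\infty$.

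It remains to transfer this to $E'$. Write $f^{N_0}_\#(E')=E_1\cdot\tau$. Since $f^{N_0}_\#(E')$ is an $r$-legal path and $E_1\in H_r$, the turn between $E_1$ and the first edge of $\tau$ is legal, hence stays non-degenerate under every iterate of $Tf$; so no cancellation occurs at that junction when $f^k$ is applied, and $f^{N_0+k}_\#(E')=f^k_\#(E_1)\cdot f^k_\#(\tau)$. Therefore $f^{N_0+k}_\#(E')$ begins with the length-$k$ initial subpath of $R$, so every finite subpath of $R$ eventually occurs in $f^k_\#(E')$, i.e.\ $f^k_\#(E')$ weakly accumulates on the singular ray $R$. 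The one place demanding care is the verification that the initial edge $E_1$ of $f^{N_0}_\#(E')$ genuinely satisfies all hypotheses of Lemma~\ref{lemma4} and that legality of the top stratum prevents cancellation from eroding the growing tile $f^k(E_1)$; this is exactly where the train track conditions, irreducibility of $H_r$, and Lemma~\ref{lemma2} are used.
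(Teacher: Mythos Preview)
Your proof follows essentially the paper's approach: use Lemma~\ref{lemma2} to pass to a fixed initial direction, identify the principal edge $E_1$, invoke Lemma~\ref{lemma4} for the singular ray $R$, and show iterates of $E'$ carry growing initial segments of $R$. Two minor imprecisions are worth flagging. First, the equalities $f^{N_0}_\#(E')=f^{N_0}(E')$ and $f^k_\#(E_1)=f^k(E_1)$ are false in general (tightening can occur in strata below $H_r$), but you never actually use them; all you need is that $f^{N_0}_\#(E')$ is $r$-legal and begins with $E_1$, and that $f^k_\#(E_1)=E_1\,u_1\,f_\#(u_1)\cdots f^{k-1}_\#(u_1)$, which follows from the splitting in Lemma~\ref{lemma4}. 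Second, deducing legality of the turn $(\bar{E}_1,\tau_1)$ from $r$-legality alone is incomplete when $\tau_1\in G_{r-1}$ (particularly if $\tau_1$ lies in a zero stratum). The cleanest fix for the splitting $E_1\cdot\tau$ is to observe that $f^{N_0}_\#(E')$ is completely split and $r$-legal, so its first term cannot be a height-$r$ indivisible Nielsen path (those contain an illegal height-$r$ turn) and must therefore be the single edge $E_1$.

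The paper handles this same step by a different, more hands-on argument: writing $f^N_\#(E')=Eu$, it shows directly that if cancellation in $u'f_\#(u)$ were to erase the leading edge $E$, then $f_\#(u)$ would contain the backtracking $\bar{E}E$, contradicting tightness. Both routes yield the needed splitting; yours is more structural, the paper's more elementary.
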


 \begin{proof}
  By using the previous lemma we know that the initial direction of $f^k_\#(E')$ is fixed for all $k\geq N$,
  where $N$ is as in Lemma \ref{lemma3}.
  Hence we can write $f^k_\#(E') = Eu$, where $E$ is a principal edge. Since the initial direction of $E$ is a
  fixed direction, we have $f_\#(E) = E\cdotp u'$. So, $f_\#(Eu) = E\cdotp u' f_\#(u)$. Because, if $f_\#(u)$ cancels
  out all of $u'$, then we can write $f_\#(u) = \overline{u'}u''$ and then in $u''$ having an $\overline{E}$ in the beginning
  must be followed
  by $E$, since the initial direction of $f^k_\#(E')$ is principal. This is not possible by definition of $f_\#$. Thus,
  the splitting is indeed there. So, $f_\#^{k+1}(E') = E\cdotp u'f_\#(u)$.
  Upon iteration we see that this ray $R'$ accumulates on a singular ray $R$ with initial direction $E$.

  Next we note that since every finite subpath of $R'$ occurs as a subpath of $R$. This is because each tile of $E'$
is contained in a tile of $E$. Hence the weak closure of $R'$ is contained in the weak closure of $R$. But the weak closure
of $R$ is just $\Lambda^+$. Hence weak closure of $R'$ is the same set and the two rays are asymptotic.

 \end{proof}

\textbf{Structure of Singular lines:} We now state a lemma which is a collection of lemmas from the subgroup decomposition work of
Handel and Mosher that tells us the structure of singular lines and guarantees that one of the leaves of any attracting
lamination is a singular line.

\begin{lemma}\cite[Lemma 3.5, Lemma 3.6]{HM-09}, \cite[Lemma 1.63]{HM-13a}\label{structure}

Let $\phi\in\out$ be rotationless and hyperbolic and $l\in \mathcal{B}_{sing}(\phi)$ then the following are true:
\begin{enumerate}
 \item $l = \overline{R}\alpha R'$ where $R$ for some singular rays $R \neq R'$ and some path $\alpha$ which is
 either trivial or a Nielsen path. Conversely, any such line is a singular line.
\item If $\Lambda\in\mathcal{L(\phi)}$ then there exists a leaf of $\Lambda$ which is a singular line and one of its
ends is dense in $\Lambda$.

\end{enumerate}

\end{lemma}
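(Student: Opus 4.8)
The plan is to reduce both parts to the description of $\mathrm{Fix}_+(\widehat{\Phi})$ for principal lifts already packaged in Lemma~\ref{lemma4} (together with Lemmas~\ref{lemma2} and \ref{lemma3}), to the tile picture of generic leaves in Lemma~\ref{tiles}, and to the uniqueness of indivisible Nielsen paths in Lemma~\ref{np}. For the forward direction of (1), let $l\in\mathcal{B}_{sing}(\phi)$, with a principal lift $\Phi$ of a rotationless iterate of $\phi$ and a lift $\widetilde{l}$ having distinct endpoints $\widetilde{\xi},\widetilde{\xi}'\in\mathrm{Fix}_+(\widehat{\Phi})$. By the converse half of Lemma~\ref{lemma4}, each endpoint is realised by a ray obtained from forward iteration of a non-fixed edge at a fixed vertex, and such rays are $\widetilde{f}_\#$-fixed; after trimming them to the tails lying on $\widetilde{l}$, write $\widetilde{l}=\overline{\widetilde{R}}\,\widetilde{\alpha}\,\widetilde{R}'$ with $\widetilde{\alpha}$ a finite path between the two (fixed) base vertices. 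Applying $\widetilde{f}_\#$, which fixes $\widetilde{l}$, $\widetilde{R}$ and $\widetilde{R}'$, and a bounded-cancellation estimate forces $f^k_\#(\alpha)$ (after re-trimming the stable portions of the two rays) to stabilise, so $\alpha$ is a periodic Nielsen path, hence a Nielsen path or trivial by Lemma~\ref{np}; and $R\neq R'$ since $\widetilde{\xi}\neq\widetilde{\xi}'$.

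For the converse of (1), suppose $l=\overline{R}\,\alpha\,R'$ with $\alpha$ trivial or Nielsen and $R\neq R'$ singular rays. Since $\alpha$ is Nielsen there is a lift $\Phi$ of the relevant rotationless iterate of $\phi$ fixing a lift $\widetilde{v}$ of the base vertex of $R$ and acting as the identity on the lift $\widetilde{\alpha}$; then $\widehat{\Phi}$ fixes the endpoint of $\widetilde{R}$ and the endpoint of $\widetilde{\alpha}\,\widetilde{R}'$, and each such point is attracting, being the endpoint of a non-fixed-edge iterate ray, so $\widetilde{l}=\overline{\widetilde{R}}\,\widetilde{\alpha}\,\widetilde{R}'$ is a lift of $l$ with both endpoints in $\mathrm{Fix}_+(\widehat{\Phi})$. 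One then checks that $\Phi$ is principal: $\mathrm{Fix}_N(\widehat{\Phi})=\mathrm{Fix}_+(\widehat{\Phi})$ contains at least the two distinct endpoints, and hyperbolicity of $\phi$ (hence $\partial\mathrm{Fix}(\Phi)=\emptyset$), together with the hypothesis that $R,R'$ are genuinely singular rays, rules out the degenerate configurations excluded in the definition of a principal lift. Hence $l\in\mathcal{B}_{sing}(\phi)$.

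For (2), realise $\Lambda\in\mathcal{L}(\phi)$ by a CT $f:G\to G$ with corresponding EG stratum $H_r$. Since $Tf$ permutes the finitely many directions of $H_r$ and $\phi$ is rotationless, some edge $E\subset H_r$ has $Tf(E)=E$ and a splitting $f_\#(E)=E\cdot u$, so by Lemma~\ref{lemma4} the ray $R=\bigcup_k f^k_\#(E)$ converges to a point $\widetilde{\xi}\in\mathrm{Fix}_+(\widehat{\Phi})$ for a principal lift $\Phi$ and has weak accumulation set exactly $\Lambda$; by Lemma~\ref{tiles}(2) the tiles $f^k_\#(E)$ are cofinal among all height-$r$ tiles, so the finite subpaths of $R$ are precisely those that occur in generic leaves of $\Lambda$, i.e.\ the forward end of $R$ is dense in $\Lambda$. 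It remains to extend $R$ through its base vertex $v$ into a bi-infinite singular line lying in $\Lambda$: using Lemmas~\ref{lemma2} and \ref{lemma3} at $v$ (or, when $H_r$ carries an indivisible Nielsen path, the structure described in Lemma~\ref{np}) one produces a second singular ray $R''$ at $v$ whose initial direction differs from that of $R$, so that $\ell=\overline{R''}\,\alpha\,R$ is reduced and is a singular line by (1); the point to be checked is that $\ell$ is in fact a leaf of $\Lambda$, and its $R$-end is then dense in $\Lambda$ by construction.

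The main obstacle is exactly that last point in (2): arranging $R''$ so that \emph{every} finite subpath of $\ell$, including those straddling the junction at $v$, occurs in $\Lambda$, so that $\ell$ is an honest leaf of $\Lambda$ and not merely a line with one dense end. Controlling the subpaths near $v$ requires combining the birecurrence of the generic leaf of $\Lambda$ with the local train-track geometry of $H_r$ at its principal vertices, which is the substance of \cite[Lemma~3.6]{HM-09}. The remaining steps — the direction counting, the bounded-cancellation manipulations in (1), and the verification that the lift $\Phi$ built in the converse of (1) is principal — are routine given the structural results quoted above.
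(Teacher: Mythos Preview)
Your treatment of part~(1) is close to the paper's and essentially correct. The paper is slightly more direct: once you have singular rays $\widetilde R,\widetilde R'$ based at fixed vertices, the finite path $\widetilde\alpha$ joining those base vertices has fixed endpoints and is hence (after projecting) a Nielsen path immediately --- no bounded-cancellation stabilisation argument is needed. The paper then highlights a point you glossed over: the concatenation $\overline{R}\alpha R'$ need not be locally injective at the endpoints of $\alpha$, and one must use the decomposition of $\alpha$ into indivisible Nielsen paths and fixed edges to adjust $R,R'$ so that it is. Your converse argument is more detailed than the paper's one-line ``follows from the definition,'' but goes in the same direction.

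Part~(2) is where your approach genuinely diverges from the paper's, and the obstacle you flag is real. You try to build the singular leaf \emph{from the outside}: construct a singular ray $R$ dense in $\Lambda$, then extend backwards through its base vertex by a second singular ray $R''$, and finally verify that the resulting line lies in $\Lambda$. As you acknowledge, that last verification --- controlling subpaths that straddle the junction --- is the hard part, and you do not supply it. The paper bypasses this difficulty entirely by working \emph{from the inside}: start with a generic leaf $l$ of $\Lambda$ (so membership in $\Lambda$ is automatic), use its complete splitting (from the tile description) to write $l=\overline{R}\cdot E\cdot R'$ with $E$ a height-$r$ edge at a principal vertex with fixed initial direction, refine to $l=\overline{R}\cdot E\cdot\alpha\cdot R''$ with $\alpha$ the maximal initial Nielsen/fixed-edge segment of $R'$, choose $l$ among generic leaves to maximise the number of components of $\alpha$, and then iterate $l$ under $f_\#$. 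Since $\Lambda$ is $f_\#$-invariant, every iterate is still a leaf of $\Lambda$; the maximality of $\alpha$ forces the limiting leaf to be a singular line, and the $E$-side end is dense in $\Lambda$ because $E$ is an EG edge of height $r$. So the paper's key idea --- iterate a generic leaf rather than assemble rays --- is exactly what circumvents the gap you identified.
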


We include a short sketch of the proof here for sake of completeness, since this result is fundamental to
the work done in this paper here. For more technical details please refer to the original proof.

\begin{proof} Let $f: G\rightarrow G$ be a  CT representing $\phi$.\\
 \textit{Sketch of proof for} (1):
 Suppose $l$ is a singular line, then there exists a principal lift $\Phi$ of $\phi$ which fixes the
 endpoints of this line, which are attracting fixed points. Use Lemma \ref{lemma4} to conclude that there are
 singular rays $R', R$ which converge to these endpoints. Join the initial endpoints of $R, R'$  by
 a path $\tilde{\alpha}$. Then the endpoints of $\tilde{\alpha}$ are fixed and the projection to $G$, say $\alpha$, is a Nielsen path.
 However, the line $\overline{R} \alpha R'$ may not be locally injective at endpoints of $\alpha$. But
 using the lemma that $\alpha$ has a unique decomposition into concatenation of indivisible Nielsen paths
 and fixed edges, one can choose $R, R'$ such that it is locally injective as is done in the original proof in 
 \cite[Lemma 3.5]{HM-09}.
 The converse part follows straight from definition of a singular line.
 This completes the sketch of proof for (1).\\

 \textit{Sketch of proof for} (2) \cite[Lemma 3.6]{HM-09}: Let $H_r$ be the unique EG strata corresponding to $\Lambda$.
 Using the lemma that every generic leaf has a complete splitting (since it can be
 written as a increasing union of tiles) one concludes that a generic leaf $l$ has a splitting of the form
 $\overline{R} \cdot E \cdot R'$, where $E$ is an edge in $H_r$ whose initial vertex is principal and initial
 direction is fixed (existence of $E$ is due to \cite[Lemma 3.19]{FH-11}). Consider a further splitting of
 $l = \overline{R} \cdot E \cdot \alpha \cdot R''$, where $\alpha$ is the longest initial segment of $R'$
 which can be expressed as a concatenation of indivisible Nielsen paths and fixed edges. Recall that there is an
 upper bound on the number of components in the complete splitting of any path which occurs as a concatenation of
 fixed edges and indivisible Nielsen paths. Using this, choose $l$ to maximize the number of components of $\alpha$.
 Now iterate $l$ by $f_\#$ to get a leaf (since $\Lambda$ is invariant under $f_\#$) and this leaf will be a
 singular line due to choice of the splitting above.

\end{proof}

\textbf{Finiteness of singular lines: }
Before we end this overview about singular lines we prove a lemma that shows that $\mathcal{B}_{sing}(\phi)$ is a finite set.
We will need this result to prove the uniform finiteness of fibers of Cannon-Thurston maps.

Recall that the normal subgroup Inn($\mathbb{F}$) acts on $\aut$ by conjugation and the orbits of this action define
an equivalence relation on $\aut$ called \textit{isogredience}. The set of principal automorphisms representing some
$\phi\in\out$ is invariant under isogredience. Also any two elements of $\aut$ which are in the same
equivalence class have the same attracting fixed points at the boundary. \cite[Remark 3.9]{FH-11} shows
that the number of isogredience classes of principal automorphisms are finite.

Let $\mathcal{B}_{\mathsf{Fix}_+}(\phi)$ denote the set of lines with endpoints in $\mathsf{Fix}_+(\phi)$
\begin{prop}\label{singfin}
 For any hyperbolic $\phi\in\out$ the set of lines $\mathcal{B}_{\mathsf{Fix}_+}(\phi)$ is a finite set and its cardinality is bounded above
 by a number depending only on rank$(\mathbb{F})$.

\end{prop}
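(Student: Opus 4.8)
The plan is to prove the sharper statement that $\mathsf{Fix}_+(\phi)$ is itself a \emph{finite} subset of $\partial\mathbb{F}$ whose cardinality is bounded above by a function of $\mathrm{rank}(\mathbb{F})$ alone. The proposition follows immediately: a line in $\mathcal{B}$ is determined, up to reversal of orientation, by its unordered pair of distinct endpoints, and every line of $\mathcal{B}_{\mathsf{Fix}_+}(\phi)$ has a lift whose two endpoints lie in $\mathsf{Fix}_+(\phi)$, so
$$|\mathcal{B}_{\mathsf{Fix}_+}(\phi)| \ \leq\ \binom{|\mathsf{Fix}_+(\phi)|}{2}.$$

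By the Notation paragraph of Section \ref{sec:4}, $\mathsf{Fix}_+(\phi)=\bigcup_{\Phi\in P(\phi)}\mathsf{Fix}_+(\widehat{\Phi})$. The first ingredient is that $P(\phi)$ meets only finitely many isogredience classes: this is \cite[Remark 3.9]{FH-11}, and since that finiteness rests on the index inequality for automorphisms of free groups, the number of such classes is in fact bounded by a constant $N_1=N_1(\mathrm{rank}(\mathbb{F}))$. As isogredient automorphisms have the same attracting fixed points on $\partial\mathbb{F}$ (recalled in the discussion of isogredience immediately preceding this proposition), choosing one representative $\Phi_1,\dots,\Phi_m$ from each class ($m\leq N_1$) gives $\mathsf{Fix}_+(\phi)=\bigcup_{i=1}^{m}\mathsf{Fix}_+(\widehat{\Phi_i})$. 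The second ingredient bounds a single summand: because $\phi$ is hyperbolic, $\mathrm{Fix}(\Phi_i)$ is trivial (a nontrivial $\Phi_i$-fixed element would be a $\phi$-periodic conjugacy class, which hyperbolic automorphisms lack), so $\partial\mathrm{Fix}(\Phi_i)=\emptyset$ and $\mathsf{Fix}_+(\widehat{\Phi_i})\subseteq\mathsf{Fix}(\widehat{\Phi_i})$ consists of isolated fixed points, of which there are at most $N_2=N_2(\mathrm{rank}(\mathbb{F}))$, again by the index inequality. Hence $|\mathsf{Fix}_+(\phi)|\leq N_1 N_2$ depends only on $\mathrm{rank}(\mathbb{F})$, and the displayed binomial estimate completes the proof.

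The argument is essentially bookkeeping once these two inputs are recorded; the only point deserving care is to \emph{upgrade} ``finitely many isogredience classes of principal automorphisms, each with finitely many fixed points on $\partial\mathbb{F}$'' to bounds that are uniform in $\mathrm{rank}(\mathbb{F})$. Both upgrades are standard consequences of the Gaboriau--Jaeger--Levitt--Lustig index inequality (see \cite{FH-11} and the references therein): in its form for a single automorphism it controls $|\mathsf{Fix}(\widehat{\Phi_i})|$, and in its summed-over-isogredience-classes form it bounds $N_1$. If one prefers to avoid invoking the index inequality for a possibly non-rotationless $\phi$, one may first replace $\phi$ by the rotationless power $\phi^{K}$ of Lemma \ref{rotationless} --- noting that $\mathsf{Fix}_+(\phi)\subseteq\mathsf{Fix}_+(\phi^{K})$, since in the hyperbolic case $\Phi\in P(\phi)$ forces $\Phi^{K}\in P(\phi^{K})$ (using that $\phi$ and $\phi^{K}$ have the same attracting laminations and covering translations, and that attracting fixed points persist under powers) --- and then apply the preceding paragraph to $\phi^{K}$, whose constant $K$ already depends only on $\mathrm{rank}(\mathbb{F})$.
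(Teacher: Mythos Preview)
There is a genuine gap: your central claim that $\mathsf{Fix}_+(\phi)$ is a \emph{finite} subset of $\partial\mathbb{F}$ is false. The set $P(\phi)$ is closed under conjugation by inner automorphisms, and if $\Phi'=i_c\,\Phi\,i_c^{-1}$ then $\mathsf{Fix}_+(\widehat{\Phi'})=c\cdot\mathsf{Fix}_+(\widehat{\Phi})$ is the $c$--\emph{translate} of $\mathsf{Fix}_+(\widehat{\Phi})$, not the same set. Hence $\mathsf{Fix}_+(\phi)=\bigcup_{\Phi\in P(\phi)}\mathsf{Fix}_+(\widehat{\Phi})$ is $\mathbb{F}$--invariant and therefore infinite whenever it is nonempty. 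You have read the paper's informal sentence preceding the proposition too literally; what is true is that isogredient lifts contribute the same \emph{number} of attracting fixed points (equivalently, the same image in $\partial\mathbb{F}/\mathbb{F}$), not the same subset of $\partial\mathbb{F}$. Thus your equality $\mathsf{Fix}_+(\phi)=\bigcup_{i=1}^{m}\mathsf{Fix}_+(\widehat{\Phi_i})$ fails, and the binomial bound $\binom{|\mathsf{Fix}_+(\phi)|}{2}$ is vacuous.

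The paper's argument avoids this by never asserting finiteness of $\mathsf{Fix}_+(\phi)$. It applies the Gaboriau--Jaeger--Levitt--Lustig inequality in its summed form to obtain $\sum_i a_i\le 2N$ with $a_i=|\mathsf{Fix}_+(\widehat{\Phi_i})|$, so there are at most $2N$ $\mathbb{F}$--orbits of attracting fixed points. But having finitely many orbits of endpoints does \emph{not} by itself bound the number of lines in $\mathcal{B}$: fixing one endpoint and translating the other by $\mathbb{F}$ can produce infinitely many $\mathbb{F}$--inequivalent lines. The extra ingredient the paper invokes is Lemma~\ref{lemma4}: each attracting fixed point is the endpoint of a singular ray obtained by iterating a principal edge in a CT, and there are only finitely many such rays in $G$; a line in $\mathcal{B}_{\mathsf{Fix}_+}(\phi)$ is then a concatenation of two such rays, which is what pins down the finite connecting data and yields the uniform bound. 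Your proposal is missing precisely this geometric step.
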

The author thanks Lee Mosher for help with the proof.

\begin{proof} Let rank$(\mathbb{F}) = N$.
 There are only finitely many isogredience classes of principal automorphisms in $P(\phi)$. Pick representatives
 $\Phi_1, \Phi_2,..., \Phi_K$ as principal automorphisms that represent these isogredience classes.
 Since $\phi$ is hyperbolic $\mathsf{Fix}_N(\widehat{\Phi}_i) = \mathsf{Fix}_+(\widehat{\Phi}_i)$ for
 all $i = 1, 2,.... , K$. If $a_i = |\mathsf{Fix}_+(\widehat{\Phi}_i)|$ for $i=1, 2, .... , K$ then
 the main inequality of counting indexes in \cite[Theorem 1']{GJLL-98} gives us $$a_1 + a_2 + ... + a_K \leq 2N$$

 Since any line in $\mathcal{B}_{sing}(\phi)$ is obtained by joining two points in $\mathsf{Fix}_+(\widehat{\Phi}_i)$ for some 
 principal lift $\widehat{\Phi}_i$, the
 above inequality shows that maximum number of possible  lines in $\mathcal{B}_{sing}(\phi)$ is uniformly bounded above by some function of $N$.

 Now, coming to $\mathcal{B}_{\mathsf{Fix}_+}(\phi)$, by using Lemma \ref{lemma4} we can deduce that the  endpoints of these lines  
 are ones that are defined by singular rays. So, $\mathcal{B}_{\mathsf{Fix}_+}(\phi)$ is obtained by joining 
 endpoints of lines in $\mathcal{B}_{sing}(\phi)$, hence the cardinality of $\mathcal{B}_{\mathsf{Fix}_+}(\phi)$ is uniformly 
 also bounded above by some number depending on rank$(\mathbb{F})$.
\end{proof}

\subsection{Weak attraction theorem }
\label{sec:9}

The following lemma is very important for our purposes. The form in which we will be using it  is: given a line $l$ such that
one endpoint of $l$, $\xi$ say, is carried by some $\mathcal{A}_{na}(\Lambda^\pm_\phi)$ (i.e. there exists some $[A]\in\mathcal{A}_{na}(\Lambda^\pm_\phi)$
such that $\xi\in\partial A$) but the other point of $l$ is not in $\partial A$ then the lemma tells us that the line
gets attracted to either the attracting lamination or the repelling lamination (or both). We call it the Weak attraction theorem.

\begin{lemma}[\cite{HM-13c} Corollary 2.17, Theorem H]
\label{WAT}
 Let $\phi\in \out$ be a rotationless and exponentially growing. Let $\Lambda^\pm_\phi$ be a dual lamination pair for $\phi$. Then for any line $\gamma\in\mathcal{B}$ not carried by $\mathcal{A}_{na}(\Lambda^{\pm}_\phi)$ at least one of the following hold:
\begin{enumerate}
 \item $\gamma$ is attracted to $\Lambda^+_\phi$ under iterations of $\phi$.
   \item $\gamma$ is attracted to $\Lambda^-_\phi$ under iterations of $\phi^{-1}$.
\end{enumerate}
Moreover, if $V^+_\phi$ and $V^-_\phi$ are attracting neighborhoods for the laminations $\Lambda^+_\phi$ and $\Lambda^-_\phi$ respectively, there exists an integer $l\geq0$ such that at least one of the following holds:
\begin{itemize}
 \item $\gamma\in V^-_\phi$.
\item $\phi^l(\gamma)\in V^+_\phi$
\item $\gamma$ is carried by $\mathcal{A}_{na}(\Lambda^{\pm}_\phi)$.
\end{itemize}

\end{lemma}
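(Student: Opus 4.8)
I would follow the structure of Handel--Mosher's proof, which reads off attraction to $\Lambda^{\pm}_\phi$ from the complete splittings of iterates realized in a CT. First I would fix a CT $f\colon G\to G$ representing $\phi$ with EG stratum $H_s$ the one associated to $\Lambda^+_\phi$; let $Z\subset G$ be the nonattracting subgraph and $\widehat{\rho}_s$ the (possibly trivial) height-$s$ indivisible Nielsen path, so that by construction $\mathcal{A}_{na}(\Lambda^+_\phi)$ is exactly the subgroup system carried by the groupoid $\langle Z,\widehat{\rho}_s\rangle$. Symmetrically, fix a CT for $\phi^{-1}$ with EG stratum $H'_{s'}$ associated to $\Lambda^-_\phi$. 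Finally, realize $\gamma$ as a bi-infinite reduced edge path in $G$, and recall (Lemma \ref{NAS}(4)) that $\mathcal{A}_{na}(\Lambda^+_\phi)=\mathcal{A}_{na}(\Lambda^-_\phi)$.

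The key reduction is a dichotomy at the level of finite subpaths: for a finite subpath $\sigma$ of $\gamma$, either some $f^k_\#(\sigma)$ --- which is completely split for all large $k$ by Lemma \ref{circuitsplit} --- has, in its complete splitting, a term equal to a single edge of $H_s$, or no iterate of $\sigma$ ever does. In the first case Lemma \ref{tiles} shows that $f^{k+j}_\#(\sigma)$ contains $j$-tiles of height $s$, hence arbitrarily long subpaths of a generic leaf of $\Lambda^+_\phi$, so $\gamma$ is attracted to $\Lambda^+_\phi$ and in fact $\phi^l(\gamma)\in V^+_\phi$ for some $l\geq 0$; conversely, if $\gamma$ is attracted to $\Lambda^+_\phi$ then some iterate contains a long generic-leaf segment, which (being itself an increasing union of tiles) crosses $H_s$ essentially and therefore produces such a term. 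Thus ``$\gamma$ is attracted to $\Lambda^+_\phi$'' is equivalent to ``some forward iterate of some subpath develops an $H_s$-edge term,'' and symmetrically for $\Lambda^-_\phi$ under $\phi^{-1}$ with $H'_{s'}$. It then remains only to prove that if neither of these occurs, $\gamma$ is carried by $\mathcal{A}_{na}(\Lambda^+_\phi)$; combined with Lemma \ref{NAS}(1) (the set of lines carried by $\mathcal{A}_{na}$ is weakly closed) this also yields the quantitative ``moreover'' statement.

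For that last step I would examine the complete splitting of a long iterate $f^k_\#(\sigma)$ term by term: each term is an edge of an irreducible stratum, an indivisible Nielsen path, an exceptional path, or a taken connecting subpath in a zero stratum (Property (Zero strata) of \ref{CT}). Under the standing hypothesis no term is an $H_s$-edge; an edge of any other stratum of $G\setminus Z$ would, by the very definition of $G\setminus Z$, eventually produce an $H_s$-edge term and is excluded; a taken connecting subpath in a zero stratum is flanked by edges of its enveloping EG stratum (Property (Zero strata), with Lemma \ref{propzero} ruling out the degenerate overlap), which is then either $H_s$ or lies in $G\setminus Z$ --- both excluded --- or else keeps the zero stratum permanently away from $H_s$; and the surviving indivisible Nielsen paths and exceptional paths must, after invoking the linear-edge structure (Property (Linear Edges)) and Lemma \ref{np}, be assembled from edges of $Z$ and copies of $\widehat{\rho}_s^{\pm 1}$. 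Hence every sufficiently long subpath of $\gamma$ lies in the groupoid $\langle Z,\widehat{\rho}_s\rangle$, so $\gamma$ itself does, i.e. $\gamma$ is carried by $\mathcal{A}_{na}(\Lambda^+_\phi)$, contrary to hypothesis.

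The hard part is exactly this exclusion analysis: showing that \emph{failing} to be carried by $\mathcal{A}_{na}$ genuinely forces a bare $H_s$-edge (or dual) term to appear under iteration, rather than hiding inside an indivisible Nielsen path of EG height $\neq s$, an exceptional path $E_1\rho^m\overline{E}_2$, or a delicate concatenation of $Z$-segments with copies of $\widehat{\rho}_s$. Controlling all such configurations is where the full precision of the CT axioms and of the nonattracting-subgraph (and groupoid) construction is needed, and it is the technical core of the theorem; for the purposes of the present paper this is imported wholesale from \cite[Corollary 2.17, Theorem H]{HM-13c}.
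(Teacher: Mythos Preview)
The paper does not prove this lemma at all: it is stated with the citation \cite[Corollary 2.17, Theorem H]{HM-13c} and no argument is given, since it is a result of Handel--Mosher being quoted for later use. Your proposal correctly recognizes this in its final sentence, where you say the result ``is imported wholesale from \cite[Corollary 2.17, Theorem H]{HM-13c}''; that is exactly the paper's approach, and the preceding sketch you give of the Handel--Mosher argument is extra expository content not present in the paper.
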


\begin{lemma}
 Suppose $\phi,\psi\in \out$ are two exponentially growing automorphisms with attracting laminations $\Lambda^+_\phi$ and $\Lambda^+_\psi$, respectively. 
 If a generic leaf $\lambda\in\Lambda^+_\phi$ is in $\mathcal{B}_{na}(\Lambda^+_\psi)$ then the whole lamination $\Lambda^+_\phi\subset \mathcal{B}_{na}(\Lambda^+_\psi)$.

\end{lemma}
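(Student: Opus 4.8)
The plan is to exploit the fact that the set of lines carried by the nonattracting subgroup system $\mathcal{A}_{na}(\Lambda^+_\psi)$ is closed in the weak topology (Lemma \ref{NAS}(1)), combined with the defining property that a generic leaf $\lambda$ of an attracting lamination $\Lambda^+_\phi$ has $\Lambda^+_\phi$ as its weak closure. First I would recall that, by definition of $\mathcal{B}_{na}(\Lambda^+_\psi)$ and by Lemma \ref{NAS}(2) applied appropriately, being in $\mathcal{B}_{na}(\Lambda^+_\psi)$ is equivalent (for lines, not just conjugacy classes) to being carried by $\mathcal{A}_{na}(\Lambda^+_\psi)$; this is precisely the content we need, and here I would cite Lemma \ref{WAT} together with the observation that $\lambda$, being a generic leaf of $\phi$'s attracting lamination, is not attracted to $\Lambda^+_\psi$ under $\psi$ and not attracted to $\Lambda^-_\psi$ under $\psi^{-1}$ — the latter because a generic leaf of $\Lambda^+_\phi$ is birecurrent and an attracting neighborhood argument would force it into $\Lambda^-_\psi$'s closure, contradicting birecurrence unless it is already carried by $\mathcal{A}_{na}$. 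So the hypothesis gives: $\lambda$ is carried by $\mathcal{A}_{na}(\Lambda^+_\psi)$.

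Next I would use the structure of the generic leaf: since the weak closure of $\lambda$ is all of $\Lambda^+_\phi$, every leaf $\mu \in \Lambda^+_\phi$ is a weak limit of translates (equivalently, every finite subpath of $\mu$ appears as a subpath of $\lambda$, hence $\mu$ lies in the weak closure of the single line $\lambda$). Concretely, realize everything in a marked graph $G$: each leaf $\mu$ of $\Lambda^+_\phi$ is a weak limit of a sequence of subpaths of $\lambda$, and each such subpath, being a piece of a line carried by $\mathcal{A}_{na}(\Lambda^+_\psi)$, extends to a line (or is itself a subpath of the line $\lambda$) carried by $\mathcal{A}_{na}(\Lambda^+_\psi)$. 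Thus $\mu$ is a weak limit of a sequence of lines each carried by $\mathcal{A}_{na}(\Lambda^+_\psi)$.

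Now the closedness from Lemma \ref{NAS}(1) finishes it: the set of lines carried by $\mathcal{A}_{na}(\Lambda^+_\psi)$ is weakly closed, so the weak limit $\mu$ is carried by $\mathcal{A}_{na}(\Lambda^+_\psi)$, i.e. $\mu \in \mathcal{B}_{na}(\Lambda^+_\psi)$. Since $\mu$ was an arbitrary leaf of $\Lambda^+_\phi$, we conclude $\Lambda^+_\phi \subset \mathcal{B}_{na}(\Lambda^+_\psi)$. The main obstacle I anticipate is the bookkeeping in the middle step: one must be careful that "every finite subpath of $\mu$ is a subpath of $\lambda$" genuinely produces, for each $\mu$, an honest sequence of \emph{lines} (not merely finite paths) carried by $\mathcal{A}_{na}(\Lambda^+_\psi)$ converging weakly to $\mu$ — this is where one invokes that $\lambda$ itself is such a line and that weak convergence of the sequence of nested subpaths of $\lambda$ exhausting $\mu$ can be upgraded to a sequence of lines, using compactness of $\mathcal{B}$ and the fact that the carried-by-$\mathcal{A}_{na}$ condition passes to the limit. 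An alternative, perhaps cleaner route would be to avoid leaf-by-leaf analysis entirely: apply Lemma \ref{WAT} directly to a generic leaf $\lambda$ to rule out attraction to $\Lambda^+_\psi$, and then note that if \emph{some} leaf of $\Lambda^+_\phi$ were attracted to $\Lambda^+_\psi$, then by weak density of $\lambda$ in $\Lambda^+_\phi$ and continuity/openness of the attracting neighborhood $V^+_\psi$, the leaf $\lambda$ would eventually enter $V^+_\psi$ as well, contradiction — so no leaf is attracted to $\Lambda^+_\psi$, and the same birecurrence argument rules out attraction to $\Lambda^-_\psi$, whence every leaf is carried by $\mathcal{A}_{na}(\Lambda^+_\psi)$ by Lemma \ref{WAT}.
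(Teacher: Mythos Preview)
Your main line of argument has a real gap. You assert that for the birecurrent line $\lambda$, membership in $\mathcal{B}_{na}(\Lambda^+_\psi)$ forces $\lambda$ to be carried by $\mathcal{A}_{na}(\Lambda^+_\psi)$, and you try to justify this via Lemma~\ref{WAT} by ruling out attraction to $\Lambda^-_\psi$ with a ``birecurrence'' argument. But that implication is false: a generic leaf of some $\Lambda^-_\psi \in \mathcal{L}(\psi^{-1})$ is birecurrent, lies in $\mathcal{B}_{na}(\Lambda^+_\psi)$, is attracted to $\Lambda^-_\psi$ under $\psi^{-1}$, and is \emph{not} carried by $\mathcal{A}_{na}(\Lambda^\pm_\psi)$. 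Nothing in your argument excludes the possibility that $\lambda$ is exactly such a leaf, i.e.\ that $\Lambda^+_\phi = \Lambda^-_\psi$. The paper's proof invokes precisely this birecurrent dichotomy: a birecurrent line in $\mathcal{B}_{na}(\Lambda^+_\psi)$ is either carried by $\mathcal{A}_{na}(\Lambda^\pm_\psi)$ or is a generic leaf of some element of $\mathcal{L}(\psi^{-1})$, and then handles the two cases separately --- in the first, closedness (Lemma~\ref{NAS}(1)) plus $\overline{\lambda}=\Lambda^+_\phi$ gives the conclusion as you describe; in the second, $\Lambda^+_\phi=\overline{\lambda}=\Lambda^-_\psi$, which is not attracted to $\Lambda^+_\psi$.

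That said, your \emph{alternative} route at the end is correct and in fact cleaner than both your main argument and the paper's. The observation that if some leaf $\mu\in\Lambda^+_\phi$ had $\psi^k_\#(\mu)\in V^+_\psi$, then by continuity of $\psi^k_\#$ and the fact that $\mu$ lies in the weak closure of $\{\lambda\}$, the open set $V^+_\psi$ would also contain $\psi^k_\#(\lambda)$ --- contradicting the hypothesis --- proves $\Lambda^+_\phi\subset\mathcal{B}_{na}(\Lambda^+_\psi)$ directly, with no case split and no appeal to $\mathcal{A}_{na}$ or Lemma~\ref{WAT}. Your final sentence about again ruling out $\Lambda^-_\psi$ and re-invoking Lemma~\ref{WAT} is then superfluous: the desired inclusion is already established.
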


\begin{proof}
 Recall that a generic leaf is birecurrent. Hence, $\lambda\in \mathcal{B}_{na}(\Lambda^+_\psi)$ implies that $\lambda$ is either carried by $\mathcal{A}_{na}(\Lambda^\pm_\psi)$ or it is a generic leaf of some element of $\mathcal{L}(\psi^{-1})$.
 First assume that $\lambda$ is carried by $\mathcal{A}_{na}(\Lambda^\pm_\psi)$. Then using Lemma \ref{NAS} item 1, we can conclude that $\Lambda^+_\phi$ is carried by $\mathcal{A}_{na}(\Lambda^+_\psi)$.

 Alternatively, if $\lambda$ is a generic leaf of some element $\Lambda^-_\psi\in \mathcal{L}(\psi^{-1})$, then the weak closure $\overline{\lambda} = \Lambda^+_\phi=\Lambda^-_\psi$ and we know $\Lambda^-_\psi$ does not get attracted to $\Lambda^+_\psi$.
 Hence, $\Lambda^+_\phi\subset\mathcal{B}_{na}(\Lambda^+_\psi)$.
\end{proof}

\begin{lemma}\label{circuitatt}
 Let $\phi\in\out$ be rotationless and $[c]$ be some conjugacy class in $\mathbb{F}$. If $\Lambda^-$ is a
 repelling lamination for $\phi$ and $V^-$ is an attracting neighborhood for $\Lambda^-$, then there are only finitely
 many values of $k>0$ such that $\phi^k_\#([c]) \in V^-$ .
\end{lemma}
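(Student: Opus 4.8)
The plan is to argue by contradiction: suppose there are infinitely many $k>0$ with $\phi^k_\#([c])\in V^-$. The key point is that $V^-$ is an attracting neighborhood for the repelling lamination $\Lambda^-$ of $\phi$, which means it is an attracting neighborhood for $\Lambda^-$ under iteration of $\phi^{-1}$. So for each such $k$, the circuit $\phi^k_\#([c])$ is weakly attracted to $\Lambda^-$ under $\phi^{-1}$, and since $V^-$ can be shrunk if desired, the images $\phi^{-j}\phi^k_\#([c]) = \phi^{k-j}_\#([c])$ stay in $V^-$ for all $j\geq 0$ and converge weakly to $\Lambda^-$. First I would extract from the hypothesis an increasing sequence $k_1<k_2<\cdots$ with $\phi^{k_i}_\#([c])\in V^-$; applying $\phi^{-k_i}$ we see $[c]=\phi^{-k_i}_\#\bigl(\phi^{k_i}_\#([c])\bigr)$ lies in $\phi^{-k_i}(V^-)$, but that is not yet a contradiction, so the real mechanism must be different.

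The cleaner route is this. Since $V^-$ is an attracting neighborhood for $\Lambda^-$ under $\phi^{-1}$, and $\Lambda^-$ is the repelling lamination, a line in $V^-$ is \emph{not} weakly attracted to $\Lambda^+$ under $\phi$ — indeed, shrinking $V^-$ if necessary we may assume (by the Weak Attraction Theorem, Lemma \ref{WAT}, or rather by the basic structure of dual attracting neighborhoods) that $V^-$ is disjoint from some attracting neighborhood $V^+$ of $\Lambda^+$. Now the central dichotomy: either $[c]$ is carried by $\mathcal{A}_{na}(\Lambda^\pm_\phi)$, or it is not. If $[c]$ \emph{is} carried by $\mathcal{A}_{na}(\Lambda^\pm_\phi)$, then by Lemma \ref{NAS}(2) it is not attracted to $\Lambda^+_\phi$; but then also $[c]$ is carried by $\mathcal{A}_{na}(\Lambda^-_\phi)$ (Lemma \ref{NAS}(4), since $\mathcal{A}_{na}$ is the same for a dual pair), so $[c]$ is not attracted to $\Lambda^-$ under $\phi^{-1}$ either. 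In that case $\phi^k_\#([c])$ stays carried by the nonattracting subgroup system for all $k$ (it is $\phi$-invariant), and since the set of lines carried by a subgroup system is closed (Lemma \ref{NAS}(1)) and $V^-$ meets this carried set in a relatively compact way — actually the point is simply that no circuit carried by $\mathcal{A}_{na}(\Lambda^-)$ can lie in the attracting neighborhood $V^-$ once $V^-$ is chosen small enough, because such circuits are bounded away from $\Lambda^-$ — we get $\phi^k_\#([c])\notin V^-$ for all $k$, so the conclusion holds vacuously.

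If instead $[c]$ is \emph{not} carried by $\mathcal{A}_{na}(\Lambda^\pm_\phi)$, apply the Weak Attraction Theorem (Lemma \ref{WAT}): either $[c]\in V^-$, or $\phi^l_\#([c])\in V^+$ for some $l\geq 0$ (the third alternative is excluded by assumption). If $\phi^l_\#([c])\in V^+$, then since $V^+$ is an attracting neighborhood for $\Lambda^+$ we have $\phi^k_\#([c])\in V^+$ for all $k\geq l$, and as $V^+\cap V^-=\emptyset$ this forces $\phi^k_\#([c])\notin V^-$ for all $k\geq l$; only finitely many $k$ remain. The remaining case is $[c]\in V^-$: then $\phi^{-j}_\#([c])$ converges weakly to $\Lambda^-$, equivalently $[c]$ is attracted to $\Lambda^-$ under $\phi^{-1}$ but is \emph{repelled} from it under $\phi$, so $\phi^k_\#([c])$ leaves every sufficiently small attracting neighborhood of $\Lambda^-$ as $k\to\infty$; hence again only finitely many $k>0$ can satisfy $\phi^k_\#([c])\in V^-$. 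I expect the main obstacle to be making precise the assertion that ``$[c]\in V^-$ implies $\phi^k_\#([c])$ eventually exits $V^-$'' — i.e., that an attracting neighborhood for $\Lambda^-$ under $\phi^{-1}$ is genuinely a repelling neighborhood under $\phi$ for a circuit that is not carried by $\mathcal{A}_{na}$; this should follow from the defining property of attracting neighborhoods together with the fact that a birecurrent weak limit of $\phi^k_\#([c])$ would have to be a leaf of $\Lambda^+$ or carried by $\mathcal{A}_{na}$, contradicting residence in $V^-$, but pinning down the topology carefully (using that $\Lambda^-$ is not in the weak closure of any $\phi$-forward orbit starting in $V^-$ unless the orbit is trapped in $\mathcal{A}_{na}$) is where the care is needed.
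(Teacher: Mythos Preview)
Your approach via the Weak Attraction Theorem is far more elaborate than what the lemma requires, and the gaps you yourself flag are real. You cannot shrink $V^-$: it is given in the hypothesis. The disjointness $V^+\cap V^-=\emptyset$ is not automatic (a short defining subpath of $\Lambda^-$ may well occur in generic leaves of $\Lambda^+$), and in your first case the claim that circuits carried by $\mathcal{A}_{na}$ are ``bounded away from $\Lambda^-$'' is not what you need --- you need that only finitely many $\phi^k_\#([c])$ lie in the \emph{fixed} neighborhood $V^-$. Most seriously, the final obstacle you identify (that $[c]\in V^-$ forces $\phi^k_\#([c])$ to eventually exit $V^-$) is essentially the entire content of the lemma restated, so the argument is circular at that point.

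The paper's proof avoids all of this with a one-line length argument. Since $\phi^{-1}_\#(V^-)\subset V^-$, the sets $\phi^{-k}_\#(V^-)$ are nested and decreasing, and $\phi^k_\#([c])\in V^-$ is equivalent to $[c]\in\phi^{-k}_\#(V^-)$. Now $V^-$ is defined by containment of a fixed finite subpath $\gamma$ of a generic leaf (\cite[Corollary 3.1.11]{BFH-00}), so $\phi^{-k}_\#(V^-)$ is defined by containment of a subpath whose length grows exponentially in $k$. Since $[c]$ is a circuit of fixed finite length (and cannot be a generic leaf by \cite[Lemma 3.1.16]{BFH-00}), there is some $K$ with $[c]\notin\phi^{-K}_\#(V^-)$, and nesting gives $[c]\notin\phi^{-k}_\#(V^-)$ for all $k\geq K$. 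No appeal to $\mathcal{A}_{na}$, dual neighborhoods, or the Weak Attraction Theorem is needed.
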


\begin{proof}
 Since $\phi$ is rotationless, $\Lambda^-$ is $\phi$ invariant and so $\phi^{-1}_\#(V^-) \subset V^-$ .
 Since any weak neighborhood can be defined
 by some finite subpath of a generic leaf (\cite[Corollary 3.1.11]{BFH-00}), we see that under iterates of $\phi^{-1}_\#$ this subpath
 grows exponentially  and since by \cite[Lemma 3.1.16]{BFH-00} a cicuit cannot be a generic leaf,
 we have that there exists some $K$ such that $[c]\notin \phi^{-K}_\#(V^-)$. Hence
 for every $k\geq K$, $\phi^k_\#([c])\notin V^-$.

\end{proof}


 \section{Main Theorem}
 \label{main}
 In this section the goal is to list all the possible weak limits of $\phi^k([c])$ as $k\rightarrow\infty$, where $\phi$ is a
 hyperbolic outer automorphism of $\mathbb{F}$ and $[c]$ is any conjugacy class in $\mathbb{F}$. Let us denote this collection of
 weak limits by $\mathcal{WL}(\phi)$. We shall see from Theorem \ref{Limit} that
 $$\mathcal{WL}(\phi) =  \mathcal{B}_{gen}(\phi) \cup \mathcal{B}_{sing}(\phi)$$

 The set on the right hand side of the equality is a very well understood collection of lines in the theory of $\out$. Recall that it appears in the description of the set of nonattracted lines in \ref{concat} and this shows that the dynamics
 of conjugacy classes under iterates of a hyperbolic outer automorphism is very well controlled. There is an immediate observation one can make here; if a line $\gamma$ is a weak limit of some conjugacy class under iterates of a hyperbolic $\phi$, then $\gamma$ is not weakly attracted to any repelling lamination of $\phi$.

  We begin this section
 with a lemma that, in some sense, is the heart of the proof here. It does a detailed analysis of the components of any complete splitting of a circuit under iterates of a hyperbolic outer automorphism. A clear understanding of the proof of this lemma gives a very clear indication of why the above equality is what one should expect.

 \textbf{Notation:} EG strata is an abbreviation for ``exponentially growing strata'', NEG strata for ``non-exponentially growing strata ''
  , CT stands for ``completely split relative train track maps'', $\mathcal{B}_{gen}(\phi)$ denotes the collection of
  all generic leaves of all attracting laminations for $\phi$ and $\mathcal{B}_{sing}(\phi)$ denotes the collection of
  all singular lines of $\phi$.

 \begin{lemma}\label{EG}
 Let $\phi\in\out$ be a rotationless and hyperbolic and $f:G\longrightarrow G$ be any CT representing $\phi$. If $\sigma$ is a circuit in $G$
 then the terms that appear in a complete splitting of $\sigma$ must contain an EG edge and hence is
 exponentially growing. Moreover, $\sigma$ is attracted to some element of $\mathcal{L}(\phi)$.
 \end{lemma}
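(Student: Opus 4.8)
The plan is to analyze what terms can appear in a complete splitting of the circuit $\sigma$ and rule out every possibility except the presence of an EG edge. By Lemma~\ref{circuitsplit}, some iterate $f^k_\#(\sigma)$ is completely split, and since being attracted to a lamination is unchanged by replacing $\sigma$ with $f^k_\#(\sigma)$, I may assume $\sigma$ itself is completely split. Now I examine the four types of terms allowed in a complete splitting: edges in irreducible strata, indivisible Nielsen paths, exceptional paths, and taken maximal subpaths in zero strata. First I would dispose of the zero-strata terms: by Property~(5) of CTs, a zero stratum $H_i$ is enveloped by an EG stratum $H_s$ with $s>i$, and (using Lemma~\ref{np} and Lemma~\ref{propzero}, together with the local structure at vertices of $H_i$) any term in a zero stratum must be preceded and followed in the complete splitting by edges of that enveloping EG stratum $H_s$. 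So the presence of a zero-stratum term already forces an EG edge to appear.

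Next I would handle NEG edges and the associated exceptional paths. An NEG stratum is either fixed (a periodic edge, fixed by Property~(4)) or non-fixed. But $\phi$ is hyperbolic, hence has no periodic conjugacy classes; a circuit built entirely out of fixed edges and Nielsen paths would be a periodic (indeed fixed) conjugacy class, which is impossible. For a non-fixed NEG edge $E_i$, Property~(7) gives $f_\#(E_i)=E_i\cdot u_i$ with $u_i$ a nontrivial completely split closed circuit; iterating, $f^n_\#(E_i)$ accumulates the circuit $u_i$, and one must then recurse on the complete splitting of $u_i$. Similarly, an exceptional path $E_1\rho^m\overline{E}_2$ grows the exponent of $\rho$ but $\rho$ is itself a root-free Nielsen path; so iterating an exceptional path never by itself produces new edge types. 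The key point is a descent/finiteness argument on the filtration: starting from the top nonfixed stratum meeting $\sigma$, if it is NEG we pass to the completely split circuit $u_i$ which lives in a strictly lower filtration level, and repeat; this cannot terminate at a collection of purely fixed edges and Nielsen paths (that would give a fixed conjugacy class), so eventually an EG stratum must be encountered. Indivisible Nielsen paths, by Lemma~\ref{np}, have their initial and terminal edges in their EG stratum, so a term which is an iNp of EG height again contributes an EG edge.

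Having shown a complete splitting of (some iterate of) $\sigma$ contains an EG edge $E$ in some EG stratum $H_r$, the ``exponentially growing'' conclusion is immediate since $|f^k_\#(E)|$ grows exponentially and this term persists (as a refinement) under further iteration by Lemma~\ref{circuitsplit}. For the ``moreover'' clause: since $E$ is an edge in the EG stratum $H_r$, its tiles $f^k_\#(E)$ form an increasing union exhausting a generic leaf of the attracting lamination $\Lambda_r$ associated to $H_r$ (Lemma~\ref{tiles}(1)), and by Lemma~\ref{tiles}(2) every $i$-tile of height $r$ eventually sits inside every sufficiently large tile; hence every finite subpath of a generic leaf of $\Lambda_r$ appears in $f^k_\#(\sigma)$ for large $k$, i.e. $\sigma$ is weakly attracted to $\Lambda_r\in\mathcal{L}(\phi)$. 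I expect the main obstacle to be the bookkeeping in the descent step — making rigorous that iterating a circuit composed of NEG edges, exceptional paths, fixed edges and Nielsen paths (with no EG edge anywhere in the recursion) yields a periodic conjugacy class, contradicting hyperbolicity — since one must track the complete splittings of the circuits $u_i$ arising from Property~(7) down the filtration and invoke the uniqueness of complete splittings (Lemma~4.11) to control the recursion.
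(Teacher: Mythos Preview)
Your proposal is correct and follows essentially the same case analysis as the paper. The paper streamlines one step you handle by descent: it observes at the outset that hyperbolicity of $\phi$ rules out linear NEG strata, exceptional paths, and geometric strata entirely (each would force a fixed conjugacy class), so these terms simply never appear in any complete splitting; your recursion on the circuits $u_i$ reaches the same conclusion but with the extra bookkeeping you anticipate. One small correction: your claim that an indivisible Nielsen path of EG height ``contributes an EG edge'' is not right --- an iNP is a single fixed term of the splitting, not an EG-edge term, and iterating it produces nothing new --- but this is harmless, since you already correctly argue that a splitting consisting only of fixed edges and Nielsen paths yields a periodic circuit, contradicting hyperbolicity.
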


 \begin{proof} By using Lemma \ref{circuitsplit} we know that $f^k_\#(\sigma)$ is completely split for all sufficiently large
 $k$.

  Existence of linear-NEG strata and
  geometric strata in $G$ automatically guarantees the existence of a conjugacy class that is fixed by $\phi$ which is not possible
  since $\phi$ is hyperbolic. Hence no term in any complete splitting of $f^k_\#(\sigma)$ can be a linear edge or
  exceptional path or edge in a geometric EG strata.

  If $G$ has any superlinear-NEG strata, then consider $E'=H_r$ to be the non-fixed  NEG stratum of lowest height.
  By (NEG edges, \ref{CT}) there exists a circuit $u_r \subset G_{r-1}$ such that $f_\#(E')= E'\cdotp u_r$ is a splitting and
  $u_r$ is completely split. The terms of its complete splitting are edges in irreducible stratum, indivisible Nielsen paths,
   fixed edges or maximal subpaths in zero strata which are taken. We claim that some term of such a splitting must be an EG edge.

  No term in splitting of $u_r$ can be a linear NEG edge since
  $\phi$ is hyperbolic. If we have taken, maximal subpaths in zero strata then we are done because one needs to pass through an
  EG stratum to enter and exit a zero strata (Envelope, \ref{CT}) and the only way to do this would be if we had EG edges appearing as terms of 
  complete splitting of $u_r$ (on either side of the splitting component contained in the zero strata) since all zero strata are contractible
  and EG strata that contain indivisible Nielsen path do not envelope zero strata (see Lemma \ref{propzero}) and 
  indivisible Nielsen paths of higher EG strata have both their endpoints in the same strata by Lemma \ref{np}.
  So if we do not have paths contained in some zero strata then the only remaining possibilities for the components of complete splitting of $u_r$ remaining  are that of 
  EG edges, fixed edges and
  indivisible Nielsen paths (since we have assumed that $E_r$ has lowest height among NEG edges). If we have only fixed edges and indivisible Nielsen paths appearing as terms of 
  complete splitting of $u_r$ then it would imply implies $u_r$ is a closed Nielsen path, which is  not true since
  $E'$ is superlinear NEG strata. Hence some term of complete splitting of $u_r$ must be an EG edge. Now for superlinear NEG edges of height  $s>r$, one 
  can similarly show by an induction argument that some term in the complete splitting of $f^k(u_s)$ must be an EG edge, for some $k>0$.

  Since we have ruled out the possibility of any linear NEG edge and geometric strata in $G$, the only choices we have
  in the complete splitting of $f^k_\#(\sigma)$ are EG edges, superlinear NEG edge, fixed edges, indivisible Nielsen paths and maximal connecting
  subpaths in a zero strata which are taken. Presence of only fixed edges and indivisible Nielsen paths
  would imply existence of a periodic circuit and
  violate the hyperbolicity assumption on $\phi$. So we must have at least an EG edge, superlinear NEG edge or a taken connecting path in a zero stratum. 
  If the last case happens then, by the same argument we gave for superlinear-NEG edges, on either side of this
  subpath of zero strata the terms of complete splitting must be EG edges. Also, if we have a superlinear NEG edge, then by same argument above, an EG edge appears as a term of 
  complete splitting in $f^t_\#(\sigma)$, for some $t>0$ . Hence some term in the complete splitting of
  $f^k_\#(\sigma)$ must be an EG edge.

  Thus $\sigma$ is exponentially growing
  and is weakly attracted to the attracting laminations which are associated to the EG strata whose edges appear
  in some complete splitting of $f^k_\#(\sigma)$ for some $k>0$.

 \end{proof}

 Notice that the above lemma implies that given any conjugacy class $[c]$ in $\mathbb{F}$, there exists at least one 
 one nonattracting subgroup system $\mathcal{A}_{na}(\Lambda^+_\phi)$ which does not carry $[c]$. We shall see shortly in Lemma \ref{lemma1} that 
 this behaviour is also observed in any line that occurs as a weak limit of $[c]$ under iterates of $\phi$.

 \begin{lemma}\label{fixlam}
  Let $\phi\in\out$ be rotationless and hyperbolic.
  Then the weak closure of every point in $\xi\in\mathsf{Fix}_+(\phi)$ contains an element of
  $\Lambda\in\mathcal{L}(\phi)$. Moreover, if the principal edge whose  iteration generates the singular ray
  with endpoint $\xi$ is an EG edge then 
  $\Lambda$ is the unique attracting lamination associated to the EG strata that contains the principal edge.
 \end{lemma}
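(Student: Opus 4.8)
The plan is to produce, via the converse part of Lemma~\ref{lemma4}, a singular ray with endpoint $\xi$, and then to inspect it according to the type of the edge that generates it. Since $\phi$ is hyperbolic, $\partial\mathsf{Fix}(\Phi)$ is trivial for every lift $\Phi$, so $\xi\in\mathsf{Fix}_+(\widehat\Phi)$ for some principal lift $\Phi$. By the converse part of Lemma~\ref{lemma4} there is a non-fixed edge $E$ of $G$ lying in an EG stratum or a superlinear NEG stratum, with fixed initial vertex and fixed initial direction, together with a splitting $f_\#(E)=E\cdot u$, such that the ray $R=E\cdot u\cdot f_\#(u)\cdot f^2_\#(u)\cdots$ is reduced and converges to a lift of $\xi$; this $E$ is the principal edge named in the statement. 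Because $R=E\cdot u\cdot f_\#(u)\cdots$ is a splitting, the terms $f^k_\#(u)$ occur as honest, pairwise disjoint subpaths of $R$, at positions marching off to infinity.

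If $E$ is an EG edge, there is nothing left to prove: the ``moreover'' clause of Lemma~\ref{lemma4} already says that the weak accumulation set of $\xi$ is exactly the unique attracting lamination $\Lambda$ associated to the EG stratum $H_r$ containing $E$, and this $\Lambda$ belongs to $\mathcal{L}(\phi)$ by the bijection between EG strata and attracting laminations. This settles both assertions of the lemma in this case.

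Now suppose $E$ is a superlinear NEG edge. By property~(Nonlinear NEG edges) of a CT, $u$ is then a nontrivial closed completely split circuit. Applying Lemma~\ref{EG} to the circuit $u$, it is attracted to some $\Lambda\in\mathcal{L}(\phi)$; hence, writing $\lambda$ for a generic leaf of $\Lambda$, every finite subpath $\beta$ of $\lambda$ is a subpath of $f^k_\#(u)$ for all large $k$ (and $|f^k_\#(u)|\to\infty$). Since each block $f^k_\#(u)$ lies inside $R$ and these blocks run off to infinity, $\beta$ occurs infinitely often in $R$; as $\beta$ was arbitrary, $\lambda$ lies in the accumulation set of $R$, that is, in the weak closure of $\xi$. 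The accumulation set of a ray is weakly closed and the weak closure of a generic leaf is the whole lamination, so $\Lambda$ is contained in the weak closure of $\xi$, as required.

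The step I expect to need the most care is the bookkeeping in the last paragraph: one must check that persistence of the splitting $R=E\cdot u\cdot f_\#(u)\cdots$ genuinely forces each $f^k_\#(u)$ to appear verbatim in $R$ with no cancellation at the junctions, so that the claim ``$\beta$ occurs infinitely often in $R$'' is legitimate. This follows at once from the definition of a splitting (its $i=0$ instance says the concatenation is already tight) together with the fact, from Lemma~\ref{lemma4}, that $R$ is a ray; the remainder of the argument is a direct appeal to Lemmas~\ref{lemma4} and~\ref{EG}.
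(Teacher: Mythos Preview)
Your proof is correct and follows essentially the same route as the paper's: obtain the principal edge $E$ from the converse of Lemma~\ref{lemma4}, dispose of the EG case via its ``moreover'' clause, and in the superlinear NEG case apply Lemma~\ref{EG} to the circuit $u$ to produce an attracting lamination in the accumulation set of the ray. Your final paragraph on the splitting bookkeeping is a welcome clarification that the paper leaves implicit.
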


 \begin{proof}
 Let $f:G\longrightarrow G$ be CT representing $\phi$.
 Given any point $\xi\in\mathsf{Fix}_+(\phi)$, it is obtained by iteration of an edge $E$ whose initial vertex
 is principal and initial direction is fixed (such an edge is called \textit{principal edge}) (use Lemma \ref{circuitsplit}).
 If $E$ is an EG edge, and $k\rightarrow\infty$, $f^k_\#(E)$ generates a singular ray with endpoint in $\xi$ then the closure of $\xi$ equals the 
 unique attracting lamination associated to the EG strata that contains $E$. 
 
On the other hand if $E$ is superlinear NEG edge then $f_\#(E)= E\cdotp u$ is a splitting and the circuit $u$ is completely split. Arguing just as we did in the previous lemma, 
some term in the complete splitting of $f^k_\#(u)$ will be an EG edge and hence the ray generated by $f^k_\#(E)$, as $k\to\infty$, is a singular ray which weakly accumulates on some 
attracting lamination $\Lambda\in\mathcal{L}(\phi)$ (where $\Lambda$ is the unique attracting lamination associated to the EG strata whose edge appears in some complete splitting of $f^k_\#(u)$). 
This implies that the weak closure of $\xi$ contains $\Lambda$.

 This shows that the weak closure of $\xi$ is exactly the unique attracting lamination associated to the strata that
 contains the edge $E$ by using Lemma \ref{lemma4}.

 \end{proof}

Our next goal is to inspect the dynamical nature of a line $\gamma$ which appears as a weak limit of some conjugacy class 
under iterates of some hyperbolic outer automorphism $\phi$. We shall prove in Lemma \ref{coro} and Lemma \ref{lemma1} that 
any such line gets attracted to some attracting lamination of $\phi$ (under iterates of $\phi$) but is never attracted to any repelling 
lamination of $\phi$ (under iterates of $\phi^{-1}$). Notice that this implies $\gamma\in\mathcal{B}_{na}(\Lambda^-_\phi)$ for every 
repelling lamination of $\phi$ and $\gamma$ is not carried by $\mathcal{A}_{na}(\Lambda^+_\phi)$ for at least 
one attracting lamination of $\phi$.

\begin{lemma}\label{coro}
  Let $\phi$ be a rotationless hyperbolic outer automorphism and $f:G\longrightarrow G$ be a CT
 representing $\phi$. Suppose $\sigma$ is a circuit in $G$.
 If $\gamma$ is a weak limit of $\sigma$ under action of $f_\#$, then $\gamma$ is not attracted to any
 element of $\mathcal{L}(\phi^{-1})$.
 \end{lemma}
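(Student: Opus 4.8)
The plan is to argue by contradiction and reduce to Lemma \ref{circuitatt}, which says that the forward orbit of a circuit meets a repelling neighborhood only finitely often. So suppose that $\gamma$ \emph{is} attracted, under iterates of $\phi^{-1}$, to some $\Lambda^-\in\mathcal{L}(\phi^{-1})$, and let $V^-$ be an attracting neighborhood of $\Lambda^-$ for $\phi^{-1}$ (which we may take to be a weak neighborhood of a generic leaf of $\Lambda^-$). The attraction hypothesis then yields an integer $l\geq 0$ with $\phi^{-l}_\#(\gamma)\in V^-$: indeed $(\phi^{-1})^n_\#(\gamma)$ converges weakly to a generic leaf of $\Lambda^-$, hence lies in the weakly open set $V^-$ for all large $n$.

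The one step that needs a word of care is transporting the hypothesis ``$\gamma$ is a weak limit of $f^k_\#(\sigma)$'' across $\phi^{-l}$. Write $\gamma=\lim_{j\to\infty} f^{k_j}_\#(\sigma)$ with $k_j\to\infty$, a limit in the weak topology on $\widehat{\mathcal B}(G)$. Realizing $\phi^{-l}$ by a homotopy equivalence of $G$, the induced self-map of $\widehat{\mathcal B}(G)$ is continuous and, on the subspace $\mathcal B(G)$ of lines, agrees with the homeomorphism inverting $f^l_\#$; applying it to the displayed limit gives
\[ \phi^{-l}_\#(\gamma)=\lim_{j\to\infty} f^{k_j-l}_\#(\sigma), \]
which again exhibits $\phi^{-l}_\#(\gamma)$ as a weak limit of circuits of the form $f^m_\#(\sigma)$ with $m\to\infty$. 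Since $\phi^{-l}_\#(\gamma)\in V^-$ and $V^-$ is weakly open, $f^{k_j-l}_\#(\sigma)\in V^-$ for all large $j$; equivalently $\phi^m_\#([c])\in V^-$ for infinitely many $m>0$, where $[c]$ is the conjugacy class carried by $\sigma$. This contradicts Lemma \ref{circuitatt}, so no such $\Lambda^-$ exists.

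The argument is short, and I do not expect a substantial obstacle: morally the statement is just ``forward orbits run away from repellers,'' and all the dynamical content is already packaged in Lemma \ref{circuitatt}. The only two points I would want to phrase carefully are (i) extracting the single exponent $l$ with $\phi^{-l}_\#(\gamma)\in V^-$ from the definition of weak attraction, which uses that the attracting neighborhoods of $\Lambda^-$ form a cofinal family of weak neighborhoods of its generic leaf (already invoked in the proof of Lemma \ref{circuitatt}), and (ii) the bookkeeping in the previous paragraph, which is harmless precisely because $f_\#$ is inverted only on $\mathcal B(G)$, where it is a genuine homeomorphism, while remaining continuous on all of $\widehat{\mathcal B}(G)$. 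No new train-track analysis beyond the excerpt is needed.
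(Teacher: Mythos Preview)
Your argument is correct and follows essentially the same route as the paper: assume $\gamma$ is attracted to some $\Lambda^-$, pull back by a fixed power of $\phi$ to land in an attracting neighborhood $V^-$, and conclude that infinitely many forward iterates $f^m_\#(\sigma)$ lie in $V^-$, contradicting Lemma \ref{circuitatt}. The only difference is bookkeeping: the paper phrases the key step in terms of a subpath $\alpha$ of a generic leaf appearing in $\gamma$ and hence in $f^k_\#(\sigma)$ for large $k$, whereas you apply the continuous map $\phi^{-l}_\#$ directly to the convergent sequence, which is arguably cleaner and avoids a small imprecision in the paper's wording about which neighborhood the iterates land in.
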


 \begin{proof}

  Suppose on the contrary $\gamma$ gets attracted to some $\Lambda^-$. If $V^-$ is an attracting neighborhood  of $\Lambda^-$ then
  $\phi^{-t}_\#(\gamma)\in V^-$ for some $t\geq 1$. This implies that $\gamma\in \phi^t_\#(V^-)$. Since
  $\phi^t_\#(\Lambda^-) = \Lambda^-$ for all $t\geq 1$ we can conclude that $\gamma$ contains some subpath $\alpha$
  of a generic leaf of $\Lambda^-$. So $\alpha$ is contained in $f^k_\#(\sigma)$ as a subpath for all sufficiently large $k's$.

  But this would mean that $f^k_\#(\sigma)\in V^-$ for infinitely many value of $k$, which contradicts the lemma that
  $\sigma$ cannot be attracted to $\Lambda^-$ by iterates of $f^k_\#$ (Lemma \ref{circuitatt}).

 \end{proof}

 \begin{cor}\label{coro1}
  Let $\phi\in \out$ be rotationless and hyperbolic. If $\gamma$ is line which is a weak limit of some conjugacy class
  in $\mathbb{F}$ then no endpoint of any lift of 
 $\gamma$ is in $\mathsf{Fix}_-(\phi)$.
 \end{cor}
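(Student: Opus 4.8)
The plan is to obtain Corollary~\ref{coro1} as a quick consequence of Lemma~\ref{coro} together with Lemma~\ref{fixlam} applied to $\phi^{-1}$, by contradiction. First I would fix a CT $f:G\to G$ representing $\phi$ and realize the given conjugacy class $[c]$ as a circuit $\sigma$ in $G$, so that the weak limits of the sequence $\phi^k([c])$ coincide with the weak limits of $f^k_\#(\sigma)$. In particular Lemma~\ref{coro} applies to our line $\gamma$ and tells us that $\gamma$ is \emph{not} weakly attracted, under iterates of $\phi^{-1}$, to any element of $\mathcal{L}(\phi^{-1})$; this is the statement we will violate.

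Next, suppose for contradiction that some lift $\widetilde{\gamma}$ of $\gamma$ has an endpoint $\xi\in\mathsf{Fix}_-(\phi)$. Recall that $\phi^{-1}$ is again rotationless and hyperbolic, and that, with the notation of this paper, $\mathsf{Fix}_-(\phi)\subseteq\mathsf{Fix}_+(\phi^{-1})$, so $\xi\in\mathsf{Fix}_+(\phi^{-1})$. Hence Lemma~\ref{fixlam}, applied to $\phi^{-1}$, shows that the weak closure of $\xi$ contains some attracting lamination $\Lambda^-\in\mathcal{L}(\phi^{-1})$. The sub-ray of $\widetilde{\gamma}$ converging to $\xi$ projects to a ray in $G$ that realizes $\xi$ and is a subpath of the realization of $\gamma$, so by the definition of the weak closure of a point (Section~\ref{sec:2}) every finite subpath of every leaf of $\Lambda^-$ occurs in $\gamma$. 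Choosing such a subpath $\beta$ of a generic leaf of $\Lambda^-$ long enough to define an attracting neighborhood $V^-$ of $\Lambda^-$ for $\phi^{-1}$, we get $\gamma\in V^-$, so $\gamma$ is weakly attracted to $\Lambda^-$ under iterates of $\phi^{-1}$ --- contradicting the conclusion of Lemma~\ref{coro} recorded above, and finishing the proof.

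The proof is therefore short, its content being entirely in Lemmas~\ref{coro} and~\ref{fixlam}. The only point that needs care --- and what I would regard as the main obstacle --- is the bookkeeping that lets us feed $\xi$, a repelling fixed point of a principal lift of $\phi$, into Lemma~\ref{fixlam} for $\phi^{-1}$: one must check that $\xi$ is an attracting fixed point of a \emph{principal} lift of $\phi^{-1}$ (equivalently, that attracting fixed points of arbitrary lifts of a rotationless automorphism are already attracting fixed points of principal lifts) and that $\phi^{-1}$ is rotationless. Granting these standard facts about rotationless automorphisms, the corollary is immediate; if one prefers to avoid the rotationlessness of $\phi^{-1}$, one can run the same argument with the uniform rotationless power $\phi^{-K}$ of Lemma~\ref{rotationless} in place of $\phi^{-1}$, noting that the sets $\mathsf{Fix}_\pm$ only enlarge under powers and that $\gamma$ is again a weak limit of a conjugacy class under $\phi^{K}$.
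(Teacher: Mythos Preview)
Your proof is correct and follows essentially the same route as the paper: assume an endpoint $\xi\in\mathsf{Fix}_-(\phi)$, use Lemma~\ref{fixlam} for $\phi^{-1}$ to get an element $\Lambda^-\in\mathcal{L}(\phi^{-1})$ in the weak closure of $\xi$, deduce that $\gamma$ is attracted to $\Lambda^-$ under $\phi^{-1}$, and contradict Lemma~\ref{coro}. The paper deals with the one technical point you flag --- rotationlessness of $\phi^{-1}$ --- exactly as in your alternative, by choosing a CT for a rotationless power of $\phi^{-1}$ rather than asserting that $\phi^{-1}$ itself is rotationless.
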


 \begin{proof} Suppose $\gamma$ had an endpoint  $\xi\in\mathsf{Fix}_-(\phi)$. Since $\phi$ is hyperbolic, so is $\phi^{-1}$ and using this we derive a contradiction to
 Lemma \ref{coro}.

  Choose a CT $f':G'\longrightarrow G'$ representing some rotationless power of $\phi^{-1}$.
  Using Lemma \ref{lemma4}, every point in $\mathsf{Fix}_-(\phi)$ is obtained by iteration of an edge $E'$ whose initial vertex is principal and
  initial direction is fixed. By using
  Lemma \ref{fixlam} we get that the weak closure of $\xi$, and hence $\gamma$,
  will contain an attracting lamination $\Lambda^-$ associated to the EG strata $E'$. This means $\gamma$
  is attracted to $\Lambda^-$. Which contradicts Lemma \ref{coro}.

 \end{proof}

 \begin{lemma}\label{lemma1}
  Let $\phi$ be a hyperbolic automorphism and suppose $\phi^k(c)$ converges weakly to some line  $\gamma$ as
  $k\rightarrow \infty$. 
  Then $\gamma$ is weakly attracted to at least one element of $\mathcal{L}(\phi)$.
 \end{lemma}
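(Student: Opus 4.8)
The plan is to combine the Weak Attraction Theorem \ref{WAT} with Lemma \ref{coro}, after first reducing to the rotationless case. Writing $k=Km+r$ with $K$ the uniform constant of Lemma \ref{rotationless} and $r\in\{0,\dots,K-1\}$, and passing to a subsequence of the $k$'s along which $\phi^{k}([c])\to\gamma$ and $r$ is constant, I obtain $\psi:=\phi^{K}$ (rotationless, still hyperbolic, hence exponentially growing) and a conjugacy class $[c']:=\phi^{r}([c])$ with $\psi^{m_j}([c'])\to\gamma$ for some $m_j\to\infty$; since $\psi$ is a rotationless power of $\phi$ we have $\mathcal{L}(\psi)=\mathcal{L}(\phi)$. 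Fix a CT $f\colon G\to G$ for $\psi$, so that $\gamma$ is a weak limit of the circuit $[c']$ under $f_{\#}$; Lemma \ref{coro} then already tells us that $\gamma$ is attracted to no element of $\mathcal{L}(\psi^{-1})$. In view of the duality bijection $\mathcal{L}(\psi)\leftrightarrow\mathcal{L}(\psi^{-1})$, it therefore suffices to exhibit one attracting lamination $\Lambda^{+}_{\psi}\in\mathcal{L}(\psi)$ for which $\gamma$ is \emph{not} carried by $\mathcal{A}_{na}(\Lambda^{\pm}_{\psi})$: the Weak Attraction Theorem \ref{WAT} then forces $\gamma$ to be attracted to $\Lambda^{+}_{\psi}$ under $\psi$ (its alternative (2) being excluded by Lemma \ref{coro}), which is the assertion of the lemma.

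So the heart of the argument is to rule out the possibility that $\gamma$ is carried by $\mathcal{A}_{na}(\Lambda^{\pm}_{\psi})$ for \emph{every} $\Lambda^{+}_{\psi}\in\mathcal{L}(\psi)$. Suppose it were. Since $\psi$ is hyperbolic it has no geometric strata (as in the proof of Lemma \ref{EG}), so by Lemma \ref{NAS}(5) each $\mathcal{A}_{na}(\Lambda^{\pm}_{\psi})$ is a $\psi$-invariant free factor system; hence their meet $\mathcal{F}$ is a $\psi$-invariant free factor system, and since the free factor support of the single line $\gamma$ is carried by each $\mathcal{A}_{na}(\Lambda^{\pm}_{\psi})$ it is carried by $\mathcal{F}$, so $\mathcal{F}$ carries $\gamma$. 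In particular $\mathcal{F}$ is nontrivial; and it is proper, since by Lemma \ref{EG} the conjugacy class $[c']$ is attracted to some $\Lambda^{+}_{\psi}$, so the corresponding $\mathcal{A}_{na}(\Lambda^{\pm}_{\psi})\neq[\mathbb{F}]$ and $\mathcal{F}$ is carried by it. Now take $f\colon G\to G$ to be a CT for $\psi$ realizing $\mathcal{F}$, say $\mathcal{F}=[G_{\ell}]$. If some EG stratum $H_{r_i}$ of $G$ — the one associated to $\Lambda_i$ — were contained in $G_{\ell}$, then also $G_{r_i}\subseteq G_{\ell}$, so $G_{\ell}$ would carry the generic leaf of $\Lambda_i$ (an increasing union of height-$r_i$ tiles, all lying in $G_{r_i}$); but that leaf is attracted to $\Lambda^{+}_i$ and so is not carried by $\mathcal{A}_{na}(\Lambda^{\pm}_i)$, let alone by the smaller system $\mathcal{F}=[G_{\ell}]$ — a contradiction. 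Hence $G_{\ell}$ contains no EG stratum, and since $\gamma$ is realized in $G_{\ell}$, no edge of $\gamma$ lies in an EG stratum.

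This contradicts the dynamics of the iterates. The analysis in the proof of Lemma \ref{EG} shows that for a hyperbolic $\psi$ there is a constant $C=C(G)$ such that, once $k$ is large, every subpath of $f^{k}_{\#}([c'])$ of length at least $C$ contains an edge of an EG stratum: in a complete splitting of $f^{k}_{\#}([c'])$ the only EG-free material consists of runs of fixed edges (which lie in a forest, as $\psi$ has no periodic conjugacy classes) and of superlinear NEG edges (each of which acquires an EG edge within a bounded number of further iterations, and none of which is linear since $\psi$ is hyperbolic), while every indivisible Nielsen path of an EG stratum has its end edges in that stratum and every taken zero-stratum subpath is flanked by EG edges — so every term lies within bounded distance of an EG edge. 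Passing to the limit, every length-$C$ subpath of $\gamma$ then contains an edge of an EG stratum, contrary to the previous paragraph. This contradiction completes the proof.

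The step I expect to be the real obstacle is precisely the one carried out in the last two paragraphs: showing that the limit line $\gamma$ escapes \emph{some} nonattracting subgroup system. The tempting shortcut — each $f^{m}_{\#}([c'])$ is not carried by $\mathcal{A}_{na}(\Lambda^{\pm}_{\psi})$ by Lemma \ref{NAS}(2), and the carried set is weakly closed by Lemma \ref{NAS}(1) — fails, because a weak limit of non-carried lines may perfectly well be carried; so one genuinely needs both the uniform "EG-density" of the iterates extracted from the proof of Lemma \ref{EG} and the observation that being carried by all of the $\mathcal{A}_{na}(\Lambda^{\pm}_{\psi})$ confines $\gamma$ to an EG-free filtration element.
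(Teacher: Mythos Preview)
Your reduction to the rotationless case and the use of Lemma~\ref{WAT} together with Lemma~\ref{coro} to reach ``if the conclusion fails then $\gamma$ is carried by every $\mathcal{A}_{na}(\Lambda^\pm_i)$'' is correct, and is a slightly different route from the paper's, which reaches the same point via the finer structure theorem Lemma~\ref{concat} and Corollary~\ref{coro1}. Your subsequent passage to the meet $\mathcal{F}$ of the nonattracting systems --- using hyperbolicity $\Rightarrow$ no geometric strata $\Rightarrow$ each $\mathcal{A}_{na}(\Lambda^\pm_i)$ is a genuine free factor system by Lemma~\ref{NAS}(5) --- is also fine, and again differs from the paper, which instead intersects the boundaries $\partial A_i$ directly. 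Up through the conclusion that the filtration element $G_\ell$ realizing $\mathcal{F}$ contains no EG stratum, your argument is sound.

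The gap is in your last paragraph. The ``EG-density'' claim --- that for some $C$ and all large $k$, every length-$C$ subpath of $f^{k}_\#([c'])$ contains an EG edge --- is not established by what you write: knowing that each superlinear NEG edge \emph{eventually} spawns an EG edge does not bound the length of a run of NEG and fixed single-edge terms in the complete splitting at a \emph{fixed} time $k$; such runs can in principle be long, and a genuine inductive argument on strata heights would be needed. Fortunately you have already won before that paragraph. You showed $\mathcal{F}=[G_\ell]$ is a nontrivial, proper, $\psi$-invariant free factor system contained in every $\mathcal{A}_{na}(\Lambda^\pm_i)$; hence any conjugacy class $[a]$ carried by $\mathcal{F}$ is carried by every $\mathcal{A}_{na}(\Lambda^\pm_i)$ and so, by Lemma~\ref{NAS}(2), is attracted to no $\Lambda^+_i$ --- contradicting Lemma~\ref{EG}. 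Equivalently: $G_\ell$ is $f$-invariant, noncontractible, and contains no EG stratum, so any circuit in $G_\ell$ never acquires an EG edge under iteration, again contradicting Lemma~\ref{EG}. This is exactly the paper's endgame, reached via your meet-of-free-factor-systems route rather than its intersection-of-boundaries route; drop the EG-density argument and conclude this way.
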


 \begin{proof}
  Replacing $\phi$ by some $\phi^N$ if necessary, we may assume that $\phi$ is rotationless. Let $f:G\longrightarrow G$
  be a CT representing $\phi$ and $\sigma$ be the realization of the conjugacy class $c$ in $G$.
  Note that if $\gamma\in \mathcal{B}_{gen}(\phi)\cup\mathcal{B}_{sing}(\phi)$ then weak closure of $\gamma$ contains
  at least one attracting lamination of $\phi$ and hence by definition, $\gamma$ is attracted to that attracting lamination.
  Hence we are left with the case when $\gamma\notin \mathcal{B}_{gen}(\phi)\cup\mathcal{B}_{sing}(\phi) $.
  
  By using  Corollary \ref{coro1} we know that no endpoint of any lift of $\gamma$ is in $\mathsf{Fix}_-(\phi)$. 
  Also Lemma \ref{coro} tells us that $\gamma$ is not a generic leaf of some repelling lamination of $\phi$. Hence using 
  Lemma \ref{concat} we can conclude that if $\gamma$ is not attracted to some attracting lamination $\Lambda^+$ then it must be carried 
  by $\mathcal{A}_{na}(\Lambda^+)$. This implies that theres exists some lift $\tilde{\gamma}$ of $\gamma$ such that 
  both endpoints of $\tilde{\gamma}$ are carried by $\partial A$ for some $[A]\in\mathcal{A}_{na}(\Lambda^+)$.

  Therefore we can conclude that if $\gamma$ is not attracted to any element of $\mathcal{L}(\phi)$ then is carried by 
  $\mathcal{A}_{na}(\Lambda^+_i)$ for every element $\Lambda^+_i\in\mathcal{L}(\phi)$ where $i=1, 2, ..., K$ for some $K<\infty$.
  So there are 
  $[A_i]\in\mathcal{A}_{na}(\Lambda^+_i)$ such that $\partial A_i$  carries both endpoints of some lift of $\gamma$.
  Hence $\cap\partial A_i\neq \emptyset$. Thus $\cap A_i\neq\emptyset$ and so there exists a conjugacy class $[c]$ which is 
  carried by $\mathcal{A}_{na}(\Lambda^+_i)$ for every $i$. But this contradicts Lemma \ref{EG}. Therefore $\gamma$ 
  must be attracted to some element of $\mathcal{L}(\phi)$.

 \end{proof}

 The following proposition gives us one side of the inclusion of Theorem \ref{Limit}.

 \begin{prop}\label{nongen0}
  Let $f:G\longrightarrow G$ be a CT representing a rotationless hyperbolic element of $\out$ and $\sigma$ is a circuit in
  $G$. If $l$ is a weak limit of $\sigma$ under the action of $f_\#$ which is  not a singular line for $\phi$, then either $l$ must be generic leaf of some element of
  $\mathcal{L}(\phi)$ or both endpoints of $l$ must be in $\mathsf{Fix}_+(\phi)$. 
 \end{prop}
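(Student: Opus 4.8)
The plan is to combine the three preceding lemmas — Lemma \ref{EG}, Lemma \ref{coro} (with its Corollary \ref{coro1}), and Lemma \ref{lemma1} — with the Handel--Mosher concatenation theorem (Lemma \ref{concat}) to pin down the birecurrent line $l$. First I would note that $l$, being a weak limit of a circuit under iterates of $f_\#$, is birecurrent: every finite subpath of $l$ occurs in $f^k_\#(\sigma)$ for all large $k$, and since $f^{k+1}_\#(\sigma)$ refines the complete splitting of $f^k_\#(\sigma)$ (Lemma \ref{circuitsplit}), any such subpath recurs in both directions along $l$. So $l$ is a birecurrent element of $\mathcal{B}$.

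Next, by Lemma \ref{lemma1}, $l$ is weakly attracted to some attracting lamination $\Lambda^+ = \Lambda^+_\phi \in \mathcal{L}(\phi)$; in particular $l$ is \emph{not} carried by $\mathcal{A}_{na}(\Lambda^+_\phi)$ and, by Lemma \ref{coro}, $l$ is not a generic leaf of any repelling lamination $\Lambda^-$ of $\phi$. Now I would invoke Lemma \ref{concat}: with $\psi = \phi^{-1}$ (passing to a further rotationless power is harmless since weak attraction is insensitive to it), we have
$$\mathcal{B}_{na}(\Lambda^-_\phi, \phi^{-1}) = \mathcal{B}_{ext}(\Lambda^\pm_\phi,\phi) \cup \mathcal{B}_{gen}(\phi) \cup \mathcal{B}_{sing}(\phi).$$
Since $l$ is not weakly attracted to $\Lambda^-_\phi$ under $\phi^{-1}$ (if it were, then as in the proof of Corollary \ref{coro1} its weak closure would contain a repelling lamination, contradicting Lemma \ref{coro}), the line $l$ lies in the left-hand side for \emph{every} dual pair $\Lambda^\pm_\phi$. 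By hypothesis $l$ is not a singular line, i.e. $l \notin \mathcal{B}_{sing}(\phi)$, and by Lemma \ref{coro} $l$ is not a generic leaf of any $\Lambda^-_\phi$. So either $l \in \mathcal{B}_{gen}(\phi)$ — and then $l$ is a generic leaf of some element of $\mathcal{L}(\phi)$, which is one of the two desired conclusions — or $l \in \mathcal{B}_{ext}(\Lambda^\pm_\phi,\phi)$ for some repelling lamination $\Lambda^-_\phi$ that $l$ is not attracted to.

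In the remaining case, $l \in \mathcal{B}_{ext}(A,\phi)$ for some $[A] \in \mathcal{A}_{na}(\Lambda^\pm_\phi)$, so some lift $\widetilde{l}$ has both endpoints in $\partial_{ext}(A,\phi) = \partial A \cup \bigcup_\Phi \mathsf{Fix}_N(\widehat{\Phi})$, the union over $A$-related principal lifts. Here is where the main obstacle lies: I must rule out an endpoint of $\widetilde{l}$ lying in $\partial A$ (as opposed to in some $\mathsf{Fix}_+(\widehat{\Phi}) = \mathsf{Fix}_N(\widehat{\Phi})$, using hyperbolicity). The idea is that if \emph{both} endpoints of some lift lay in $\partial A$, then $l$ would be carried by $\mathcal{A}_{na}(\Lambda^+_\phi)$, contradicting Lemma \ref{lemma1}; and if exactly one endpoint $\xi$ lay in $\partial A$ while the other did not, the Weak Attraction Theorem (Lemma \ref{WAT}) would force $l$ into $V^+_\phi$ or $V^-_\phi$ — the latter is impossible as $l$ is a weak limit of a circuit (Lemma \ref{circuitatt}, Lemma \ref{coro}), and the former says $l$ is attracted to $\Lambda^+_\phi$, which must then be reconciled with the $\mathcal{B}_{ext}$ membership; running the birecurrence of $l$ through Lemma \ref{concat}'s ``moreover'' clause, applied to the two rays of $l$ sharing an asymptotic end, forces all the relevant boundary points to lie in a single $\mathsf{Fix}_N(\widehat{\Phi})$. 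Carrying this dichotomy carefully — the delicate point being the mixed case where one uses malnormality of $\mathcal{A}_{na}$ (Lemma \ref{NAS}(6)) and the structure of $\partial_{ext}$ — yields that both endpoints of $\widetilde{l}$ lie in $\mathsf{Fix}_+(\widehat{\Phi})$ for a single principal lift $\widehat{\Phi}$, hence both endpoints of $l$ are in $\mathsf{Fix}_+(\phi)$, which is the second desired conclusion.
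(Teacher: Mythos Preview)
Your overall architecture matches the paper's up through placing $l$ in $\mathcal{B}_{ext}(\Lambda^\pm,\phi)$ and deducing that at least one endpoint lies in $\mathsf{Fix}_+(\phi)$, but there are two genuine problems.

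First, the birecurrence claim is false. The proposition's own conclusion allows $l$ to be a line in $\mathcal{B}_{\mathsf{Fix}_+}(\phi)$ that is not a generic leaf, and such lines (for instance one whose two singular-ray ends lie in different EG strata) are typically not birecurrent. Your argument --- that refinement of complete splittings of $f^k_\#(\sigma)$ forces subpaths of $l$ to recur in both directions along $l$ --- does not work: weak convergence says every finite subpath of $l$ eventually appears in $f^k_\#(\sigma)$, not that it appears infinitely often in $l$ itself.

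Second, and this is the real gap, your treatment of the mixed case (one endpoint $P \in \mathsf{Fix}_+(\phi)$, the other $Q \in \partial A$ for some $[A] \in \mathcal{A}_{na}(\Lambda^\pm)$) does not go through. The Weak Attraction Theorem only reconfirms that $l$ is attracted to $\Lambda^+$, which you already knew from Lemma~\ref{lemma1}; it gives no purchase on $Q$. And the ``moreover'' clause of Lemma~\ref{concat} concerns two \emph{distinct lines} sharing an endpoint at infinity --- the two rays of a single line meet at a basepoint in the graph, not at a point of $\partial\mathbb{F}$, so there is nothing to feed into that clause. The paper closes this gap with a tool you did not invoke: Lemma~\ref{attfix} (the Levitt--Lustig theorem on boundary dynamics). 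Once $Q \notin \mathsf{Fix}_+(\phi)$ and, by Corollary~\ref{coro1}, $Q \notin \mathsf{Fix}_-(\phi)$, Lemma~\ref{attfix} applied to $\phi^{-1}$ forces the backward iterates of $Q$ to converge to a point $Q^- \in \mathsf{Fix}_-(\phi)$; since the weak closure of $Q^-$ contains a repelling lamination (Lemma~\ref{fixlam}) and $P \neq Q^-$, the line $l$ would then be weakly attracted to that repelling lamination under $\phi^{-1}$, contradicting Lemma~\ref{coro}. This boundary dichotomy is the missing ingredient.
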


 \begin{proof}
 Suppose on the contrary that $l\notin \mathcal{B}_{gen}(\phi)$.
 Given that $l$ is not  a singular line, and since
 we know that $l$ is not weakly attracted to any element of $\mathcal{L}(\phi^{-1})$ we can apply
 Lemma \ref{concat} and deduce that $l\in \mathcal{B}_{ext}(\Lambda^\pm, \phi)$ for every dual lamination pair
 of $\phi$ since $l\notin \mathcal{B}_{gen}(\phi) \cup \mathcal{B}_{sing}(\phi)$. But there exists at least one element
  $\Lambda^+\in \mathcal{L}(\phi)$,  to which $l$ is attracted (by Lemma \ref{lemma1}), which implies that $l$ is not carried by
 $\mathcal{A}_{na}(\Lambda^\pm)$. If we denote the two distinct endpoints of $l$ by $P$ and $Q$, then at least 
 one endpoint, $P$ say, is in $\mathsf{Fix}_+(\phi)$. We now procced to give an argument by contradiction.
 
 Suppose that $P\in\mathsf{Fix}_+(\phi)$ but $Q\notin \mathsf{Fix}_+(\phi)$. 
 In this case 
 notice that $Q\notin \mathsf{Fix}_-(\phi)$, since that would violate corollary \ref{coro1}. 
Since $Q\notin \mathsf{Fix}_+(\phi)$, iteration of $Q$ by $\phi^{-t}_\#$ converges to a point $Q^-$ in
$\mathsf{Fix}_-(\phi^{-t})$ (for some $t\geq 1)$ by Lemma \ref{attfix}. But this implies $l$ is attracted to some $\Lambda^-_{Q^-}$ (since $\mathsf{Fix}_+(\phi)$ and
$\mathsf{Fix}_-(\phi)$ are disjoint and hence $P\neq Q^-$), where $\Lambda^-_{Q^-}$ is in the weak closure
of $Q^-$ (by using Lemma \ref{fixlam}). But this contradicts Lemma \ref{coro1}. Hence this case is not possible.

 Therefore $P\in\mathsf{Fix}_+(\phi)$ and $Q\in \mathsf{Fix}_+(\phi)$ and we get the desired conclusion.
 
 Note that since we have assumed that $l$ is not a singular line there does not exist a principal lift 
 that fixes both $P$ and $Q$, hence this case covers more than singular lines.

 \end{proof}

 We now state an important lemma from the Subgroup Decomposition work of Handel and Mosher. This lemma can
 be used to conclude that every line in $\mathcal{B}_{sing}(\phi)$ and every leaf in any
 attracting lamination for $\phi$  occurs as a weak limit of some conjugacy class under action of $\phi_\#$.

 \begin{lemma}\label{converse}
  \cite[Lemma 1.52]{HM-13a} For each $P\in \mathsf{Fix}_+(\phi)$ there is a conjugacy class $[a]$ which is
  weakly attracted to every line in the weak accumulation set of $P$.
 \end{lemma}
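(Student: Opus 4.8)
The statement is cited as \cite[Lemma 1.52]{HM-13a}, so strictly speaking the work has been done by Handel and Mosher; my task is to reconstruct the argument. The plan is to use the converse direction of Lemma \ref{lemma4}: every attracting fixed point $P \in \mathsf{Fix}_+(\widehat{\Phi})$ for some principal lift $\Phi$ is the endpoint of a singular ray $\widetilde{R} = \widetilde{E} \cdot \widetilde{u} \cdot \widetilde{f}_\#(\widetilde{u}) \cdots$ obtained by iterating a nonfixed edge $E$ with fixed initial direction at a principal (hence $f$-fixed) vertex $v$. First I would fix a CT $f\colon G \to G$ representing a rotationless power $\psi = \phi^N$ of $\phi$ (by Lemma \ref{rotationless}), noting that weak attraction is insensitive to passing to powers; after establishing the conclusion for $\psi$ it transfers to $\phi$ because the weak accumulation set of $P$ and the notion of weak attraction are unchanged.

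The core of the argument is to produce the conjugacy class $[a]$ explicitly. Since $v$ is a vertex of the finite graph $G$ through which the edge $E$ passes, choose any closed circuit $\sigma$ in $G$ based at $v$ whose first edge is $E$ and whose last edge is \emph{not} $\overline{E}$ (this is possible because $G$ has no valence-one vertices, so $v$ has at least one other direction available, or one may simply take $\sigma$ to traverse a large subgraph); let $[a]$ be the conjugacy class realized by $\sigma$. Then $f_\#(\sigma)$ is again a circuit, and because the initial direction of $E$ is fixed by $Tf$ and $f_\#(E) = E \cdot u$ is a splitting (property (Nonlinear NEG edges) of \ref{CT}, or Lemma \ref{lemma4} in the EG case), no cancellation destroys the leading edge $E$; iterating, $f^k_\#(\sigma)$ contains the initial segment $E \cdot u \cdot f_\#(u) \cdots f^{k-1}_\#(u)$ of the singular ray $R$ as a genuine subpath, with length growing without bound (the ray is not eventually periodic since $\phi$ is hyperbolic, or $u$ contains an EG edge by the argument of Lemma \ref{EG} / Lemma \ref{fixlam}). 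Hence every finite subpath of $R$ eventually appears in $f^k_\#(\sigma)$.

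It remains to pass from ``$[a]$ contains arbitrarily long initial segments of the one singular ray $R$'' to ``$[a]$ is weakly attracted to every line in the weak accumulation set of $P$''. A line $\ell$ in the weak accumulation set of $P$ is, by definition, a weak limit of some sequence of subrays of $R$ (equivalently, every finite subpath of $\ell$ occurs infinitely often as a subpath of $R$). Given a finite subpath $\beta$ of such an $\ell$, it occurs as a subpath of $R$, hence of some $f^k_\#(\sigma)$, hence $f^k_\#([a])$ contains $\beta$; since this holds for every finite subpath of $\ell$, a diagonal argument over an exhausting sequence of subpaths of $\ell$ shows $f^{k_j}_\#([a]) \to \ell$ weakly for a suitable subsequence, i.e.\ $[a]$ is weakly attracted to $\ell$. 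The main obstacle is the bookkeeping in the last step: one must be careful that the \emph{direction} along $R$ matters and that subpaths of $R$ reappear in $f^k_\#(\sigma)$ without being broken by the surrounding (backward) part of the circuit — this is handled by the bounded cancellation lemma, since the forward iterates of the tail of $\sigma$ interact with the growing initial segment $E \cdot u \cdots$ only within a bounded distance of the basepoint $v$, so arbitrarily long interior subpaths of $R$ survive intact in $f^k_\#(\sigma)$.
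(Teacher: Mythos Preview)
The paper does not give its own proof of Lemma \ref{converse}; it is simply quoted from Handel--Mosher \cite[Lemma 1.52]{HM-13a}, so there is nothing to compare your argument against here. Your reconstruction is essentially the standard Handel--Mosher argument and is correct in outline: realize $P$ via Lemma \ref{lemma4} as the endpoint of the ray $R$ obtained by iterating a principal edge $E$, take any circuit $\sigma$ through the principal vertex crossing $E$, and observe that $f^k_\#(\sigma)$ contains arbitrarily long interior subpaths of $R$, which is exactly what is needed since every finite subpath of a line in the accumulation set of $P$ occurs infinitely often (hence arbitrarily deep) in $R$.

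One small correction to your write-up: it is not literally true that $f^k_\#(\sigma)$ contains the \emph{full} initial segment $E\cdot u\cdot f_\#(u)\cdots f^{k-1}_\#(u)$ of $R$, because there can be cancellation at \emph{both} cyclic junctions of $f^k_\#(E)$ with $f^k_\#(\tau)$ (writing $\sigma = E\tau$), not just ``near the basepoint $v$''. What bounded cancellation actually gives is that $f^k_\#(\sigma)$ contains $f^k_\#(E)$ minus at most a fixed constant $C$ from each end. This is still enough, for the reason above: lines in the weak accumulation set of $P$ only see subpaths that recur infinitely often in $R$, and those all occur past position $C$. Also, the condition ``last edge is not $\overline{E}$'' is automatic for a cyclically reduced circuit, so you need only note that $E$ lies on \emph{some} circuit, which holds because $G$ is a core graph.
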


\begin{cor}\label{nongen}
 Let $\phi$ be a rotationless hyperbolic outer automorphism and $\Lambda\in \mathcal{L}(\phi)$ be an attracting
 lamination. Then every nongeneric leaf (if it exists) of $\Lambda$ must be a line with both endpoints 
 in $\mathsf{Fix}_+(\phi)$.

 \end{cor}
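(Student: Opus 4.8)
The plan is to prove Corollary \ref{nongen} by combining Proposition \ref{nongen0} with Lemma \ref{structure}(2), which guarantees a leaf of $\Lambda$ that is both singular and has one end dense in $\Lambda$. First I would recall that by Lemma \ref{structure}(2) there is a leaf $\lambda_0 \in \Lambda$ which is a singular line for $\phi$ with one end dense in $\Lambda$; by Lemma \ref{converse} (applied to the endpoint of $\lambda_0$ that is an attracting fixed point) there is a conjugacy class $[a]$ weakly attracted to every line in the weak accumulation set of that endpoint, and since one end of $\lambda_0$ is dense in $\Lambda$, the weak accumulation set contains all of $\Lambda$. Thus every leaf of $\Lambda$ — in particular every nongeneric leaf — is a weak limit of the conjugacy class $[a]$ under iterates of $\phi$.

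Now let $l$ be a nongeneric leaf of $\Lambda$. By the previous paragraph, $l$ is a weak limit of some conjugacy class under $\phi_{\#}$. If $l$ is a singular line for $\phi$, then by Lemma \ref{structure}(1) it has the form $\overline{R}\alpha R'$ where $R, R'$ are singular rays converging to points of $\mathsf{Fix}_+(\widehat{\Phi})$ for a principal lift $\Phi$, so both endpoints of (the relevant lift of) $l$ lie in $\mathsf{Fix}_+(\phi)$ and we are done. If $l$ is not a singular line, then Proposition \ref{nongen0} applies directly: since $l$ is a weak limit of a circuit $\sigma$ under $f_{\#}$ and is not a singular line, either $l$ is a generic leaf of some element of $\mathcal{L}(\phi)$ or both endpoints of $l$ lie in $\mathsf{Fix}_+(\phi)$. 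The first alternative needs to be ruled out: if $l$ were a generic leaf of some $\Lambda' \in \mathcal{L}(\phi)$, then its weak closure is all of $\Lambda'$; but $l$ is also a leaf of $\Lambda$, so $l \in \Lambda$ forces $\overline{l} \subseteq \Lambda$ by closedness of $\Lambda$, hence $\Lambda' = \overline{l} \subseteq \Lambda$, and since both are attracting laminations corresponding to (unique) EG strata this gives $\Lambda = \Lambda'$, making $l$ generic in $\Lambda$ — contradicting the hypothesis that $l$ is nongeneric. So the second alternative holds and both endpoints of $l$ are in $\mathsf{Fix}_+(\phi)$.

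The main obstacle I anticipate is the bookkeeping in the first step: making precise that "one end of $\lambda_0$ dense in $\Lambda$" implies the weak accumulation set of the corresponding attracting fixed point is all of $\Lambda$, so that Lemma \ref{converse} yields a conjugacy class weakly attracted to the nongeneric leaf $l$ (which is needed before Proposition \ref{nongen0} can even be invoked). One must be a little careful that $l$ being a weak limit of a circuit — rather than merely being weakly attracted to as a single target — is what Proposition \ref{nongen0} requires; this is fine because "$[a]$ is weakly attracted to $l$" means $\phi^k_{\#}([a])$ converges weakly to $l$, so realizing $[a]$ as a circuit $\sigma$ in a CT $G$ puts us exactly in the setting of Proposition \ref{nongen0}. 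The rest is a clean dichotomy (singular vs.\ not singular) plus the short argument above ruling out genericity, and requires no new machinery.
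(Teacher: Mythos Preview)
Your approach is exactly the paper's: invoke Lemma~\ref{structure}(2) to get a singular leaf with one end dense in $\Lambda$, apply Lemma~\ref{converse} to obtain a conjugacy class weakly attracted to every leaf of $\Lambda$, then feed the nongeneric leaf into Proposition~\ref{nongen0}. The paper's proof is nothing more than these two sentences.

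There is one genuine gap in your write-up, though. In ruling out the alternative ``$l$ is a generic leaf of some $\Lambda'\in\mathcal{L}(\phi)$'', you deduce $\Lambda'\subseteq\Lambda$ and then assert that this forces $\Lambda'=\Lambda$ because both correspond to EG strata. That implication is false: attracting laminations can be properly nested (if an edge in a higher EG stratum $H_s$ maps across edges of a lower EG stratum $H_r$, then $\Lambda_r\subsetneq\Lambda_s$), so a generic leaf of $\Lambda'$ can perfectly well be a nongeneric-in-$\Lambda$ leaf of $\Lambda$. The paper avoids this issue by reading ``nongeneric leaf of $\Lambda$'' as ``leaf of $\Lambda$ not lying in $\mathcal{B}_{gen}(\phi)$'' (i.e.\ not generic in \emph{any} attracting lamination); this is the reading used when the corollary is applied in Proposition~\ref{asym2}. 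Under that interpretation the generic alternative in Proposition~\ref{nongen0} is excluded by hypothesis and your extra argument is unnecessary. If you insist on the stricter reading ``not generic in $\Lambda$'', the corollary as stated is simply false for the reason above, so your attempted fix cannot succeed.
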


 \begin{proof}

 Recall that Lemma \ref{structure} says that every attracting lamination $\Lambda\in\mathcal{L}(\phi)$ contains a singular line as a leaf
 , one of whose ends is dense in $\Lambda$. Lemma \ref{converse} tells us that there exists a conjugacy class
 $[a]$ which is weakly attracted to every line in $\Lambda$.

  The proof now follows directly from Proposition \ref{nongen0}.
 \end{proof}

\begin{remark}
 It is worth noting that this corollary is very special to hyperbolic outer automorphisms and will fail if $\phi$ is
 not hyperbolic. Once we have linear NEG edges or geometric strata the extended boundary takes a much more complex structure
 and one can easily construct examples where this lemma will fail.
\end{remark}

We finally state and prove the main theorem of this section which classifies all weak limits of conjugacy classes
 under iterations of a hyperbolic outer automorphism. 
 
 \textbf{Notation:} Let $\mathcal{B}_{\mathsf{Fix}_+}(\phi)$ denote the set of all 
 lines with endpoints in $\mathsf{Fix}_+(\phi)$. Note that 
 $\mathcal{B}_{sing}(\phi)\subset \mathcal{B}_{\mathsf{Fix}_+}(\phi)$.

 \begin{thm}
  \label{Limit}

 Suppose $\phi \in \out$ is a hyperbolic outer automorphism and $[c]$ is
 any conjugacy class in $\mathbb{F}$. Then the weak limits of $[c]$ under iterates of $\phi$ are in
 $$\mathcal{WL}(\phi) : =  \mathcal{B}_{gen}(\phi) \cup \mathcal{B}_{\mathsf{Fix}_+}(\phi)$$

 Conversely, any line in $\mathcal{WL}(\phi)$ is a weak limit of some conjugacy class in $\mathbb{F}$ under action of $\phi$.

 \end{thm}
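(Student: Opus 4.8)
The plan is to prove the two inclusions separately, using the machinery assembled in the preliminaries. For the forward inclusion, suppose $\gamma$ is a weak limit of $\phi^k([c])$; after passing to a rotationless power (which does not change weak limits or the sets $\mathcal{B}_{gen}$, $\mathcal{B}_{\mathsf{Fix}_+}$, $\mathcal{L}(\phi)$) I may assume $\phi$ is rotationless and work with a CT $f\colon G\to G$ and the realization $\sigma$ of $[c]$. If $\gamma$ is a singular line we are done since $\mathcal{B}_{sing}(\phi)\subset\mathcal{B}_{\mathsf{Fix}_+}(\phi)$. Otherwise, by Corollary \ref{coro1} no endpoint of a lift of $\gamma$ lies in $\mathsf{Fix}_-(\phi)$, and by Lemma \ref{coro} $\gamma$ is not a generic leaf of any repelling lamination; if moreover $\gamma\notin\mathcal{B}_{gen}(\phi)$, then Proposition \ref{nongen0} applies directly and forces both endpoints of $\gamma$ into $\mathsf{Fix}_+(\phi)$, i.e. $\gamma\in\mathcal{B}_{\mathsf{Fix}_+}(\phi)$. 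So the forward inclusion is essentially a repackaging of Proposition \ref{nongen0}, Lemma \ref{coro}, and Corollary \ref{coro1}.

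For the converse, I must show every line in $\mathcal{WL}(\phi)$ actually arises as a weak limit of some conjugacy class. Fix a line $l\in\mathcal{WL}(\phi)$. If $l\in\mathcal{B}_{gen}(\phi)$, say $l$ is a generic leaf of $\Lambda^+\in\mathcal{L}(\phi)$, then by Lemma \ref{structure}(2) $\Lambda^+$ has a leaf that is a singular line with an end dense in $\Lambda^+$, hence an endpoint $P\in\mathsf{Fix}_+(\phi)$ whose weak accumulation set contains all of $\Lambda^+$, in particular contains $l$. By Lemma \ref{converse} there is a conjugacy class $[a]$ weakly attracted to every line in the weak accumulation set of $P$, so $\phi^k([a])$ weakly accumulates on $l$; a subsequence then converges weakly to $l$, exhibiting $l$ as a weak limit. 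If instead $l\in\mathcal{B}_{\mathsf{Fix}_+}(\phi)$, then $l$ has (at least one, hence both after the analysis of $\mathsf{Fix}_+$) endpoint $P\in\mathsf{Fix}_+(\widehat{\Phi})$ for some principal lift $\Phi$; by Lemma \ref{fixlam} the weak closure of $P$ contains an attracting lamination, and in fact $l$ itself, being a line with endpoints in $\mathsf{Fix}_+(\phi)$, lies in the weak accumulation set of $P$ (via Lemma \ref{lemma4}, which says these endpoints come from iterating principal edges, so the relevant rays accumulate on $l$). Again Lemma \ref{converse} produces a conjugacy class weakly attracted to every line in this accumulation set, so $l$ is a weak limit.

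The main obstacle I anticipate is the converse direction for a non-singular line $l\in\mathcal{B}_{\mathsf{Fix}_+}(\phi)$: I need to be sure that such an $l$ genuinely lies in the weak accumulation set of one of its endpoints $P$, so that Lemma \ref{converse} can be invoked. When $l$ is a singular line this is clean (it is $\overline{R}\alpha R'$ with $R,R'$ singular rays realizing the two fixed points), but for a general line with both endpoints in $\mathsf{Fix}_+(\phi)$ that are fixed by \emph{different} principal lifts, one must argue that iterating a principal edge toward $P$ produces a ray whose forward accumulation set already contains $l$, or alternatively reduce to the singular case by noting (as in the remark after Proposition \ref{nongen0}) that $\mathcal{B}_{\mathsf{Fix}_+}(\phi)$ is built from $\mathcal{B}_{sing}(\phi)$ by joining endpoints. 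I would handle this by using Lemma \ref{lemma4} to identify each endpoint of $l$ with a singular ray, concatenating appropriately, and then appealing to Lemma \ref{converse} applied to the endpoint whose weak accumulation set is largest; a mild bookkeeping point is checking that the concatenated line is exactly $l$ rather than a line merely asymptotic to it, which the local-injectivity argument in the sketch of Lemma \ref{structure}(1) takes care of.
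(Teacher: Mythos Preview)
Your approach matches the paper's exactly: reduce to a rotationless power, invoke Proposition~\ref{nongen0} for the forward inclusion, and for the converse cite Lemma~\ref{structure}, Lemma~\ref{fixlam}, and Lemma~\ref{converse}. The paper's own proof is a three-sentence reference to precisely these lemmas, so your expanded write-up is faithful.

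That said, the obstacle you flag in the last paragraph is genuine, and your proposed resolution does not close it. For a line $l\in\mathcal{B}_{\mathsf{Fix}_+}(\phi)\setminus\mathcal{B}_{sing}(\phi)$ whose endpoints $P,Q$ are fixed by \emph{different} principal lifts, appealing to ``the endpoint whose weak accumulation set is largest'' does not help: by Lemma~\ref{lemma4}, when $P$ arises from iterating an EG edge the weak accumulation set of $P$ is \emph{exactly} the associated attracting lamination $\Lambda_P$, nothing more. So Lemma~\ref{converse} as stated only produces a conjugacy class attracted to leaves of $\Lambda_P$; if $l$ is not a leaf of any attracting lamination (which can happen, e.g.\ when its two ends are dense in incomparable laminations), neither endpoint's accumulation set contains $l$, and the argument stalls. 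The paper's proof is equally terse at this point and does not spell out how such lines are handled either; the sentence before Lemma~\ref{converse} asserts that it covers $\mathcal{B}_{sing}(\phi)$, which already goes beyond what the stated version (about accumulation sets) literally gives. What is actually needed---and what the cited source \cite[Lemma~1.52]{HM-13a} is being used for---is a direct CT argument: a circuit passing through the relevant principal vertex (or vertices) in the right directions will, under iteration by $f_\#$, develop longer and longer subpaths of the form $\overline{f^k_\#(E)}\,\alpha\, f^k_\#(E')$, and hence weakly limit to $l$ itself rather than merely to lines in the accumulation set of an endpoint. Your concatenation idea in step~(2) is the right picture; the fix is to bypass Lemma~\ref{converse} and argue directly with a suitable circuit, not to try to squeeze $l$ into a weak accumulation set.
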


 \begin{proof}
 We may assume without loss that $\phi$ is rotationless, since the work of Feighn and Handel
in \cite{FH-11} show that there exists some number $K$ such that $\phi^K$ is rotationless for any $\phi\in\out$
and it is obvious that the set of weak limits of conjugacy classes is invariant under passing to a finite power.

 If $\gamma$ occurs as a weak limit of some conjugacy class under action of $\phi_\#$ Proposition \ref{nongen0}
 guarantees that it is in $\mathcal{B}_{gen}(\phi) \cup \mathcal{B}_{\mathsf{Fix}_+}(\phi)$.

 The proof of the converse part
  directly follows by using Lemma \ref{fixlam} and  Lemma \ref{converse}, since every attracting lamination $\Lambda$ of $\phi$ has a leaf
 which is a singular line and one of whose ends is dense in $\Lambda$ by using Lemma \ref{structure}.

  \end{proof}
\begin{remark}
  Note that the situation is much simpler in case of fully irreducible and hyperbolic $\phi$ since the unique attracting lamination 
  of such an element does not contain any nongeneric leaves. Hence the equality just reduces to 
  $\mathcal{WL}(\phi) : =  \mathcal{B}_{gen}(\phi) \cup \mathcal{B}_{sing}(\phi)$. 
  \end{remark}
  
We end this section with a corollary which characterizes hyperbolicity in terms of weak limits of conjugacy classes.

\begin{cor}\label{althyp}
 Let $\phi\in\out$ be rotationless. Then $\phi$ is hyperbolic if and only if every
 conjugacy class $[c]$ is weakly attracted to some line in $\mathcal{B}_{sing}(\phi)$.
\end{cor}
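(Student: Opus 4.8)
The statement to prove is: for rotationless $\phi\in\out$, $\phi$ is hyperbolic if and only if every conjugacy class $[c]$ is weakly attracted to some line in $\mathcal{B}_{sing}(\phi)$. I would organize the argument into the two implications, treating the forward direction as a near-immediate consequence of the machinery already assembled and the reverse direction as the one requiring a genuine (if short) argument.

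For the forward implication, assume $\phi$ is hyperbolic. Then Lemma \ref{EG} tells us that any circuit $\sigma$ representing $[c]$ is weakly attracted to some attracting lamination $\Lambda\in\mathcal{L}(\phi)$. By Lemma \ref{structure}(2), $\Lambda$ contains a leaf $l$ which is a singular line, one of whose ends is dense in $\Lambda$; in particular every finite subpath of a generic leaf of $\Lambda$ — and hence (by Lemma \ref{tiles} / birecurrence) every finite subpath of $l$ arising along its dense end — eventually appears in $f^k_\#(\sigma)$. So $[c]$ is weakly attracted to $l\in\mathcal{B}_{sing}(\phi)$, which is what we want. One should double-check that ``weakly attracted to $\Lambda$'' can be upgraded to ``weakly attracted to the specific singular leaf $l$''; this is exactly the content of the dense-end clause in Lemma \ref{structure}(2) combined with the fact that the weak closure of that dense end is all of $\Lambda$, so any neighborhood basis for $l$ (in the relevant direction) is realized.

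For the reverse implication, I would argue the contrapositive: if $\phi$ is \emph{not} hyperbolic, then by the theorem of \cite{BF-92} and \cite{Br-00} quoted in the introduction, $\phi$ has a periodic conjugacy class, and since $\phi$ is rotationless this is in fact a \emph{fixed} conjugacy class $[c]$ (by the lemma on rotationless automorphisms killing periodic behaviour). A fixed conjugacy class has a constant orbit under $\phi_\#$, so its only weak limit is $[c]$ itself (the realization of $[c]$ as a circuit in any marked graph), which is a circuit, not a line, and in particular is not in $\mathcal{B}_{sing}(\phi)$ — indeed a circuit cannot even be a leaf of any lamination (cf.\ \cite[Lemma 3.1.16]{BFH-00}, used already in the proof of Lemma \ref{circuitatt}). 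Hence $[c]$ is not weakly attracted to any line at all, in particular not to a singular line. This contradicts the hypothesis, so $\phi$ must be hyperbolic.

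The main obstacle, such as it is, is purely bookkeeping: making sure the phrase ``weakly attracted to some line in $\mathcal{B}_{sing}(\phi)$'' is interpreted correctly when $\mathcal{B}_{sing}(\phi)$ could a priori be empty. But if $\phi$ is hyperbolic it has at least one EG stratum (otherwise all strata are NEG/zero and one produces a fixed conjugacy class, contradicting hyperbolicity), hence at least one attracting lamination, hence by Lemma \ref{structure} at least one singular line, so $\mathcal{B}_{sing}(\phi)\neq\emptyset$ and the forward direction is not vacuous. Conversely if the hypothesis ``every $[c]$ is weakly attracted to some line in $\mathcal{B}_{sing}(\phi)$'' is to be satisfiable at all we likewise need $\mathcal{B}_{sing}(\phi)\neq\emptyset$, so there is no degenerate case to worry about on that side either. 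I expect the whole corollary to fit in a short paragraph once Lemma \ref{EG} and Lemma \ref{structure} are invoked.
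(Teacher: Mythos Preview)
Your overall strategy matches the paper's (which just cites Theorem \ref{Limit} and Lemma \ref{fixlam} without detail), and both directions are morally right, but each has a small wrinkle worth fixing.

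For the forward direction, you invoke the ``dense end'' clause of Lemma \ref{structure}(2) to pass from ``$[c]$ is weakly attracted to $\Lambda$'' to ``$[c]$ is weakly attracted to the singular leaf $l$''. That clause points the wrong way: density of one end of $l$ in $\Lambda$ tells you that subpaths of generic leaves appear in $l$, whereas what you need is that every finite subpath of $l$ appears in a generic leaf. The correct (and simpler) reason is just that $l\in\Lambda=\overline{\lambda}$ for a generic leaf $\lambda$, so every finite subpath of $l$ is a subpath of $\lambda$; since $\phi^k_\#(\sigma)$ eventually contains every finite subpath of $\lambda$, it eventually contains every finite subpath of $l$. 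The dense-end statement is not needed here.

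For the reverse direction, your claim that ``the only weak limit of a fixed $[c]$ is the circuit $[c]$ itself'' is not quite right in the weak topology on $\widehat{\mathcal{B}}(G)$: the periodic \emph{line} $c^\infty$ is also a weak limit of the constant sequence $[c]$, since every finite subpath of $c^\infty$ occurs in the circuit. So you must rule out $c^\infty\in\mathcal{B}_{sing}(\phi)$. This is straightforward: the endpoints $c^{\pm\infty}$ lie in $\partial\mathsf{Fix}(\Phi)$ for the lift $\Phi$ fixing $c$ (and for any other lift fixing $c^{+\infty}$, which must differ from $\Phi$ by an inner automorphism by a power of $c$), hence not in $\mathsf{Fix}_+(\widehat{\Phi})$. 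Equivalently, a singular ray arises from iterating an EG or superlinear NEG edge (Lemma \ref{lemma4}) and is therefore never eventually periodic, so no singular line is periodic. Either observation closes the gap.
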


\begin{proof}
 Follows directly from Theorem \ref{Limit} and Lemma \ref{fixlam}.
\end{proof}

 \section{Applications}
 \label{appl}
 In this section we will look at some applications of the results we proved in the previous section.
 The first half of this section deals with Cannon-Thurston maps for a hyperbolic $\phi\in\out$ and
 quasiconvexity of infinite index, finitely generated subgroups of $\mathbb{F}$ in the extension group
 $G= \mathbb{F} \rtimes_\phi \mathbb{Z}$. In this half of the section we carefully develop the ideas and comment on
 possible motivations which leads to the Theorems \ref{ct1}, \ref{qc1}, \ref{qc2}.

 The next half of the section generalizes the results we prove in the first half to the case when
 we replace $\phi$ by a Gromov-hyperbolic and purely \textit{atoroidal} group (recall that in such a group
 every element is hyperbolic). Except for description of the ending lamination set
 we will generally be brief about the proofs since they are almost identical to the ones in the first half of this section.


\begin{remark}\label{flip}
There are a couple of important points that we would like to clarify before we proceed with the applications. This is
for benefit of readers who are not familiar with standard terminologies in the weak attraction language.
 \begin{enumerate}
  \item A ``line'' $l\in \widetilde{\mathcal{B}}$ is not just a geodesic in $\mathbb{F}$ joining two points in
  $\partial\mathbb{F}\times\partial\mathbb{F}$. Recall that
  $\widetilde{\mathcal{B}}=\{ \partial \mathbb{F} \times \partial \mathbb{F} - \vartriangle \}/(\mathbb{Z}_2)$ (see Preliminaries \ref{sec:2}).
  The action of $\mathbb{Z}_2$ on $\partial\mathbb{F}\times\partial\mathbb{F}$ is by interchanging endpoints.
  So a line of $\widetilde{\mathcal{B}}$ is unoriented and flip-invariant.
  $\widetilde{\mathcal{B}}$ carries the weak topology induced from Cantor topology on
  $\partial \mathbb{F}$.
    Elements of $\mathcal{B}=\widetilde{\mathcal{B}}/\mathbb{F}$, are projections of line in $\widetilde{\mathcal{B}}$ in the
    quotient space which is compact but not Hausdorff. Elements of $\mathcal{B}$ are also referred to as lines.

  \item When we say $\phi^k_\#([c])$ converges to a line $l$ or $l$ is a weak limit of $\phi^k_\#([c])$
  as $k\rightarrow\infty$ it is equivalent to saying every subpath $\alpha$ of $l$ occurs as subpath of $\phi^s_\#([c])$ for all
  sufficiently large $s$.

  Lifting to $\widetilde{\mathcal{B}}$, if $\tilde{\alpha}$ is a subpath of $\tilde{l}$ then either $\tilde{\alpha}$ or $\tilde{\alpha}^{-1}$
  occurs as a subword of some cyclic permutation of a word $w$  representing $\phi^s_\#([c])$.
    To summarize, $\phi^k_\#([c])$ converges to a line $l$ or $l$ is a weak limit of $\phi^k_\#([c])$
  as $k\rightarrow\infty$ is equivalent to saying that for every lift $\tilde{l}\in\widetilde{\mathcal{B}}$ and every subword
  $\tilde{\alpha}$ of $\tilde{l}$, either $\tilde{\alpha}$ or $\tilde{\alpha}^{-1}$ occurs as a subword of some cyclic permutation of
  a word $w_s$ representing $\phi^s_\#([c])$ for all sufficiently large $s$.

 \end{enumerate}

\end{remark}

\textbf{Notation: } By $\widetilde{\mathcal{WL}}(\phi)$ we denote all the lifts of lines in $\mathcal{WL}(\phi)$ to $\mathcal{B}$.

 \subsection{Canon-Thurston maps for hyperbolic $\phi$.}
 \label{sec:A1}
Let $\Gamma$ be a word-hyperbolic group and $H<\Gamma$ is a word-hyperbolic subgroup. If the inclusion map $i: H \rightarrow \Gamma$ extends to a
 continuous map of the boundaries $\widehat{i}: \partial H\rightarrow \partial \Gamma$ then
$\widehat{i}$ is called a Cannon-Thurston map. When it does exist, it is an interesting question to know what its properties are.
Its precise behavior is captured by the notion of \textit{Ending laminations}.
The original definition was given in \cite{Mj-97} and for Free groups it was
later modified and used in \cite{KL-13} by Kapovich and Lustig. 
Let $$ 1 \rightarrow \mathbb{F} \rightarrow G \rightarrow \mathcal{H} \rightarrow 1 $$ be an exact sequence of
hyperbolic groups where $\mathcal{H}< \out$. Then Mitra defined:

\begin{definition}\label{el}
 Let $z\in \partial \mathcal{H}$ and $\{\phi_n\}$ be a sequence of vertices on a geodesic joining $1$ to $z$
 Cayley graph of $\mathcal{H}$. Define
 $$\Lambda_{z,[c]} = \{l\in\partial\mathbb{F}\times\partial\mathbb{F} | w \text{ subword of } l\Rightarrow\exists n\ni w \text{ or } w^{-1}\text{ subword of } {\phi_n}_\#([c]) \}$$
 $$ \Lambda_z = \bigcup_{c\in\mathbb{F}-\{1\}} \Lambda_{z,[c]}$$
 $$\Lambda_{\mathcal{H}} = \bigcup_{z\in\partial\mathcal{H}} \Lambda_z$$
\end{definition}

 In \cite[Lemma 3.3]{Mj-97} shows that $\Lambda_z$ is independent of the choice of the
sequence $\phi_n$.

Strictly speaking Mitra's definition is much more general, but we have adapted the definition as the special
case for $\out$.

 The Cannon-Thurston map, when it exists, identifies the endpoints of the certain leaves of ending lamination. Our goal here is to understand the class of
leaves that get identified by this map by using the theory of attracting laminations and singular lines.

Let $\Gamma_\phi$ denote the mapping torus for a hyperbolic $\phi\in\out$.
The precise statement for the behaviour of Cannon-Thurston map is given by:

\begin{thm}\cite[Theorem 4.11]{Mj-98}
 If $\phi\in\out$ is a hyperbolic outer automorphism then the Cannon-Thurston map $\widehat{i}: \partial\mathbb{F} \rightarrow \partial\Gamma_\phi$ exists.
 Moreover, $\widehat{i}(X) = \widehat{i}(Y)$ if and only if
 the line $l\in\partial\mathbb{F}\times \partial\mathbb{F}$ joining $X$ to $Y$ is in $\Lambda_z$ for some $z\in\partial\mathcal{H}$.
\end{thm}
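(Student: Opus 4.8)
This is Mitra's theorem \cite{Mj-98}; the plan is to follow his theory of Cannon--Thurston maps and ending laminations for hyperbolic group extensions, specialized to the mapping torus $\Gamma_\phi=\mathbb{F}\rtimes_\phi\mathbb{Z}$, which is Gromov-hyperbolic precisely because $\phi$ is hyperbolic (Bestvina--Feighn \cite{BF-92}, Brinkmann \cite{Br-00}). First I would establish existence of $\widehat{i}$. By Mitra's criterion it is enough to show that for every $M>0$ there is $N>0$ so that any geodesic segment in the Cayley graph of $\mathbb{F}$ lying outside the $N$-ball about $1$ maps, under the inclusion, to a path in $\Gamma_\phi$ lying outside the $M$-ball about $1$. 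To verify this one attaches to each geodesic $\lambda$ in the fiber $\mathbb{F}\times\{0\}$ a hyperbolic ladder $B_\lambda\subset\Gamma_\phi$ --- roughly the union over $n\in\mathbb{Z}$ of the lines $\phi^n_\#(\lambda)$ in the fibers $\mathbb{F}\times\{n\}$ together with connecting $t$-edges --- and shows that $B_\lambda$ is uniformly quasiconvex in $\Gamma_\phi$. Quasiconvexity is forced by hyperbolicity of $\phi$: iterating in either direction stretches $\mathbb{F}$-length, which produces a coarsely Lipschitz retraction $\Gamma_\phi\to B_\lambda$. Since $\lambda\subset B_\lambda$, the required distance estimate --- hence the map $\widehat{i}$ --- follows.

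For the fiber description, the easy direction is: if $l$ joins $X$ to $Y$ and $l\in\Lambda_z$ for some $z\in\partial\mathcal{H}$, then $\widehat{i}(X)=\widehat{i}(Y)$. Here $\mathcal{H}=\langle\phi\rangle\cong\mathbb{Z}$, so $z=\pm\infty$; say $z=+\infty$. By definition of $\Lambda_{z,[c]}$ there is a conjugacy class $[c]$ so that every finite subword of a fixed lift $\widetilde l$ occurs in $\phi^n_\#([c])$ for arbitrarily large $n$; equivalently $\widetilde l$ is a weak limit of the circuits $\phi^n_\#([c])$, so suitable $\mathbb{F}$-translates of the $\mathbb{F}$-axes of the elements $\phi^n(c)$ have endpoints tending to $X$ and to $Y$ in $\partial\mathbb{F}$. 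In $\Gamma_\phi$ one has $\phi^n(c)=t^n c t^{-n}$, so the $\Gamma_\phi$-axis of each such translate is a $\Gamma_\phi$-translate of one fixed geodesic (the axis of $c$ in $\Gamma_\phi$), supported on fibers at level about $n$; these translates recede to infinity in $\Gamma_\phi$, so their pairs of endpoints in $\partial\Gamma_\phi$ collapse to a single point, which by continuity of $\widehat{i}$ must equal both $\widehat{i}(X)$ and $\widehat{i}(Y)$.

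The hard direction is the converse: if $\widehat{i}(X)=\widehat{i}(Y)=:p\in\partial\Gamma_\phi$ then $(X,Y)\in\Lambda_z$ for a suitable $z$. Fix a geodesic ray $\beta$ in $\Gamma_\phi$ from $1$ to $p$; its projection to the $\mathbb{Z}$-factor is coarsely monotone, going to $+\infty$ or to $-\infty$, and this supplies the candidate $z$. Realize $[X,Y]$ in the fiber $\mathbb{F}\times\{0\}$ and flow it through the bundle, so that in $\mathbb{F}\times\{n\}$ it becomes $\phi^n_\#([X,Y])$. Because $\widehat{i}$ amalgamates the two ends $X$ and $Y$ along $\beta$, the ladder and retraction estimates from the first step force the two sides of $B_{[X,Y]}$ to become $\Gamma_\phi$-close near infinity; pushing this information back into the fibers in the direction of $z$ shows that every finite subpath of $[X,Y]$ reappears in $\phi^n_\#(w)$ for a conjugacy class $w$ read off from the bundle geometry near $\beta$, which is exactly the statement $(X,Y)\in\Lambda_z$.

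The main obstacle is this last step: controlling, uniformly in $n$, how the geodesic $[X,Y]$ is dragged through the fiber bundle, and certifying that its image can degenerate under $\widehat{i}$ only in the way prescribed by a leaf of an ending lamination. This is where Mitra's quantitative hyperbolic-ladder estimates do the real work, and where it matters that $\mathbb{Z}$ is two-ended so that both $z=\pm\infty$ are available; granted existence of $\widehat{i}$, the rest --- including independence of $\Lambda_z$ from the chosen geodesic, which is \cite[Lemma 3.3]{Mj-97} --- is comparatively formal.
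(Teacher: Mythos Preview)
The paper does not prove this theorem at all: it is quoted verbatim as \cite[Theorem 4.11]{Mj-98} and used as a black box, so there is no ``paper's own proof'' to compare against. Your sketch is a faithful outline of Mitra's ladder construction and ending-lamination argument, which is exactly the source being cited; for the purposes of this paper, simply citing the result (as the author does) is what is expected here.
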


 We now proceed to give the description of $\Lambda_z$ using our work in the earlier section.

If $\phi\in \out$ is hyperbolic then by Brinkmann's work \cite{Br-00} we have an exact sequence
$$ 1 \rightarrow \mathbb{F} \rightarrow G \rightarrow \langle \phi \rangle \rightarrow 1 $$ of hyperbolic groups
where $G= \mathbb{F} \rtimes_\phi \mathbb{Z}$ is the mapping torus of $\phi$

\begin{lemma}\label{el1}
 Let $\mathcal{H} = \langle \phi \rangle$ for some hyperbolic $\phi\in\out$. Then
 $$\Lambda_{\langle \phi \rangle} = \widetilde{\mathcal{WL}}(\phi) \cup \widetilde{\mathcal{WL}}(\phi^{-1})$$

\end{lemma}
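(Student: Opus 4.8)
The plan is to prove the two inclusions separately, in each case translating between Mitra's combinatorial definition of $\Lambda_{\langle\phi\rangle}$ and the weak-limit language of Section \ref{main}. The key observation is that $\partial\langle\phi\rangle$ consists of exactly two points, call them $z_+$ and $z_-$, corresponding to the two ends of the line which is the Cayley graph of $\mathbb{Z}$; a geodesic from $1$ to $z_+$ has vertices $\phi^n$ with $n\to+\infty$, and a geodesic from $1$ to $z_-$ has vertices $\phi^{-n}$ with $n\to+\infty$. Thus $\Lambda_{\langle\phi\rangle} = \Lambda_{z_+}\cup\Lambda_{z_-}$, and unwinding Definition \ref{el}, a line $\widetilde l$ (with its endpoints) lies in $\Lambda_{z_+,[c]}$ precisely when every subword of $\widetilde l$ occurs (up to inversion) as a subword of some cyclic permutation of $\phi^n_\#([c])$ for some $n$ --- which, by Remark \ref{flip}(2), is exactly the statement that $\widetilde l$ is a lift of a weak limit of the circuit associated to $[c]$ under iterates of $\phi_\#$. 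Consequently $\Lambda_{z_+} = \widetilde{\mathcal{WL}}(\phi)$ essentially by definition, once one is careful about the quantifier structure.

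For the inclusion $\Lambda_{z_+}\subseteq\widetilde{\mathcal{WL}}(\phi)$: given $\widetilde l\in\Lambda_{z_+,[c]}$, I would observe that for each finite subword $\widetilde\alpha$ of $\widetilde l$ there is some $n=n(\alpha)$ with $\widetilde\alpha$ (or its inverse) appearing in $\phi^n_\#([c])$; since the circuit $\sigma$ realizing $[c]$ eventually has a completely split image under $f_\#$ (Lemma \ref{circuitsplit}) and each refinement of the complete splitting is preserved by further iteration, once $\widetilde\alpha$ appears in $\phi^{n}_\#([c])$ it persists for all larger iterates --- hence $\widetilde l$ genuinely is a weak limit, and Theorem \ref{Limit} places the underlying line $l$ in $\mathcal{WL}(\phi)$, so $\widetilde l\in\widetilde{\mathcal{WL}}(\phi)$. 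The monotonicity here (appearance persists under further iteration) is the point that needs Lemma \ref{circuitsplit}; without it, ``$\widetilde\alpha$ appears for some $n$'' would not upgrade to ``$\widetilde\alpha$ appears for all large $n$''. For the reverse inclusion $\widetilde{\mathcal{WL}}(\phi)\subseteq\Lambda_{z_+}$: given $l\in\mathcal{WL}(\phi)$, the converse half of Theorem \ref{Limit} supplies a conjugacy class $[c]$ with $\phi^n_\#([c])$ converging weakly to $l$; then for every subword $\widetilde\alpha$ of every lift $\widetilde l$, Remark \ref{flip}(2) gives that $\widetilde\alpha$ or $\widetilde\alpha^{-1}$ is a subword of a cyclic permutation of $\phi^s_\#([c])$ for all large $s$, which is more than enough to conclude $\widetilde l\in\Lambda_{z_+,[c]}\subseteq\Lambda_{z_+}$.

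Running the same argument with $\phi$ replaced by $\phi^{-1}$ (noting $\phi$ hyperbolic $\iff$ $\phi^{-1}$ hyperbolic, and that a geodesic to $z_-$ is a geodesic to $z_+$ for the generator $\phi^{-1}$) yields $\Lambda_{z_-} = \widetilde{\mathcal{WL}}(\phi^{-1})$. Taking the union over the two boundary points of $\langle\phi\rangle$ gives the claimed equality. I expect the main obstacle to be purely bookkeeping rather than conceptual: carefully matching the $\exists n$ quantifier in Mitra's definition against the $\forall\, s\text{ large}$ phrasing of weak convergence (which is where Lemma \ref{circuitsplit}'s refinement statement does the real work), and keeping track of the passage between oriented lifts in $\widetilde{\mathcal B}$ and their flip-invariant, $\mathbb{F}$-orbit projections in $\mathcal B$, together with the reduction to a rotationless power (harmless, since $\Lambda_{\langle\phi\rangle}$ and $\mathcal{WL}$ are both invariant under replacing $\phi$ by a positive power --- for $\Lambda$ this is because a geodesic ray toward $z_\pm$ in $\langle\phi\rangle$ and one in $\langle\phi^K\rangle$ are a bounded Hausdorff distance apart, and Mitra's set is independent of the choice of defining sequence by \cite[Lemma 3.3]{Mj-97}).
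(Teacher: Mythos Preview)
Your strategy—identify $\partial\langle\phi\rangle=\{z_+,z_-\}$ and match Mitra's $\Lambda_{z_\pm,[c]}$ with the lifts of weak limits of $[c]$ under $\phi^{\pm 1}$—is exactly the paper's, which dispatches it in two sentences via Remark~\ref{flip}(2). Your invocations of Theorem~\ref{Limit} are unnecessary: $\mathcal{WL}(\phi)$ is \emph{defined} (opening of Section~\ref{main}) as the set of all weak limits of conjugacy classes under iterates of $\phi$, so ``$l$ is a weak limit of some $[c]$ under $\phi$'' and ``$l\in\mathcal{WL}(\phi)$'' are synonyms by definition; Theorem~\ref{Limit} is the separate identification $\mathcal{WL}(\phi)=\mathcal{B}_{gen}(\phi)\cup\mathcal{B}_{\mathsf{Fix}_+}(\phi)$, which plays no role in Lemma~\ref{el1}.

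There is one genuine slip: the persistence claim ``once $\widetilde\alpha$ appears in $\phi^n_\#([c])$ it persists for all larger iterates'' does not follow from Lemma~\ref{circuitsplit}. That lemma says the complete splitting of $f^{k+1}_\#(\sigma)$ refines that of $f^k_\#(\sigma)$, meaning each splitting term maps under $f_\#$ and then further subdivides; it does \emph{not} say that arbitrary subpaths of $f^k_\#(\sigma)$ reappear in $f^{k+1}_\#(\sigma)$ (a subpath straddling a splitting boundary, with an EG edge $E$ on one side whose terminal edge differs from that of $f(E)$, need not persist verbatim after one more iteration). The quantifier match you want is available without this: since $\widetilde l$ is an infinite line and each $\phi^n_\#([c])$ is a finite circuit, exhausting $\widetilde l$ by nested subwords $\alpha_1\subset\alpha_2\subset\cdots$ forces $n_{\alpha_j}\to\infty$, and $\alpha_i\subset\alpha_j$ gives $\alpha_i\subset\phi^{n_{\alpha_j}}_\#([c])$ for all $j\ge i$—so every subword appears for arbitrarily large $n$. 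Alternatively, note that Section~\ref{sec:2} already phrases weak attraction as ``for some value of $k$'', so Mitra's $\exists n$ matches on the nose and no upgrading is needed at all.
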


\begin{proof}
In Definition \ref{el} if we let $\mathcal{H} = \langle \phi \rangle$  we see that $\partial\mathcal{H}$ has exactly two points, $z_1$ and $z_2$ say, and
the sequences that converge to these points are $\phi^n$ and $\phi^{-n}$ respectively.
If we use our observations in (2) of Remark \ref{flip}, we see that the set of lines in $\Lambda_{z_1, [c]}$ are exactly
the lines $l\in\widetilde{\mathcal{B}}$ such that $[c]$ weakly converges to the projection of $l$ in $\mathcal{B}$
 under iteration of $\phi_\#$.
So, $\Lambda_{z_1} = \widetilde{\mathcal{WL}}(\phi)$. Similarly  $\Lambda_{z_2} = \widetilde{\mathcal{WL}}(\phi^{-1})$.
Thus we have $\Lambda_{\langle \phi \rangle} = \widetilde{\mathcal{WL}}(\phi) \cup \widetilde{\mathcal{WL}}(\phi^{-1})$.

\end{proof}

Kapovich and Lustig in \cite[Theorem 5.4]{KL-13} showed the following:
\begin{thm}
 For a hyperbolic and fully irreducible $\phi\in\out$, the Cannon-Thurston map $\widehat{i}:\partial\mathbb{F}\
 \rightarrow \partial \Gamma_\phi$ is a finite-to-one map and the cardinality of the preimage set of each
 point in $\partial\Gamma_\phi$ is bounded by $2N$, where $N=\text{rank }(\mathbb{F})$.
\end{thm}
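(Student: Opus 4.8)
The plan is to read the fibers of $\widehat{i}$ directly off Lemma~\ref{el1} and Theorem~\ref{Limit}, and then to bound them using the incompatibility of the forward and backward dynamics of a hyperbolic $\phi$. Replacing $\phi$ by a rotationless power if necessary (which changes neither $\partial\Gamma_\phi$ nor the fibers of $\widehat{i}$), by \cite[Theorem 4.11]{Mj-98} we have $\widehat{i}(X)=\widehat{i}(Y)$ if and only if the line joining $X$ and $Y$ lies in $\Lambda_{\langle\phi\rangle}$, and by Lemma~\ref{el1} the latter equals $\widetilde{\mathcal{WL}}(\phi)\cup\widetilde{\mathcal{WL}}(\phi^{-1})$. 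Since $\phi$ is fully irreducible and hyperbolic, each of $\phi,\phi^{-1}$ has a single attracting lamination, $\Lambda^+_\phi$ and $\Lambda^-_\phi$ respectively, so Theorem~\ref{Limit} gives $\mathcal{WL}(\phi)=\mathcal{B}_{gen}(\phi)\cup\mathcal{B}_{\mathsf{Fix}_+}(\phi)$ and symmetrically for $\phi^{-1}$. Hence $\widehat{i}(X)=\widehat{i}(Y)$ holds precisely when the line joining $X$ and $Y$ is a lift of a generic leaf of $\Lambda^+_\phi$ or of $\Lambda^-_\phi$, or a lift of a line in $\mathcal{B}_{\mathsf{Fix}_+}(\phi)\cup\mathcal{B}_{\mathsf{Fix}_-}(\phi)$. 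Fibers of size at most $2$ are harmless, so the entire problem is to bound fibers of size at least $3$.

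The key step is to show that if $\widehat{i}^{-1}(\widehat{i}(X))$ has at least three elements then $X\in\mathsf{Fix}_+(\phi)\cup\mathsf{Fix}_-(\phi)$. Such an $X$ is an endpoint of some line $l$ in the ending lamination. If $l$ is a lift of a line in $\mathcal{B}_{\mathsf{Fix}_+}(\phi)\cup\mathcal{B}_{\mathsf{Fix}_-}(\phi)$ we are done, since those lines have both endpoints in $\mathsf{Fix}_\pm(\phi)$ by definition (and this also covers nongeneric leaves, by Corollary~\ref{nongen}). Otherwise $l$ is a generic leaf of $\Lambda^+_\phi$, say (the $\Lambda^-_\phi$ case being symmetric under $\phi\leftrightarrow\phi^{-1}$). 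Since the fiber has at least three elements, there is a point $Z$ distinct from $X$ and from the other endpoint of $l$ such that the line $m$ joining $X$ and $Z$ also lies in the ending lamination; as $l$ and $m$ share the endpoint $X$, they are distinct asymptotic lines, and I would eliminate the possibilities for $m$ one by one. If $m$ is a generic leaf of $\Lambda^+_\phi$, then $l$ and $m$ are two asymptotic leaves of an attracting lamination of $\phi$, hence both are singular lines by Proposition~\ref{asym2}, so $X\in\mathsf{Fix}_+(\phi)$. If $m$ is a lift of a line in $\mathcal{B}_{\mathsf{Fix}_+}(\phi)$, then $X\in\mathsf{Fix}_+(\phi)$ at once. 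If $m$ is a generic leaf of $\Lambda^-_\phi$, then a common subray of $l$ and $m$ in a marked graph has weak closure equal both to $\Lambda^+_\phi$ (birecurrence of the generic leaf $l$) and to $\Lambda^-_\phi$ (birecurrence of $m$), forcing $\Lambda^+_\phi=\Lambda^-_\phi$; this is impossible, since $l$ would then be a generic leaf of a lamination in $\mathcal{L}(\phi^{-1})$, hence --- being a weak limit of a conjugacy class under iterates of $\phi$ by the converse part of Theorem~\ref{Limit} --- attracted to a repelling lamination of $\phi$, contradicting Lemma~\ref{coro}. Finally, if $m$ is a lift of a line in $\mathcal{B}_{\mathsf{Fix}_+}(\phi^{-1})=\mathcal{B}_{\mathsf{Fix}_-}(\phi)$, then $X\in\mathsf{Fix}_-(\phi)$, but $l$ is a weak limit of a conjugacy class under iterates of $\phi$ and so has no endpoint in $\mathsf{Fix}_-(\phi)$ by Corollary~\ref{coro1} --- a contradiction. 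This proves the step.

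To conclude, I would observe that no fiber can contain a point of $\mathsf{Fix}_+(\phi)$ together with a point of $\mathsf{Fix}_-(\phi)$: if $X\in\mathsf{Fix}_+(\phi)$, $Y\in\mathsf{Fix}_-(\phi)$ and the line joining them lies in $\widetilde{\mathcal{WL}}(\phi)$, then it is a weak limit of a conjugacy class under $\phi$ with an endpoint in $\mathsf{Fix}_-(\phi)$, contradicting Corollary~\ref{coro1}; if it lies in $\widetilde{\mathcal{WL}}(\phi^{-1})$ the same argument applies after noting $\mathsf{Fix}_-(\phi^{-1})=\mathsf{Fix}_+(\phi)$. Therefore every fiber of $\widehat{i}$ of size at least $3$ is contained either in $\mathsf{Fix}_+(\phi)$ or in $\mathsf{Fix}_-(\phi)$, and each of these two sets has at most $2N$ elements: choosing isogredience representatives $\Phi_1,\dots,\Phi_K$ of the principal automorphisms of $\phi$, the index inequality of \cite[Theorem 1']{GJLL-98} used in the proof of Proposition~\ref{singfin} gives $|\mathsf{Fix}_+(\widehat{\Phi}_1)|+\cdots+|\mathsf{Fix}_+(\widehat{\Phi}_K)|\leq 2N$, and $\mathsf{Fix}_+(\phi)$ is the union of the $\mathsf{Fix}_+(\widehat{\Phi}_i)$. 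Since $N=\mathrm{rank}(\mathbb{F})\geq 3$ we have $2N\geq 2$, so every fiber of $\widehat{i}$ has at most $2N$ elements and $\widehat{i}$ is finite-to-one with the asserted uniform bound. I expect the four-way case analysis in the middle paragraph to be the real obstacle: it is exactly the point where one must pin down which leaves of the ending lamination can be asymptotic through a common point of $\partial\mathbb{F}$, and it is there that Proposition~\ref{asym2}, the birecurrence of generic leaves, and Corollary~\ref{coro1} carry the argument.
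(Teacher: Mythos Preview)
This theorem is not proved in the paper: it is quoted from \cite[Theorem~5.4]{KL-13} and then generalized in Theorem~\ref{ct1}, whose proof drops the sharp $2N$ bound in favor of an unspecified constant depending only on $\mathrm{rank}(\mathbb{F})$. So there is no proof in the paper to compare against; your argument is an attempt to recover the sharp bound from the paper's machinery, and it is worth examining on those terms.

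Your middle paragraph is sound: the case analysis correctly shows that every point of a fiber of size at least $3$ lies in $\mathsf{Fix}_+(\phi)$, or every such point lies in $\mathsf{Fix}_-(\phi)$. The gap is in the final step, where you assert that ``$\mathsf{Fix}_+(\phi)$ is the union of the $\mathsf{Fix}_+(\widehat{\Phi}_i)$'' over isogredience \emph{representatives} and therefore has at most $2N$ elements. This is false. By the paper's notation $\mathsf{Fix}_+(\phi)=\bigcup_{\Phi\in P(\phi)}\mathsf{Fix}_+(\widehat{\Phi})$ ranges over \emph{all} principal lifts; since conjugating a principal lift by any nontrivial $g\in\mathbb{F}$ yields another principal lift, this union is $\mathbb{F}$--invariant and hence infinite in $\partial\mathbb{F}$. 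The GJLL inequality only bounds $\sum_i|\mathsf{Fix}_+(\widehat{\Phi}_i)|\le 2N$, hence each individual $|\mathsf{Fix}_+(\widehat{\Phi})|$, not the cardinality of the full union.

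What is missing is the assertion that all points of a given fiber lie in $\mathsf{Fix}_+(\widehat{\Phi})$ for one and the same principal lift $\Phi$. For hyperbolic $\phi$ distinct principal lifts have disjoint attracting fixed sets in $\partial\mathbb{F}$ (a common point would be fixed by a nontrivial inner automorphism, hence equal to some $g^{\pm\infty}$, whose weak accumulation set is a single periodic line rather than an attracting lamination, contradicting Lemma~\ref{fixlam}). Combined with the observation --- and this is where full irreducibility really enters, via triviality of $\mathcal{A}_{na}(\Lambda^\pm_\phi)$ and hence of $\mathcal{B}_{ext}$ in Lemma~\ref{concat}, as in the remark after Theorem~\ref{Limit} --- that every line of $\mathcal{WL}(\phi)$ joining two points of $\mathsf{Fix}_+(\phi)$ is actually a singular line, one then deduces that the entire fiber sits inside a single $\mathsf{Fix}_+(\widehat{\Phi})$, after which $|\mathsf{Fix}_+(\widehat{\Phi})|\le 2N$ finishes the argument.
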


We shall improve this theorem by removing the fully irreducible assumption. However the uniform
bound that we provide is not a sharp bound like the one obtained above (see \cite[Remark 5.9]{KL-13}.

We use the following lemma by  \cite[Lemma 2.15]{HM-13c} to prove proposition \ref{asym2}, which is the main technical
result in this section that tells us exactly which elements of $\mathcal{WL}(\phi)$ are identified. A baby version of the lemma first
appeared in \cite[Lemma 3.3]{HM-11} for the fully irreducible case.

\begin{lemma} \label{asym1}
 For any rotationless $\phi$ and generic leaves $l', l''$ of $\phi$, if some end of $l'$ is asymptotic to some
 end of $l''$, then $l', l'' \in\mathcal{B}_{sing}(\phi)$.
\end{lemma}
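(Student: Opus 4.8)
\emph{Proposal.} The plan is to work in a $CT$ $f\colon G\to G$ representing $\phi$ (which exists by Lemma~\ref{rotationless} and the $CT$ existence theorem, as $\phi$ is rotationless) and to combine the tile structure of leaves of attracting laminations with the description of singular rays as iterates of principal edges. The first move is a reduction via birecurrence. Since generic leaves are birecurrent, the weak closure of any subray of a generic leaf $l'$ equals $\overline{l'}$, which is the attracting lamination $\Lambda'$ of which $l'$ is generic. If an end of $l'$ is asymptotic to an end of $l''$ then they share a subray $\rho$, whence $\overline{\rho}=\overline{l'}=\Lambda'$ and $\overline{\rho}=\overline{l''}=\Lambda''$, so $\Lambda'=\Lambda''$. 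Thus we may assume $l',l''$ are distinct generic leaves of a single attracting lamination $\Lambda=\Lambda^+_\phi$ with EG stratum $H_r$, sharing a common asymptotic endpoint $\xi\in\partial\mathbb F$ (with distinct other endpoints $\eta',\eta''$).

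The heart of the argument is to show $\xi\in\mathsf{Fix}_+(\widehat{\Phi})$ for some principal lift $\Phi$ of $\phi$. The shared ray $\rho$ is a \emph{branch ray} of $\Lambda$: it extends to a full leaf of $\Lambda$ in two incompatible ways. As $\Lambda$ is realized in the finite graph $G$, a finiteness argument shows there are only finitely many $\mathbb F$-orbits of branch rays of $\Lambda$; since $\Lambda$ is $f_\#$-invariant and $f_\#$ carries branch rays to branch rays, the $f_\#$-orbit of $\rho$ is finite and hence eventually periodic, so after replacing $\rho$ by a branch ray in its orbit we may take $f^m_\#$ to fix it, i.e. $\widehat{\Phi}^m(\xi)=\xi$ for a suitable lift $\Phi$. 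Rotationlessness ($\mathsf{Per}_N=\mathsf{Fix}_N$ on principal lifts) upgrades this to $\xi$ being fixed by a lift of $\phi$ itself, and by Lemma~\ref{tiles} every leaf-ray of $\Lambda$ is an increasing union of tiles of height $r$, which grow exponentially under $f_\#$; so $\rho$ is expanding and $\xi$ is attracting, while the two incompatible extensions of $\rho$ force the relevant lift to be principal. Lemma~\ref{lemma4} then gives that $\rho$ is asymptotic to a singular ray $R$ with terminal point $\xi$ and $\overline R=\Lambda$.

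It remains to locate the other endpoints. By birecurrence of $l'$, every finite subpath of the negative-end ray $\rho^{-}$ of $l'$ recurs infinitely in $l'$, hence occurs along its positive end $\rho$ and so along $R$; since cofinal finite subpaths of $R$ are tiles $f^{k}_\#(E)$ of height $r$, the ray $\rho^{-}$ is assembled from tile segments, and the rigidity of the tile structure forces $\rho^{-}$ to be asymptotic to a singular ray too, with terminal point $\eta'$ lying in $\mathsf{Fix}_+(\widehat{\Phi})$ for the \emph{same} principal lift $\Phi$ (which is pinned down by $\xi$). Then $\widetilde{l'}$ has both endpoints in $\mathsf{Fix}_+(\widehat{\Phi})$, so $l'\in\mathcal B_{sing}(\phi)$; the identical argument applied to $l''$ (or to the pair with roles reversed) gives $l''\in\mathcal B_{sing}(\phi)$. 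A cleaner alternative for the last two steps, once the first reduction is in place, is to note $l',l''\in\mathcal B_{gen}(\phi)\subseteq\mathcal B_{na}(\Lambda^-_\phi,\phi^{-1})$ with a common asymptotic end and distinct other ends, and to apply the ``moreover'' clause of Lemma~\ref{concat}: this yields either a single $[A]\in\mathcal A_{na}(\Lambda^\pm_\phi)$ whose extended boundary contains all three endpoints, or a single principal lift $\Phi$ with all three endpoints in $\mathsf{Fix}_N(\widehat{\Phi})=\mathsf{Fix}_+(\widehat{\Phi})$. The second case finishes immediately, and the first is excluded because a generic leaf of $\Lambda^+_\phi$ is weakly attracted to $\Lambda^+_\phi$ (so, by Lemma~\ref{NAS}(2) and the Weak Attraction Theorem~\ref{WAT}, cannot be carried by $\mathcal A_{na}(\Lambda^\pm_\phi)$), combined with the second step forcing $\xi\in\mathsf{Fix}_+$ of a principal lift.

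I expect the main obstacle to be exactly the third step: birecurrence tells us only that every finite subpath recurs, not that an entire end ``is'' a singular ray, so one must push the tile-rigidity argument carefully (or, in the alternative route, genuinely rule out the mixed case of Lemma~\ref{concat} where some endpoints lie in $\partial A$ and others in various $\mathsf{Fix}_N$'s), and one must verify that the principal lift witnessing one endpoint is the same as the one witnessing the other.
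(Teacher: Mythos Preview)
The paper does not give its own proof of this lemma: it is quoted verbatim from \cite[Lemma~2.15]{HM-13c} (with a remark that a fully-irreducible version is \cite[Lemma~3.3]{HM-11}). So there is no in-paper argument to compare against; what you have written is an attempt to reprove a cited result.

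Your first reduction is correct and is the natural opening move: birecurrence of generic leaves forces the weak closure of the shared ray to equal both $\Lambda'$ and $\Lambda''$, so $l',l''$ are generic leaves of a single $\Lambda^+$. After that, however, neither of your two routes is complete. In the branch-ray route, the claim that there are only finitely many $\mathbb{F}$-orbits of branch rays of $\Lambda$ is not obvious and you do not justify it; more seriously, the ``tile-rigidity'' step that is supposed to force the \emph{other} endpoint $\eta'$ of $l'$ into $\mathsf{Fix}_+(\widehat{\Phi})$ for the \emph{same} principal lift $\Phi$ is only asserted, not argued, and this is exactly the hard part (you correctly flag it yourself). In the alternative route via Lemma~\ref{concat}, you are right that no endpoint of a generic leaf of $\Lambda^+$ can lie in $\partial A$ for $[A]\in\mathcal{A}_{na}(\Lambda^\pm)$ (else the ray-closure would put $\Lambda^+$ inside $\mathcal{A}_{na}$), but in case~(a) of that lemma the three endpoints are only placed in $\partial_{ext}(A,\phi)=\partial A\cup\bigcup_{\Phi}\mathsf{Fix}_N(\widehat{\Phi})$, and nothing prevents $P$ and $Q'$ from landing in $\mathsf{Fix}_N$ of \emph{different} $A$-related principal lifts; that would not make $l'$ a singular line. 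You note this issue but do not resolve it.

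There is also a circularity concern worth flagging: in \cite{HM-13c} the result you are invoking as Lemma~\ref{concat} (Theorem~G there) is proved \emph{after} Lemma~2.15, and the asymptotic-leaves lemma is part of the machinery leading up to it, so using Theorem~G to reprove Lemma~2.15 risks assuming what you want. If you pursue this, you should check that the ``moreover'' clause of Theorem~G does not itself rely on Lemma~2.15.
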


Notice that in the following proposition, by extending our set from $\mathcal{B}_{sing}(\phi)$ to 
$\mathcal{B}_{\mathsf{Fix}_+}(\phi)$, 
we can generalize from generic leaves to include nongeneric leaves also.

\begin{prop}\label{asym2}
Let $\phi$ be rotationless and hyperbolic and
 suppose $l',l''$ are two leaves in $\cup \Lambda_i$, where $\Lambda_i$'s are attracting laminations
 of $\phi$. If some end of $l'$ is asymptotic to some end of $l''$ then
 both $l',l'' \in\mathcal{B}_{\mathsf{Fix}_+}(\phi)$.

 To summarize, only lines in $\mathcal{B}_{\mathsf{Fix}_+}(\phi)$ can be asymptotic.
\end{prop}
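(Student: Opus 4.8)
The plan is to reduce the general (possibly nongeneric) case to the generic case already handled by Lemma~\ref{asym1}. First I would observe that by Corollary~\ref{nongen} every nongeneric leaf of any attracting lamination $\Lambda_i$ has \emph{both} endpoints in $\mathsf{Fix}_+(\phi)$, so it already lies in $\mathcal{B}_{\mathsf{Fix}_+}(\phi)$ and there is nothing to prove for such a leaf. Hence the only case requiring work is when at least one of $l', l''$ --- say $l'$ --- is a \emph{generic} leaf of some $\Lambda_i$.

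Next I would split into subcases according to whether $l''$ is generic or nongeneric. If both $l'$ and $l''$ are generic leaves (of possibly different attracting laminations of $\phi$), then Lemma~\ref{asym1} applies directly and gives $l', l'' \in \mathcal{B}_{sing}(\phi) \subset \mathcal{B}_{\mathsf{Fix}_+}(\phi)$, finishing that subcase. The remaining subcase is $l'$ generic and $l''$ nongeneric. Here I would use the asymptotic end: let $\xi \in \partial\mathbb{F}$ be the common endpoint of a lift $\widetilde{l}'$ and a lift $\widetilde{l}''$. Since $l''$ is nongeneric, Corollary~\ref{nongen} tells us both of its endpoints are in $\mathsf{Fix}_+(\phi)$; in particular $\xi \in \mathsf{Fix}_+(\phi)$, so there is a principal lift $\widehat{\Phi}$ with $\xi \in \mathsf{Fix}_+(\widehat{\Phi})$. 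The generic leaf $l'$ has one end landing at $\xi$, a point fixed by a principal automorphism; I would then invoke the structure of generic leaves --- a generic leaf has a complete splitting into tiles and, by Lemma~\ref{structure}(2) and the discussion around Lemma~\ref{lemma4}, the end of a leaf of an attracting lamination that converges to an attracting fixed point of a principal lift is itself a singular ray. Applying this to the $\xi$-end of $l'$ shows that $\xi$ is the endpoint of a singular ray, and then (using Lemma~\ref{lemma4} and the fact that $\phi$ is hyperbolic, so every such attracting fixed point comes from iterating a principal edge) the \emph{other} endpoint $Q$ of $l'$ must also be a fixed point of a principal lift: otherwise, iterating $Q$ backwards by Lemma~\ref{attfix} would land it in $\mathsf{Fix}_-$ and force $l'$ to be attracted to a repelling lamination, contradicting that $l'$ is a leaf of an attracting lamination of $\phi$ (a repelling lamination of $\phi$ is not attracted to $\Lambda^+_\phi$). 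Hence both endpoints of $l'$ lie in $\mathsf{Fix}_+(\phi)$, i.e.\ $l' \in \mathcal{B}_{\mathsf{Fix}_+}(\phi)$, and $l'' \in \mathcal{B}_{\mathsf{Fix}_+}(\phi)$ by Corollary~\ref{nongen}.

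The last sentence of the statement, that only lines in $\mathcal{B}_{\mathsf{Fix}_+}(\phi)$ can be asymptotic, is then a restatement of the conclusion: any two leaves of $\cup\Lambda_i$ sharing an asymptotic end both lie in $\mathcal{B}_{\mathsf{Fix}_+}(\phi)$.

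The main obstacle I anticipate is the mixed subcase ($l'$ generic, $l''$ nongeneric): Lemma~\ref{asym1} is stated only for pairs of generic leaves, so one cannot cite it as a black box and must instead argue directly that the shared endpoint being in $\mathsf{Fix}_+(\phi)$ forces the generic leaf $l'$ to have its \emph{other} end also at a principal fixed point. The delicate point is ruling out that the other end of $l'$ sits in $\mathsf{Fix}_-(\phi)$ or is a point whose forward/backward iterates escape; this is exactly where hyperbolicity of $\phi$ together with Lemma~\ref{attfix} and Lemma~\ref{fixlam} (weak closures of $\mathsf{Fix}_-$ points contain repelling laminations, which are not attracted to $\Lambda^+$) must be combined carefully.
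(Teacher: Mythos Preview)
Your case split (both generic / both nongeneric / mixed) is exactly the paper's, and the first two cases are handled correctly via Lemma~\ref{asym1} and Corollary~\ref{nongen}.

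The mixed case, however, has a genuine gap. You correctly observe that the shared endpoint $\xi$ lies in $\mathsf{Fix}_+(\phi)$ because the nongeneric leaf already has both endpoints there. The problem is your argument for the \emph{other} endpoint $Q$ of the generic leaf $l'$. You claim that if $Q\notin\mathsf{Fix}_+(\phi)$ then backward iterates of $Q$ land in $\mathsf{Fix}_-$, so $l'$ is attracted to some repelling lamination under $\phi^{-1}$, and you say this contradicts $l'$ being a leaf of an attracting lamination. But this is not a contradiction: a generic leaf of $\Lambda^+$ that is \emph{not} a singular line is typically weakly attracted to the dual lamination $\Lambda^-$ under iterates of $\phi^{-1}$. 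Your parenthetical ``a repelling lamination of $\phi$ is not attracted to $\Lambda^+_\phi$'' is true but irrelevant; the direction is wrong. There is also a lift-matching issue: Lemma~\ref{attfix} is a statement about a specific $\Phi$, and the principal lift fixing $\xi$ need not be the one governing the dynamics of $Q$, so even the dichotomy you invoke for $Q$ is not cleanly available.

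The paper closes this gap differently: it places both $l'$ and $l''$ in $\mathcal{B}_{na}(\Lambda^-_j)$, where $\Lambda^+_j$ is the lamination containing the generic leaf, and then invokes the ``moreover'' clause of Lemma~\ref{concat} (closure of $\mathcal{B}_{na}$ under concatenation). That clause forces the three endpoints $P,Q',Q''$ either all into $\mathsf{Fix}_+(\widehat{\Phi})$ for a single principal lift, or all into $\partial_{ext}(A,\phi)$ for some $[A]\in\mathcal{A}_{na}(\Lambda^\pm_j)$. The second alternative is then ruled out for the generic leaf's endpoints because a generic leaf of $\Lambda^+_j$ cannot have an endpoint in $\partial A$. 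This is the missing ingredient in your approach: you need the concatenation-closure structure of $\mathcal{B}_{na}$, not the pointwise dynamics of Lemma~\ref{attfix}, to pin down the second endpoint of the generic leaf.
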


\begin{proof}

 Let $f:G\longrightarrow G$ be a CT representing $\phi$. Denote the endpoints of $l'$ by $Q', P$ and
 the endpoints of $l''$ by $Q'', P$, where $P$ is the common endpoint.

 If both the leaves are generic then we are done by Lemma \ref{asym1}. If both $l', l''$ are nongeneric then
 they are already in $\mathcal{B}_{\mathsf{Fix}_+}(\phi)$ by Lemma \ref{nongen}. It remains to consider the case when
 $l'$ is nongeneric leaf but $l''$ is generic. The following claim completes the proof.

 \textbf{Claim:} If $l'$ is a nongeneric leaf and $l''$ is a generic leaf which are asymptotic then 
 $l''\in\mathcal{B}_{\mathsf{Fix}_+}(\phi)$.

 \textit{Proof of claim:} Denote the endpoints of $l', l''$ by $Q', P, Q''$ respectively, where $P$
 is the common endpoint.
Let $\Lambda^+_j$  denote the attracting lamination that contains $l''$. Then both $l', l''$ are in
$\mathcal{B}_{na}(\Lambda^-_j)$. Recall that the set of lines in $\mathcal{B}_{na}(\Lambda^-_j)$ are closed
under concatenation ( by Lemma \ref{concat} ). So consider the line $l$ obtained by concatenation of $l'$ and $l''$, which has
endpoints at $Q', Q''$. Then by using the work of Handel and Mosher
\ref{concat} , there exists a principal lift $\Phi$ of $\phi$ such that either all three of $P, Q', Q''$ are in
$\mathsf{Fix}_+(\widehat{\Phi})$ or all three points are in $\partial_{ext}(A,\phi)$
for some $[A]\in\mathcal{A}_{na}(\Lambda^\pm_j)$. 

If the first conclusion is true then $l''\in\mathcal{B}_{sing}(\phi)$ by 
definition. 
If the second
conclusion is true, then $l''$ cannot have an endpoint carried by $\mathcal{A}_{na}(\Lambda^\pm_j)$
because it is generic  leaf of $\Lambda^+_j$ and so the only remaining possibility is that  
$P, Q''$ are both in $\mathsf{Fix}_+(\phi)$ . 
This gives us that $l''\in\mathcal{B}_{\mathsf{Fix}_+}(\phi)$.


 \end{proof}

 We now use this Proposition to prove our main result about Cannon-Thurston maps for hyperbolic $\phi$.

\begin{thm} \label{ct1}
Consider the exact sequence of hyperbolic groups:
$$ 1\rightarrow \mathbb{F} \rightarrow G\rightarrow \langle\phi\rangle\rightarrow 1$$

  The Cannon-Thurston map $\widehat{i}:\partial\mathbb{F}\
 \rightarrow \partial G$ is a finite-to-one map and cardinality of preimage of each point in $\partial G$ is
 bounded above by a number depending only on $\mathsf{rank}(\mathbb{F})$.

\end{thm}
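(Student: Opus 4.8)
The plan is to combine the description of the ending lamination set from Lemma \ref{el1} with the asymptotic rigidity statement of Proposition \ref{asym2} and the finiteness of $\mathcal{B}_{\mathsf{Fix}_+}(\phi)$ from Proposition \ref{singfin}. First I would recall that by \cite[Theorem 4.11]{Mj-98} the Cannon--Thurston map $\widehat{i}\colon\partial\mathbb{F}\to\partial G$ exists and identifies two points $X,Y\in\partial\mathbb{F}$ precisely when the line joining them lies in $\Lambda_z$ for some $z\in\partial\langle\phi\rangle$, i.e.\ in $\Lambda_{\langle\phi\rangle}$. By Lemma \ref{el1} this means $\widehat{i}(X)=\widehat{i}(Y)$ iff the line $XY$ belongs to $\widetilde{\mathcal{WL}}(\phi)\cup\widetilde{\mathcal{WL}}(\phi^{-1})$. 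So the fibers of $\widehat{i}$ are exactly the equivalence classes of the relation on $\partial\mathbb{F}$ generated by ``$X\sim Y$ if $XY\in\widetilde{\mathcal{WL}}(\phi)\cup\widetilde{\mathcal{WL}}(\phi^{-1})$'', and we must bound the cardinality of these classes uniformly.

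Next I would translate ``$X\sim Y$'' into asymptotic language. Fix a point $X\in\partial\mathbb{F}$ and consider its equivalence class $[X]$. By Theorem \ref{Limit}, $\mathcal{WL}(\phi)=\mathcal{B}_{gen}(\phi)\cup\mathcal{B}_{\mathsf{Fix}_+}(\phi)$; a line in $\mathcal{B}_{\mathsf{Fix}_+}(\phi)$ has both endpoints in the finite set $\mathsf{Fix}_+(\phi)$ (finite by Proposition \ref{singfin}), and likewise for $\phi^{-1}$. So the ``$\mathcal{B}_{\mathsf{Fix}_+}$'' part of the relation only ever identifies points lying in the finite set $\mathsf{Fix}_+(\phi)\cup\mathsf{Fix}_-(\phi)$, contributing a single finite block of size at most $|\mathsf{Fix}_+(\phi)|+|\mathsf{Fix}_-(\phi)|$, uniformly bounded in $\mathsf{rank}(\mathbb{F})$. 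For the ``$\mathcal{B}_{gen}$'' part: if $XY$ is (a lift of) a generic leaf of some attracting lamination $\Lambda_i$ of $\phi$, then $Y$ is an endpoint of a generic leaf of $\Lambda_i$ asymptotic to the ray from the $\Lambda_i$-side at $X$. I want to say that for a generic point $X$ there are only boundedly many such $Y$. Here is where Proposition \ref{asym2} enters: if $X$ is an endpoint of a leaf that is \emph{not} in $\mathcal{B}_{\mathsf{Fix}_+}(\phi)$, then no two distinct leaves of $\cup\Lambda_i$ (or of $\cup\Lambda_i^-$) can share the endpoint $X$, so $X$ is identified with at most one point on the $\phi$-side and at most one on the $\phi^{-1}$-side; combined with the finite exceptional block and the fact that a leaf cannot be simultaneously generic and have an endpoint in $\mathsf{Fix}_+$ unless it is nongeneric/singular (Proposition \ref{nongen0}, Corollary \ref{nongen}), this forces $|[X]|$ to be bounded by a constant depending only on $\mathsf{rank}(\mathbb{F})$.

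Concretely I would argue: let $N=\mathsf{rank}(\mathbb{F})$ and $B=|\mathsf{Fix}_+(\phi)|+|\mathsf{Fix}_-(\phi)|\le 4N$ by Proposition \ref{singfin} (via the index inequality of \cite{GJLL-98}). Case (i): $X\in\mathsf{Fix}_+(\phi)\cup\mathsf{Fix}_-(\phi)$. Every $Y\sim X$ satisfies $XY\in\widetilde{\mathcal{WL}}(\phi)\cup\widetilde{\mathcal{WL}}(\phi^{-1})$; if $XY$ is a generic leaf with $X\in\mathsf{Fix}_+(\phi)$ then by Proposition \ref{asym2} $Y\in\mathsf{Fix}_+(\phi)$ as well, and similarly in the other cases, so $[X]\subset\mathsf{Fix}_+(\phi)\cup\mathsf{Fix}_-(\phi)$ and $|[X]|\le B$. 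Case (ii): $X\notin\mathsf{Fix}_+(\phi)\cup\mathsf{Fix}_-(\phi)$. Then no line of $\widetilde{\mathcal{B}}_{\mathsf{Fix}_+}(\phi)\cup\widetilde{\mathcal{B}}_{\mathsf{Fix}_+}(\phi^{-1})$ has $X$ as an endpoint, so any $Y\sim X$ gives a generic leaf of some $\Lambda_i$ through $X$ (for $\phi$) or of some $\Lambda_i^-$ through $X$ (for $\phi^{-1}$); by Proposition \ref{asym2} any two such leaves through $X$ on the same side must coincide (otherwise they would both lie in $\mathcal{B}_{\mathsf{Fix}_+}$, contradicting $X\notin\mathsf{Fix}_+\cup\mathsf{Fix}_-$), so there is at most one $Y$ on the $\phi$-side and at most one on the $\phi^{-1}$-side, giving $|[X]|\le 3$. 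Taking the maximum of the two cases, every fiber of $\widehat{i}$ has cardinality at most $\max\{B,3\}\le 4N$, a bound depending only on $\mathsf{rank}(\mathbb{F})$; in particular $\widehat{i}$ is finite-to-one. The main obstacle is making the equivalence-class bookkeeping precise --- in particular ensuring the relation really does not ``chain'' outside the finite exceptional set, which is exactly what the uniqueness clause coming from Proposition \ref{asym2} provides, so the argument should go through cleanly once that proposition is in hand.
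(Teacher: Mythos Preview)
Your approach is essentially the paper's own: combine Mitra's identification criterion, Lemma \ref{el1}, Proposition \ref{asym2}, and Proposition \ref{singfin}. The paper's argument is terser --- it simply observes that three identified points yield asymptotic lines in a single $\widetilde{\mathcal{WL}}(\phi^{\pm 1})$ and then invokes Proposition \ref{asym2} and Proposition \ref{singfin} --- while you carry out an explicit case split on whether $X\in\mathsf{Fix}_+(\phi)\cup\mathsf{Fix}_-(\phi)$, which has the merit of making the ``no chaining'' issue visible.

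One point of imprecision in your Case (i): you assert that if $XY$ is a generic leaf and $X\in\mathsf{Fix}_+(\phi)$ then ``by Proposition \ref{asym2}'' $Y\in\mathsf{Fix}_+(\phi)$. Proposition \ref{asym2} as stated requires \emph{two} asymptotic leaves of attracting laminations; a single generic leaf with one endpoint in $\mathsf{Fix}_+(\phi)$ is not immediately covered. The cleanest fix is to avoid this case altogether by using the full fiber: since $\widehat{i}$ is an actual map, the identification relation is already transitive, so for three identified points $X,Y,Z$ all three lines $XY,XZ,YZ$ lie in $\Lambda_{\langle\phi\rangle}$, and by \cite[Proposition 5.1]{Mj-97} (or Corollary \ref{coro1}) all three lie on the same side, say $\widetilde{\mathcal{WL}}(\phi)$. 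Then among $XY,XZ,YZ$ at least two of the pairwise-asymptotic lines are generic (or already in $\mathcal{B}_{\mathsf{Fix}_+}(\phi)$), and Proposition \ref{asym2} applied to a genuinely asymptotic pair forces all of $X,Y,Z$ into $\mathsf{Fix}_+(\phi)$. Alternatively, you can repeat the concatenation argument from the proof of Proposition \ref{asym2} (via Lemma \ref{concat}) with a singular line through $X$ in place of the nongeneric leaf; the birecurrence of the generic leaf then rules out the $\partial A$ alternative exactly as in that proof. Either way the conclusion stands, and your overall bound (and the paper's) follows.
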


\begin{proof}
The map $\widehat{i}$ identifies two points in $\partial\mathbb{F}$ if and only if there is a line in
$\widetilde{\mathcal{WL}}(\phi) \cup \widetilde{\mathcal{WL}}(\phi^{-1})$ which connects the points. If three or more
points are identified then we get asymptotic lines in either $\widetilde{\mathcal{WL}}(\phi)$ or $\widetilde{\mathcal{WL}}(\phi^{-1})$
(not both).

The above proposition \ref{asym2} tells us that only the lines in $\mathcal{B}_{\mathsf{Fix}_+}(\phi)$ or 
$\mathcal{B}_{\mathsf{Fix}_-}(\phi)$ can be asymptotic.
But for any given $\phi$, both $\mathcal{B}_{\mathsf{Fix}_+}(\phi)$ and $\mathcal{B}_{\mathsf{Fix}_-}(\phi)$ are finite sets with cardinality
uniformly bounded above by Proposition \ref{singfin}.

\end{proof}

\subsection{Quasiconvexity in extension of $\mathbb{F}$ by $\phi$}
\label{sec:A2}
Next we proceed to show another application of our work. It is related to quasiconvexity of subgroups
of $\mathbb{F}$ in the extension group $G$. First we quote a result due to Mitra:

\begin{lemma} \cite[Lemma 2.1]{Mj-99} \cite[Lemma 2.4]{MjR-17}
\label{qc}
 Consider the exact sequence $$ 1\rightarrow \mathbb{F} \rightarrow G\rightarrow \mathcal{H}\rightarrow 1$$
 of hyperbolic groups.
If $H$ is a finitely generated infinite index subgroup of
 $\mathbb{F}$, then $H$ is quasiconvex in $G$ if and only if it does not carry a leaf of the ending lamination.
\end{lemma}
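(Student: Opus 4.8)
The plan is to prove the two implications separately, with essentially all of the work going into the ``only if''. The cleanest reformulation is: a finitely generated infinite index $H<\mathbb{F}$ is quasiconvex in $G$ if and only if the Cannon--Thurston map $\widehat i\colon\partial\mathbb{F}\to\partial G$ is injective on $\partial H$. Indeed, by Mitra's Cannon--Thurston theorem for hyperbolic extensions (\cite{Mj-97}; see also \cite[Theorem 4.11]{Mj-98} for the cyclic case quoted before Lemma \ref{el1}), $\widehat i$ identifies two distinct points of $\partial\mathbb{F}$ exactly when the line joining them lies in some $\Lambda_z$; so ``$\widehat i|_{\partial H}$ is not injective'' is literally ``$H$ carries a leaf of $\Lambda_{\mathcal H}$'' in the sense of Definition \ref{el}. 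Thus it suffices to establish the boundary-injectivity version.

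The forward direction is soft. A finitely generated subgroup of a free group is quasiconvex, so $H$ is hyperbolic and the inclusion $H\hookrightarrow\mathbb{F}$ extends continuously to an embedding of $\partial H$ as a compact subset of $\partial\mathbb{F}$. If moreover $H$ is quasiconvex in $G$, then the inclusion $H\hookrightarrow G$ extends to a topological embedding $\partial H\hookrightarrow\partial G$. Since the composite $H\hookrightarrow\mathbb{F}\hookrightarrow G$ equals $H\hookrightarrow G$, and continuous boundary extensions of a given coarse map are unique, the restriction of $\widehat i$ to $\partial H$ is this embedding and hence injective. So if $H$ carries a leaf it cannot be quasiconvex in $G$.

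For the converse, suppose $H$ is not quasiconvex in $G$; I want a line carried by $H$ whose two distinct endpoints are identified by $\widehat i$. Since $H$ is quasiconvex in $\mathbb{F}$ while $H\hookrightarrow G$ is not a quasi-isometric embedding, there are $w_n\in H$ with $|w_n|_{\mathbb{F}}\asymp|w_n|_H\to\infty$ and $|w_n|_G/|w_n|_H\to 0$. Viewing $G$ as a tree of spaces over $\mathrm{Cay}(\mathcal{H})$ with fiber $\mathrm{Cay}(\mathbb{F})$, a $G$-geodesic from $1$ to $w_n$ has $\mathcal{H}$-shadow coarsely a geodesic segment $[1,h_n]$ traversed out and back, and $|w_n|_G$ is comparable to $2|h_n|_{\mathcal{H}}$ plus the cost of crossing the fiber for $w_n$ along $[1,h_n]$; the hypothesis therefore forces $|h_n|_{\mathcal{H}}=o(|w_n|_{\mathbb{F}})$ and that a geodesic through $w_n$ in the fiber does not grow along $[1,h_n]$. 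If the $|h_n|_{\mathcal{H}}$ stayed bounded the distortion would be bounded, a contradiction, so $|h_n|_{\mathcal{H}}\to\infty$ and, after passing to a subsequence, $h_n$ fellow-travels a geodesic ray in $\mathcal{H}$ to some $z\in\partial\mathcal{H}$ with vertices $\phi_1,\phi_2,\dots$. Now the hallways flare condition of \cite{BF-92}, which holds because $G$ is hyperbolic, says precisely that a long fiber geodesic cannot fail to flare when flowed over a long $\mathcal{H}$-geodesic; so the recentered segments near $w_n$ converge, by compactness of $\mathcal{B}$, to a bi-infinite line $l$ whose ends lie in $\overline{N_C(H)}\cap\partial\mathbb{F}=\partial H$ (quasiconvexity of $H$ in $\mathbb{F}$), i.e. carried by $H$, and which does not flare along $(\phi_k)\to z$. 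Reading subwords, non-flaring along $(\phi_k)$ says that every finite subword of $l$ occurs in ${(\phi_k)}_\#([c])$ for a suitable conjugacy class $[c]$ and all large $k$; that is, $l\in\Lambda_z$. Hence $H$ carries a leaf of $\Lambda_{\mathcal{H}}$.

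The main obstacle is this last step: converting the purely metric failure of quasiconvexity into a genuine leaf of $\Lambda_z$, rather than merely an ``approximately bounded'' line near $H$. This requires (i) keeping control of the $\mathcal{H}$-coordinates $h_n$, in particular excluding the degenerate regime where the $G$-geodesics do not penetrate a definite distance into $\mathcal{H}$, and (ii) using the quantitative flare inequality of \cite{BF-92} with uniform constants to turn ``non-flaring along a ray to $z$'' into the subword membership of Definition \ref{el}. By contrast, the compactness and limit-set inputs (finiteness of the core graph of $H$, giving compactness of $\partial H$, and $\overline{N_C(H)}\cap\partial\mathbb{F}=\partial H$), and the uniqueness of boundary extensions used in the forward direction, are routine.
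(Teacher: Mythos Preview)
The paper does not prove this lemma at all: it is quoted verbatim as \cite[Lemma 2.1]{Mj-99} and \cite[Lemma 2.4]{MjR-17} and then used as a black box in Proposition~\ref{minfil}, Theorem~\ref{qc1}, and Theorem~\ref{qc2}. So there is no ``paper's own proof'' to compare against; what you have written is an attempt to reprove an external result.

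On the content of your attempt: the forward direction is correct and standard. Quasiconvexity of $H$ in $G$ gives a boundary embedding $\partial H\hookrightarrow\partial G$, and by uniqueness of continuous boundary extensions this must agree with $\widehat i|_{\partial H}$; since Mitra's theorem identifies the point-preimages of $\widehat i$ with endpoints of leaves of $\Lambda_{\mathcal H}$, no such leaf can be carried by $H$.

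The converse, however, is only a sketch, and you yourself flag the gap. The passage from ``$|w_n|_G/|w_n|_H\to 0$'' to ``some recentered limit line $l$ with $\partial l\subset\partial H$ lies in $\Lambda_z$'' is not justified. Two specific issues: first, ``recentered segments near $w_n$'' is undefined, and it is not clear at which points you recenter so that the limit is bi-infinite \emph{and} stays in a bounded neighborhood of $H$ \emph{and} witnesses non-flaring along the ray to $z$; these three requirements pull in different directions. Second, the inference ``non-flaring along $(\phi_k)$ implies every finite subword of $l$ occurs in $(\phi_k)_\#([c])$ for some $[c]$'' is not the statement of the Bestvina--Feighn flaring condition and needs a real argument. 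In Mitra's original proof this is where the ladder construction and the precise definition of $\Lambda_z$ do the work; your outline gestures at this but does not carry it out. If you want a self-contained proof here, you should either reproduce Mitra's ladder argument or invoke the Scott--Swarup/Mitra criterion directly rather than rederiving it.
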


We use our description of the ending lamination of hyperbolic $\phi$ to prove an interesting result about
quasiconvexity of finitely generated subgroups of $\mathbb{F}$.
The first step is to connect the concept of filling in the sense of free factor supports with the
concept of ``minimally filling''. (see section \ref{sec:3})

Kapovich and Lustig in their work of ending laminations \cite[Proposition A.2]{KL-13} showed that for a hyperbolic fully irreducible
outer automorphism, its ending lamination set is \textit{minimally filling}. A set of lines $S$ is said to be minimally filling if
there does not exist any finitely generated infinite index subgroup of $\mathbb{F}$ that carries a line of $S$.
Note that minimally filling implies \textit{filling} in the sense of free factor support \ref{fill}. The converse
is generally not true. However something interesting can be said for the converse direction too.

\begin{prop}\label{fill-minfill}
 Suppose $S$ is a set of lines in $\mathbb{F}$ that is filling in the sense of free factor supports
 and there does not exist any free factor of any finite index subgroup of $F$ that carries a line in $S$
 then $S$ is minimally filling in $F$. Converse also holds.

\end{prop}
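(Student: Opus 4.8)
The plan is to deduce both implications from a single structural fact about finitely generated subgroups of free groups, namely Marshall Hall's theorem (equivalently, a Stallings folding argument): every finitely generated subgroup $H<\mathbb{F}$ is a free factor of some finite-index subgroup $K<\mathbb{F}$. Throughout I read ``free factor of a finite-index subgroup'' as meaning a \emph{proper} free factor, since a finite-index subgroup of $\mathbb{F}$ carries every line and the unrestricted statement would be vacuous. I also use the elementary index bookkeeping that a proper free factor of a free group of rank $\geq 2$ has infinite index: if $H$ is a free factor of $K$ of finite index $d$, then $\text{rank}(H)-1=d(\text{rank}(K)-1)$, which together with $d\geq 1$ and $\text{rank}(K)\geq 2$ forces $\text{rank}(H)\geq\text{rank}(K)$, hence $H=K$.

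For the stated implication, assume $S$ is filling and that no proper free factor of any finite-index subgroup of $\mathbb{F}$ carries a line of $S$, and suppose toward a contradiction that some finitely generated, infinite-index subgroup $H<\mathbb{F}$ carries a line $\ell\in S$. By Marshall Hall's theorem $H$ is a free factor of a finite-index subgroup $K<\mathbb{F}$; since $[\mathbb{F}:H]=[\mathbb{F}:K]\cdot[K:H]$ is infinite while $[\mathbb{F}:K]$ is finite, $[K:H]$ is infinite, so $H\neq K$, i.e.\ $H$ is a \emph{proper} free factor of the finite-index subgroup $K$ which carries $\ell\in S$ --- contradicting the hypothesis. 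Hence $S$ is minimally filling. (The filling hypothesis is not actually needed for this direction.)

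For the converse, assume $S$ is minimally filling. First, $S$ is filling: for each $\ell\in S$ the free factor support $\mathcal{A}_{supp}(\ell)$ is a single free factor that carries $\ell$, and were it proper it would be finitely generated of infinite index, contradicting minimal filling; so $\mathcal{A}_{supp}(\ell)=[\mathbb{F}]$, and since $\mathcal{A}_{supp}(S)$ carries $\ell$ as well, $\mathcal{A}_{supp}(S)=[\mathbb{F}]$. Second, let $H$ be a proper free factor of a finite-index subgroup $K<\mathbb{F}$; then $H$ is finitely generated (a free factor of the finitely generated group $K$), and since $\text{rank}(\mathbb{F})\geq 3$ forces $\text{rank}(K)\geq 2$, the bookkeeping above gives $[K:H]=\infty$, hence $[\mathbb{F}:H]=\infty$. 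By minimal filling $H$ carries no line of $S$, which is exactly the required condition.

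The genuine content is the invocation of Marshall Hall's theorem; the only steps needing a little (routine) care are the verification that ``carries a line of $S$'' is preserved when one enlarges a subgroup to a finite-index overgroup, and that proper free factors of the relevant free groups (which all have rank $\geq 2$) are of infinite index. I therefore anticipate no serious obstacle, only bookkeeping.
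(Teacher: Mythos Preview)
Your proof is correct and takes essentially the same approach as the paper: both invoke Marshall Hall's theorem (cited in the paper as Burns '69) for the forward direction, and for the converse both use that a proper free factor of a finite-index subgroup is finitely generated of infinite index. You supply more detail (the Schreier index bookkeeping, the explicit verification that minimally filling implies filling, and the observation that the filling hypothesis is unused in the forward direction), but the underlying argument is identical.
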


\begin{proof}

 Using Corollary 1 from the work  of \cite{Burns-69}, we know that any finitely generated subgroup of $F$ can be
 realized as a free factor of some finite index subgroup of $F$. This together with the additional hypothesis
 in the lemma implies $S$ is minimally filling.

 The converse part follows directly from definitions and the observation that any free factor of any finite index
 subgroup of $F$ is finitely generated and infinite index in $F$.
 \end{proof}

The following definition generalizes the ``minimally filling'' so that we can use it for hyperbolic outer automorphisms
which are not fully irreducible. The idea behind this definition is the property that the free factor support
of lamination is either all of $[\mathbb{F}]$ or every line is carried by some proper free factor system $[F^i]$
where $F^i$ is a proper free factor of $\mathbb{F}$.

\begin{definition} \label{relminfill}
Consider a finitely generated subgroup of infinite index  $H<\mathbb{F}$.

 A set of lines $S$ in $\mathbb{F}$ said to be \textbf{\textit{minimally filling with respect to $H$}} if for every finitely
 generated subgroup $H'<\mathbb{F}$ containing $H$ as a proper free factor, no finitely generated infinite index
 subgroup in $H'$ carries a line of $S$.

 Similarly we say a set of lines $\Lambda$ in $\mathcal{B}$ is
 \textbf{\textit{minimally filling with respect to $H$}} if every lift of $\Lambda$ in $\widetilde{\mathcal{B}}$
 is minimally filling with respect to $H$.

 \end{definition}

 It is easy to see that  $S$ is minimally filling with respect to $H$ if and only if no finitely generated
 subgroup of $\mathbb{F}$ containing $H$ as a proper free factor , carries a line in $S$. However
 we will use the aforementioned definition since it is easier to relate with the standard definition of
 minimally filling. The following lemma establishes the connection.

 \begin{lemma}

  If $S$ is minimally filling in $\mathbb{F}$ if and only if $S$
  is minimally filling with respect to every finitely generated, infinite index subgroup of $\mathbb{F}$.

  \end{lemma}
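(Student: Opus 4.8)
The plan is to derive the equivalence directly from the definitions, the only non-formal input being the theorem of Burns already used in Proposition~\ref{fill-minfill}: every finitely generated subgroup of $\mathbb{F}$ is a free factor of some finite-index subgroup of $\mathbb{F}$.

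For the forward direction, assume $S$ is minimally filling in $\mathbb{F}$ and let $H$ be any finitely generated subgroup of infinite index. To verify that $S$ is minimally filling with respect to $H$, take any finitely generated $H'<\mathbb{F}$ containing $H$ as a proper free factor and any finitely generated $H''<H'$ of infinite index in $H'$; one must see that $H''$ carries no line of $S$. Here the only point is multiplicativity of index: $[\mathbb{F}:H'']=[\mathbb{F}:H']\,[H':H'']$, so $[\mathbb{F}:H'']\ge[H':H'']=\infty$, and therefore $H''$ is a finitely generated subgroup of infinite index in $\mathbb{F}$. By the hypothesis it carries no line of $S$. (This argument never used that $H$ itself is finitely generated or of infinite index, so the conclusion in fact holds with respect to every subgroup $H$.)

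For the reverse direction I would argue by contraposition. If $S$ is not minimally filling in $\mathbb{F}$, pick a finitely generated subgroup $K<\mathbb{F}$ of infinite index carrying a line $\ell\in S$. By Burns' theorem (Corollary~1 of \cite{Burns-69}), $K$ is a free factor of some finite-index subgroup $L<\mathbb{F}$; since $[\mathbb{F}:L]<\infty$ while $[\mathbb{F}:K]=\infty$ we have $K\neq L$, so $K$ is a \emph{proper} free factor of $L$ and hence $[L:K]=\infty$. As $L$ has finite index in $\mathbb{F}$ it is finitely generated, so $L$ is a finitely generated subgroup containing $K$ as a proper free factor, inside which $K$ is a finitely generated subgroup of infinite index carrying $\ell\in S$. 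By Definition~\ref{relminfill} this says precisely that $S$ is not minimally filling with respect to $K$ — and $K$ is a finitely generated subgroup of infinite index in $\mathbb{F}$. This contradicts the hypothesis, so $S$ must be minimally filling in $\mathbb{F}$.

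The lemma is thus essentially formal and there is no serious obstacle. The one subtlety worth flagging is in the reverse direction: one needs a finitely generated overgroup of $K$ in which $K$ sits as a \emph{proper} free factor of infinite index, and the naive move of adjoining a free letter to $K$ inside $\mathbb{F}$ can fail (for instance when $K$ already has small corank in $\mathbb{F}$). Burns' theorem is exactly what circumvents this, and the index comparison $[\mathbb{F}:K]=\infty>[\mathbb{F}:L]$ is what forces the free factor to be proper.
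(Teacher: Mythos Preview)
Your proof is correct and follows essentially the same approach as the paper: both directions are handled the same way, with Burns' theorem doing the real work in the reverse direction. Your forward direction is slightly cleaner---you argue directly via multiplicativity of index that any finitely generated infinite-index subgroup of $H'$ is already infinite index in $\mathbb{F}$, whereas the paper phrases it through the equivalent reformulation noted after Definition~\ref{relminfill}---but this is a cosmetic difference, not a substantive one.
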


  \begin{proof}
  To see the forward direction, let $\mathbb{F}\geq H'\geq H$
  be finitely generated subgroups such that $H$ is a proper free factor of $H'$. If $H'$ carries a line in $S$
  then there exists a proper free factor $K<H'$ of $H'$ which carries this line. But then we know that $K$ must
  have infinite index in $H'$, hence in $\mathbb{F}$, which violates that $S$ is minimally filling.

  Conversely, suppose $S$
  is minimally filling with respect to every finitely generated, infinite index subgroup of $F$. If there exists
  some finitely generated infinite index subgroup $H<\mathbb{F}$ which carries a line in $S$, then by
  using the result of \cite{Burns-69}, we know that there exists a finite index (hence finitely generated)
  subgroup $H'<\mathbb{F}$ which contains $H$ as a free factor (if $H$  is a free factor of $\mathbb{F}$ then
  take $H' = \mathbb{F}$).
  But $H$ has infinite index in $H'$ since
  it is a proper free factor of $H'$ and $H'$ has finite index in $\mathbb{F}$,
  hence $S$ is not minimally filling with respect to $H$. Which is a contradiction.
 \end{proof}

Let us elaborate on the purpose  of giving this definition.


\begin{prop}\label{minfil}

 Consider the exact sequence $$ 1\rightarrow \mathbb{F} \rightarrow G\rightarrow \langle\phi\rangle\rightarrow 1$$
 where $\phi\in\out$ hyperbolic.

Let  $H$ be a finitely generated, infinite index subgroup of $\mathbb{F}$. If every attracting and every
repelling lamination of $\phi$ is minimally filling with respect to $H$ then $H$ is quasiconvex in $G$.

\end{prop}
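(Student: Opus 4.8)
The plan is to feed the description of the ending lamination from Lemma \ref{el1} into Mitra's quasiconvexity criterion (Lemma \ref{qc}). Since $H$ is finitely generated of infinite index, Lemma \ref{qc} says that $H$ is quasiconvex in $G$ if and only if $H$ carries no leaf of the ending lamination $\Lambda_{\langle\phi\rangle}$, and by Lemma \ref{el1} the latter equals $\widetilde{\mathcal{WL}}(\phi)\cup\widetilde{\mathcal{WL}}(\phi^{-1})$. As $\phi^{-1}$ is again hyperbolic, with attracting laminations the repelling laminations of $\phi$ and vice versa, it is enough by symmetry to show that $H$ carries no line of $\mathcal{WL}(\phi)$. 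Passing to a rotationless power (Lemma \ref{rotationless}) affects neither the set $\mathcal{L}(\phi)$ of attracting laminations --- hence not the hypothesis --- nor the quasiconvexity question, since $G_{\phi^K}$ has finite index in $G_\phi$; so I assume $\phi$ is rotationless and fix a CT representative $f:G\to G$ of $\phi$.

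The key reduction is the claim: \emph{if $H$ carries some line $\ell\in\mathcal{WL}(\phi)=\mathcal{B}_{gen}(\phi)\cup\mathcal{B}_{\mathsf{Fix}_+}(\phi)$, then $H$ carries a leaf of some $\Lambda\in\mathcal{L}(\phi)$.} If $\ell\in\mathcal{B}_{gen}(\phi)$ this is immediate, since $\ell$ is itself a leaf of an attracting lamination. So suppose $\ell\in\mathcal{B}_{\mathsf{Fix}_+}(\phi)$. The set $\mathsf{Fix}_+(\phi)=\bigcup_{\Phi\in P(\phi)}\mathsf{Fix}_+(\widehat\Phi)$ is $\mathbb{F}$-invariant, because $P(\phi)$ is closed under isogredience; hence some lift $\widetilde\ell$ witnessing that $H$ carries $\ell$ has both endpoints in $\mathsf{Fix}_+(\phi)$. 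Pick such an endpoint $\xi\in\mathsf{Fix}_+(\phi)\cap\partial H$. On one hand, Lemma \ref{fixlam} says the weak closure of $\xi$ contains a leaf $\mu$ of some $\Lambda\in\mathcal{L}(\phi)$. On the other hand, since $\xi\in\partial H$ and $H$ has finite rank, the minimal $H$-invariant subtree of $\widetilde{G}$ is convex and contains a ray converging to a lift of $\xi$; projecting that ray to the finite core graph $\Delta$ with $\pi_1(\Delta)$ conjugate to $H$, every line in the weak closure of $\xi$ is realized in $\Delta$, i.e.\ is carried by $H$ (equivalently, one invokes that the set of lines carried by the subgroup system $\{[H]\}$ is weakly closed, Section \ref{sec:3}). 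Combining the two facts, $H$ carries the leaf $\mu$ of $\Lambda$, which proves the claim.

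To finish, suppose for contradiction that $H$ carries a line $\ell\in\mathcal{WL}(\phi)$; by the claim, $H$ carries a leaf $\mu$ of some $\Lambda\in\mathcal{L}(\phi)$, and $\Lambda$ is an attracting lamination of $\phi$, so by hypothesis it is minimally filling with respect to $H$. By the reformulation stated after Definition \ref{relminfill}, this means that no finitely generated subgroup of $\mathbb{F}$ containing $H$ as a proper free factor carries a line of $\Lambda$. But $H$ is finitely generated of infinite index, so --- by the result of \cite{Burns-69} used in Proposition \ref{fill-minfill}, which realizes any finitely generated subgroup of $\mathbb{F}$ as a free factor of some finite-index subgroup --- $H$ is a proper free factor of a finitely generated $H'<\mathbb{F}$ (the index being finite on one side and infinite on the other forces $H\neq H'$). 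Then $H'$ carries $\mu$, contradicting minimal filling with respect to $H$. Hence $H$ carries no line of $\mathcal{WL}(\phi)$, and by the symmetric argument none of $\mathcal{WL}(\phi^{-1})$; by Lemmas \ref{el1} and \ref{qc}, $H$ is quasiconvex in $G$. I expect the main obstacle to be the second paragraph --- specifically, the verification that the entire weak closure of a point of $\partial H$ consists of lines carried by $H$, so that the leaf $\mu$ furnished by Lemma \ref{fixlam} is genuinely carried by $H$; the $\mathbb{F}$-invariance of $\mathsf{Fix}_+(\phi)$ needed to reconcile the two lifts of $\ell$ is routine but should be recorded.
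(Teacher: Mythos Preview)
Your proof is correct and follows essentially the same route as the paper's: argue by contrapositive via Lemma \ref{qc}, use that the set of lines carried by $[H]$ is weakly closed together with the fact that the weak closure of any line in $\mathcal{WL}(\phi)$ contains some $\Lambda\in\mathcal{L}(\phi)$ to conclude $H$ carries a leaf of an attracting lamination, and then invoke Burns to contradict minimal filling with respect to $H$. Your write-up is more detailed than the paper's (you split into the generic and $\mathcal{B}_{\mathsf{Fix}_+}(\phi)$ cases and justify the rotationless reduction), but the argument is the same; the passage to a rotationless power is harmless but not strictly needed, since the paper's statement of $\mathcal{WL}(\phi)$ and of $\mathsf{Fix}_+(\phi)$ already absorb that reduction.
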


\begin{proof}
 Suppose $H$ is not quasiconvex. Then by Lemma \ref{qc} it carries a leaf of the ending lamination set.
Without loss we may assume that it carries a line in $\widetilde{\mathcal{WL}}(\phi)$. Since the closure of projection of any line in
$\widetilde{\mathcal{WL}}(\phi)$ contains an attracting lamination and $\partial H$ is compact,
$H$ must carry lift of a generic leaf of some $\Lambda_i\in\mathcal{L}(\phi)$.

Using \cite{Burns-69} one has a finite index subgroup $H'<\mathbb{F}$ such that $H$ is a proper free factor
of $H'$ (since $H$ is infinite index in $\mathbb{F}$), and hence infinite index in $H'$ and $H$ carries lift
of a leaf of $\Lambda_i$.
This violates that $\Lambda_i$ is minimally filling with respect to $H$. Hence $H$ must be
quasiconvex in $G$.

\end{proof}
The following result establishes certain dynamical conditions for
a finitely generated, infinite index $H<\mathbb{F}$ to be quasiconvex in the extension group $G$.
We give one equivalent condition and one sufficient condition for $H$ to be quasiconvex.

\begin{thm} \label{qc1}
 Consider the exact sequence $$ 1\rightarrow \mathbb{F} \rightarrow G\rightarrow \langle\phi\rangle\rightarrow 1$$
 where $\phi$ is a hyperbolic automorphism of $\mathbb{F}$. Let $H$ be a finitely generated infinite index subgroup of
 $\mathbb{F}$.  Then the following are equivalent:
 \begin{enumerate}
  \item $H$ is quasiconvex in $G$
  \item $\partial H\cap \{\mathsf{Fix}_+(\phi)\cup \mathsf{Fix}_-(\phi)\} = \emptyset$.

 \end{enumerate}
 Moreover both $(1)$ and $(2)$ are satisfied if every attracting and repelling lamination of $\phi$ is
 minimally filling with respect to $H$.
\end{thm}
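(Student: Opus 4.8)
The plan is to prove Theorem \ref{qc1} by establishing the equivalence (1)$\iff$(2) using the description of the ending lamination set from Lemma \ref{el1}, together with Mitra's quasiconvexity criterion (Lemma \ref{qc}), and then deduce the ``moreover'' part from Proposition \ref{minfil}. The key conceptual input is that, by Lemma \ref{el1}, $H$ fails to be quasiconvex precisely when $\partial H$ (equivalently, some lift of $H$ acting on $\partial\mathbb{F}$) carries a line in $\widetilde{\mathcal{WL}}(\phi)\cup\widetilde{\mathcal{WL}}(\phi^{-1})$, and by Theorem \ref{Limit} such a line has endpoints either spanning a generic leaf of some attracting (resp. repelling) lamination, or lying in $\mathsf{Fix}_+(\phi)$ (resp. $\mathsf{Fix}_-(\phi)$).

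\textbf{Direction (2)$\Rightarrow$(1).} First I would assume $\partial H\cap\{\mathsf{Fix}_+(\phi)\cup\mathsf{Fix}_-(\phi)\}=\emptyset$ and suppose for contradiction that $H$ is not quasiconvex. By Lemma \ref{qc}, $H$ carries a leaf $\ell$ of the ending lamination $\Lambda_{\langle\phi\rangle}$, so by Lemma \ref{el1} we may assume (after possibly replacing $\phi$ by $\phi^{-1}$) that $\ell\in\widetilde{\mathcal{WL}}(\phi)$, i.e. $\ell$ projects to a line in $\mathcal{B}_{gen}(\phi)\cup\mathcal{B}_{\mathsf{Fix}_+}(\phi)$. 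If $\ell$ projects into $\mathcal{B}_{\mathsf{Fix}_+}(\phi)$ then by definition its endpoints lie in $\mathsf{Fix}_+(\phi)$ — but a lift of a line carried by $H$ has both endpoints in $\partial H$ (using the characterization of carrying in Section \ref{sec:3}), contradicting $\partial H\cap\mathsf{Fix}_+(\phi)=\emptyset$. If instead $\ell$ projects to a generic leaf of some $\Lambda_i\in\mathcal{L}(\phi)$, then since the set of lines carried by $H$ (viewed via the finite-rank subgroup system $\{[H]\}$) is closed (the first Lemma of Section \ref{sec:3}) and $\partial H$ is compact, the whole weak closure of $\ell$, which is $\Lambda_i$, is carried by $H$; by Lemma \ref{structure} $\Lambda_i$ contains a singular line as a leaf, hence $H$ carries a line with endpoints in $\mathsf{Fix}_+(\phi)$, again contradicting the hypothesis. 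This gives (2)$\Rightarrow$(1).

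\textbf{Direction (1)$\Rightarrow$(2).} Conversely, suppose $\partial H$ meets $\mathsf{Fix}_+(\phi)\cup\mathsf{Fix}_-(\phi)$, say it contains a point $\xi\in\mathsf{Fix}_+(\widehat\Phi)$ for some principal lift $\Phi$. Since $H$ has infinite index and infinite rank considerations apply, $\partial H$ is an infinite compact set; I would argue that $H$ then carries a line of $\mathcal{B}_{\mathsf{Fix}_+}(\phi)$: either $\partial H$ contains a second point of $\mathsf{Fix}_+(\widehat\Phi)$ (then the line joining them is a singular line carried by $H$, and it lies in $\Lambda_{\langle\phi\rangle}$ by Lemma \ref{el1} and Theorem \ref{Limit}), or $\partial H$ contains a point $\eta\ne\xi$ with $\eta\notin\mathsf{Fix}_-(\phi)$, in which case the line $\xi\eta$ is carried by $H$ and by Lemma \ref{fixlam} its weak closure contains an attracting lamination; thus this line lies in $\mathcal{WL}(\phi)\subset\Lambda_{\langle\phi\rangle}$ and $H$ is not quasiconvex by Lemma \ref{qc}. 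The remaining sub-case where every point of $\partial H$ other than $\xi$ lies in $\mathsf{Fix}_-(\phi)$ cannot occur, as $\mathsf{Fix}_-(\phi)$ is finite (Proposition \ref{singfin}) while $\partial H$ is infinite, so $\partial H$ would have to contain a second point of $\mathsf{Fix}_+(\phi)$, returning us to the first case. The symmetric argument handles $\xi\in\mathsf{Fix}_-(\phi)$ using $\phi^{-1}$. Finally, the ``moreover'' clause is precisely Proposition \ref{minfil}.

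\textbf{The hard part} will be the bookkeeping in (1)$\Rightarrow$(2): making rigorous the dichotomy on $\partial H$ and in particular ensuring that when $\partial H$ contains an attracting fixed point together with at least one other point, one genuinely lands inside $\mathcal{WL}(\phi)$ (not merely a line whose closure touches a lamination but which itself fails to be a leaf). I expect to resolve this by invoking Lemma \ref{fixlam} to produce the attracting lamination in the weak closure of $\xi$, together with Theorem \ref{Limit}'s converse direction and Lemma \ref{converse} to certify membership in $\mathcal{WL}(\phi)$, combined with the closedness of the set of lines carried by $H$ to pass from a single line to the needed leaf carried by $H$.
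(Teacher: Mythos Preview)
Your $(2)\Rightarrow(1)$ direction and the ``moreover'' part match the paper's argument essentially verbatim.

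For $(1)\Rightarrow(2)$, however, your case analysis is both unnecessary and contains an error. The paper's argument is much shorter: if $P\in\partial H\cap\mathsf{Fix}_+(\phi)$, then by Lemma~\ref{fixlam} the weak accumulation set of $P$ contains some $\Lambda\in\mathcal{L}(\phi)$; since $\partial H$ is compact and the set of lines carried by $[H]$ is weakly closed, $H$ carries all of $\Lambda$ (any line with endpoint $P$ and other endpoint in $\partial H$ has $\Lambda$ in its weak closure), hence carries a leaf of the ending lamination, contradicting quasiconvexity via Lemma~\ref{qc}. No dichotomy on the remaining points of $\partial H$ is needed. You essentially describe this fix yourself in your ``hard part'' paragraph, and once you adopt it the three sub-cases collapse.

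The specific error: your third sub-case relies on the assertion that $\mathsf{Fix}_-(\phi)$ is finite, citing Proposition~\ref{singfin}. This is false. The set $\mathsf{Fix}_-(\phi)=\bigcup_{\Psi\in P(\phi^{-1})}\mathsf{Fix}_+(\widehat\Psi)\subset\partial\mathbb{F}$ is $\mathbb{F}$-invariant (conjugating a principal lift yields another), hence infinite. Proposition~\ref{singfin} bounds the cardinality of $\mathcal{B}_{\mathsf{Fix}_+}(\phi)\subset\mathcal{B}$, i.e.\ the number of $\mathbb{F}$-orbits of such lines, not the number of fixed points in $\partial\mathbb{F}$. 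So your argument does not rule out $\partial H\setminus\{\xi\}\subset\mathsf{Fix}_-(\phi)$. Relatedly, in your second sub-case the line $\xi\eta$ need not itself lie in $\mathcal{WL}(\phi)=\mathcal{B}_{gen}(\phi)\cup\mathcal{B}_{\mathsf{Fix}_+}(\phi)$; what is true (and sufficient) is that its weak closure contains $\Lambda$, which is the route the paper takes.
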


\begin{proof}
The conclusion in the ``moreover'' part is discussed in Proposition \ref{minfil}. We proceed to prove the equivalence of
$(1)$ and $(2)$.

Suppose $H$ is quasiconvex in $G$.
Since $H$ is finite rank subgroup of $\mathbb{F}$, $\partial H$ is compact in $\partial \mathbb{F}$
the set of lines
carried by $[H]$ is closed in the weak topology. If
$P\in\partial H\cap \{\mathsf{Fix}_+(\phi)\cup \mathsf{Fix}_-(\phi)\} \neq \emptyset$ then weak closure
of $P$ must contain either an attracting lamination or a repelling lamination for $\phi$. Hence
$H$ contains leaves of the ending lamination set and this contradicts that $H$ quasiconvex by using Lemma \ref{qc}

Conversely, if $\partial H\cap \{\mathsf{Fix}_+(\phi)\cup \mathsf{Fix}_-(\phi)\} = \emptyset$,
then $H$ does not carry a leaf of any ending lamination set, hence $H$ is quasiconvex.
To see this use Theorem \ref{Limit} and Lemma \ref{structure} which states that every attracting (repelling)
lamination carries a singular line which has endpoints in $\mathsf{Fix}_+(\phi)$ ($\mathsf{Fix}_-(\phi)$) .

\end{proof}

We make the following observation regarding the special case when $\phi$ is fully irreducible. This
case is well known and was done in \cite{Mj-99}, \cite{KL-13}.

\begin{cor}\label{qciwip}
 If $\phi$ is fully irreducible, then any finitely generated infinite index subgroup is quasiconvex in the
 extension group $G$.
\end{cor}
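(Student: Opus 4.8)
The plan is to obtain this as an immediate consequence of the ``moreover'' clause of Theorem \ref{qc1} (equivalently, of Proposition \ref{minfil}), so the only point that really needs to be checked is that every attracting and every repelling lamination of a fully irreducible $\phi$ is minimally filling with respect to an arbitrary finitely generated, infinite index subgroup $H<\mathbb{F}$.

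First I would use the fact that a fully irreducible $\phi$ has a unique attracting lamination $\Lambda^+_\phi$ and a unique repelling lamination $\Lambda^-_\phi$, i.e. $\mathcal{L}(\phi)=\{\Lambda^+_\phi\}$ and $\mathcal{L}(\phi^{-1})=\{\Lambda^-_\phi\}$. By \cite[Proposition 2.4]{BFH-97}, any finitely generated subgroup of $\mathbb{F}$ that carries a leaf of $\Lambda^+_\phi$ has finite index in $\mathbb{F}$; since $\phi^{-1}$ is also fully irreducible, the same statement holds for $\Lambda^-_\phi$. Hence $\Lambda^+_\phi$ and $\Lambda^-_\phi$ are minimally filling in $\mathbb{F}$, which is also the content of \cite[Proposition A.2]{KL-13}.

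Next I would invoke the lemma following Definition \ref{relminfill}, which says that a set of lines minimally filling in $\mathbb{F}$ is automatically minimally filling with respect to every finitely generated, infinite index subgroup of $\mathbb{F}$. Applied to $\Lambda^+_\phi$ and $\Lambda^-_\phi$, this shows both laminations are minimally filling with respect to $H$. Since they exhaust $\mathcal{L}(\phi)\cup\mathcal{L}(\phi^{-1})$, the hypothesis of the ``moreover'' part of Theorem \ref{qc1} holds, and therefore $H$ is quasiconvex in $G$. Alternatively, one can verify condition $(2)$ of Theorem \ref{qc1} directly: if some $P\in\partial H$ lay in $\mathsf{Fix}_+(\phi)\cup\mathsf{Fix}_-(\phi)$, then by Lemma \ref{fixlam} the weak closure of $P$ would contain $\Lambda^+_\phi$ or $\Lambda^-_\phi$, and since $\partial H$ is compact and the set of lines carried by $[H]$ is closed, $H$ would carry a leaf of that lamination, contradicting minimal filling because $H$ has infinite index; so $\partial H\cap\{\mathsf{Fix}_+(\phi)\cup\mathsf{Fix}_-(\phi)\}=\emptyset$ and quasiconvexity follows from Theorem \ref{qc1}. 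I do not expect any genuine obstacle here beyond citing the minimal-filling property of the attracting lamination of a fully irreducible automorphism; the rest is a formal consequence of Theorem \ref{qc1} and the lemmas relating the absolute and relative notions of minimal filling.
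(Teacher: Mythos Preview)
Your proposal is correct and follows essentially the same route as the paper: invoke \cite[Proposition 2.4]{BFH-97} and \cite[Proposition A.2]{KL-13} to conclude that the unique attracting and repelling laminations of a fully irreducible $\phi$ are minimally filling, pass to ``minimally filling with respect to $H$'' via the lemma after Definition \ref{relminfill}, and then apply the ``moreover'' clause of Theorem \ref{qc1}. Your alternative verification of condition (2) is a nice addition but not in the paper's proof.
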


\begin{proof}
  This follows from the work of  \cite[Proposition 2.4]{BFH-97} and
   \cite[Proposition A.2]{KL-13} where both papers prove that
 $\Lambda$ is minimally filling in $\mathbb{F}$ and hence minimally filling with respect to any
 finitely generated infinite index subgroup of $\mathbb{F}$, where $\Lambda$ is the unique attracting lamination
 of $\phi$. Similarly for the unique repelling lamination.
\end{proof}

Now we proceed to give an algebraic condition for quasiconvexity:

\begin{thm}\label{qc2}
Consider the exact sequence $$ 1\rightarrow \mathbb{F} \rightarrow G\rightarrow \langle\phi\rangle\rightarrow 1$$
 where $\phi$ is a hyperbolic automorphism of $\mathbb{F}$.

 If $H$ is a finitely generated, infinite index subgroup of $\mathbb{F}$ such that:
 \begin{enumerate}
  \item $H$ is contained in a proper free factor of $\mathbb{F}$.
 \item $H$ does not contain any subgroup which is also a subgroup of some
 $\phi$ periodic free factor.

 \end{enumerate}

 Then $H$ is quasiconvex in $G$.
 Moreover if $\phi$ does not have an attracting lamination that fills in the sense of free factor support, condition
 $(2)$ is a sufficient condition for quasiconvexity of $H$ in $G$.
\end{thm}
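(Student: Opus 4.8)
The plan is an argument by contradiction built on Mitra's quasiconvexity criterion (Lemma \ref{qc}) and the description of the ending lamination set in Lemma \ref{el1}: if $H$ is not quasiconvex then it carries a leaf of $\widetilde{\mathcal{WL}}(\phi)\cup\widetilde{\mathcal{WL}}(\phi^{-1})$, and from this I would produce a nontrivial subgroup of $H$ contained in a conjugate of a $\phi$-periodic free factor, contradicting hypothesis $(2)$. We may assume $\phi$ is rotationless, since quasiconvexity of $H$ in $G$, the set $\mathcal{L}(\phi)$, the property of a free factor being $\phi$-periodic, and hypotheses $(1)$--$(2)$ are all unaffected by replacing $\phi$ by a power.

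Assume $H$ is not quasiconvex in $G$. By Lemma \ref{qc}, Lemma \ref{el1} and Theorem \ref{Limit}, $H$ carries a line lying in $\mathcal{WL}(\phi)\cup\mathcal{WL}(\phi^{-1})$; after relabelling, say it carries $\gamma\in\mathcal{WL}(\phi)=\mathcal{B}_{gen}(\phi)\cup\mathcal{B}_{\mathsf{Fix}_+}(\phi)$. In either case the weak closure of $\gamma$ contains an attracting lamination $\Lambda_i\in\mathcal{L}(\phi)$: if $\gamma$ is a generic leaf this closure is $\Lambda_i$, and if $\gamma\in\mathcal{B}_{\mathsf{Fix}_+}(\phi)$ then Lemma \ref{fixlam}, applied to an endpoint of $\gamma$ lying in $\mathsf{Fix}_+(\phi)$, puts some $\Lambda_i$ in the weak closure of that endpoint, hence of $\gamma$. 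Because the set of lines carried by a finitely generated subgroup is closed in the weak topology and contains $\gamma$, it contains the entire weak closure of $\gamma$, so $H$ carries every leaf of $\Lambda_i$; fix a generic leaf $\lambda$ of $\Lambda_i$, so that $H$ carries $\lambda$.

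Now I would extract the periodic free factor. Since $\lambda$ is generic, $\mathcal{A}_{supp}(\lambda)=\mathcal{A}_{supp}(\Lambda_i)$ is a single free factor $[F_i]$, and as $\phi$ permutes $\mathcal{L}(\phi)$ the lamination $\Lambda_i$ is $\phi$-periodic, so $[F_i]$ is a $\phi$-periodic free factor. To see $F_i$ is proper: under hypothesis $(1)$, $H\le F$ for some proper free factor $F$ of $\mathbb{F}$, so $\partial H\subseteq\partial F$, whence $F$ carries $\lambda$ and $\mathcal{A}_{supp}(\lambda)\le[F]<[\mathbb{F}]$; in the ``moreover'' clause one assumes instead that no attracting or repelling lamination of $\phi$ fills, so $[F_i]$ is proper directly and $(1)$ is not needed. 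Choose a lift $\widetilde\lambda$ of $\lambda$ with both endpoints in $\partial H$; since $[F_i]$ carries $\lambda$, some $\mathbb{F}$-translate $g\,\widetilde\lambda$ has both endpoints in $\partial F_i$, so the two distinct endpoints of $\widetilde\lambda$ lie in $\partial H\cap\partial(g^{-1}F_ig)$. By the standard fact that for finitely generated subgroups $A,B$ of a free group one has $\partial(A\cap B)=\partial A\cap\partial B$ (via the core of the fibre product of the corresponding covering graphs), $K:=H\cap g^{-1}F_ig$ is nontrivial. But $K$ is a nontrivial subgroup of $H$ which is also a subgroup of the $\phi$-periodic free factor $g^{-1}F_ig$, contradicting hypothesis $(2)$. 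The case $\gamma\in\mathcal{WL}(\phi^{-1})$ is identical with $\mathcal{L}(\phi^{-1})$ and the repelling laminations of $\phi$ in place of $\mathcal{L}(\phi)$, using that a $\phi^{-1}$-periodic free factor is $\phi$-periodic.

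The main obstacle I anticipate is this last step --- passing from two common boundary points of $H$ and a conjugate of $F_i$ to a nontrivial intersection. I would settle it by citing the Stallings fibre-product description of intersections of finitely generated subgroups of a free group, which gives $\partial(A\cap B)=\partial A\cap\partial B$; alternatively, one can phrase hypothesis $(2)$ directly as the disjointness of $\partial H$ from the boundaries of conjugates of $\phi$-periodic free factors and avoid the point entirely. A minor supporting point is the assertion that a free factor $F\supseteq H$ carries every line carried by $H$, which is immediate from $\partial H\subseteq\partial F$ and the boundary criterion for a free factor system to carry a line.
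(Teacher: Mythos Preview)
Your proof is correct and follows the same route as the paper's own argument: contradict quasiconvexity via Lemma~\ref{qc}, pass from a carried line in $\mathcal{WL}(\phi)$ to a carried attracting lamination using closedness of the set of lines carried by $H$, take its free-factor support $[F_i]$, and then split on whether $F_i$ is proper, using hypothesis~(1) to force properness and hypothesis~(2) to contradict a nontrivial $H\cap gF_ig^{-1}$. Your write-up is in fact more careful than the paper's at the one genuine pressure point---the passage from $\partial H\cap\partial(g^{-1}F_ig)$ containing two points to $H\cap g^{-1}F_ig$ being nontrivial---which the paper simply asserts; your appeal to the Stallings fibre-product identity $\partial(A\cap B)=\partial A\cap\partial B$ for finitely generated subgroups of a free group is the right way to close that step.
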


\begin{proof}
 Suppose $H$ is not quasiconvex. Then by Lemma \ref{qc}, $H$ carries a leaf of the ending lamination set.
 Since the attracting and repelling laminations are paired by free factor supports, (see \ref{sec:6})
 without loss assume that $H$ carries lift of a line in  $\mathcal{WL}(\phi)$. Since $H$ is of finite rank, $\partial H$ is compact and
 set of lines carried by $[H]$ is closed in the weak topology. This implies that $H$ carries lift of an attracting
 lamination $\Lambda_i\in\mathcal{L}(\phi)$. Let $[F^i]$ be the
 (necessarily $\phi$-invariant, see proof of Lemma 3.2.4, \cite{BFH-00}) free factor support of $\Lambda_i$.

 \textbf{Case 1: $F^i$ is not proper:} In this case we use condition $(1)$ to get a contradiction. Since
 $H$ is contained in a proper free factor $H'$ say, the free factor support of lines carried by $H$ is contained
 in $H'$. This implies $F^i < H''$ for some conjugate of $H'$, but since $F^i = \mathbb{F}$ we get a contradiction.

 \textbf{Case 2: $F^i$ is a proper free factor:} In this case we conclude that $H\cap gF^ig^{-1}$ is nonempty for
 some $g\in\mathbb{F}$ and hence a
 subgroup of a $\phi$ invariant free factor. This violates condition $(2)$.

 The ``moreover'' part is a consequence of the proof of Case 2.

\end{proof}

\subsection{Ending Laminations for purely atoroidal $\mathcal{H}$}
\label{sec:A3}
  We now proceed to define the notion of weak limit set of a group $\mathcal{H}\in\out$, which is
  purely atoroidal i.e. every element of $\mathcal{H}$ is a hyperbolic outer automorphism. For this we will use the
  ideas developed so far to prove the special case in \ref{Limit} and use the notion of ending laminations as a guide.
  It is to be noted that extending the definition from $\widetilde{\mathcal{WL}}(\phi)$ to $\widetilde{\mathcal{WL}}(\mathcal{H})$
  is not entirely obvious at a first glance, however familiarity with description of the ending lamination set (see remark \ref{flip})
  will be of great help.
  So for the rest of the section let us fix the following notations and assumptions:

  \textbf{Assumptions: }
  \begin{enumerate}
  \item $1\rightarrow \mathbb{F}\rightarrow G\rightarrow \mathcal{H}\rightarrow 1 $ is an exact sequence
  of hyperbolic groups where $\mathcal{H}$ is non-elementary and purely atoroidal.
  \item $z\in\partial\mathcal{H}$ is point in the Gromov boundary of $\mathcal{H}$.
  \item $\phi_n\rightarrow z$ means $\phi_n$ is a sequence of vertices in the Cayley graph of $\mathcal{H}$ so that
   they lie on a geodesic joining $1$ to $z$ in the compactified space
  $\mathcal{H}\cup\partial\mathcal{H}$. So whenever we use the notation $\phi_n$  we mean a term in the
  sequence as described here.
  \item If $S$ is a set of lines in $\mathcal{B}$, we denote its lift to $\widetilde{\mathcal{B}}$ by
  $\widetilde{S}$.
  \end{enumerate}

  \begin{definition}\label{limH}
   Let $z\in\partial\mathcal{H}$ and $\phi_n$ be a sequence in $\mathcal{H}$ converging to $z$.
   Define the sets $$\mathcal{WL}(\phi_n, [c]) = \{l\in\mathcal{B} \arrowvert {l \text{ is a weak limit of } \phi^{k}_n}_\#([c])\text{ as } k\rightarrow\infty\}$$

$$ \widetilde{\mathcal{WL}}(z, [c])   = \bigcap_{k = 1}^{\infty} \big\{\overline{\bigcup_{j=k}^{\infty} \widetilde{\mathcal{WL}}(\phi_j, [c])}\big\} $$


\end{definition}

 So a line $\tilde{l}$ is in $\widetilde{\mathcal{WL}}(z, [c])$ if and only if given any finite subpath $\alpha$ of $\tilde{l}$ and $M\geq 1$
 there exists $j>M$ such that $\alpha$ is a subpath of ${\phi^k_j}_\#([c])$ for some $k>0$. This
 is equivalent to saying that every line in $\widetilde{\mathcal{WL}}(z, [c])$ is a weak limit of lines
 $\tilde{l}_j\in\widetilde{\mathcal{WL}}(\phi_j, [c])$.

 This implies that
 $\alpha$  is a subpath of $\phi_j([c'])$ for some $c'\in \mathbb{F}-\{1\}$. Hence we deduce that for any subpath
 $\alpha$ of $\tilde{l}$ and any $M>0$, there exists $j>M$ and a  conjugacy class $[c']$ such that
 $\alpha$ is subpath of ${\phi_j}_\#([c'])$. This implies $\widetilde{\mathcal{WL}}(z, [c]) \subset \overline{\Lambda}_z = \Lambda_z$.
 Hence we have the following corollary:

 \begin{cor}\label{mahan}
 If for $z\in\partial\mathcal{H}$, $\Lambda_z$ is  the ending lamination set  defined by Mitra, then:
  $$\widetilde{\mathcal{WL}}(z,[c])  \subset \Lambda_z$$.
   \end{cor}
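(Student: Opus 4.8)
\textbf{Proof proposal for Corollary~\ref{mahan}.}
The plan is to unravel both sides of the asserted inclusion to subpath-level statements, match them up, and finish by invoking that $\Lambda_z$ is closed. First I would restate membership in $\widetilde{\mathcal{WL}}(z,[c])$: unwinding Definition~\ref{limH}, a lift $\tilde{l}\in\widetilde{\mathcal{B}}$ lies in $\bigcap_{k\geq 1}\overline{\bigcup_{j\geq k}\widetilde{\mathcal{WL}}(\phi_j,[c])}$ precisely when, for every finite subpath $\alpha$ of $\tilde{l}$ and every $M\geq 1$, there are $j>M$ and $k\geq 1$ with $\alpha$ or $\alpha^{-1}$ a subpath of $(\phi_j^{\,k})_\#([c])$; equivalently, $\tilde{l}$ is weakly approximated by lifts of lines in the sets $\widetilde{\mathcal{WL}}(\phi_j,[c])$ with $j\to\infty$. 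Here each $\phi_j$ is the $j$-th vertex of the fixed geodesic $\phi_n\to z$ in the Cayley graph of $\mathcal{H}$, and since $\mathcal{H}$ is purely atoroidal, $\phi_j$ is a hyperbolic outer automorphism, so $\widetilde{\mathcal{WL}}(\phi_j,[c])$ is the weak-limit set of Section~\ref{main} applied to the single automorphism $\phi_j$.

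Next I would absorb one iterate of $\phi_j$: writing $(\phi_j^{\,k})_\#([c]) = (\phi_j)_\#([c'])$ with $[c']:=(\phi_j^{\,k-1})_\#([c])$, which is a nontrivial conjugacy class since $\phi_j$ is an automorphism, this exhibits $\alpha$ (or $\alpha^{-1}$) as a subpath of the circuit $(\phi_j)_\#([c'])$, in which $\phi_j$ is a genuine vertex of the geodesic joining $1$ to $z$. Thus every finite subpath of $\tilde{l}$ occurs, up to reversal, in ${\phi_n}_\#([c'])$ for some index $n$ along the geodesic and some conjugacy class $[c']$, which is exactly the type of occurrence recorded in Mitra's Definition~\ref{el}. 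Running $\alpha$ over an exhaustion of $\tilde{l}$ by finite subpaths and, for each central window, translating by a suitable element of $\mathbb{F}$ the periodic bi-infinite line determined by the corresponding circuit $(\phi_j)_\#([c'])$ so that its central window agrees with that of $\tilde{l}$, one obtains a sequence of leaves of $\Lambda_z$ converging weakly to $\tilde{l}$ in $\widetilde{\mathcal{B}}$; hence $\tilde{l}\in\overline{\Lambda_z}$. Since $\Lambda_z$ is closed (as recorded above, $\overline{\Lambda}_z=\Lambda_z$), we conclude $\tilde{l}\in\Lambda_z$.

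The step requiring the most care is this last matching, because the auxiliary conjugacy class $[c']$ produced for a subpath $\alpha$ depends on $\alpha$ (it is an iterate of $[c]$ under $\phi_j$ of length comparable to $\alpha$), so one cannot land inside a single $\Lambda_{z,[c'']}$ directly; the argument must instead manufacture, for each window of $\tilde{l}$, an honest leaf of $\Lambda_z$ whose central portion is that window, and then lean on both closedness and $\mathbb{F}$-invariance of $\Lambda_z$. The remaining checks are routine: that $(\phi_j)_\#([c'])$ is a nontrivial cyclically reduced circuit so that its periodic line is an honest element of $\widetilde{\mathcal{B}}$, and that flip-invariance of lines renders the $\alpha$-versus-$\alpha^{-1}$ ambiguity harmless. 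I would also stress that no reverse inclusion is claimed; the corollary records only that $\widetilde{\mathcal{WL}}(z,[c])$ sits inside Mitra's ending lamination set, which is all that is needed in what follows.
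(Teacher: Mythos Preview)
Your approach is essentially the paper's own: unwind Definition~\ref{limH} to the subpath level, absorb one iterate via $(\phi_j^{\,k})_\#([c])=(\phi_j)_\#([c'])$ with $[c']=(\phi_j^{\,k-1})_\#([c])$, and then appeal to $\overline{\Lambda_z}=\Lambda_z$. The paper's argument is exactly the short paragraph immediately preceding the corollary, and you have expanded it faithfully---including correctly flagging that the auxiliary class $[c']$ depends on the window $\alpha$, which the paper leaves implicit.

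One caution on your added explicitness: the claim that the periodic bi-infinite line determined by the circuit $(\phi_j)_\#([c'])$ is itself a leaf of $\Lambda_z$ is not immediate from Definition~\ref{el} as stated. A subword of that periodic line longer than the circuit need not occur in $\phi_n([c''])$ for any single fixed $[c'']$ and any $n$ on the chosen geodesic, so membership in some $\Lambda_{z,[c'']}$ is not automatic. The paper does not make this particular claim; it simply asserts the inclusion into $\overline{\Lambda_z}$ without naming specific approximating leaves. So while your overall strategy matches the paper's, the periodic-line construction you introduce would need its own justification (e.g.\ via Mitra's independence-of-sequence lemma or a direct appeal to properties of $\Lambda_z$ established in \cite{Mj-97}) rather than following from Definition~\ref{el} alone.
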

Note that $\widetilde{\mathcal{WL}}(z, [c])$  is a closed set by construction.

 As we can see that the ending lamination set is far more complicated when $\mathcal{H}$ is not a cyclic group.
 One of the big problems one faces is if a set of lines that occurs as a attracting or repelling lamination
 for every element of $\mathcal{H}$. Recall that stabilizers of attracting (repelling) laminations
 of a fully irreducible $\phi\in\out$ are virtually cyclic \cite[Theorem 2.14]{BFH-97}. But this may not be true
 if $\phi$ is hyperbolic but reducible even if the lamination fills (in the sense of free factor supports).

 Infact something worse can happen. Since $\mathcal{H}$ is hyperbolic the stabilizer subgroup of lamination
 inside $\mathcal{H}$ if not virtually cyclic will necessarily contain a nonabelian free group. This
 is due to the lemma that Bestvina, Feighn and Handel showed
 Tits alternative holds for $\out$ \cite{BFH-00} and Feighn, Handel proved every abelian
 subgroup of $\out$ is virtually $\mathbb{Z}^n$ for some $n$ \cite{FH-09}. In order to make a meaningful
 conclusion about Cannon-Thurston maps for $G$ we need to avoid these situations. The following results
 guarantee that such things do not occur.\\


 \begin{lemma}\label{wlz}
 Suppose  $\phi, \psi$ are two hyperbolic elements of $\out$ contained in $\mathcal{H}$.
 Denote the ending laminations $\Lambda_{z}, \Lambda_{z'}$, where $z = \phi^{\infty}, z' = \psi^{\infty}$ are
 in $\partial\mathcal{H}$. Let $l, l' \in \Lambda_{z}\cup \Lambda_{z'} $. Also suppose that $\phi$ and $\psi$ do not have 
 a common attracting fixed point. Under these assumptions,
 if $l$, $l' $ have an asymptotic end, then
 \begin{enumerate}
  \item $\phi$ and $\psi$ have a common attracting lamination.
  \item Both $l, l'$ are
  lifts of lines in either $\mathcal{B}_{\mathsf{Fix}_+}(\phi)$ or $\mathcal{B}_{\mathsf{Fix}_+}(\psi)$.
 \end{enumerate}

 \end{lemma}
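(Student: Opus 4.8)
The plan is to reduce the statement to the single-automorphism picture of Section~\ref{main} and then feed the two asymptotic leaves into the concatenation dichotomy of Lemma~\ref{concat}. First I would pass to rotationless powers of $\phi$ and $\psi$: this affects none of the objects in the statement --- weak limits and asymptoticity of lines, the sets $\mathsf{Fix}_\pm$, the collections $\mathcal L(\cdot)$ and the points $z=\phi^\infty$, $z'=\psi^\infty\in\partial\mathcal H$ are all unchanged, the last because $\{\phi^n\}$ is a quasigeodesic in $\mathcal H$ converging to $z$ and $\Lambda_z$ is independent of the defining sequence. Combined with Theorem~\ref{Limit} this gives $\Lambda_z=\widetilde{\mathcal{WL}}(\phi)$ and $\Lambda_{z'}=\widetilde{\mathcal{WL}}(\psi)$, so each of $l,l'$ is a lift of a line which is either a generic leaf of an attracting lamination of $\phi$ (resp.\ of $\psi$) or a line of $\mathcal B_{\mathsf{Fix}_+}(\phi)$ (resp.\ of $\mathcal B_{\mathsf{Fix}_+}(\psi)$). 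If both lines arise under the \emph{same} automorphism the argument below runs with a single automorphism, is essentially Proposition~\ref{asym2}, and yields conclusion~(2) with both lines in one $\mathcal B_{\mathsf{Fix}_+}(\cdot)$ (conclusion~(1) being vacuous), so I would concentrate on the mixed case, $l$ a weak limit under $\phi$ and $l'$ a weak limit under $\psi$.

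Let $P$ be the common endpoint of the asymptotic ends of $l$ and $l'$. The first key step is that the weak closure of the point $P\in\partial\mathbb F$ is a \emph{single} attracting lamination, and an attracting lamination of $\phi$: if $l$ is a generic leaf of $\Lambda^+_\phi$ then, by birecurrence of generic leaves, the subray of $l$ ending at $P$ is weakly dense in $\Lambda^+_\phi$, so the weak closure of $P$ equals $\Lambda^+_\phi$; and if $l\in\mathcal B_{\mathsf{Fix}_+}(\phi)$ then $P\in\mathsf{Fix}_+(\phi)$ and Lemma~\ref{fixlam} identifies the weak closure of $P$ with a single attracting lamination of $\phi$. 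Running the same dichotomy on $l'$ shows this weak closure is also an attracting lamination of $\psi$, so the common value $\Lambda$ is an attracting lamination of both --- this is conclusion~(1). Now the hypothesis $\mathsf{Fix}_+(\phi)\cap\mathsf{Fix}_+(\psi)=\varnothing$ prevents $l$ and $l'$ from both lying in the $\mathcal B_{\mathsf{Fix}_+}$ of their own automorphism (otherwise $P$ would be a common attracting fixed point), so at least one of them --- say $l$ --- is a generic leaf of $\Lambda$; since $\Lambda\in\mathcal L(\phi)\cap\mathcal L(\psi)$ that leaf is a generic leaf of $\phi$ and of $\psi$, and hence there is a single $\eta\in\{\phi,\psi\}$ with $\Lambda\in\mathcal L(\eta)$ under whose iterates both $l$ and $l'$ arise as weak limits of conjugacy classes.

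For conclusion~(2) in the mixed case I would invoke Lemma~\ref{concat}. By Lemma~\ref{coro} neither $l$ nor $l'$ is attracted, under $\eta^{-1}$, to the repelling lamination $\Lambda^-_\eta$ dual to $\Lambda$, so both lie in $\mathcal B_{na}(\Lambda^-_\eta)$. Applying the ``moreover'' clause of Lemma~\ref{concat} to the concatenation of $l$ and $l'$ along $P$, with free endpoints $Q$ and $Q'$, produces either a principal lift $H$ of $\eta$ with $P,Q,Q'\in\mathsf{Fix}_+(\widehat H)$ --- in which case $l,l'\in\mathcal B_{sing}(\eta)\subseteq\mathcal B_{\mathsf{Fix}_+}(\eta)$ and we are done --- or an $[A]\in\mathcal A_{na}(\Lambda^\pm_\eta)$ with $P,Q,Q'\in\partial_{ext}(A,\eta)$. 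In the latter case the endpoints of the generic leaf $l$ cannot lie in $\partial A$: the subray of $l$ towards such an endpoint converges to a point of $\partial A$, hence is eventually carried by $A$, so its weak closure --- which is all of $\Lambda$ --- would be carried by $\mathcal A_{na}(\Lambda)$, contradicting that a generic leaf of $\Lambda$ is attracted to $\Lambda$ while $\mathcal A_{na}(\Lambda)$ carries no line attracted to $\Lambda$. Therefore $P,Q\in\mathsf{Fix}_+(\eta)$, i.e.\ $l\in\mathcal B_{\mathsf{Fix}_+}(\eta)$; and the other line is then either already in $\mathcal B_{\mathsf{Fix}_+}(\eta)$ or is itself a generic leaf of $\Lambda$, in which case Lemma~\ref{asym1} applied inside $\eta$ puts it in $\mathcal B_{sing}(\eta)\subseteq\mathcal B_{\mathsf{Fix}_+}(\eta)$. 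Either way $l,l'\in\mathcal B_{\mathsf{Fix}_+}(\eta)$, which is conclusion~(2).

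The hard part will be the mixed case, and within it the fact that a line of $\mathcal B_{\mathsf{Fix}_+}(\psi)$ need not be a leaf of an attracting lamination, so Proposition~\ref{asym2} and Lemma~\ref{asym1} do not apply off the shelf and one is forced to route through the concatenation dichotomy of Lemma~\ref{concat} together with the non-carrying property of $\mathcal A_{na}(\Lambda)$; keeping track of which of $\phi,\psi$ plays the role of $\eta$, and the degenerate sub-case $l=l'$, also needs care. A secondary technicality is justifying $\Lambda_z=\widetilde{\mathcal{WL}}(\phi)$ for $z=\phi^\infty$ computed inside the ambient group $\mathcal H$ rather than inside $\langle\phi\rangle$, which rests on the independence of $\Lambda_z$ from the defining geodesic and on quasiconvexity of $\langle\phi\rangle$ in $\mathcal H$.
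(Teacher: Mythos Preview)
Your overall strategy --- reduce to $\Lambda_z=\widetilde{\mathcal{WL}}(\phi)$ via Lemma~\ref{el1}, use the common endpoint $P$ to manufacture a common attracting lamination, and then feed both lines into the concatenation dichotomy of Lemma~\ref{concat} --- is exactly the paper's approach; your unpacking of Proposition~\ref{asym2} into a direct appeal to Lemma~\ref{concat} is fine and arguably clearer. However, there is a genuine gap in your proof of conclusion~(1).

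You claim that when $l\in\mathcal B_{\mathsf{Fix}_+}(\phi)$, Lemma~\ref{fixlam} ``identifies the weak closure of $P$ with a single attracting lamination of $\phi$''. But Lemma~\ref{fixlam} only asserts that the weak closure \emph{contains} an attracting lamination; equality is proved there only when the principal edge generating the singular ray is EG. For a superlinear NEG edge $E$ with $f_\#(E)=E\cdot u$, the circuit $u$ may cross several EG strata, and then the weak closure of the endpoint contains each of the corresponding laminations and is not a single element of $\mathcal L(\phi)$. In your mixed case, the hypothesis forces at least one line, say $l$, to be generic, so $\overline P=\Lambda^+_\phi$; but if $l'\in\mathcal B_{\mathsf{Fix}_+}(\psi)$ you only obtain $\Lambda^+_\psi\subseteq\overline P=\Lambda^+_\phi$, and containment of one attracting lamination in another does \emph{not} make $\Lambda^+_\psi$ an attracting lamination of $\phi$. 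The paper closes this gap with a short counting argument you are missing: if $\Lambda^+_\psi\subsetneq\Lambda^+_\phi$ then every leaf of $\Lambda^+_\psi$ is a nongeneric leaf of $\Lambda^+_\phi$, hence by Corollary~\ref{nongen} lies in $\mathcal B_{\mathsf{Fix}_+}(\phi)$, which is finite by Proposition~\ref{singfin}, contradicting that $\Lambda^+_\psi$ is uncountable. Once you have $\Lambda^+_\phi=\Lambda^+_\psi$, your choice of $\eta=\psi$ and the remainder of your argument for conclusion~(2) goes through as written (the phrase ``eventually carried by $A$'' for a ray landing in $\partial A$ is a little loose, but the conclusion --- that the weak accumulation set of a point of $\partial A$ is carried by $[A]$, hence cannot be $\Lambda$ --- is correct).
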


 \begin{proof}
 Due to the conclusion from Proposition \ref{asym2} we may assume that $l\in \Lambda_{z}, l'\in \Lambda_{z'}$.
 We split the proof into cases:

 \textbf{Case 1: }
 If $l, l'$ are both generic leaves of $\phi$ and $\phi$ and share an endpoint $P$, then the weak closure
 of $P$, $\overline{P} = \Lambda^+_\phi = \Lambda^+_\psi$, where $\Lambda^+_\phi$ and $ \Lambda^+_\psi$ are attracting laminations
 for $\phi$ and $\psi$ respectively. So $l, l'$ can be treated as lifts of generic leaves of $\Lambda^+_\phi$ and
(since both are birecurrent leaves) the conclusion (2) now follows directly from Proposition \ref{asym2}.

 \textbf{Case 2: }
Suppose $l$ is lift of some generic leaf  of $\phi$,  $l'$ is lift of a line
 $\mathcal{B}_{\mathsf{Fix}_+}({\psi})$ and they are asymptotic with common endpoint $P$. 
 Thus $\overline{P} = \Lambda^+_\phi \supseteq \Lambda^+_\psi$. If the inclusion is proper then the leaves of 
 $\Lambda^+_\psi$ are nongeneric leaves of $\Lambda^+_\psi$. But by using Corollary \ref{nongen} we know that 
 these nongeneric leaves are lines in $\mathcal{B}_{\mathsf{Fix}_+}(\phi)$, which is a finite set by using 
 Proposition \ref{singfin}.
 But since $\Lambda^+_\psi$ is an uncountable set we must have $\Lambda^+_\phi=\Lambda^+_\psi$.
 Now apply Proposition \ref{asym2} (on $\psi$) to get the second conclusion.

 \end{proof}

 The proposition below gives some necessary conditions that a pair of elements of $\mathcal{H}$ must satisfy
 in order for the extension group to be hyperbolic. Recall that Dowdall-Taylor proved that
 for the extension group to be hyperbolic, every element of $\mathcal{H}$ must be hyperbolic.

 \begin{prop} \label{nec}
  If $\phi, \psi \in \mathcal{H}$ are two points such that $\phi$, $\psi$ do not have a common power
  , then the following equivalent conditions are satisfied:
  \begin{enumerate}
 \item  $\phi^\infty , \psi^\infty$ are distinct points in
  $\partial\mathcal{H}$
 \item They do not have a common attracting lamination.
  \item They do not have common attracting fixed point.
  \item No leaf of an attracting lamination or singular line of $\phi$ is asymptotic to a leaf of an attracting lamination
 or singular line for $\psi$.

 \end{enumerate}

   \end{prop}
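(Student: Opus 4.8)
The plan is to prove (1) outright, establish the cycle $(1)\Rightarrow(2)\Rightarrow(3)\Rightarrow(4)\Rightarrow(1)$, and observe that the reverse of the cycle is immediate; under the no-common-power hypothesis this forces all four conditions to hold, so in particular they are equivalent. For (1) and the return arrow: since $\mathcal{H}$ is a non-elementary hyperbolic group and $\phi,\psi$ are infinite-order with no common power, standard hyperbolic-group theory gives that $\phi^{\pm\infty},\psi^{\pm\infty}$ are four distinct points of $\partial\mathcal{H}$ (two loxodromics that share one endpoint share both and are commensurable), which is (1); conversely if $\phi,\psi$ had a common power $\theta$, then $\theta$ is hyperbolic so it has an attracting lamination, which is then an attracting lamination of both $\phi$ and $\psi$, giving $\neg(2)$, and $\theta$ also supplies a common attracting fixed point and, via Lemma \ref{structure}, a single singular line serving both, giving $\neg(3)$ and $\neg(4)$. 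Thus it remains to prove $(1)\Rightarrow(2)$, $(2)\Rightarrow(3)$, $(3)\Rightarrow(4)$.

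The crux is $(1)\Rightarrow(2)$, i.e.\ that a common attracting lamination forces a common power. Suppose $\phi,\psi$ share an attracting lamination $\Lambda$, so $\Lambda=\Lambda^+_\phi=\Lambda^+_\psi$ is repelled by both $\phi^{-1}$ and $\psi^{-1}$, and $\phi_\#,\psi_\#$ each expand leaf segments of $\Lambda$ by their Perron--Frobenius factors. Recall that hyperbolicity of $G=\mathbb{F}\rtimes\mathcal{H}$ is equivalent to the flaring (hallways-flare) condition for conjugacy classes transported along geodesics of $\mathcal{H}$, and that this in turn yields uniform exponential growth along geodesics. Since $\phi^{-\infty}\neq\psi^{-\infty}$ by (1), the bi-infinite path $\cdots\phi^{-2},\phi^{-1},1,\psi^{-1},\psi^{-2}\cdots$ is a quasigeodesic in $\mathcal{H}$. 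Fix a leaf segment $\alpha$ of $\Lambda$ of very large length $L$ (birecurrence lets us close $\alpha$ up with a path of bounded length to a circuit $[c]$), and transport $[c]$ along a genuine geodesic fellow-traveling this path: moving toward $\phi^{-\infty}$ applies (conjugates of) powers of $\phi^{-1}_\#$ and moving toward $\psi^{-\infty}$ applies (conjugates of) powers of $\psi^{-1}_\#$, and both contract the leaf segment $\alpha$. Using uniform exponential growth to guarantee this contraction is already visible inside the fixed flaring window $M$, the transported circuit has strictly larger length at the vertex $1$ than at distance $M$ in either direction, contradicting flaring. Hence (2).

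For the remaining two implications: if $\xi\in\partial\mathbb{F}$ were a common attracting fixed point of $\phi$ and $\psi$, then by Lemma \ref{fixlam} the weak closure of $\xi$ would equal an attracting lamination of $\phi$ and also one of $\psi$, contradicting (2); this gives $(2)\Rightarrow(3)$. For $(3)\Rightarrow(4)$, suppose some leaf $l$ of an attracting lamination or singular line of $\phi$ is asymptotic to such an $l'$ for $\psi$. By Theorem \ref{Limit} and Corollary \ref{nongen} such leaves lie in $\mathcal{WL}(\phi)$ (resp.\ $\mathcal{WL}(\psi)$), and since every weak limit has all its finite subwords appearing along the relevant iteration, $l\in\Lambda_{\phi^\infty}$ and $l'\in\Lambda_{\psi^\infty}$ in Mitra's sense (Corollary \ref{mahan}). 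As (3) holds, Lemma \ref{wlz} applies to $l,l'$ and concludes that $\phi,\psi$ have a common attracting lamination, contradicting (2). Hence (4), which closes the cycle.

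The main obstacle is the crux $(1)\Rightarrow(2)$; the other steps are essentially bookkeeping with Lemmas \ref{fixlam}, \ref{wlz}, \ref{structure} and Theorem \ref{Limit}. The two delicate points in the crux are: (i) confirming that closing up a long leaf segment of $\Lambda$ and transporting the resulting circuit along the geodesic genuinely shrinks it in \emph{both} directions --- one must keep track of bounded cancellation and of the non-leaf part of the circuit under iterated $\phi^{-1}_\#$ and $\psi^{-1}_\#$; and (ii) citing the precise flaring characterization of hyperbolicity of $\mathbb{F}\rtimes\mathcal{H}$ together with its uniform-exponential-growth consequence, so that the shrinkage is detected inside a fixed flaring window regardless of the Perron--Frobenius factors and translation lengths of $\phi$ and $\psi$.
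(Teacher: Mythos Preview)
Your route differs substantially from the paper's and carries an unflagged gap. The paper does not argue by a cycle at all: it invokes \cite[Proposition~5.1]{Mj-97}, which says that for distinct points $z,z'\in\partial\mathcal{H}$ no line of $\Lambda_z$ shares an endpoint with any line of $\Lambda_{z'}$. Since $\Lambda_{\phi^\infty}=\widetilde{\mathcal{WL}}(\phi)$ by Lemma~\ref{el1} and Theorem~\ref{Limit}, this single Cannon--Thurston fact yields (2), (3) and (4) from (1) in one stroke: a common attracting lamination, a common point of $\mathsf{Fix}_+$, or an asymptotic pair of leaves/singular lines would each produce a shared endpoint between $\Lambda_{\phi^\infty}$ and $\Lambda_{\psi^\infty}$.

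The gap in your argument is $(2)\Rightarrow(3)$, which is not the bookkeeping you take it for. You assert that for a common $\xi\in\mathsf{Fix}_+(\phi)\cap\mathsf{Fix}_+(\psi)$, Lemma~\ref{fixlam} makes the weak closure of $\xi$ \emph{equal} to an attracting lamination of $\phi$ and also to one of $\psi$. But that lemma only says the closure \emph{contains} some $\Lambda_\phi\in\mathcal{L}(\phi)$ and some $\Lambda_\psi\in\mathcal{L}(\psi)$; when $\xi$ is generated by a superlinear NEG principal edge the closure can strictly contain several attracting laminations together with additional lines, and nothing forces $\Lambda_\phi=\Lambda_\psi$. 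I do not see how to repair this step without Mitra's result or something of comparable strength. Your flaring idea for $(1)\Rightarrow(2)$ is interesting and may be salvageable, but the difficulties you flag are genuine: being weakly close to $\Lambda^+_\psi$ does not by itself control $|\psi^{-M}([c])|$ from above, so producing a single circuit that shrinks under \emph{both} $\phi^{-M}$ and $\psi^{-M}$ within the fixed flaring window is a real problem, not a detail. The paper's appeal to Mitra bypasses all of this.
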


   \begin{proof}
    Using \cite[Proposition 5.1]{Mj-97} we know that $\phi^\infty$ and $\psi^\infty$ are distinct points
    if and only if no line in $\Lambda_{\phi^\infty}$ has any endpoint common with a line in $\Lambda_{\psi^\infty}$.
    Hence they do not have a common attracting lamination by using Proposition \ref{el1}. The reverse
    direction is true by Theorem \ref{Limit}.
    This establishes the equivalence between (1) and (2).
    The equivalence between (1) and the others follow similarly by using \ref{Limit} and \ref{el1}.

   \end{proof}

\begin{remark}
Note that the above proposition combined with Mitra's result \cite[Proposition 5.1]{Mj-97} and
Theorem \ref{Limit} shows that
for any $z\in\partial\mathcal{H}$ which is not the endpoint of an axis, its corresponding ending lamination set $\Lambda_z$ does not contain
any line which is equal or asymptotic to a leaf or a singular line of any element of $\mathcal{H}$.
This shows that the techniques we used for showing finiteness of fibers of Cannon-Thurston maps
cannot be directly applied in the more general setting when $\mathcal{H}$ is not (virtually) cyclic.
\end{remark}

We end this paper with a question: Is the converse to Corollary \ref{mahan} true ?

\noindent\rule[0.5ex]{\linewidth}{1pt}
Address: Department of Mathematical Sciences, IISER Mohali, Punjab, India.\\
Contact: \href{mailto:pritam@scarletmail.rutgers.edu}{pritam@scarletmail.rutgers.edu}
\bibliographystyle{plainnat}
\def\bibfont{\small}
\bibliography{biblo}

 \end{document}